\newtheorem{theorem}{Theorem}[section]
\newtheorem{lemma}[theorem]{Lemma}
\newtheorem{proposition}[theorem]{Proposition}
\newtheorem{corollary}[theorem]{Corollary}
\newtheorem{conjecture}[theorem]{Conjecture}
\newtheorem{itheorem}{Theorem}
\theoremstyle{plain}
\newtheorem{definition}[theorem]{Definition}
\newtheorem{example}[theorem]{Example}
\newtheorem{remark}[theorem]{Remark}
\renewcommand{\theequation}{\arabic{section}.\arabic{equation}}
\numberwithin{equation}{section}
\newcounter{subeqn}
\renewcommand{\thesubeqn}{\theequation\alph{subeqn}}
\newcommand{\subeqn}{%
  \refstepcounter{subeqn}% Step subequation number
  \tag{\thesubeqn}% Label equation
}
\newcommand{\newseq}{%
  \refstepcounter{equation}% Step subequation number
}
\newcommand{\nc}{\newcommand}
\nc{\cat}{\mathcal{V}}
\nc{\func}{\EuScript{T}}
\nc{\res}{\operatorname{res}}
\renewcommand{\Re}{\operatorname{Re}}
\newcommand{\im}{\operatorname{im}}
\newcommand{\KLR}{\operatorname{KLR}}
\newcommand{\id}{\operatorname{id}}
\renewcommand{\dim}{\operatorname{dim}}
\newcommand{\Sym}{\operatorname{Sym}}
\nc{\bQ}{\mathbb{N}}
\newcommand{\Bi}{\mathbf{i}}
\newcommand{\Bj}{\mathbf{j}}
\nc{\pq}{\mathsf{q}}
\nc{\PQ}{\mathsf{Q}}
\nc{\PC}{\mathfrak{C}}
\newcommand{\Bs}{{\underline{\mathbf{s}}}}
\newcommand{\Bt}{{\underline{\mathbf{t}}}}
\nc{\slehat}{\mathfrak{\widehat{sl}}_e}
\nc{\sllhat}{\mathfrak{\widehat{sl}}_\ell}
\nc{\glehat}{\mathfrak{\widehat{gl}}_e}
\nc{\slnhat}{\mathfrak{\widehat{sl}}_n}
\nc{\glnhat}{\mathfrak{\widehat{gl}}_n}
\nc{\eE}{\EuScript{E}}
\newcommand{\arxiv}[1]{\href{http://arxiv.org/abs/#1}{\tt arXiv:\nolinkurl{#1}}}
\nc{\eF}{\EuScript{F}}
\nc{\fF}{\mathfrak{F}}
\nc{\fE}{\mathfrak{E}}
\newcommand{\K}{\mathbbm{k}}
\newcommand{\Z}{\mathbb{Z}}
\newcommand{\Q}{\mathbb{Q}}
\nc{\Qlb}{\mathbb{\bar \Q}_\ell}
\nc{\Fq}{\mathbb{F}_q}
\nc{\Fqb}{\mathbb{\bar F}_q}
\nc{\walg}{W}
\newcommand{\om}{\omega}
\newcommand{\R}{\mathbb{R}}
\newcommand{\C}{\mathbb{C}}
\nc{\KZ}{\mathsf{KZ}}
\newcommand{\la}{\leftarrow}
\newcommand{\sS}{\mathsf{S}} 
\newcommand{\sT}{\mathsf{T}}
\newcommand{\sss}{\mathsf{s}} 
\newcommand{\sk}{\mathsf{k}}
\nc{\Bv}{\mathbf{v}}
  \nc{\Bw}{\mathbf{w}}
  \nc{\Bb}{\mathbf{b}}
\nc{\tU}{\mathcal{U}}
\nc{\Bu}{\mathbf{u}}
 \nc{\Fl}{\mathscr{F}\!\ell}
\nc{\Tr}{\operatorname{Tr}}
\nc{\sheafK}{\EuScript{K}}
\nc{\bmu}{\boldsymbol{\mu}}
\nc{\bpi}{\boldsymbol{\pi}}
\nc{\dwalg}{\mathbb{W}}
\nc{\dalg}{\mathbb{T}}
\nc{\aalg}{\mathbb{A}}
\nc{\alm}{\mathscr{A}}
\nc{\bra}{\mathscr{B}}
\nc{\bO}{\mathbb{O}}
\nc{\Kos}{\EuScript{K}}
\nc{\tilt}{\EuScript{T}}
\renewcommand{\la}{\lambda}
\newcommand{\al}{\alpha}
\newcommand{\Hom}{\operatorname{Hom}}
\renewcommand{\(}{\left(}
\newcommand{\cP}{\mathcal{P}}
\newcommand{\cO}{\mathcal{O}}
\nc{\hCST}{\hC_{\sS,\sT}}
\newcommand{\CST}{C_{\sS,\sT}}
\newcommand{\Ext}{\operatorname{Ext}}
\newcommand{\cS}{\mathcal{S}}
\newcommand{\hB}{\EuScript{B}}
\newcommand{\hD}{\EuScript{D}}
\newcommand{\hC}{\EuScript{C}}
\newcommand{\gu}{\mathfrak{g}_U}
\newcommand{\bS}{\mathbb{S}}
\newcommand{\cQ}{\mathcal{Q}}
\newcommand{\excise}[1]{}
\newcommand{\End}{\operatorname{End}}
\newcommand{\fg}{\mathfrak{g}}
\newcommand{\mmod}{\operatorname{-mod}}
\newcommand{\dgmod}{\operatorname{-dg-mod}}
\newcommand{\umod}{\operatorname{-\underline{mod}}}
\newcommand{\ck}{\kappa}
\newcommand{\alg}{T}
\newcommand{\bla}{{\underline{\boldsymbol{\la}}}}
\newcommand{\Lotimes}{\overset{L}{\otimes}}
\newcommand{\thetitle}{Rouquier's conjecture and diagrammatic algebra}
\begin{document}

\renewcommand{\theitheorem}{\Alph{itheorem}}

\usetikzlibrary{decorations.pathreplacing,backgrounds,decorations.markings,calc,
shapes.geometric}
\tikzset{wei/.style={draw=red,double=red!40!white,double distance=1.5pt,thin}}

\noindent {\Large \bf 
\thetitle}
\bigskip\\
{\bf Ben Webster}\footnote{Supported by the NSF under Grant DMS-1151473.}\\
Department of Mathematics, University of Virginia, Charlottesville, VA
\bigskip\\
{\small
\begin{quote}
\noindent {\em Abstract.}
We prove a conjecture of Rouquier relating the decomposition numbers
in category $\cO$ for a cyclotomic rational Cherednik algebra to
Uglov's canonical basis of a higher level Fock space.  Independent proofs of this
conjecture have also recently been given by Rouquier, Shan, Varagnolo and
Vasserot and by Losev, using different methods.

Our approach is to develop two diagrammatic models for this category
$\cO$; while inspired by geometry, these are purely diagrammatic
algebras, which we believe are of some intrinsic interest.  In
particular, we can quite explicitly describe the representations of
the Hecke algebra that are hit by projectives under the $\KZ$-functor
from the Cherednik category $\cO$ in this case, with an explicit basis.

This algebra has a number of beautiful structures including
categorifications of many aspects of Fock space.  It can be understood
quite explicitly using a homogeneous cellular basis which generalizes
such a basis given by Hu and Mathas for cyclotomic KLR algebras.
Thus, we can transfer results proven in this diagrammatic formalism to  category
$\cO$ for a cyclotomic rational Cherednik algebra, including the
connection of decomposition numbers to canonical bases mentioned
above, and an action of the affine braid group by derived equivalences
between different blocks.
\end{quote}
}
\bigskip

\section{Introduction}
\label{sec:introduction}
One of the most powerful tools in the theory of category $\cO$ for a
semi-simple Lie algebra is to consider it not just as a
lonely category but as a module over the monoidal category of
projective functors.  This perspective was essential for a number of
significant advances in our understanding of category $\cO$; one
example is
the theory of Soergel bimodules \cite{Soe90,Soe92}.
In category $\cO$ for cyclotomic Cherednik algebras, defined in \cite{GGOR}, the
r\^ole of projective functors is played by the induction functors of
Bezrukavnikov and Etingof \cite{BEind}.

In this paper, we exploit the fact that these functors essentially
control the entire structure of the category, just as is the case for
category $\cO$.  In category $\cO$, all projectives were obtained by
acting on a single projective with translation functors.  In the
Cherednik case, this method of control is a bit more indirect.  In
brief, category $\cO$ for a cyclotomic Cherednik algebra is the unique
collection of highest weight categories with a deformation which are tied together by induction functors, and a particular
partial order on simples.  Theorem \ref{thm:Cherednik-unique}, based
on ideas from \cite{RSVV}, makes this statement precise.  These
induction functors can also be repackaged into a highest weight
categorical action of $\slehat$ (a notion defined by Losev
\cite{LoHWCI}).  Similar uniqueness theorems with other applications
in representation theory have been proven by the author jointly with
Brundan and Losev \cite{LoWe,BLW}.

This fact is mainly of interest because we can give two constructions
of categories which also satisfy these properties, and thus are
equivalent to the Cherednik category $\cO$.  As in Rouquier
\cite{RouqSchur}, we can associate a choice of parameters for the
Cherednik algebra of $ \Z/\ell\Z\wr S_n$ to a charge $\Bs =(s_1,\dots,
s_\ell)\in \Z^\ell$; we let $\cO^{\Bs}$ denote the sum of category
$\cO$ for these parameters over all $n$.  (In fact, we can work with
arbitrary parameters.  See Section \ref{sec:comparison-theorem} for
details.)

We also associate a graded finite dimensional algebra with two presentations
to the same data, as introduced in \cite{WebBKnote} under the name
{\bf WF Hecke algebras}\footnote{Here, ``WF'' stands for ``weighted
  framed.'' This is explained in greater detail in \cite{WebBKnote}.}. We'll first introduce a ``Hecke-like'' presentation
which makes the connection to the $\KZ$ functor straightforward but
which is not homogeneous, and
then a ``KLR-like'' presentation, which has the considerable advantage
of being graded.  We'll let $T^\Bs$ denote this algebra with its
induced grading.  

We'll describe these presentations in considerable detail in Sections
\ref{sec:pict-hecke-cher}--\ref{sec:basic}.  The graded presentation
is a generalization of the Khovanov-Lauda-Rouquier algebras
\cite{KLI,Rou2KM}, and a special case of a construction described by
the author in \cite{WebwKLR}.  In the terminology of that paper, it's
a {\bf reduced steadied quotient} of a {\bf weighted KLR
  algebra}. Both these presentations are purely
combinatorial/diagrammatic in description, though the formalism from
which they are constructed is heavily influenced by geometry.
\begin{itheorem}\label{main}
  There is an equivalence of categories between the category of
  finite-dimensional (ungraded) representations of $T^{\Bs}$ and the
  category $\cO^{\Bs}$.  

  In particular, the category of graded modules over $T^{\Bs}$ is a
  graded lift of $\cO^{\Bs}$ compatible with the graded lifts of the
  Hecke algebra defined by Brundan and Kleshchev \cite{BKKL}.
\end{itheorem}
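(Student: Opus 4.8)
The plan is to reduce the first statement to the uniqueness theorem recorded as Theorem~\ref{thm:Cherednik-unique}: it is enough to exhibit on the category $T^{\Bs}\mmod$ all of the structure characterizing $\cO^{\Bs}$, namely a flat deformation over the relevant parameter ring making it a sum of highest weight categories, a highest weight categorical action of $\slehat$ in the sense of Losev \cite{LoHWCI}, and a highest weight order on the simples (indexed by $\ell$-multipartitions) agreeing with the one read off from the charge $\Bs$. Most of this is already built into the diagrammatic presentations. First I would run the cellular-algebra machinery on the homogeneous cellular basis generalizing that of Hu and Mathas: this basis, suitably deformed, shows that $T^{\Bs}$ and its deformation are quasi-hereditary, identifies the standard, costandard and simple modules, labels them by $\ell$-multipartitions, and makes the decomposition matrix unitriangular for the order coming from $\Bs$ — so the highest weight datum on the diagrammatic side is literally the expected one.

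Next I would produce the categorical $\slehat$-action. Since $T^{\Bs}$ is a reduced steadied quotient of a weighted KLR algebra, adjoining or deleting a strand of fixed residue at the end of a diagram gives biadjoint functors on $T^{\Bs}\mmod$ changing $n$ by one; the weighted-KLR relations supply the nil-Hecke and $\mathfrak{sl}_2$ relations needed for these to assemble into a categorical action of $\slehat$, and checking that this action preserves the class of standardly filtered modules and behaves correctly under the deformation upgrades it to a highest weight categorical action. I expect this verification of Losev's axioms to be the main obstacle: it requires a careful, if ultimately combinatorial, analysis of how the cellular basis interacts with adding a strand — the analogue of the cyclotomic-KLR computation, but complicated by the weighting and by the passage to a steadied quotient. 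Once it is in place, Theorem~\ref{thm:Cherednik-unique} applies and yields an equivalence $T^{\Bs}\mmod \simeq \cO^{\Bs}$ intertwining the deformations and the categorical actions.

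Finally, for the compatibility with $\KZ$, I would identify the $\KZ$-quotient on the diagrammatic side. The functor $\KZ$ is the quotient of $\cO^{\Bs}$ by the objects supported on a proper closed locus, with image the module category of the cyclotomic Hecke algebra of $\Z/\ell\Z \wr S_n$ with parameters determined by $\Bs$; I would show that the corresponding quotient of $T^{\Bs}$ is the one killing all diagrams that factor through an idempotent with too many strands pushed to the far right, and that this quotient is precisely that Hecke algebra. Transporting the grading from the KLR-like presentation through this surjection, it restricts on the Hecke quotient to the grading of the Brundan--Kleshchev isomorphism with the cyclotomic KLR algebra \cite{BKKL}; hence the graded lift of $\cO^{\Bs}$ by graded $T^{\Bs}$-modules is compatible with the Brundan--Kleshchev graded lift of the Hecke algebra, as claimed. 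The one delicate point is matching the normalizations of the two gradings, which can be pinned down on the cellular basis elements that survive in the quotient.
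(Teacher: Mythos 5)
The skeleton of your plan is right — reduce to Theorem \ref{thm:Cherednik-unique}, extract the highest weight structure from the cellular basis, realize induction and restriction diagrammatically, and identify $\KZ$ with an idempotent truncation — but you have misread what Theorem \ref{thm:Cherednik-unique} actually asks you to verify, and the substitute you propose does not close the argument. The hypotheses of that theorem are not ``a highest weight categorical $\slehat$-action in the sense of Losev''; they are the RSVV-style cover conditions: that the $d$-fold restriction functor $K$ to $H_d(\pq,\PQ_\bullet)\mmod$ is a quotient functor which becomes an equivalence after base change to $R=\C((h,z_1,\dots,z_\ell))$ and is $-1$-faithful after base change to $\C$, together with a compatible duality, a common refinement of the orders, and (when $q=-1$) that $H_2(T+1)$ lies in the image of $K$. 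None of these appear in your proposal, and they are the technical heart of the proof: the generic equivalence comes from counting simples against the number of cells once the Hecke algebra is semisimple over the generic point; $-1$-faithfulness is proved by showing no submodule of a cell module is killed by the ``Hecke'' idempotent $e_{D_{s,m}}$, using explicit elements built from the cellular basis; and the $q=-1$ case is a separate dimension count exhibiting $H_2(T+1)$ as a summand of $e_{D_{s,2}}\PC^{\Bs}_2 e_D$ for a two-point loading with points $\kappa/2$ apart. Verifying Losev's axioms (which the paper does, but later and for different purposes) would let you invoke a uniqueness theorem of the \cite{LoWe,BLW} type, not the one you cite, and even then you would still need a faithfulness input of exactly this kind.

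Two smaller points. First, the order of operations matters: the argument is run on the ungraded, Hecke-like presentation $\PC^{\Bs}$ over the deformation ring $\mathscr{R}$, precisely because condition (4) needs an honest identification of $e_{D_{s,d}}\PC^{\Bs}_d e_{D_{s,d}}$ with the deformed cyclotomic Hecke algebra; the graded KLR-like presentation enters only afterwards, through the explicit isomorphism of Theorem \ref{thm:Hecke-KLR}, and it is that isomorphism (whose formulas are modeled on Brundan--Kleshchev's) which delivers the compatibility of gradings you want in the second half of the statement. Working directly with $T^{\Bs}$ as you propose forces you to reconstruct this isomorphism anyway. Second, your description of the $\KZ$-quotient is off: it is the functor $M\mapsto e_{D_{s,d}}M$ for the widely-spaced loading far to the right, i.e.\ the quotient by the Serre subcategory of modules killed by that idempotent; ``killing diagrams that factor through idempotents with strands pushed to the far right'' describes the steadying relation, which is already imposed in the definition of $T^{\Bs}$ and is a different mechanism.
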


We should emphasize to the reader: this is the first explicit
description of the category $\cO$ for Cherednik algebras we know in
the literature.  As part of its proof, we give an explicit description
of the modules given by the image of the Knizhnik-Zamolodchikov
functor, a question which has been unresolved since the original
definition of this functor in \cite{GGOR}.

Furthermore, this development is also of theoretical interest.  The algebra $T^{\Bs}$ has a large number of desirable properties which
are not easily seen from the Cherednik perspective:

\begin{itheorem}\label{consequences}\hfill
  \begin{enumerate}
  \item The algebra $T^\Bs$ is graded cellular; its basis vectors are
    indexed by pairs of generalizations of standard Young tableaux of
    the same shape.
  \item This equivalence gives an explicit description, including a
    basis and graded lift, of the image of projectives from
    $\cO^{\Bs}$ under the $\KZ$ functor.
\item If the charges $\Bs$ and $\Bs'$ are
  permutations of each other modulo $e$, then the
  derived categories $D^b(T^\Bs\mmod)$ and $D^b(T^{\Bs'}\mmod)$ are equivalent,
  and in fact there is a strong categorical action of the affine braid
  group lifting that of the affine Weyl group on charges.
  \item The graded Grothendieck group $K^0_q(T^\Bs)$ is canonically
    isomorphic to Uglov's $q$-Fock space attached to the same charges.
\item Under this isomorphism, the standard modules correspond to pure
  wedges, the projectives to Uglov's canonical basis, and the simples
  to its dual.
  \end{enumerate}
\end{itheorem}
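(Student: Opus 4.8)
The plan is to establish each of the five assertions on the diagrammatic side, where the relevant combinatorics is transparent, and to transport the representation-theoretic statements to $\cO^\Bs$ through the equivalence of Theorem~\ref{main}. Part~(1) is the foundation. I would construct a homogeneous basis of $T^\Bs$ indexed by pairs $(\mathsf{S},\mathsf{T})$ of ``semistandard tableaux'' of the same multipartition shape $\la$ --- a tableau here recording a loading of strands compatible with $\la$ --- generalizing the Hu--Mathas cellular basis of a cyclotomic KLR algebra. The element $c_{\mathsf{S}\mathsf{T}}$ is a concatenation of weighted KLR diagrams interpolating from the loading of $\mathsf{S}$, through the distinguished ``$\la$-loading,'' to the loading of $\mathsf{T}$, and the cellular anti-involution is the horizontal flip. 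That the $c_{\mathsf{S}\mathsf{T}}$ span and that $c_{\mathsf{S}\mathsf{T}}\,c_{\mathsf{U}\mathsf{V}}$ lies in $\sum_{\mathsf{V}'} \K\, c_{\mathsf{S}\mathsf{V}'}$ modulo the span of basis elements attached to cells $>\la$ follows from a straightening algorithm: apply the local relations to push dots and crossings past red strands, each move either simplifying a diagram or increasing the cell. The genuinely hard point is linear independence of the $c_{\mathsf{S}\mathsf{T}}$; for this I would exhibit a faithful representation of $T^\Bs$ on a direct sum of polynomial rings, as is available for weighted KLR algebras \cite{WebwKLR}, and then match $\dim_\K T^\Bs$ against the number of such pairs, using the known dimensions of the blocks of $\cO^\Bs$ as an independent check via Theorem~\ref{main}.

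Parts~(2) and~(3) are then comparatively short. Under the equivalence of Theorem~\ref{main} the $\KZ$ functor is intertwined with the quotient functor $V \mapsto \mathbf{e}V$ for the idempotent $\mathbf{e}\in T^\Bs$ picking out diagrams whose loadings keep every strand far from the red lines; one identifies $\mathbf{e}T^\Bs\mathbf{e}$, grading included, with the cyclotomic KLR algebra of \cite{BKKL}, so the image of a projective is $\mathbf{e}P_\la$, whose basis and graded lift are read off from the cellular basis of Part~(1). For~(3), the equivalence attached to interchanging two consecutive charges is given by tensoring with an explicit $(T^{\Bs'},T^\Bs)$-bimodule of diagrams; it is a derived equivalence because the composite in the two orders is quasi-isomorphic to a Rouquier-type complex of bimodules resolving the identity, just as for weighted KLR algebras. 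Composing these produces functors indexed by the affine Weyl group on charges, and the main obstacle in this part is upgrading the Coxeter relations, which hold up to isomorphism, to the \emph{braid} relations required for a strong categorical action; I expect to get these from isotopy invariance of the diagrammatic calculus together with the analysis of triple-point moves, or by descending the corresponding statement for weighted KLR algebras to the reduced steadied quotient.

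For~(4) and~(5) I pass to the graded Grothendieck group. BGG reciprocity for the graded highest weight structure makes $\{[\Delta_\la]\}$ and $\{[P_\la]\}$ two $\Z[q,q^{-1}]$-bases of $K^0_q(T^\Bs)$ related by a unitriangular change of basis. I would define the comparison map to Uglov's $q$-Fock space by sending $[\Delta_\la]$ to the pure wedge labelled by $\la$, and check that it intertwines the $U_q(\slehat)$-action by comparing the categorified functors $\eE,\eF$ of the highest weight categorical $\slehat$-action --- available by Theorem~\ref{main} together with the uniqueness input of \cite{RSVV,LoHWCI} --- with Uglov's explicit formulas; it is enough to verify this on a highest weight generator. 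Uglov's canonical basis is the unique bar-invariant basis that is unitriangular with respect to the pure wedges: the unitriangularity is exactly the reciprocity just noted, and bar-invariance of $[P_\la]$ follows from a graded duality on $T^\Bs\mmod$ fixing the simples, built from the cellular anti-involution and the grading-reversing twist, so $[P_\la]$ is Uglov's canonical basis vector. Finally the perfect pairing $\langle [P_\mu],[L_\la]\rangle = \d_{\la\mu}$ between projectives and simples becomes, under the comparison map, the pairing between the canonical and dual canonical bases, so the classes of the simples are precisely the dual canonical basis. Of all these steps, the one I regard as the crux is the linear-independence half of Part~(1), on which everything else rests; the braid relations in Part~(3) are the other point that will require real care.
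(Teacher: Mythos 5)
Your outline for parts (1), (2) and (4) is broadly the route the paper takes, but several of your key steps would fail as written. For the linear independence in part (1): the faithful polynomial representation exists for the full weighted KLR algebra, not for the reduced steadied quotient $T^\Bs$ (the steadying and cyclotomic relations kill part of it), so it cannot certify independence there; and calibrating $\dim T^\Bs$ against "the known dimensions of the blocks of $\cO^\Bs$ via Theorem~\ref{main}" is circular, because the proof of Theorem~\ref{main} (through Theorem~\ref{thm:Cherednik-unique}) itself requires the highest weight structure coming from the cellular basis. The paper's non-circular route is to prove the basis theorem first in the ungraded WF Hecke presentation (Lemma~\ref{lem:hbasis}): composing a vanishing linear combination with the diagrams $\phi_\sS$ that drag all strands to the far-separated positions $D_{s,m}$ reduces, by upper-triangularity, to $e_{D_{s,m}}\PC^\vartheta e_{D_{s,m}}\cong H_m(\pq,\PQ_\bullet)$, whose rank $m!\,\ell^m$ is known independently of Cherednik algebras; the graded statement is then transferred through the isomorphism of Theorem~\ref{thm:Hecke-KLR}. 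Two further divergences: in (4) it is not enough to check the intertwiner on a highest weight generator, since the level-$\ell$ Fock space is not cyclic over $U_q(\gu)$ --- one must compute $[\eF_i S_\xi]$ and $[\eE_i S_\xi]$ for every $\xi$, which the paper does via explicit standard filtrations (Lemma~\ref{lem:FS-filtration}, Corollary~\ref{cor:ES-filtration}); and in (3) the paper does not use Rouquier complexes: the derived equivalence comes from showing $\bra^{\vartheta,-\vartheta}$ is a tilting bimodule (so its Ext-algebra is concentrated in degree $0$) together with a BGG-reciprocity dimension count, and the braid relations come from the composition law $\bK^{\vartheta,\vartheta'}\Lotimes\bK^{\vartheta',\vartheta''}\cong\bK^{\vartheta,\vartheta''}$ whenever $\vartheta'$ is between the other two weightings.

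The genuine gap is in part (5). That $P_\xi$ is fixed by the categorical duality $\psi=\Hom(-,T^\Bs)$ is immediate; what is actually needed is that the involution induced by $\psi$ on $K^0_q(T^\Bs)$ coincides, under the isomorphism of part (4), with \emph{Uglov's} bar involution on the Fock space --- equivalently, that the Euler form $\langle[M],[N]\rangle=\dim_q\R\Hom(M,N)$ matches Uglov's sesquilinear form. Your sentence "bar-invariance of $[P_\la]$ follows from a graded duality \dots so $[P_\la]$ is Uglov's canonical basis vector" silently identifies these two involutions, and that identification is precisely the hard content of Rouquier's conjecture; it cannot follow from the cellular anti-involution alone, since cellularity by itself does not determine decomposition numbers. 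The paper proves it (Proposition~\ref{prop:bar}) by induction along the affine braid group action, reducing to a well-separated weighting where $T^\vartheta$ is Morita equivalent to a quiver Schur algebra and the statement is supplied by the perverse-sheaf results of \cite{WebwKLR,SWschur} --- the one point of the whole theorem flagged in the introduction as requiring geometric input. You would also need the strict unitriangularity $b_\xi\in u_\xi+\sum_{\xi'<\xi}q^{-1}\Z[q^{-1}]u_{\xi'}$, i.e.\ that $\Hom(P_{\xi'},P_\xi)$ is positively graded for $\xi'\neq\xi$, which again comes from \cite{WebwKLR} and not from the cellular structure.
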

The first four points of this theorem have purely algebraic proofs.
The last point requires some geometric input from a category of
perverse sheaves considered in \cite{WebwKLR}; this also resolves a long-standing conjecture of Rouquier, that
the multiplicities of standard modules in projectives (which coincide
by BGG reciprocity with the multiplicities of simples in standards)
are given by the coefficients of a canonical basis specialized at
$q=1$.  Note that we have constructed a $q$-analogue of these
multiplicities using a grading on the algebras in question, rather
than using depth in the Jantzen filtration on standards as in
\cite{RamTing,ShanJantzen}.  Theorem \ref{consequences}(3) was proven
using geometric techniques in \cite[5.1]{GoLo}, but we eventually
intend to show that our functors match theirs in forthcoming work \cite{Webqui}.

Independent proofs of Theorem \ref{consequences}(5) have recently
appeared in work of Rouquier,
Shan, Varagnolo and Vasserot \cite{RSVV} and of Losev \cite{LoVV}, using very
different methods from those contained here; both proofs proceed by
proving the ``categorical dimension conjecture'' of Vasserot and
Varagnolo  \cite[8.8]{MR2673739}.  Of
course, it would be very interesting in the future to unify these
proofs.  

The ``categorical dimension conjecture'' actually leads to a stronger result, since instead
of relating $\cO^{\Bs}$ to a diagrammatic category, it relates
it to a truncation of parabolic category $\cO$ for an affine Lie
algebra, which is known to be Koszul by \cite[2.16]{SVV};  its Koszul dual
is again a Cherednik category $\cO$, with data specified by level-rank
duality, as conjectured of Chuang and Miyachi \cite{CM} and proven by
Shan, Varagnolo and Vasserot \cite[B.5]{SVV}. 

In our context, the consequence of these results is that:
\begin{itheorem}
 For each weight $\mu$, the algebra $T^{\Bs}_\mu$ is standard Koszul,
  and its Koszul dual is Morita equivalent to another such algebra
  $T^{\Bs^!}_{\mu^!}$, with parameters related by rank-level duality.
\end{itheorem}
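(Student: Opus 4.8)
The plan is to deduce Theorem~C by transporting the Koszulity and level--rank duality results of \cite{SVV} across the chain of equivalences furnished by Theorem~\ref{main} and the categorical dimension conjecture. Fix a weight $\mu$. By Theorem~\ref{main}, together with Theorem~\ref{consequences}(1), the algebra $T^{\Bs}_\mu$ is a graded quasi-hereditary algebra, positively graded with semisimple degree-zero part, whose category of graded modules is a graded lift of a single block $\cO^{\Bs}_\mu$ of cyclotomic Cherednik category $\cO$; this graded lift is, moreover, compatible with the Brundan--Kleshchev graded lift \cite{BKKL} of the image of the $\KZ$ functor. The categorical dimension conjecture of \cite[8.8]{MR2673739}, proven in \cite{RSVV} and independently in \cite{LoVV}, provides an equivalence of $\cO^{\Bs}_\mu$ with a finite truncation $\bar\cO_\mu$ of a block of parabolic category $\cO$ for an affine Lie algebra; by \cite[2.16]{SVV} this category $\bar\cO_\mu$ is Koszul, and in fact standard Koszul, its standard modules having linear projective resolutions and its costandard modules linear injective resolutions.

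The crux is to upgrade the equivalence $\cO^{\Bs}_\mu\simeq\bar\cO_\mu$ to a graded equivalence intertwining the Koszul grading on $\bar\cO_\mu$ with the grading on graded $T^{\Bs}_\mu$-modules. The equivalence of \cite{RSVV} is compatible with the $\KZ$ functors, and, via this equivalence, the Koszul grading on $\bar\cO_\mu$ induces on the image of $\KZ$ the Brundan--Kleshchev grading; thus both sides are graded lifts of $\cO^{\Bs}_\mu$ compatible with one and the same graded lift of the Hecke algebra. Such graded lifts are unique up to equivalence: this is the graded refinement of the rigidity already underlying Theorem~\ref{main} (compare Theorem~\ref{thm:Cherednik-unique} and \cite{RSVV,BLW}), and it is corroborated by the agreement of graded decomposition numbers, those of $T^{\Bs}_\mu$ being the coefficients of Uglov's canonical basis by Theorem~\ref{consequences}(5), which Uglov identified with the parabolic affine Kazhdan--Lusztig polynomials governing the Koszul grading of $\bar\cO_\mu$. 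It follows that $T^{\Bs}_\mu$ is standard Koszul, and hence that its Koszul dual is again quasi-hereditary.

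It remains to identify this Koszul dual. Koszul duality for truncated parabolic affine category $\cO$ --- the truncated affine analogue of the parabolic--singular Koszul duality of Beilinson, Ginzburg and Soergel --- shows that the Koszul dual of $\bar\cO_\mu$ is Morita equivalent to another truncated parabolic affine category $\cO$, whose combinatorial datum is obtained from that of $\bar\cO_\mu$ by level--rank duality; this is the picture conjectured by Chuang and Miyachi \cite{CM} and established in \cite[B.5]{SVV}. Applying the categorical dimension conjecture to the dual datum identifies this category with a cyclotomic Cherednik category $\cO^{\Bs^!}_{\mu^!}$, whose parameters $(\Bs^!,\mu^!)$ are the rank--level dual of $(\Bs,\mu)$, and a final application of Theorem~\ref{main} rewrites it as the category of graded $T^{\Bs^!}_{\mu^!}$-modules. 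Composing the graded equivalences in this chain then exhibits the Koszul dual of $T^{\Bs}_\mu$ as Morita equivalent to $T^{\Bs^!}_{\mu^!}$, as asserted. I expect the grading-matching step of the second paragraph to be the main obstacle: the categorical dimension conjecture is \emph{a priori} an ungraded statement, so making it compatible with the Koszul grading --- equivalently, proving that the combinatorially defined grading on $T^{\Bs}_\mu$ is a Koszul grading --- is where the real content lies, and it is exactly here that one must combine Theorem~\ref{consequences}(5) with the uniqueness of graded highest weight structures.
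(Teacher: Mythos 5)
Your overall architecture coincides with the paper's: Koszulity is imported from the truncated parabolic affine category $\cO$ via the categorical dimension conjecture of \cite{RSVV,LoVV}, using \cite[2.16]{SVV} for (standard) Koszulity of the affine side, and the identification of the Koszul dual comes from the Chuang--Miyachi picture \cite{CM} proven in \cite[B.5]{SVV} (the paper cites \cite[7.4]{RSVV}), transported back through the comparison theorem. You have also correctly located the crux: making the \emph{a priori} ungraded equivalence compatible with the two gradings.

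Where you diverge is in the mechanism for that crux, and as formulated your primary argument has a gap. You propose a ``uniqueness of graded lifts compatible with the Brundan--Kleshchev grading of the Hecke algebra.'' This requires two unproven inputs: that the Koszul grading on the truncated parabolic category $\cO$ actually induces the Brundan--Kleshchev grading on the image of $\KZ$, and a uniqueness theorem for graded lifts of a highest weight cover relative to a graded quotient functor. Neither is available off the shelf, and the analogy with Theorem \ref{thm:Cherednik-unique} and \cite{BLW} does not supply a graded rigidity statement. The paper avoids both issues entirely by invoking the numerical criterion of \cite[2.11.1]{BGS96}: any grading on an algebra whose graded Cartan matrix agrees with that of a Koszul grading is itself Koszul and \emph{graded Morita equivalent} to it. The only input needed is then exactly the fact you relegate to ``corroboration'' --- that the graded decomposition numbers of $T^{\Bs}_\mu$ are the coefficients of Uglov's canonical basis (Corollary \ref{cor:Fock-decom}), matched against \cite[8.2]{MR2673739} on the affine side. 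So the fix is to promote your corroborating observation to the proof itself and discard the uniqueness-of-lifts claim; note that the numerical criterion also hands you the graded Morita equivalence needed to transport the identification of the Koszul dual, which your version would otherwise have to argue separately.
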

We give an independent geometric proof that these algebras are
Koszul in \cite{Webqui}.
Since the grading and radical filtrations on the standards of a
standard Koszul algebra coincide, this shows on abstract grounds that
$q$-analogues of decomposition numbers using the grading coincide with
those using the Jantzen filtration.  This observation is also a key
piece of evidence for the ``symplectic duality'' conjectures on the
author, Braden, Licata and Proudfoot.  We will develop the
consequences of this observation further in later works \cite{BLPWgco,Webqui}.

\section*{Acknowledgements}

This paper owes a special debt to Ivan Losev; his suggestions and
generous sharing of unpublished manuscripts was vital to its
development.  I also benefited a great deal from discussions with
Chris Bowman-Scargill, Liron Speyer, Ben Elias, Peng Shan, Eric
Vasserot, Catharina Stroppel, Peter Tingley, Stephen Griffeth, Iain
Gordon, Nick Proudfoot, Tom Braden and Tony Licata, amongst many
others.

\clearpage

\section{WF Hecke algebras}
\label{sec:pict-hecke-cher}

\subsection{Hecke and Cherednik algebras}
\label{sec:hecke-cher-algebr}

Consider the
rational Cherednik algebra $\mathsf{H}$ of $\Z/\ell\Z\wr S_d$ (ranging over all
values of $d$)  over the base field $\C$ for the parameters $k=m/e$ where
$(m,e)=1$ and $h_j= s_jk-
j/\ell$.  That is, let $S_0$ be the set of complex reflections in
$\Z/\ell\Z\wr S_d$ that
switch two coordinate subspaces and $S_1$ the set which fix the
coordinate subspaces.  For each such reflection, let $\al_s$ be a
linear function vanishing on $\ker(s-1)$, and $\al_s^\vee$ a vector
spanning $\im(s-1)$ such that $\langle \al_s^\vee,\al_s\rangle =2$.  
Let \[\omega_s(y,x)=\frac{\langle y,\al_s\rangle \langle \al_s^\vee,x\rangle }{\langle
  \al_s^\vee,\al_s\rangle }=\frac{\langle y,\al_s\rangle \langle \al_s^\vee,x\rangle }{2 }\]
The RCA is the quotient of the algebra $T(\C^d\oplus
(\C^d)^*)\# (\Z/\ell\Z\wr S_d)$ by the relations for $y,y'\in \C^d,x,x'\in (\C^d)^*$:
\[[x,x']=[y,y']=0\]\[ [y,x]=\langle y,x\rangle
+\sum_{s\in S_0}2k\omega_s(y,x)  s+\sum_{s\in S_1} k\omega_s(y,x) 
\sum_{j=0}^{\ell-1}\det(s)^{-j}(s_j-s_{j-1}-1/\ell+\delta_{j,0})s.\]

\begin{definition}
  Category $\cO$, which we denote $\cO^{\Bs}_d$ (leaving $k$ implicit),
  is the full subcategory of modules over $\mathsf{H}$ which are
  generated by a finite dimensional subspace invariant under
  $\Sym(\C^d)\#\C[\Z/\ell\Z\wr S_d]$ on which $\Sym(\C^d)$ acts
  nilpotently.  Let $\cO^{\Bs}\cong \oplus_{d}\cO^{\Bs}_d$.
\end{definition}
This category is closely tied to the  cyclotomic Hecke algebra
$H_d(q,Q_\bullet)$ by a functor
$\KZ\colon\cO^{\Bs}_d\to H_d(q,Q_\bullet)\mmod$, which is fully
faithful on projectives.
\begin{definition}
  The cyclotomic Hecke algebra $H_d(q,Q_\bullet)$ is the algebra over
  $\C$ generated by $X_1^{\pm 1},\dots, X_d^{\pm 1}$ and
  $T_1,\dots, T_{d-1}$ with relations 
  \[ (T_i+1)(T_i-q)=0 \qquad T_iT_{i\pm 1}T_i=T_{i\pm 1}T_iT_{i\pm 1}
  \qquad T_iT_j=T_jT_i \,\, (i\neq j\pm 1)\]
  \[X_iX_j=X_jX_i\qquad T_iX_iT_i=qX_{i+1} \qquad X_iT_j=T_iX_j
  \,\,(i\neq j,j+1) \]
  \[(X_1-Q_1)(X_1-Q_2)\cdots (X_1-Q_\ell)=0\] where $q=\exp(2\pi i k)$ and
 $Q_i=\exp(2\pi i k s_i)$. 
\end{definition} 
One fact we'll use extensively is that these categories and functors
deform nicely when our parameters are valued not in $\C$ but a
local ring with residue field $\C$.  Let $\mathscr{R}=\C[[h,z_1,\dots,
z_\ell]]$.
We can consider the Cherednik algebra over $\mathscr{R}$ with
parameters $\sk=k+\frac{h}{2\pi i}$ and $\sss_j= (ks_j-\frac{z_j}{2\pi i})/\sk$.  Let $\bO^{\Bs}_d$ be the deformed category
$\cO$ of the
Cherednik algebra over $\mathscr{R}$ for the complex reflection group $\Z/\ell \Z\wr S_d$
with the parameters as above (the 1-parameter deformations inside this one are
discussed by Losev \cite[\S 3.1]{Lotowards}).  Let $\bO^{\Bs}\cong \bigoplus_d\bO^{\Bs}_d$.
This category is also equipped with a Knizhnik-Zamolodchikov functor,
landing in modules over the Hecke algebra $H_d(\pq,\PQ_\bullet)$ for
$\pq=qe^h$ and $\PQ_i=Q_ie^{-z_i}$.  We'll let $H_d(\pq)$ denote the
usual affine Hecke algebra of rank $d$ with parameter $\pq$.  Fix an
integer $D$.  

\begin{theorem}\label{thm:Cherednik-unique}
  Assume $\bQ^\Bs_d$ are categories for each $d\leq D$
  which satisfy:
  \begin{enumerate}
  \item $\bQ^\Bs_0\cong \mathscr{R}\operatorname{-mod}$
  \item $\bQ^\Bs_d$ is a highest weight
    category over $\mathscr{R}$ in the sense of \cite{RouqSchur}; in
    particular, $\bQ^\Bs_d$ is $\mathscr{R}$-flat.
  \item $\bQ^\Bs_d$ is endowed with adjoint $\mathscr{R}$-linear
    induction and restriction functors
    \[\operatorname{ind}\colon \bQ^\Bs_{d-1}\to \bQ^\Bs_{d}\qquad
    \operatorname{res}\colon  \bQ^\Bs_d\to \bQ^\Bs_{d-1}\]
 which preserve the categories of projective modules for all $d\leq D$.  Furthermore,
 the powers $\operatorname{ind}^c$
 have compatible actions of the affine Hecke algebra $H_c(\pq)$.
  \item the $d$-fold restriction functor
    $K=\operatorname{res}^d\colon \bQ^\Bs_d\to H_d(\pq)\mmod$ lands in
    the subcategory $H_d(\pq,\PQ_\bullet)\mmod$, and is a quotient
    functor to this subcategory that becomes an equivalence of categories after base change to $R=\C((h,z_1,\dots,
z_\ell))$ and is $-1$-faithful after base change to $\C$.
\item The category of $\mathscr{R}$-flat objects in $\bQ^\Bs_d$ are
  endowed with a duality which interwines a duality on modules
  over the Hecke algebra under induction functors and $K$.
\item The order induced on simple representations of
  $H_d(\pq,\PQ_\bullet)\otimes_{\mathscr{R}}R$ by the highest
  weight structure on $\bQ^\Bs_d$ has a common refinement with that induced by
  $\bO^{\Bs}_d$.
\item If $q=-1$, then the image of $\bQ^\Bs_2$ under $K$ contains
  the permutation module $H_2(T+1)$.
  \end{enumerate}
In this case, there is an equivalence $\bQ^\Bs_d\cong \bO^\Bs_d$ for
all $d\leq D$ which
matches $K$ with the usual Knizhnik-Zamolodchikov functor $\KZ$.
\end{theorem}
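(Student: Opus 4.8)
The plan is to induct on $d$, building the equivalences $\bQ^\Bs_d\cong\bO^\Bs_d$ one degree at a time and compatibly with the induction and restriction functors that link consecutive degrees. The base case $d=0$ is exactly hypothesis (1), since $\bO^\Bs_0\cong\mathscr{R}\mmod$ as well. So assume the equivalence has been constructed in all degrees $<d$, intertwining $\operatorname{ind}$, $\operatorname{res}$, and the functors to the Hecke algebras. For the inductive step, note that by hypothesis (2) together with (4) --- and by the corresponding classical facts about the Cherednik category $\cO$ (highest weight over a deformation \cite{GGOR}, and the behaviour of $\KZ$) --- both $\bQ^\Bs_d$ and $\bO^\Bs_d$ are highest weight categories over the regular local ring $\mathscr{R}$ equipped with exact quotient functors to $A\mmod$, where $A=H_d(\pq,\PQ_\bullet)$, which become equivalences after inverting the maximal ideal of $\mathscr{R}$. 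The theorem then follows from a uniqueness statement for such deformed highest weight covers of $A$: two of them whose highest weight orders admit a common refinement (hypothesis (6)) are equivalent over the quotient functors. This is Rouquier's double centralizer theorem from \cite{RouqSchur}, in the form refined by \cite{RSVV} to take advantage of the $(\ell+1)$-dimensional deformation base.

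The mechanism behind that uniqueness statement also governs how the remaining hypotheses enter. If a highest weight cover $F\colon\cC\to A\mmod$ is sufficiently faithful --- in Rouquier's sense, so that $F$ induces isomorphisms on $\Hom$ and $\Ext^1$ between $\Delta$-filtered and $\nabla$-filtered objects --- then $\cC$ is reconstructed from $A$ as $\End_A(F(P))\mmod$ for $P$ a projective generator, with $F(P)$ itself pinned down by the highest weight order; hence two such covers with a common order refinement coincide. Over the fraction field $R$ the cover becomes an equivalence, so the obstructing $\Ext$-groups vanish generically, and a localization argument over $\mathscr{R}$ in the style of \cite{RSVV} reduces the passage from the mere $(-1)$-faithfulness over $\C$ furnished by (4) to the faithfulness the uniqueness theorem demands, to a check at the primes of $\mathscr{R}$ of codimension one, where the localization is a discrete valuation ring. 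At such a prime, if the specialized Hecke parameter is generic the cover is already an equivalence and there is nothing to check; at the finitely many special codimension-one primes one invokes the tower of induction and restriction functors of (3), together with the compatible $H_c(\pq)$-actions on $\operatorname{ind}^c$, to reduce the needed faithfulness to the rank-two cyclotomic Hecke algebra. The one genuinely non-semisimple rank-two case is $q=-1$, and there hypothesis (7) supplies precisely the missing module $H_2(T+1)$ in the image, closing that gap; the contravariant duality of (5) is used along the way to trade $\Hom$-statements for $\Delta$'s against those for $\nabla$'s, and at the end to ensure the equivalence produced intertwines the dualities, and therefore carries $K$ to $\KZ$.

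Besides the faithfulness analysis, carrying this out demands verifying that $\bO^\Bs_d$ satisfies (1)--(7): this is essentially an inventory of known properties of category $\cO$ for the cyclotomic Cherednik algebra --- highest weight over a deformation \cite{GGOR}, the Bezrukavnikov--Etingof induction and restriction functors together with their affine Hecke actions \cite{BEind}, the fact that $\KZ$ is a quotient functor, $(-1)$-faithful and generically an equivalence \cite{GGOR,RouqSchur}, the standard contravariant duality, and the $q=-1$ statement --- plus a check that the known highest weight order on $\cO^\Bs_d$ is compatible with the orders occurring in the hypotheses so that (6) can be applied symmetrically. One must also propagate the data from one degree to the next: the equivalence in degree $d$ has to be chosen compatibly with the one already built in degree $d-1$ under $\operatorname{ind}$ and $\operatorname{res}$, and here the affine Hecke compatibility in (3) is indispensable, since it rigidifies the identification of $\operatorname{ind}^d$ of a generator of degree $0$ on the two sides.

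The principal obstacle is the faithfulness upgrade. Hypothesis (4) supplies only $(-1)$-faithfulness over the residue field, whereas the uniqueness mechanism wants something close to $1$-faithfulness over $\mathscr{R}$, and bridging this gap forces a detailed understanding of $\Hom$ and $\Ext^1$ between standard modules at the codimension-one primes of $\mathscr{R}$, together with the reduction, via the categorical action of (3), to the situation of rank at most two resolved by (7). By comparison, the induction on $d$, the verification of the axioms for $\bO^\Bs$, and the bookkeeping that matches $K$ with $\KZ$ are formal --- though the last of these does genuinely rely on the duality hypothesis (5).
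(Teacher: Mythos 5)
Your overall strategy is the paper's: the theorem is an application of the uniqueness criterion for deformed highest weight covers from \cite[2.20]{RSVV}, with hypothesis (6) supplying the order compatibility, the duality of (5) letting one pass between $\Delta$- and $\nabla$-filtered objects, and \cite[2.18]{RSVV} upgrading the assumed $-1$-faithfulness over $\C$ together with the generic equivalence over $R$ to the $0$-faithfulness the criterion requires. (The paper does not actually induct on $d$; it applies the criterion for each $d\leq D$ separately, using the induction functors only to reduce one of its conditions to small rank. That difference is cosmetic.)

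There is, however, a genuine gap in how you dispose of the remaining condition. The criterion requires, separately from faithfulness, that every $\KZ(P)$ for $P$ projective in $\bO^{\Bs}$ with $\Ext^{\leq 1}(L,T)\neq 0$ against some tilting lie in the image of $K$; by \cite[6.3]{RSVV} and compatibility with induction this reduces to the projectives of $\bQ^\Bs_1$ together with $H_2(T+1)$ when $q=-1$. You fold this into the ``faithfulness upgrade,'' declare that the only non-semisimple small-rank case is $q=-1$ in rank two, and let hypothesis (7) close it. But the rank-one case is not automatic: $H_1(\pq,\PQ_\bullet)\otimes\C$ is far from semisimple whenever several $Q_i$ coincide, and the bulk of the paper's proof is precisely the verification that the images under $K$ of the indecomposable projectives of $\bQ^\Bs_1$ are the modules $\mathscr{R}[X_1]/\prod_{Q_i=u,\ \Delta_i\leq\Delta_j}(X_1-\PQ_i)$, i.e.\ the same as the images under $\KZ$. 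This is done by analyzing $P_1^u=\operatorname{ind}(\mathscr{R},H_1^u)$, peeling off its standard quotients one at a time, and using full faithfulness of $K$ on projectives together with a dimension count on $K(P_j)\otimes\C$ to identify each successive kernel with an indecomposable projective. Hypothesis (7) is needed \emph{in addition to} this rank-one computation, not in place of it; without the rank-one argument the application of \cite[2.20]{RSVV} does not go through.
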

\begin{proof}
  This is heavily based on \cite[2.20]{RSVV}, which we'll apply in
  this case with $R=\mathscr{R},B=H_d(\pq,\PQ_\bullet), F=\KZ$, $F'=K$.
 There are 4 conditions required by this lemma, which we consider in
 the order given there.  %We note that it is checked in \cite{RSVV}
 \begin{itemize}
 \item The order induced by the two covers must have a common
   refinement:  This is one of our assumptions.
\item The functor $\KZ$ is fully faithful on standard or costandard
  filtered objects in $\bO^\bS_d$: This is proven in \cite[5.37]{RSVV}.
\item The functor $K$ is fully faithful on 
  $(\bQ^\Bs_d)^{\Delta}$ and  $(\bQ^\Bs_d)^{\nabla}$: Using the
  duality, these two statements are equivalent.
  Thus, we need only establish that $K$ is $0$-faithful (i.e. faithful
  on standard filtered objects).  We already
  assume that $K\otimes_{\mathscr{R}}\C((h,z_1,\dots, z_\ell))$ is an equivalence
  and thus $0$-faithful.  The result then
  follows from \cite[2.18]{RSVV}.
  
\item The image $\KZ(P)$ of any
  projective $P$ in $\bO^\bS$ whose simple quotient $L$ has
  $\Ext^i(L,T)\neq 0$ for some tilting $T$ and $i=0$ or $1$ also lies in the image of $K$:
  By \cite[6.3]{RSVV}, these images are precisely the modules of the
  form $H_d\otimes_{H_1} M$ for $M$ in the image of projectives under $\KZ$, and if $q=-1$, also the modules $H_d(T_1+1)$.  

  By compatibility with induction functors, we only need to show that
$\KZ(P)$ of any projective object in $\bO^{\Bs}_1$ and $H_2(T+1)$ (if $q=-1$) lie in this image.
  The latter is an assumption, so we need only address the former.
  In $H_1$, we have $\ell$ different simple representations over the
  generic point which correspond to the eigenvalues $\PQ_i$.  We
  denote the corresponding standard module $\Delta_i$.  Let $H_1^u$ be the stable kernel of $X_1-u$, and consider $P_1^u=\operatorname{ind}(\mathscr{R},
    H_1^u)$.  Let $m_u$ be the number of indices $i$ such that $Q_i=u$.  The object $P_1^u$ is indecomposable (since $H_1^u$ is
    and $K$ is fully faithful on projectives), and thus has a unique
    standard quotient $\Delta_i$ for some $i$, the largest standard
    such that $Q_i=u$.  The kernel of this map is a module we'll call
    $P_2^u$; this has a standard filtration, and thus a map to some
    other standard $\Delta_j$.  If we let $P_j$ be the projective
    cover of $\Delta_j$, we have an induced map $P_j\to P_2^u$.  The induced map $K(P_j)\to
    K(P_2^u)=(X-\PQ_i)H_1^u$ must be surjective, since it induces a
    surjective map $K(P_j)\otimes \C\to K(\Delta_i)\otimes \C$.

    Consider $K(P_j)\otimes \C$.  This must be the kernel of
    $(X_1-u)^m$ for some $1\leq m\leq m_u$.  In fact, $m>m_u$ because otherwise, we would have $K(P_j)\otimes
    \C\cong K(P_i)\otimes \C$ (impossible since $K\otimes \C$
    is fully faithful on projectives).  Thus, $K(P_j)\otimes \C$ has
    dimension $\leq m_u-1$.  Since the dimension of $K( P_2^u)\otimes
    \C$ is $m_u-1$, we must have $K( P_2^u)\otimes
    \C\cong K( P_j^u)\otimes
    \C $, so by full faithfulness, $P_j\cong  P_2^u$.  

Applying this argument inductively, we find that $P_1^u$ has a
filtration by the different indecomposable projectives on which
$X_1-u$ is topologically nilpotent, with successive quotients being
the different standards.  In particular, the images of these
projectives are \[\mathscr{R}[X_1]/\prod_{\substack{Q_i=u\\ \Delta_i\leq
  \Delta_j}}(X_1-\PQ_i)\] which are the same as the images for $\KZ$.
 \end{itemize}
This establishes all the conditions and finishes the proof.
\end{proof}

Note that this theorem can be easily applied to show that whenever the
order induced on multipartitions of numbers $\leq D$ induced by 
the Cherednik algebra (which uses the $c$-function) coincides with
dominance order, then we can apply this theorem with $\bQ^\Bs_d$ the
category of modules over the rank $d$ cyclotomic $q$-Schur algebra for
the parameters $(\pq,\PQ_\bullet)$.  This will occur whenever $s_1\ll
s_2\ll\cdots \ll s_\ell$, though there is no choice of parameters
where this will work for all $D$ if $k\in \Q$.  

\subsection{Combinatorial preliminaries }
\label{sec:comb-prel-}

The combinatorics that underlie category $\cO$ for a Cherednik algebra are those of higher-level
Fock spaces and multipartitions.

We must introduce a small
generalization of the combinatorics that appear in twisted Fock spaces
(in the sense of Uglov \cite{Uglov}).  As we'll see later, this is
just rearranging deck chairs, but it quite convenient for us.
Fix scalars $(r_1,\dots, r_\ell)\in (\C/\Z)^\ell$, and $k\in \C$ with $\ck=\Re(k)$.  Consider the subset of $\C/\Z$ defined by 
\[U=\{r_i+km\pmod \Z\mid i=1,\dots, \ell\text{ and }m\in \Z\}\]
This set is finite if and only if $k\in \Q$, and connected if and only
if all $r_i$ lie in the same coset of the subgroup of the additive
group $\Z k$ in $\C/\Z$.  We endow this set with an oriented
graph structure by connecting $u \to u+k$ for every $u\in U$.  We
let $\gu$ be the Lie algebra whose Dynkin diagram is given by $U$ if
$k\notin\Z$.  If $k\in \Z$, then we let $\gu$ be the product over
$U$ of copies of 
$\mathfrak{\widehat{gl}}_1$, the Heisenberg algebra on infinitely
many variables, with the grading element $\partial$ adjoined. 
This is a product of either finitely many copies of $\slehat$ if
$k=a/e$ with $(a,e)=1$, or of $\mathfrak{sl}_\infty$ if $k$ is
irrational.  Throughout, we'll fix $e$ to be the denominator of $k$ if
$k\in \Q$, or $e=0$ if $k\notin \Q$.
\begin{remark}
  For purposes of the internal theory of WF Hecke algebras, we'll only
  care about the exponentials $\exp(2\pi ir_j)=Q_j$ and $\exp(2\pi i k)=q$.  Thus,
  we could just as easily define $U$ to be the subset of $\C^\times$
  of the form $\{Q_iq^m\}$ for $m\in \Z$.  This definition easily
  translates to other fields, and applies equally well there.
  However, it's only over $\C$ that we can make sense of the
  connection to Cherednik algebras, so we will focus on this case.
\end{remark}

\subsubsection{Weightings}
\label{sec:weightings}

\begin{definition}
  An {\bf $\ell$-multipartition} of $n$ is an $\ell$-tuple of
  partitions with $n$ total boxes.  The individual partitions of this
  $\ell$-tuple are called its components.  Recall that the {\bf
    diagram} of a multipartition $(\xi^{(1)},\dots, \xi^{(k)})$ is the
  set of 3-tuples $(a,b,m)$ of natural numbers satisfying
  $k\geq m\geq 1, b\geq 1, \xi_b^{(m)}\geq a\geq 1$.  We call each of
  these 3 tuples a {\bf box}.

  A {\bf charge} on a multipartition is a choice of integers
  $s_1,\dots, s_\ell$; the charged content of a box $(a,b,m)$ is
  $s_m+b-a$.  
\end{definition}

For example, the multipartition $((2,2),(3,1))$ with charge
$s_1=3,s_2=-4$, when drawn in French notation (i.e. using $a$ and $b$
as the usual $x$ and $y$ coordinate) will appear as:
\[\tikz[very thick,scale=1.3]{\draw (0,0) -- (0,1);\draw (.5,0) -- (.5,1);\draw
  (1,0) -- (1,1);\draw (0,0) -- (1,0);\draw (0,.5) -- (1,.5);\draw
  (0,1) -- (1,1);
\draw (2,0) -- (2,1);\draw (2.5,0) -- (2.5,1);\draw
  (3,0) -- (3,.5);\draw (3.5,0) -- (3.5,.5);\draw (2,0) --
  (3.5,0);\draw (2,.5) -- (3.5,.5);
\draw (2,1) -- (2.5,1);
\node at (.25,.25) {$3$};
\node at (.75,.25) {$2$};
\node at (.25,.75) {$4$};
\node at (.75,.75) {$3$};
\node at (2.25,.25) {$-4$};
\node at (2.75,.25) {$-5$};
\node at (2.25,.75) {$-3$};
\node at (3.25,.25) {$-6$};
}\]
where each box is filled with its charged content.

Usually in the theory of twisted Fock spaces, one has a basis indexed
by $\ell$-multipartitions, and the structure of this space (especially its
$\gu$-module structure) depends on choice of charge.

These charges contribute to the structure of the Fock space and its
$\slehat$ action in two different ways: the
order induced on boxes by the charged content, and value of the
charged content $\pmod e$.  We wish to separate these functions of the
charge, and generalize to the case where $\slehat$ is replaced by $\gu$.
\begin{definition} A {\bf weighting} of an $\ell$-multipartition is an
  ordered $\ell$-tuple $(r_1,\dots, r_\ell)\in (\C/\Z)^\ell$ 
  and an ordered
  $\ell$-tuple $(\vartheta_1,\dots,\vartheta_\ell)\in \R^\ell$ with $\vartheta_i\neq \vartheta_j$ (with no assumption of congruence
  between the two).
\end{definition} 
The quantities $r_*$ carry the information which corresponds to the residue class
$\pmod e$, and quantities $\vartheta_*$ carry the information of the induced order on boxes. 
%We note that this is exactly the same information as a weighting of
%the Crawley-Boevey graph, together with an enumeration of the new
%edges (having fixed $k$ beforehand).  We identify weightings of
%multipartitions with weightings of the CB graph from now on.

Given an arbitrary weighting, we
associate a {\bf residue} in $U$ to each box of the diagram of a
multi-partition:
the box
$(a,b,m)$ receives $r_m+k(b-a)$; note that all elements of $U$ occur
for a box of some multipartition.  We
will often match these residues with their corresponding simple roots
of $\gu$. We let $\res(\xi/\eta)$ for a skew multi-partition
$\xi/\eta$ denote the sum of the roots corresponding to each box in
its diagram.  

In essence, if the residues $r_i$ and $r_j$ do not differ by an
integer multiple of $k$, the corresponding partitions will not
interact; this is analogous to a result of Dipper and Mathas
\cite[1.1]{DMmorita} for Ariki-Koike algebras.  Thus, let us
concentrate on the case where the graph $U$ is connected.  

\begin{definition}\label{def:Uglov}
  The {\bf Uglov weighting} $\vartheta_{\Bs}^\pm$ attached to an $\ell$-tuple $(s_1,\dots,
  s_\ell)$ of integers
  (its {\bf charge}), is that where $k=\ck=\pm \nicefrac{1}{e}$ if
  $e>0$ and $k$ is an arbitrary positive irrational real number if $e=0$.  
  \begin{itemize}
  \item the residue $r_m$ is given by the reduction of $ks_m\pmod \Z$.
  \item the weights of the partitions are given by $\vartheta_j=\ck
    s_j-je\ck /\ell$.
\end{itemize}
\end{definition}
The choice of $k=\pm \nicefrac{1}{e}$ is less significant than it
might first appear; nothing about the combinatorics we consider later
will change if $k=\pm \nicefrac{a}{e}$ for any positive integer $a$
coprime to $e$. 
In general, our combinatorics will reduce to familiar notions for
those who work with charged multipartitions and twisted higher level
Fock spaces in the Uglov case.  In particular, the induced order on
boxes is the same as that coming from charged content (using the
component as a tie-breaker).   

There is a symmetry of this definition: sending $k\to -k$
and $s\mapsto s^\star=(-s_\ell,\dots, -s_1)$ results in the same
weighting up to shift, if we reindex $i\mapsto \ell-i+1$, and send
$r_i\mapsto -r_{\ell-i+1}$.  

Actually, for any weighting with $U$ connected, there is
an Uglov weighting which can replace it. Thus,
we lose no generality by only considering Uglov weightings.
\begin{definition}\label{def:Uglovation}
  For an arbitrary weighting with $U$ connected, we define its {\bf
    Uglovation} to be the Uglov weighting associated to $s_1,\dots,
  s_\ell$ constructed as
  follows:  
  \begin{itemize}
  \item By assumption, since $U$ is connected, $r_j-r_1$ is an integer multiple of $k$.  If
    $e=0$, we let
    $h_j$ be the unique integer such that $r_j-r_1=kh_j$, and if
    $e>0$, let $h_j$ be the smallest such non-negative integer.  In
    particular $h_1=0$.  
\item Reindexing values except for the first, we can assume
  $\vartheta_j/\ck-h_j$ are cyclicly ordered $\!\!\pmod e$.
\item We let $s_1=0$ by convention.  We let $s_j$ be the unique
  integer such that $s_j\equiv h_j\pmod e$ and $0\leq
  \vartheta_j/\ck-\vartheta_1/\ck-s_j\leq e$.   That is,
  \[s_j=h_j+e\big\lfloor ( \vartheta_j/e\ck-\vartheta_1/e\ck)\big\rfloor.\]
  \end{itemize}
\end{definition}
We will show that the algebra $T^\vartheta$ which we'll attach to a
weighting is Morita equivalent to that for an Uglov weighting in
Corollary \ref{cor:Uglovation-Morita}.

\subsubsection{Dominance order}
\label{sec:dominance-order}

We will imagine our multipartition diagram drawn in ``Russian
notation'' with rows tilted northeast, and columns northwest if $\ck>0$
(and {\it vice versa} if $\ck<0$), with the
bottom corner placed at $\vartheta_m$, and the boxes having diagonal of length
$2\ck$; see Figure \ref{fig:partition}. For a box at $(i,j,m)$ in the diagram of
$\nu$, its {\bf $x$-coordinate} is $\vartheta_m+\ck(j-i)$, that is, the $x$-coordinate of the
center of the box when partitions are drawn as we have specified.
This coincides with the $\Bs$-shifted content as in \cite{GoLo} if we
choose $\ck=1$ and $s_i=\vartheta_m$.

\begin{definition}\label{def:dominance-order}
  The {\bf weighted
    dominance order} on multipartitions for a fixed weighting is the
  partial order where $\nu \geq \nu'$ if for each real
  number $a$, the number of boxes in $\nu$ with a fixed residue with $x$-coordinate less
  than $a$ is greater than or equal to the same number in $\nu'$.  
\end{definition}

On
  single partitions, this is a coarsening of the usual dominance order, but
  for multipartitions, it depends in a subtle way on the weighting.
  What is usually called dominance order on multipartitions arises   as a refinement on
  multipartitions of $n$
when $\ck>0$ and $ \vartheta_i-\vartheta_{i-1}>n\ck$ for all $i$.  

In order to clarify the relationship between our combinatorics and
that for rational Cherednik algebras, it will be useful to refine
this order using a numerical function;  we let the {\bf weighted
  $c$-function} be the function that assigns to a multipartition the
sum of minus the
$x$-coordinates of its boxes.  The obvious order by
$c$-function is thus a refinement of weighted dominance order.  In the
theory of rational Cherednik algebras, there is also a $c$-function,
which we denote $c_{\operatorname{GL}}$, since we use the conventions
of \cite[\S 2.3.5]{GoLo} except that their $\la_i$
  is our $\la_{i-1}$.

\begin{proposition}\label{prop:c-functions}
  If we let $\vartheta_i=s_i\ck- i/\ell$ then our
  $c$-function is related to the usual $c$-function by 
\[\ell c=
c_{\operatorname{GL}}+\frac{\ell n}{2}-n+\kappa n(s_1+\cdots
  +s_\ell).\]  
Since these functions differ by an orientation preserving affine
transformation, they induce the same $c$-function order.
\end{proposition}
When $\kappa=1/e$
  and the numbers $s_i$ are integers, this recovers our usual Uglov
  weighting for the charge $\Bs$.  Thus, in this case, the usual $c$-function order is a refinement
  of $\vartheta_{\Bs}^+$-weighted dominance order.  
\begin{proof}
  This follows instantly from the formula \cite[(2.3.8)]{GoLo}
  (accounting for the difference in convention).  We need only to show that our $c$-function agrees with 
  \[-\sum_{r=1}^{\ell} (r-1)|\xi^{(r)}| +\sum_{A\in \xi}\kappa
  \ell\res^{\Bs}(A) = \sum_{(i,j,r)\in \la} -r+1+ (s_r+j-i)\cdot
  -\ck=\sum_{(i,j,r)\in \la}  -(\ck s_r+r-1)-\ck(j-i)\]
This is, indeed, the sum of minus the $x$-coordinates of the boxes
under the Uglov weighting.  
\end{proof}

\subsubsection{Loadings for multipartitions and $\mathbf{i}$-tableaux}
\label{sec:load-mult}

Fix a multipartition $\xi$, and give its diagram a very subtle
tilt to the right.  
We create a subset by projecting the top corner of each box to
the real number line, and weighting that point with the residue of the
box.  More precisely: 
\begin{definition}
  We let 
\[D_\xi:=\{\vartheta_k+(i+j)\epsilon+\ck (j-i)\mid (i,j,k)\text{ a box in the diagram of }\nu\}\]
\end{definition}
Obviously, this set depends on $\epsilon$, but for $0<\epsilon$
sufficiently small, its equivalence class will not change.  This
equivalence class will be independent of $\epsilon$
as long as $0<\epsilon
<|\vartheta_i-\vartheta_j+q\ck|/|\xi|$ for integers $q$ with $|q|\leq
|\xi|$, so we exclude $\epsilon$ from the notation.

We can upgrade this set to a loading--that is, to a map $D_\xi\to U$.
In \cite{WebwKLR}, we would think of this as a map from $\R\to
U\cap\{0\}$ that extends the map on $D_\xi$ by 0 on all other points.
  The loading $\Bi_\xi$ sends  $\vartheta_k+(i+j)\epsilon+\ck (j-i)$ to the
  simple root $\al_{m}$ if there is a box $(i,j,k)$ in the diagram of
  $\nu$ with residue $m=r_k+k(j-i)$, and 0 otherwise.

\nc{\lo}{.9951}
\nc{\sm}{.0696} \nc{\ong}{.0696} \nc{\ma}{.0048}
\begin{figure}
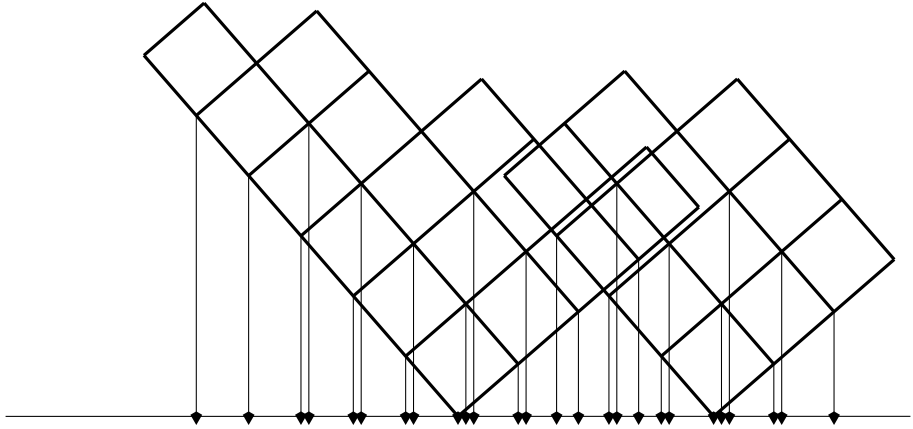
\centering
  
  \tikz[very thick,scale=3,rotate=-4]{ \coordinate (a) at (0,0);
    \coordinate (b) at (1.13,1.13*.07); \draw (a) -- +(-1.5,1.5);
    \draw (a) ++(.5,.5) -- +(-1.25,1.25); \draw (a) ++(.25,.25) --
    +(-1.5,1.5); \draw (a) ++(.75,.75) -- +(-.75,.75); \draw (a)
    ++(1,1) -- +(-.25,.25); \draw (a) -- +(1,1); \draw (a) ++
    (-.25,.25)-- +(1,1); \draw (a) ++ (-.5,.5)-- +(.75,.75); \draw (a)
    ++ (-.75,.75)-- +(.75,.75); \draw (a) ++ (-1,1)-- +(.5,.5); \draw
    (a) ++ (-1.25,1.25)-- +(.5,.5); \draw (a) ++ (-1.5,1.5)--
    +(.25,.25); \draw[thin] (a) + (-2,-.14) -- +(2,.14);\node at (a)
    [kite, fill, inner sep=1.2pt]{}; \draw[thin] (a) +(.25,.25) --
    +(.25*\lo+.25*\sm,.25*\ong+.25*\ma) node[at end,kite, fill,
    inner sep=1.2pt]{}; \draw[thin] (a) +(.25,.75) --
    +(.25*\lo+.75*\sm,.25*\ong+.75*\ma) node[at end,kite, fill,
    inner sep=1.2pt]{}; \draw[thin] (a) +(.5,.5) --
    +(.5*\lo+.5*\sm,.5*\ong+.5*\ma) node[at end,kite, fill, inner
    sep=1.2pt]{}; \draw[thin] (a) +(0,.5) -- +(.5*\sm,.5*\ma) node[at
    end,kite, fill, inner sep=1.2pt]{}; \draw[thin] (a) +(0,1) --
    +(\sm,\ma) node[at end,kite, fill, inner sep=1.2pt]{};
    \draw[thin] (a) +(.75,.75) --+(.75*\lo+.75*\sm,.75*\ong+.75*\ma)
    node[at end,kite, fill, inner sep=1.2pt]{}; \draw[thin] (a)
    +(-.25,.25) --+(-.25*\lo+.25*\sm,-.25*\ong+.25*\ma) node[at
    end,kite, fill, inner sep=1.2pt]{}; \draw[thin] (a) +(-.25,.75)
    --+(-.25*\lo+.75*\sm,-.25*\ong+.75*\ma) node[at end,kite, fill,
    inner sep=1.2pt]{}; \draw[thin] (a) +(-.5,.5)
    --+(-.5*\lo+.5*\sm,-.5*\ong+.5*\ma) node[at end,kite, fill,
    inner sep=1.2pt]{}; \draw[thin] (a) +(-.5,1)
    --+(-.5*\lo+\sm,-.5*\ong+\ma) node[at end,kite, fill, inner
    sep=1.2pt]{}; \draw[thin] (a) +(-.75,1.25)
    --+(-.75*\lo+1.25*\sm,-.75*\ong+1.25*\ma) node[at end,kite,
    fill, inner sep=1.2pt]{}; \draw[thin] (a) +(-.75,.75)
    --+(-.75*\lo+.75*\sm,-.75*\ong+.75*\ma) node[at end,kite, fill,
    inner sep=1.2pt]{}; \draw[thin] (a) +(-1,1)
    --+(-1*\lo+\sm,-1*\ong+\ma) node[at end,kite, fill, inner
    sep=1.2pt]{}; \draw[thin] (a) +(-1.25,1.25)
    --+(-1.25*\lo+1.25*\sm,-1.25*\ong+1.25*\ma) node[at end,kite,
    fill, inner sep=1.2pt]{}; \draw (b) -- +(-1,1); \draw (b) ++(.5,.5)
    -- +(-1,1); \draw (b) ++(.25,.25) -- +(-1,1); \draw (b)
    ++(.75,.75) -- +(-.75,.75); \draw (b) -- +(.75,.75); \draw (b) ++
    (-.25,.25)-- +(.75,.75); \draw (b) ++ (-.5,.5)-- +(.75,.75); \draw
    (b) ++ (-.75,.75)-- +(.75,.75); \draw (b) ++ (-1,1)-- +(.5,.5);
    \node at (b) [kite, fill, inner sep=1.2pt]{}; \draw[thin] (b)
    +(.25,.25) -- +(.25*\lo+.25*\sm,.25*\ong+.25*\ma) node[at
    end,kite, fill, inner sep=1.2pt]{}; \draw[thin] (b) +(.25,.75) --
    +(.25*\lo+.75*\sm,.25*\ong+.75*\ma) node[at end,kite, fill,
    inner sep=1.2pt]{}; \draw[thin] (b) +(.5,.5) --
    +(.5*\lo+.5*\sm,.5*\ong+.5*\ma) node[at end,kite, fill, inner
    sep=1.2pt]{}; \draw[thin] (b) +(0,.5) -- +(.5*\sm,.5*\ma) node[at
    end,kite, fill, inner sep=1.2pt]{}; \draw[thin] (b) +(0,1) --
    +(\sm,\ma) node[at end,kite, fill, inner sep=1.2pt]{};
    \draw[thin] (b) +(-.25,.25)
    --+(-.25*\lo+.25*\sm,-.25*\ong+.25*\ma) node[at end,kite, fill,
    inner sep=1.2pt]{}; \draw[thin] (b) +(-.25,.75)
    --+(-.25*\lo+.75*\sm,-.25*\ong+.75*\ma) node[at end,kite, fill,
    inner sep=1.2pt]{}; \draw[thin] (b) +(-.5,.5)
    --+(-.5*\lo+.5*\sm,-.5*\ong+.5*\ma) node[at end,kite, fill,
    inner sep=1.2pt]{};\draw[thin] (b) +(-.5,1)
    --+(-.5*\lo+\sm,-.5*\ong+\ma) node[at end,kite, fill, inner
    sep=1.2pt]{}; \draw[thin] (b) +(-.75,.75)
    --+(-.75*\lo+.75*\sm,-.75*\ong+.75*\ma) node[at end,kite, fill,
    inner sep=1.2pt]{}; }
  \caption{The set $D_\xi$ attached to the multipartition $\xi=(6,5,3,1); (4,4,3)$}
\label{fig:partition}
\end{figure}
\begin{definition}
  Given a subset $D\subset \R$, let a {\bf $D$-tableau} be a filling of
  the diagram of a multi-partition with the elements of $D$ such that
  \begin{itemize}
  \item each $d\in D$ occurs exactly once, and
  \item the entry in $(1,1,m)$ is greater than than $\vartheta_m$,
  \item the entry in $(i,j,m)$ is greater than that in $(i-1,j,m)$
    minus $\ck$ and greater than that in $(i,j-1,m)$ plus $\ck$.
  \end{itemize}
  If the differences between each pair of real numbers which occurs is
  greater than $\ck$, this is just the notion of a standard tableau on a
  charged multipartition.  Also, note that transposing each partition
  gives a tableau when $\ck$ is replaced by $-\ck$.

 If we upgrade the set $D$ to a loading $\Bi\colon D\to U$, an {\bf
   $\Bi$-tableau} is a $D$-tableau such that 
$\Bi$ is the function that sends each element of $D$ to the
    residue of the box it occurs in.
\end{definition}
Note that this condition is the same as saying that if we add $\ck(i-j)$
to the entry in box $(i,j,k)$, we obtain a standard tableau on each
component of the multipartition.   This is a less useful observation
than you might think, since in general, we will think about the set of
$D$-tableaux for $D$ fixed, and the addition described above will
result in different numbers used in the filling for different
tableaux.
\begin{example}
  For example, if $\ell=1$ and $D=\{a,b\}$ for two real numbers $a,b$,
  then the set of $D$-tableaux will look as expected if $|a-b|>|\ck|$:
  assuming $\vartheta_1<a<b$, we'll have
  tableaux
  \[\tikz[baseline=5pt,scale=.3,thick]{\draw (0,0) --(-2,2); \draw
    (0,0) --(1,1); \draw (-1,3) --(-2,2); \draw (-1,3) --(1,1); \draw
    (-1,1) -- (0,2); \draw[very thick] (-.7,.3) -- (0,-.3) -- (.7,.4);
    \node[scale=.7] at (0,1) {$a$}; \node[scale=.7] at (-1,2)
    {$b$}; } \qquad \qquad \tikz[baseline=5pt,scale=.3,thick]{\draw
    (0,0) --(2,2); \draw (0,0) --(-1,1); \draw (1,3) --(2,2); \draw
    (1,3) --(-1,1); \draw (1,1) -- (0,2); \draw[very thick] (-.7,.4)
    -- (0,-.3) -- (.7,.4); \node[scale=.7] at (0,1)
    {$a$}; \node[scale=.7] at (1,2) {$b$};}\]
  On the other hand, if $ |a-b|<|\ck|$, and $a,b >\vartheta_1$, then
  then we have 2 $D$-tableaux of one shape:
  \[\tikz[baseline=5pt,scale=.3,thick]{\draw (0,0) --(-2,2); \draw
    (0,0) --(1,1); \draw (-1,3) --(-2,2); \draw (-1,3) --(1,1); \draw
    (-1,1) -- (0,2); \draw[very thick] (-.7,.3) -- (0,-.3) -- (.7,.4);
    \node[scale=.7] at (0,1) {$a$}; \node[scale=.7] at (-1,2)
    {$b$}; } \qquad \qquad \tikz[baseline=5pt,scale=.3,thick]{\draw
    (0,0) --(-2,2); \draw (0,0) --(1,1); \draw (-1,3) --(-2,2); \draw
    (-1,3) --(1,1); \draw (-1,1) -- (0,2); \draw[very thick] (-.7,.3)
    -- (0,-.3) -- (.7,.4); \node[scale=.7] at (0,1)
    {$b$}; \node[scale=.7] at (-1,2)
    {$a$}; } \]
  and none of the other\footnote{Note that switching the sign of $\ck$
    both reverses the conditions on rows and columns for a
    $D$-tableau, and our convention for drawing partitions in Russian
    notation.  Thus, this observation is true for either sign.}.
\end{example}

The {\bf Russian reading word} of an $\Bi$-tableau of shape $\eta$ is
the word obtained by reading the boxes of the tableau in order of the
$x$-coordinate, reading up columns, that is, in the order of the
loading $\Bi_\eta$, reading left to right.

For a usual standard tableau of shape $\eta$, the boxes where entries are
below a fixed value form a new partition diagram.  However, for a
$\Bi$-tableau, this is not the case; that said, one can make sense of a
particular box being addable or removable relative to a value $h$.
\begin{definition}
  For a fixed box $(i,j,m)$ whose entry is not $h$, we have a
  subdiagram of $\eta$ given by
  the boxes $(i',j',m)$ with entries $> h+(j'-i'-j+i)\kappa$.  We say
  that $b=(i,j,m)$ is {\bf addable} (resp. {\bf removable}) {\bf relative to
  $h$} if
\begin{itemize}
\item it is addable (resp. removable) for this subdiagram, and 
\item if $b=(1,1,m)$  then we have $\vartheta_m<h$.
\end{itemize}
\end{definition}
That is, a box is addable (resp. removable) relative to $h$ if it
existing entry (if it is has one) is $>h$ (resp. $<h$) and making its
entry $h$ would not disturb the tableau conditions.  
 
Note that the subdiagram we consider depends on $h$ and $i-j$, and
that it is only relevant whether the adjacent squares $(i,j\pm 1,m)$
and $(i\pm 1,j,m)$ are in this subdiagram.  Note that:
\begin{itemize}
\item The box $(1,1,m)$ is addable if  $\vartheta_m<h$  and
  furthermore if it is in the diagram, then the 
  entry is $>h$.
\item The box $(i,j,m)$ in the diagram of $\eta$ is  addable relative to $h$ if $h$ is
  less than the entry in $(i,j,m)$, $h+\ck$ is greater than the entry in
  $(i-1,j,m)$ and $h-\ck$ is greater than the entry in $(i,j-1,m)$.
 \item  If $(i,j,m)$ is not in the  diagram of $\eta$, it is
    addable relative to $h$ if it is addable for the whole diagram
and $h+\ck$ is greater than the entry in
  $(i-1,j,m)$ and $h-\ck$ is greater than the entry in $(i,j-1,m)$.
\item A box $(i,j,m)$ in the diagram of $\eta$ is removable relative
  to $h$ if $h$ is greater than the entry in $(i,j,m)$, $h+\ck$ is
  less than the entry in $(i,j+1,m)$ and $h-\ck$ is greater than the
  entry in $(i+1,j,m)$.
\end{itemize}

\begin{figure}
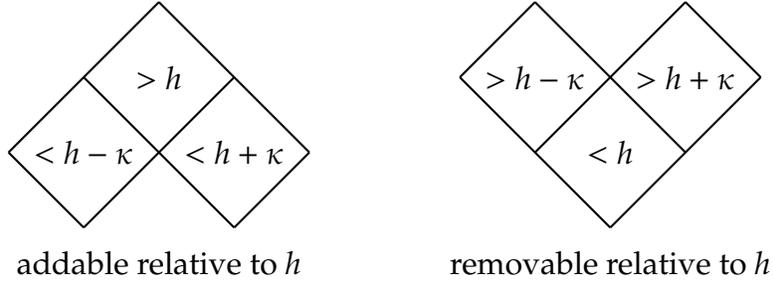

  \centering
  \tikz { \node[label=below:{addable relative to $h$}] at (-3,0){
      \tikz[thick,scale=2]{\draw (-1,0) -- (0,1); \draw (1,0) --
        (0,1); \draw (-1,0) -- (-.5,-.5); \draw (1,0) -- (.5,-.5);
        \draw (.5,.5) -- (-.5,-.5); \draw (-.5,.5) -- (.5,-.5); \node
        at (0,.5){$>h$}; \node at (.5,0){$<h+\ck$}; \node at
        (-.5,0){$<h-\ck$}; } }; \node[label=below:{removable relative to
      $h$}] at (3,0){ \tikz[thick,scale=-2]{\draw (-1,0) -- (0,1);
        \draw (1,0) -- (0,1); \draw (-1,0) -- (-.5,-.5); \draw (1,0)
        -- (.5,-.5); \draw (.5,.5) -- (-.5,-.5); \draw (-.5,.5) --
        (.5,-.5); \node at (0,.5){$<h$}; \node at (.5,0){$>h-\ck$};
        \node at (-.5,0){$>h+\ck$}; } }; }
  
  \caption{Relatively addable and removable boxes}
\label{fig:add-remove}
\end{figure}

We say that a box $(i',j',m')$ is right of $(i,j,m)$ if the associated
$x$ coordinate in $D_\xi$ is greater, that is if \[\vartheta_{m'}+(i'+j')\epsilon+\ck (j'-i')>\vartheta_m+(i+j)\epsilon+\ck (j-i).\]
\begin{definition}
  The {\bf degree} of a box $b$ in an $\Bi$-tableau with entry $h$ is the
  number of boxes of the same residue as and to the right of $b$ which are
  addable relative to the entry $h$ minus the number removable
  relative to $h$.

  The {\bf degree} of an $\Bi$-tableau is the sum of the degrees of the boxes.
\end{definition}
Again, we wish to emphasize that this does not count elements which
are addable or removable with respect to a fixed diagram; instead for
each box $(i',j',m')$ right of our fixed one, we compute a separate
subdiagram with depends on $i'-j'$ and on $h$, and check whether it is
addable or removable in this diagram.

\subsection{WF Hecke algebras defined}
\label{sec:pict-cher-algebr}

We will apply this combinatorics to define a diagrammatic version
of the category $\cO_m$.  As in \cite{WebBKnote}, let $S$ be a local
complete $\K$-algebra and let $\pq,\PQ_1,\dots,\PQ_\ell\in S$ be units
with $q,Q_1,\dots, Q_\ell$ their images in $\K$.  
\begin{definition}
  We let a {\bf type WF Hecke diagram} be a collection of curves in
  $\R\times [0,1]$ with each curve mapping diffeomorphically to
  $[0,1]$ via the projection to the $y$-axis.  Each curve is allowed
  to carry any number of squares or the formal inverse of a square.  We draw:
  \begin{itemize}
  \item a dashed line $\ck$ units to the right of each strand,
    which we call a {\bf ghost},
\item red
lines at $x=\vartheta_i$ each of which carries a label
$\PQ_{i}\in S$. 
  \end{itemize} 
We now require that there are no
  triple points or tangencies involving any combination of strands,
  ghosts or red lines and no squares lie on crossings.  We consider these diagrams equivalent if they
  are related by an isotopy that avoids these tangencies, double
  points and squares on crossings.
\end{definition}
In examples, we'll usually draw these with the number $Q_i$ written at
the bottom of the strand, leaving the lift $\PQ_i$ implicit.

Note that at any fixed value of $y$, the positions of the various
strands in this horizontal slice give a finite subset $D$ of $\R$.  If
this slice is chosen generically, in particular avoiding any crossings, we'll have that  we have
$\vartheta_i-d\neq m\kappa$ and $d'-d\neq m\kappa$ for 
for all $d,d'\in D,m\in \Z$.  We'll call such a subset {\bf generic}.

We can now define the object of primary interest in this section.  
\begin{definition}
  The {\bf type WF Hecke algebra}  $\PC^\vartheta$ is the $S$-algebra generated by
  WF Hecke diagrams modulo the local relations
\newseq
\begin{equation*}\subeqn\label{nilHecke-2}
    \begin{tikzpicture}[scale=.7,baseline,green!50!black]
      \draw[very thick](-3,0) +(-1,-1) -- +(1,1); \draw[very thick](-3,0) +(1,-1) --
      node[pos=.8,fill=green!50!black,inner sep=3pt]{} +(-1,1) ;
      % \draw[very thick] (0,0) +(0,-1) -- +(0,1) node[below, at
      % start]{$i$}; \fill (0,0) circle (5pt);
      \node[black] at (-1.5,0){$-$}; \draw[very thick](0,0) +(-1,-1) -- +(1,1); \draw[very thick](0,0) +(1,-1) --  node[pos=.2,fill=green!50!black,inner sep=3pt]{}
      +(-1,1); 
    \end{tikzpicture}\hspace{4mm}=\hspace{4mm}
    \begin{tikzpicture}[scale=.7,baseline,green!50!black]
      \draw[very thick](-3,0) +(-1,-1) --  node[pos=.2,fill=green!50!black,inner sep=3pt]{}+(1,1); \draw[very thick](-3,0) +(1,-1) -- +(-1,1); 
      % \draw[very thick] (0,0) +(0,-1) -- +(0,1) node[below, at
      % start]{$i$}; \fill (0,0) circle (5pt);
      \node[black] at (-1.5,0){$-$}; \draw[very thick](0,0) +(-1,-1) --
      node[pos=.8,fill=green!50!black,inner sep=3pt]{} +(1,1); \draw[very thick](0,0) +(1,-1) -- +(-1,1)
     ; \node[black] at (2,0){$=$}; \draw[very
      thick](4,0) +(-1,-1) -- +(-1,1); \draw[very
      thick](4,0) +(0,-1) -- +(0,1); 
    \end{tikzpicture}
  \end{equation*}
  \begin{equation*}\subeqn\label{NilHecke3}
    \begin{tikzpicture}[very thick,scale=.9,yscale=.8,baseline,green!50!black]
      \draw(-2.8,0) +(0,-1) .. controls (-1.2,0) ..  +(0,1); \draw (-1.2,0) +(0,-1) .. controls
      (-2.8,0) ..  +(0,1) ; 
    \end{tikzpicture}\hspace{4mm}
= 0\qquad \qquad 
    \begin{tikzpicture}[very thick,scale=.9,yscale=.8,baseline,green!50!black]
      \draw (-3,0) +(1,-1) -- +(-1,1); \draw
      (-3,0) +(-1,-1) -- +(1,1) ; \draw
      (-3,0) +(0,-1) .. controls (-4,0) ..  +(0,1); 
    \end{tikzpicture}\hspace{4mm}=\hspace{4mm}
\begin{tikzpicture}[very thick,scale=.9,yscale=.8,baseline,green!50!black]
\draw (1,0) +(1,-1) -- +(-1,1)
     ; \draw (1,0) +(-1,-1) -- +(1,1)
      ; \draw (1,0) +(0,-1) .. controls
      (2,0) ..  +(0,1); 
    \end{tikzpicture}\hspace{4mm}
  \end{equation*}
\[ \subeqn\label{qHghost-bigon1}
\begin{tikzpicture}[very thick,xscale=1.3,yscale=.8,baseline=25pt,green!50!black]
 \draw (1,0) to[in=-90,out=90]  (1.5,1) to[in=-90,out=90] (1,2)
;
  \draw[dashed] (1.5,0) to[in=-90,out=90] (1,1) to[in=-90,out=90] (1.5,2);
  \draw (2.5,0) to[in=-90,out=90]  (2,1) to[in=-90,out=90] (2.5,2);
\node[black] at (3,1) {=};
  \draw (3.7,0) --node[midway,fill,inner sep=3pt]{} (3.7,2) 
 ;
  \draw[dashed] (4.2,0) to (4.2,2);
  \draw (5.2,0) -- (5.2,2);
\node[black] at (5.6,1) {$-\pq$};
  \draw (6.2,0) -- (6.2,2);
  \draw[dashed] (6.7,0)-- (6.7,2);
  \draw (7.7,0) -- node[midway,fill,inner sep=3pt]{} (7.7,2);
\end{tikzpicture}
\]
\[ \subeqn\label{qHghost-bigon2}
\begin{tikzpicture}[very thick,xscale=1.3,yscale=.8,baseline=25pt,green!50!black]
 \draw[dashed]  (1,0) to[in=-90,out=90]  (1.5,1) to[in=-90,out=90] (1,2)
;
  \draw(1.5,0) to[in=-90,out=90] (1,1) to[in=-90,out=90] (1.5,2);
  \draw (2,0) to[in=-90,out=90]  (2.5,1) to[in=-90,out=90] (2,2);
\node[black] at (3,1) {=};
  \draw[dashed] (3.7,0) --(3.7,2) 
 ;
  \draw (4.2,0) to node[midway,fill,inner sep=3pt]{}  (4.2,2);
  \draw (4.7,0) -- (4.7,2);
\node[black] at (5.6,1) {$-\pq$};
  \draw[dashed] (6.2,0) -- (6.2,2);
  \draw (6.7,0)-- (6.7,2);
  \draw (7.2,0) -- node[midway,fill,inner sep=3pt]{} (7.2,2);
\end{tikzpicture}
\]
\begin{equation*}\label{eq:triple-point-1}\subeqn
    \begin{tikzpicture}[very thick,xscale=1.5,yscale=.8,baseline,green!50!black]
      \draw[dashed] (-3,0) +(.4,-1) -- +(-.4,1);
 \draw[dashed]      (-3,0) +(-.4,-1) -- +(.4,1); 
    \draw (-2,0) +(.4,-1) -- +(-.4,1); \draw
      (-2,0) +(-.4,-1) -- +(.4,1); 
 \draw (-3,0) +(0,-1) .. controls (-3.5,0) ..  +(0,1);\node at (-1,0) {=};  \draw[dashed] (0,0) +(.4,-1) -- +(-.4,1);
 \draw[dashed]      (0,0) +(-.4,-1) -- +(.4,1); 
    \draw (1,0) +(.4,-1) -- +(-.4,1); \draw
      (1,0) +(-.4,-1) -- +(.4,1); 
 \draw (0,0) +(0,-1) .. controls (.5,0) ..  +(0,1);
\node[black] at (2.1,0) {$-\pq$};
     \draw (4,0)
      +(.4,-1) -- +(.4,1); \draw (4,0)
      +(-.4,-1) -- +(-.4,1); 
 \draw[dashed] (3,0)
      +(.4,-1) -- +(.4,1); \draw[dashed] (3,0)
      +(-.4,-1) -- +(-.4,1); 
\draw (3,0)
      +(0,-1) -- +(0,1);
    \end{tikzpicture}
  \end{equation*}
\begin{equation*}\label{eq:triple-point-2}\subeqn
    \begin{tikzpicture}[very thick,xscale=1.5,yscale=.8,baseline,green!50!black]
      \draw (-3,0) +(.4,-1) -- +(-.4,1);
 \draw     (-3,0) +(-.4,-1) -- +(.4,1); 
\draw (-2,0) +(0,-1) .. controls (-2.5,0) ..  +(0,1);
 \draw[dashed] (-3,0) +(0,-1) .. controls (-3.5,0) ..  +(0,1);\node[black] at (-1,0) {=};  \draw (0,0) +(.4,-1) -- +(-.4,1);
 \draw   (0,0) +(-.4,-1) -- +(.4,1); 
    \draw[dashed] (0,0) +(0,-1) .. controls (.5,0) ..  +(0,1);
 \draw (1,0) +(0,-1) .. controls (1.5,0) ..  +(0,1);
\node[black] at (2,0)
      {$+$};   
 \draw (3,0)
      +(.4,-1) -- +(.4,1); \draw (3,0)
      +(-.4,-1) -- +(-.4,1); 
 \draw[dashed] (3,0)
      +(0,-1) -- +(0,1); \draw (4,0)
      +(0,-1) -- +(0,1); 
    \end{tikzpicture}
  \end{equation*}
 \begin{equation*}\label{qHcost}\subeqn
  \begin{tikzpicture}[very thick,baseline,xscale=1.5,yscale=.8]
    \draw [wei]  (-1.8,0)  +(0,-1) -- node[ below, at start]{$\PQ_i$}+(0,1);
       \draw[green!50!black](-1.2,0)  +(0,-1) .. controls (-2.8,0) ..  +(0,1);
           \node at (-.5,0) {=};
    \draw [green!50!black] (1.5,0)  +(0,-1) -- node[midway,
       fill=green!50!black,inner sep=2.5pt]{}+(0,1);
       \draw[wei] (.8,0)  +(0,-1) -- node[ below, at start]{$\PQ_i$}+(0,1);
 \node at (2.8,0) {$-\PQ_i$};
        \draw[green!50!black] (4.6,0)  +(0,-1) -- +(0,1);
       \draw [wei] (3.9,0)  +(0,-1) --node[ below, at start,black]{$\PQ_i$} +(0,1);
  \end{tikzpicture}
\end{equation*}
 \excise{ \begin{equation*}\label{red-triple}\subeqn
  \begin{tikzpicture}[very thick,baseline=-2pt,scale=.7]
    \draw [wei]  (0,-1) -- (0,1);
       \draw[green!50!black](.5,-1) to[out=90,in=-30] (-.5,1);
       \draw[green!50!black](-.5,-1) to[out=30,in=-90] (.5,1);
  \end{tikzpicture}- \begin{tikzpicture}[very thick,baseline=-2pt,scale=.7]
    \draw [wei]  (0,-1) -- (0,1);
       \draw[green!50!black](.5,-1) to[out=150,in=-90] (-.5,1);
       \draw[green!50!black](-.5,-1) to[out=90,in=-150] (.5,1);
  \end{tikzpicture}
=-\PQ_\bullet \Bigg(\,\begin{tikzpicture}[very thick,baseline=-2pt,scale=.7]
    \draw [wei]  (0,-1) -- (0,1);
       \draw[green!50!black](.5,-1) to[out=90,in=-90] (.5,1);
       \draw[green!50!black](-.5,-1) to[out=90,in=-90] node[midway,
       fill=green!50!black,inner sep=2.5pt]{} (-.5,1);
  \end{tikzpicture}\,-\pq \,\begin{tikzpicture}[very thick,baseline=-2pt,scale=.7]
    \draw [wei]  (0,-1) -- (0,1);
       \draw[green!50!black](.5,-1) to[out=90,in=-90] node[midway,
       fill=green!50!black,inner sep=2.5pt]{} (.5,1);
       \draw[green!50!black](-.5,-1) to[out=90,in=-90] (-.5,1);
  \end{tikzpicture}\,\Bigg)
\end{equation*}}
\begin{equation*}\label{smart-red-triple}\subeqn
    \begin{tikzpicture}[very thick,xscale=1.5,yscale=.8,baseline,green!50!black]
      \draw (-2,0) +(.4,-1) -- +(-.4,1);
 \draw     (-2,0) +(-.4,-1) -- +(.4,1); 
 \draw[wei] (-2,0) +(0,-1) .. controls (-2.5,0) ..   node[ below, at start,black]{$\PQ_i$}+(0,1);
\node[black] at (-1,0) {=};  \draw (0,0) +(.4,-1) -- +(-.4,1);
 \draw   (0,0) +(-.4,-1) -- +(.4,1); 
    \draw[wei] (0,0) +(0,-1) .. controls (.5,0) .. node[ below, at start,black]{$\PQ_i$} +(0,1);
 \node[black] at (1,0)
      {$+$};   
 \draw (2,0)
      +(.4,-1) -- +(.4,1); \draw (2,0)
      +(-.4,-1) -- +(-.4,1); 
 \draw[wei] (2,0)
      +(0,-1) -- +(0,1); 
    \end{tikzpicture}
  \end{equation*}
 \begin{equation*}\label{dumb-red-triple}\subeqn
   \begin{tikzpicture}[very thick,baseline=-2pt,yscale=.8,]
    \draw [wei]  (0,-1) --node[ below, at start,black]{$\PQ_i$} (0,1);
       \draw[green!50!black,dashed](.5,-1) to[out=90,in=-30] (-.5,1);
       \draw[green!50!black](-.5,-1) to[out=30,in=-90] (.5,1);
  \end{tikzpicture}= \begin{tikzpicture}[very thick,baseline=-2pt,yscale=.8,]
    \draw [wei]  (0,-1) -- node[ below, at start,black]{$\PQ_i$} (0,1);
       \draw[green!50!black,dashed](.5,-1) to[out=150,in=-90] (-.5,1);
       \draw[green!50!black](-.5,-1) to[out=90,in=-150] (.5,1);
  \end{tikzpicture}\qquad \qquad   \begin{tikzpicture}[very thick,baseline=-2pt,yscale=.8,]
    \draw [wei]  (0,-1) -- node[ below, at start,black]{$\PQ_i$} (0,1);
       \draw[green!50!black](.5,-1) to[out=90,in=-30] (-.5,1);
       \draw[green!50!black,dashed](-.5,-1) to[out=30,in=-90] (.5,1);
  \end{tikzpicture}= \begin{tikzpicture}[very thick,baseline=-2pt,yscale=.8]
    \draw [wei]  (0,-1) -- node[ below, at start,black]{$\PQ_i$} (0,1);
       \draw[green!50!black](.5,-1) to[out=150,in=-90] (-.5,1);
       \draw[green!50!black,dashed](-.5,-1) to[out=90,in=-150] (.5,1);
  \end{tikzpicture}\qquad \qquad   \begin{tikzpicture}[very thick,yscale=.8,baseline=-2pt]
    \draw [wei]  (0,-1) -- node[ below, at start,black]{$\PQ_i$} (0,1);
       \draw[green!50!black,dashed](.5,-1) to[out=90,in=-30] (-.5,1);
       \draw[green!50!black,dashed](-.5,-1) to[out=30,in=-90] (.5,1);
  \end{tikzpicture}= \begin{tikzpicture}[very thick,yscale=.8,baseline=-2pt]
    \draw [wei]  (0,-1) --node[ below, at start,black]{$\PQ_i$} (0,1);
       \draw[green!50!black,dashed](.5,-1) to[out=150,in=-90] (-.5,1);
       \draw[green!50!black,dashed](-.5,-1) to[out=90,in=-150] (.5,1);
  \end{tikzpicture}
\end{equation*}
and the non-local relation that a idempotent is 0 if the strands can be
divided into two groups with a gap $>|\kappa|$ between them
and all red strands in the right hand group.  

Some care must be used when understanding what it means to apply these
relations locally.  In each case, the LHS and RHS have a dominant term
which are related to each other via an isotopy through a disallowed
diagram with a tangency, triple point or a square on a crossing.  You can only apply the
relations if this isotopy avoids tangencies, triple points and squares on crossings
everywhere else in the diagram;  in particular, all other strands must
avoid the region where the relation is applied.  One can always choose isotopy
representatives sufficiently generic for this to hold.
\end{definition}

\subsection{A Morita equivalence}
\label{sec:morita-equivalence}

One must be slightly careful in the definition of these algebras,
since as described they have $\aleph_1$ many idempotents. We'll
usually fix a finite collection $\mathscr{D}$ of subsets of $\R$ and consider the subalgebra
$\PC^\vartheta_{\mathscr{D}}$ where the green
strands at the top
and bottom of every diagram is equal to one of the sets in
$\mathscr{D}$.  This subalgebra is finite dimensional. In fact, we'll
describe a basis of it in Lemma \ref{lem:hbasis}. Recall that for each $\ell$-multipartition $\xi$, we have a
subset $D_\xi$ defined as in Figure \ref{fig:partition}.  Let
$\mathscr{D}^\circ_m$ be the collection of these for all
$\ell$-multipartitions of size $m$.  

\begin{lemma}\label{lem:Morita}
  For all collections  $\mathscr{D}$ of $m$-element subsets containing
  $\mathscr{D}^\circ_m$, the inclusion
  $\PC^\vartheta_{\mathscr{D}^\circ_m}\to \PC^\vartheta_{\mathscr{D}}$
  induces a Morita equivalence.
\end{lemma}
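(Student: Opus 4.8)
The plan is to show that $\PC^\vartheta_{\mathscr{D}^\circ_m}$ and $\PC^\vartheta_{\mathscr{D}}$ are Morita equivalent by realizing the inclusion as multiplication by an idempotent $e = \sum_{D \in \mathscr{D}^\circ_m} e_D$ (the sum of idempotents corresponding to loadings in the privileged collection) inside the larger algebra, and then verifying that this idempotent is \emph{full}, i.e. $\PC^\vartheta_{\mathscr{D}} \, e \, \PC^\vartheta_{\mathscr{D}} = \PC^\vartheta_{\mathscr{D}}$. Equivalently, it suffices to show that for every generic $m$-element subset $D \in \mathscr{D} \setminus \mathscr{D}^\circ_m$, the idempotent $e_D$ lies in the two-sided ideal generated by $e$; then every generator of $\PC^\vartheta_{\mathscr{D}}$ factors through $e$ and standard Morita theory (as in the analogous argument for weighted KLR algebras in \cite{WebwKLR}) gives the equivalence.

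First I would reduce to a combinatorial statement about loadings: given a generic subset $D$ of size $m$ with residue data compatible with a nonzero idempotent (so that $e_D \neq 0$, which by the non-local vanishing relation forces the points of $D$ to not be separable by a gap $> |\kappa|$ with all red strands on the right), I must produce a diagram $\alpha$ from some $D_\xi \in \mathscr{D}^\circ_m$ to $D$ and a diagram $\beta$ from $D$ to $D_\xi$ with $\beta\alpha = e_D$. The natural candidates are ``straight-line'' diagrams that slide each strand of $D$ monotonically to its target position in $D_\xi$ without creating any crossing of two solid strands (crossings with ghosts and red lines are unavoidable but harmless). The key point is that one can always choose the multipartition $\xi$ and the isotopy class so that this sliding introduces no double crossing of the same pair of strands and no strand crosses a red line ``the wrong way'' an uncancellable number of times; then $\beta\alpha$ differs from $e_D$ only by applying relations \eqref{qHghost-bigon1}, \eqref{qHghost-bigon2}, \eqref{smart-red-triple} and the nilHecke-type relations, all of which either leave $e_D$ unchanged or express it in terms of diagrams that again factor through $e$. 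Here I would lean on the fact, asserted just before the lemma, that $\PC^\vartheta_{\mathscr{D}}$ has an explicit basis (Lemma \ref{lem:hbasis}) indexed by pairs of $\Bi$-tableaux: this basis description lets one check that the span of diagrams factoring through $e$ is all of $\PC^\vartheta_{\mathscr{D}}$ by a dimension/triangularity count rather than by manipulating each relation by hand.

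The main obstacle is the \emph{existence} of a suitable target $D_\xi$ for a given generic $D$: one must show that every generic loading that supports a nonzero idempotent can be connected to a multipartition loading of the form in Figure \ref{fig:partition} by a sequence of ``elementary'' moves, each of which is invertible up to the relations. Concretely, one wants to argue that by repeatedly moving the rightmost strand as far right as the genericity and the red lines allow — and then the next, etc. — one arrives at a configuration in which the strands, read with their residues, are exactly the projected top-corners of boxes of a genuine multipartition with the given weighting. This is essentially a statement that the ``addable/removable relative to $h$'' bookkeeping set up in Section \ref{sec:load-mult} is consistent: a generic non-vanishing loading of total size $m$ always ``fits'' into some multipartition shape. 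I would prove this by induction on $m$, peeling off the strand of largest $x$-coordinate (which must occupy an addable box relative to the rest) and invoking the inductive hypothesis on the remaining $m-1$ strands, using the $\vartheta_i \neq \vartheta_j$ hypothesis in the definition of a weighting to guarantee room. Once this combinatorial lemma is in hand, the rest is the routine verification that the resulting $\alpha,\beta$ satisfy $\beta\alpha = e_D$ up to lower terms and hence $e$ is full, completing the Morita equivalence.
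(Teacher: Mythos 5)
Your high-level frame is the same as the paper's: reduce to showing the idempotent $e_\circ=\sum_\xi e_{D_\xi}$ is full, i.e.\ $\PC^\vartheta_{\mathscr{D}}e_\circ\PC^\vartheta_{\mathscr{D}}=\PC^\vartheta_{\mathscr{D}}$. But the execution has three genuine problems. First, you cannot invoke Lemma \ref{lem:hbasis}: in the paper that basis is established \emph{after} this lemma, and the spanning argument for it begins by citing Lemma \ref{lem:Morita} to write every element as a sum of terms $ae_\xi b$. Using the basis here is circular, and in any case a dimension count does not by itself show that the two-sided ideal generated by $e_\circ$ is everything.

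Second, the step where all the work lives is precisely the one you wave away. Sliding the strands of a generic $D$ to a multipartition position and back does not return $e_D$ up to ``harmless'' corrections: each application of (\ref{qHghost-bigon1}), (\ref{qHghost-bigon2}), (\ref{qHcost}) or the nilHecke relation produces correction terms carrying dots and fewer crossings, and there is no a priori reason these factor through $e_\circ$. One must set up a termination argument showing the rewriting process ends. The paper does this by isotoping every diagram to a ``left-justified'' form at $y=\nicefrac12$, grouping strands into equivalence classes each anchored to a unique red line (using the steadying relation to kill classes with no red strand), and then inducting on a preorder built from dominance order on the slice together with the distance of dots from the anchoring red strand; every relation application strictly increases this preorder. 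Without some such well-founded order your claim that the corrections ``again factor through $e$'' is unsupported. Moreover, $e_D$ will in general only be a \emph{sum} of elements factoring through various $e_{D_\xi}$, not a single product $\beta\alpha$.

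Third, your combinatorial lemma is not the right statement and your induction does not prove it. A generic $m$-element subset supporting a nonzero idempotent need not ``fit into'' any multipartition shape, and the rightmost strand of such a subset need not sit in an addable-box position relative to the rest. What is actually true (and is the crux of the paper's proof) is that a left-justified, dot-free configuration in which every pair of nearby strands is separated by a ghost and every equivalence class is anchored to a red line is forced to be exactly some $D_\xi$: the red line gives the box $(1,1,p)$, the ghost to the right of a strand gives $(i+1,j,p)$, and the strand caught on a ghost gives $(i,j+1,p)$. That identification only emerges at the end of the rewriting process; it is not a property of the original loading $D$, so peeling off the strand of largest $x$-coordinate from $D$ does not get off the ground.
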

\begin{proof}
Throughout this proof, we write ``diagram'' to mean a WF Hecke diagram.
  Let $e_{\circ}$ be the idempotent given by the sum of straight-line
  diagrams for the subsets $D_\xi$, then we already know that
  $e_{\circ}\PC^\vartheta_{\mathscr{D}}e_{\circ}=\PC^\vartheta_{\mathscr{D}^\circ_m}$,
  so in order to show that $e_{\circ}\PC^\vartheta_{\mathscr{D}}$
  induces a Morita equivalence, we need only show that
  $\PC^\vartheta_{\mathscr{D}}e_{\circ}\PC^\vartheta_{\mathscr{D}}=\PC^\vartheta_{\mathscr{D}}$.  
We'll also simplify by only considering the case where $\ck
<0$.  The case where $\ck>0$ follows by similar arguments.

    The underlying idea of the proof is that at $y=\nicefrac{1}{2}$, we
  push strands as far to the left as possible.  We do this by a series
  of reductions:

{\bf Push to left-justified:} Fix a real number
  $\epsilon$.  By applying an isotopy, we may assume that for any strand in the horizontal slice at
  $y=\nicefrac{1}{2}$, there is either a strand (red or black) or a
  ghost within $\epsilon$ to its left, or a strand within $\epsilon$
  to the left of its ghost.  Otherwise, we can simply move this
  strand to the left by $\epsilon$.  Eventually this process will
  terminate, or the slice at $y=\nicefrac{1}{2}$ will be unsteady and
  thus 0.  We call such a diagram {\bf left-justified}.

This defines an equivalence relation on strands generated by imposing that two strands are
equivalent if one is with $\epsilon$ of the other or its ghost.  Once
we shrink $\epsilon$ to be much smaller than
$\vartheta_i-\vartheta_j-p\ck$ for all $i\neq j\in [1,\ell]$ and $p\in \Z$, we cannot have any pair
of red strands which are equivalent, since the distance between two
equivalent strands must within $m\epsilon$ of a multiple of
$\ck$. On the other hand, every equivalence class must contain a
red strand, since otherwise, we can simply shift all its elements
$\epsilon$ units to
the left.   

{\bf Preorder on left-justified diagrams:} We now place a preorder on left-justified diagrams, given by the
dominance order on the slice at $y=\nicefrac 12$ and then ordering by the distance of
dots from the red line in its equivalence class.  That is, for each
equivalence class, we have a function $\delta(t)$ given by the number
of dots on strands in the equivalence class within $t$ units of the
red strand.  If we have two left-justified diagrams $a,b$ with the
same slice at $y=\nicefrac 12$, then $a\geq b$ if $\delta_a\geq \delta_b$ for every equivalence
class.  

The engine of the proof is that we will show: {\it Unless a
  left-justified diagram has slice at $y=\nicefrac{1}{2}$
  corresponding to a multi-partition, it can be rewritten in terms of
  diagrams which are higher in this preorder.}

{\bf Remove L's:} Now we wish to rule out certain configurations that
correspond to collections of boxes which are not Young diagrams.  
The first of these are L's such as the diagrams below:
 \[\tikz { \node at (-3,0){
      \tikz[thick,scale=2]{\draw (-1,0) -- (0,1); \draw (0,1) --
        (.5,.5); \draw (-1,0) -- (0,-1); \draw (0,-1) -- (.5,-.5);
        \draw (.5,.5) -- (-.5,-.5); \draw (-.5,.5) -- (.5,-.5); } }; \node at (3,0){ \tikz[thick,scale=-2]{\draw (-1,0) -- (0,1); \draw (0,1) --
        (.5,.5); \draw (-1,0) -- (0,-1); \draw (0,-1) -- (.5,-.5);
        \draw (.5,.5) -- (-.5,-.5); \draw (-.5,.5) -- (.5,-.5);  } };
  }\] Remember that
we draw in Russian notation, so the Young diagram condition is that
boxes will not fall when gravity is applied.

Depending on the sign of $\kappa$, one of these diagrams corresponds
to the situation where we have a pair of black strands within $2\epsilon$ of each other with a ghost
between them, but no strand between their ghosts.  In this case, we can apply
(\ref{eq:triple-point-2}) to write this in terms of slices higher in
this partial order.  
\begin{equation*}
     \begin{tikzpicture}[very thick,xscale=1.5,yscale=.8,baseline,green!50!black]
 \draw (3,0)
      +(.4,-1) -- +(.4,1); \draw (3,0)
      +(-.4,-1) -- +(-.4,1); 
 \draw[dashed] (3,0)
      +(0,-1) -- +(0,1); \draw (4,0)
      +(0,-1) -- +(0,1); 
    \end{tikzpicture}\quad=   \quad\begin{tikzpicture}[very thick,xscale=1.5,yscale=.8,baseline,green!50!black]
      \draw (-3,0) +(.4,-1)  to[out=90,in=-90] +(-.4,0)
      to[out=90,in=-90] (-3.4,1);
 \draw     (-3,0) +(-.4,-1) to[out=90,in=-90] +(.4,0)
      to[out=90,in=-90] (-2.6,1); 
\draw (-2,0) +(0,-1) .. controls (-2.7,0) ..  +(0,1);
 \draw[dashed] (-3,0) +(0,-1) .. controls (-3.7,0) ..  +(0,1); \end{tikzpicture}\quad- \quad
    \begin{tikzpicture}[very
      thick,xscale=1.5,yscale=.8,baseline,green!50!black]
\draw (0,0) +(.4,-1) to[out=90,in=-90] +(-.4,0)
      to[out=90,in=-90] (-.4,1);
 \draw   (0,0) +(-.4,-1) to[out=90,in=-90] +(-.4,-.4)
      to[out=90,in=-90]  (.4,1); 
    \draw[dashed] (0,0) +(0,-1)-- +(0,1);
 \draw (1,0) +(0,-1) --  +(0,1);
  \end{tikzpicture}
  \end{equation*}
The other will correspond to the situation where there is no ghost between the
strands, but a strand between the ghosts.  In this case  we can apply
(\ref{eq:triple-point-1}) similarly.

Thus, we need only consider the
possibilities that that two consecutive strands within $2\epsilon$ of
each other have both a strand
between ghosts and a ghost between strands, or neither.  These
correspond to the box configurations
\begin{equation}
 \tikz[baseline] { \node at (-3,0){
      \tikz[thick,scale=2]{\draw (-1,0) -- (0,1); \draw (0,1) --
        (1,0); \draw (-1,0) -- (0,-1); \draw (0,-1) -- (1,0);
        \draw (.5,.5) -- (-.5,-.5); \draw (-.5,.5) -- (.5,-.5); } }; \node at (3,0){ \tikz[thick,scale=-2]{\draw (-.5,.5) -- (0,1); \draw (0,1) --
        (.5,.5); \draw (-.5,-.5) -- (0,-1); \draw (0,-1) -- (.5,-.5);
     \draw (.5,.5) -- (-.5,-.5); \draw (-.5,.5) -- (.5,-.5);  } };
  }\label{eq:box-on-box}
\end{equation}

The first of these is exactly what we are looking for, and the second
will be ruled out by other means.

{\bf Remove dots}:  Fix an equivalence class which has at least one
dot on a strand.  Consider the point of the closest dot in this equivalence class to
the red line.  The strand that this dot sits on must be constrained
from moving left by a ghost or a red strand.  If it is a red strand,
then we can apply the relation (\ref{qHcost}) to write this in terms
of a diagram with slice higher in dominance order and the diagram with the dot removed.  

If the dot is to the left of the red line, then it cannot be
constrained only by a strand left of its ghost,   since in this case,
we can just shift the strand and all to its left in the equivalence
class $\epsilon$ units
leftward.  As usual, this process must terminate or the idempotent
will be 0 in $\PC^\vartheta$.   Thus, either a
ghost or strand to its left is constraining it. 

If the constraint is a ghost, we can apply
(\ref{qHghost-bigon2}) to move this strand left.  The correction term
will have a dot closer to the red strands.
If the constraint is a strand, then we can apply the relation  
\begin{equation}
\begin{tikzpicture}[very thick,baseline, green!50!black]
  \draw (0,-1) --  (0,1);
  \draw (1,-1)-- (1,1);
\end{tikzpicture}
=
\begin{tikzpicture}[very thick,baseline,green!50!black]
  \draw (0,-1) to[out=90,in=-90] (1,0)
  to[out=90,in=-90] node[pos=.75, fill=green!50!black,inner sep=2pt]{} (0,1);
  \draw (1,-1)to[out=90,in=-90] node[below, at start]{$j$} (0,0)
  to[out=90,in=-90] node[at start, fill=green!50!black,inner sep=2pt]{} (1,1);
\end{tikzpicture}
-
\begin{tikzpicture}[very thick,baseline,green!50!black]
  \draw (0,-1) to[out=90,in=-90]  (1,0)
  to[out=90,in=-90] (0,1);
  \draw (1,-1) to[out=90,in=-90] node[pos=.25, fill,inner sep=2pt]{} (0,0)
  to[out=90,in=-90] node[at start, fill,inner sep=2pt]{} (1,1);
\end{tikzpicture}\label{eq:2}
\end{equation} to move the dot left.  Eventually, the dot will encounter a ghost
and we can apply an earlier argument.  Since the dot moved left by no
more than $m\epsilon$ and then moved right by $\kappa$, over all it
has moved right.

Symmetrically, if the dot is right of the red line, then we must have
that it is constrained by a strand, either immediately to its left or
left of its ghost. Otherwise, the original strand and all to its right
in the equivalence class can be moved left by $\epsilon$ units.  We
can apply \eqref{eq:2} for a strand immediately to the left or
(\ref{qHghost-bigon1}) for one left of the ghost, to show that this
factors through a slice higher in dominance order.

Thus, in all cases, if there is a dot anywhere, the diagram can be
written as a sum of ones higher in our preorder.  That is, we can
assume that there are no dots.  

{\bf Remove unsupported boxes:} If we have a consecutive
pair of strands with no ghost between them, this corresponds to the
second configuration of \eqref{eq:box-on-box}, and thus we must rule
it out.  We can apply the relation
\eqref{eq:2}, and rewrite as a sum of diagrams higher in our
order. Using the dot removal process, we see that every pair of
strands within $2\epsilon$ of each other must be separated by a ghost,
and their ghost must be separated by a strand, corresponding to the
first configuration of \eqref{eq:box-on-box}.

{\bf Find the Young diagram:}  
We have now performed sufficient reductions to show that we factor
through $D_\xi$, but let us describe $\xi$ in order to show this more
clearly.  
Each equivalence class breaks up into groups of strands within
$m\epsilon$ of the points $m\kappa+\vartheta_p$ for $m\in \Z$.  
Left of the
red strand, the leftmost element of each group of the equivalence
class must be a ghost, there is a central group where the red strand
itself is left-most, and then right of the red strand left-most
element must be a strand.

This precisely means that the resulting slice has strands at the
points in $D_\xi$ for some $\xi$:  the boxes of $\xi$ are in bijection
with strands; the equivalence classes correspond to the component
partitions in the multipartition $\xi$, the box $(1,1,p)$
corresponding to the strand which blocked by the red line $p$.  Given
the strand for the box $(i,j,p)$, the box $(i+1,j,p)$ is the strand whose
ghost is to the right of it, and the box $(i,j+1,p)$ is the strand
caught on its ghost.  The partition condition is precisely that two
consecutive close black strands correspond to $(i,j,p)$ and
$(i+1,j+1,p)$, the ghost between them to $(i+1,j,p)$ and the strand
between their ghosts to $(i,j+1,p)$.

This shows that the algebra is spanned by elements factoring through
$e_{D_\xi}$ for some multipartition $\xi$.
\end{proof}

\subsection{Relationship to the Hecke algebra}
\label{sec:relat-hecke-algebra}

For a real number $s>0$, let $D_{s,m}$ be the set $\{s,2s,\dots, ms\}$.
For any WF Hecke diagram, we can
embed it into the plane with top and bottom at $s,2s,\dots, ms$,
and if $s\gg |\kappa|$, we can assume that no strand passes between
any crossing and its ghost.  This will happen, for example, if we write the diagram as a composition of the
diagrams of the type \begin{equation*}
    \begin{tikzpicture}[very thick,xscale=1.5,baseline,green!50!black]
      \draw (-2.5,0) +(.7,-1) -- +(-.7,1);
 \draw     (-2.5,0) +(-.7,-1) -- +(.7,1); 
      \draw[dashed] (-3,0) +(.7,-1) -- +(-.7,1);
 \draw [dashed]    (-3,0) +(-.7,-1) -- +(.7,1); 
    \end{tikzpicture}
  \end{equation*}
If such a strand exists, we
can just increase $s$ by scaling the diagram horizontally; however,
$\kappa$ is left unchanged, so the strand will be pushed out from
between the crossings.  

%For any element $a\in \PC^\vartheta$, we can {\bf horizontally
%  compose} it 
\begin{proposition}[\mbox{\cite[Thm. \ref{n-wfHecke}]{WebBKnote}}]\label{prop:Hecke-quotient}
For $s\gg |\kappa|$, there is an isomorphism
$H_m(\pq,\PQ_\bullet)\cong \PC^\vartheta_{D_{s,m}}$ sending  
\newseq 
\begin{align*}\label{Hecke-gens1} \subeqn
    \tikz[baseline]{
      \node at (0,0){ 
        \tikz[very thick,xscale=1.2,green!50!black]{
          \draw (-.5,-.5)-- (-.5,.5);
          \draw (.5,-.5)-- (.5,.5) node [midway,fill=green!50!black,inner
          sep=2.5pt]{};
          \draw (1.5,-.5)-- (1.5,.5);
          \node at (1,0){$\cdots$};
          \node at (0,0){$\cdots$};
        }
      };}&\leftrightarrow X_j\\ \label{Hecke-gens2}\subeqn
       \tikz[baseline]{\node[label=below:{}] at (4.5,0){ 
        \tikz[very thick,xscale=1.2, green!50!black]{
          \draw (-.5,-.5)-- (-.5,.5);
          \draw (.1,-.5)-- (.9,.5);
          \draw (.9,-.5)-- (.1,.5);
          \draw (1.5,-.5)-- (1.5,.5);
          \node at (1,0){$\cdots$};
          \node at (0,0){$\cdots$};
        }
      };
    }&\leftrightarrow
    \begin{cases}
      T_j+1 & \kappa<0\\
      T_i-q & \kappa>0
    \end{cases}
\end{align*}
\end{proposition}
\excise{\begin{proof}
 The Hecke relations follow from
  \cite{WebBKnote}
, and the cyclotomic
  relations from the non-local one.  The only thing that remains to be
  checked is that the non-local relation introduces no other new
  ones.  As in the proof of \cite[4.19]{Webmerged}, this follows from moving all but one strand
  right of all reds using (\ref{qHcost}\& \ref{smart-red-triple}), leaving a single one at far left.  Pulling this
  across all reds shows that the non-local relation is equivalent to the
  cyclotomic one via (\ref{qHcost})
.
\end{proof}}

This shows that the category $H_m(\pq,\PQ_\bullet)\mmod$ is a quotient
category of $\PC^\vartheta_{\mathscr{D}}$ for any collection
$\mathscr{D}$ containing $D_{s,m}$.

We can extend this theorem a bit further to a ``relative setting.'' Fix a collection $\mathscr{D}$, and fix $s>0$ such that $s>|d|+|\kappa|$ for all elements
$d\in D\in \mathscr{D}$.  
For $D\in \mathscr{D}$, let $D'=D\cup \{s,\dots, ms\}$, and
$\mathscr{D}'=\{D'\}_{D\in \mathscr{D}}$.  

One can use the same formulas to define a map $\eta\colon \PC^\vartheta_{\mathscr{D}}\otimes
  H_m(\pq)\to \PC^\vartheta_{\mathscr{D}'}$ by sending $a\otimes 1$ to
  the diagram $a$ with vertical stands added at $x=s,2s,\dots, ms$,
  and $1\otimes X_i$ and $1\otimes T_i$ with images as indicated in
  equations (\ref{Hecke-gens1}--\ref{Hecke-gens2}) on the strands at
  $x=s,2s,\dots, ms$, horizontally composed with the identity in
  $\PC^\vartheta_{\mathscr{D}}$.  Schematically, we have
\[a\otimes b \mapsto \quad \tikz[baseline=-2pt,very thick]{\node[draw, inner sep=10pt] at
  (0,0){$a$};  \node[draw, inner sep=10pt] at
  (1.5,0){$b$};}\]

\begin{lemma}\label{lem:horizontal-Hecke}
  The map $\eta\colon \PC^\vartheta_{\mathscr{D}}\otimes
  H_m(\pq)\to \PC^\vartheta_{\mathscr{D}'}$ is a well-defined ring homomorphism.
\end{lemma}
\begin{proof}
We only need to check that horizontally composed diagrams in
$\PC^\vartheta_{\mathscr{D}'}$ satisfy the correct relations.  
The relations of $H_m(\pq)$ are satisfied by the righthand set of
strands by \cite[\ref{n-wdHecke}]{WebBKnote}, since all these relations
are local in nature.  

For $\PC^\vartheta_{\mathscr{D}}$, we need only note that adding a
diagram at the right will not change any of the relations.  This is
clear for the local relations, and unsteady idempotents remain
unsteady, so the only non-local relation is preserved as well.
\end{proof}

Assume that $\mathscr{D}$ is a collection of sets of size $m$, and
$\mathscr{E}$ a collection of sets of size $m+1$. We can consider the
$
\PC^\vartheta_{\mathscr{E}}\operatorname{-}\PC^\vartheta_{\mathscr{D}}$
module $e_{\mathscr{E}}\PC^\vartheta e_{\mathscr{D}'}$ where
$\PC^\vartheta_{\mathscr{D}}$ acts on the right via the map of Lemma \ref{lem:horizontal-Hecke}, and
as before $\mathscr{D}'$ is the collection given by $\mathscr{D}$ with
$\{s\}$ added to each set (where $\{s\}$ is assumed to be $|\ck|$ larger than
any element of $D\in \mathscr{D}$).  Schematically, an element of this
bimodule looks like:
\[\begin{tikzpicture}[very thick,label distance=7pt]
\node at (-4,0){  \begin{tikzpicture}[very thick,yscale=-1.5,xscale=1.5]
\draw[green!50!black] (2.8,2.5) to[out=-130, in=50] (.25,1.25);
\draw[dashed,green!50!black] (2.6,2.5) to[out=-130, in=50] (.05,1.25);
\draw[wei] (-1.5,2.5)  to[out=-90, in=90] (-1.5,1.25);%-- (b.-105);
\draw[green!50!black] (-.25,2.5)  to[out=-90, in=90] (-.25,1.25);%-- (b.-105);
\draw[dashed,green!50!black] (-.45,2.5)  to[out=-90, in=90] (-.45,1.25);
\draw[wei] (.75,2.5) to[out=-90, in=90] (.75,1.25);% -- (b.-160);
\draw[green!50!black] (1.5,2.5) to[out=-90, in=90] (2,1.25);
\draw[dashed,green!50!black] (1.3,2.5) to[out=-90, in=90] (1.8,1.25);%-- (b.-33);
\node at (-.85,1.6) {$\cdots$};
\node at (1.325,1.6) {$\cdots$};

\draw[decorate,decoration=brace,-] (-1.7,1.1) --
    node[above,midway]{$\PC^\vartheta_{\mathscr{E}}$-action} (2.2,1.1);
\draw[decorate,decoration=brace,-] (1.7,2.65) --
    node[below,midway]{$\PC^\vartheta_{\mathscr{D}}$-action} (-1.7,2.65);
  \end{tikzpicture}};
  \end{tikzpicture}\]

Tensor and $\Hom$ with this bimodule induces adjoint $\mathscr{R}$-linear
    induction and restriction functors
    \[\operatorname{ind}\colon \PC^\vartheta_{\mathscr{D}}\to \PC^\vartheta_{\mathscr{E}}\qquad
    \operatorname{res}\colon  \PC^\vartheta_{\mathscr{E}}\to
    \PC^\vartheta_{\mathscr{D}}.\]
If we take $\mathscr{D}=\{{D}_{s,m-1}\}$ and
$\mathscr{E}=\{{D}_{s,m}\}$, then these functors coincide with
the usual induction and restriction functors for Hecke algebras.
We'll consider some important properties of these functors later.

\subsection{Cellular structure}
\label{sec:cellular-structure}

In this section, we define a cellular structure on this algebra.  Consider
a generic subset $D\subset \R$, and a $D$-tableau $\sS$ of shape
$\xi$.  We will describe a WF Hecke diagram
$\hB_{\sS}\in e_{D_\xi}\PC^\vartheta e_{D}$ which matches $D_\xi$ at
the top $y=1$ (i.e. its points are given by the projection of boxes in
the diagram, as in Figure \ref{fig:partition}) and given by the set
$D$ at the bottom.  The strands at the top are naturally in bijection
with boxes in the diagram of $\xi$, and those at the bottom have a
bijection given by the tableau $\sS$.  The strands of the diagram
$\hB_{\sS}$ connect the top and the bottom using this bijection,
without creating any bigons between pairs of strands or strands and
ghosts.  This diagram is not unique up to isotopy (since we have not
specified how to resolve triple points), but we can choose one such
diagram arbitrarily.  We let $*$ denote the
  reflection of a diagram through a horizontal axis, $\hB^*_\sS$ is
  this same diagram with top and bottom reversed.

\begin{example}\label{big-example-1}
Consider the example where $q=-1$ and $\ck=-4$, with
 $Q_1=1,Q_2=-1$ and $d=2$. 
The resulting category, weighted order, and basis only depend on
the difference of the weights $\vartheta_1-\vartheta_2$.  In fact,
there are only 3 different possibilities; the category changes when
this value passes $\pm 4$.
 
There are 5 multipartitions of size 2:
\[
p_1=\tikz[baseline=5pt,scale=.3,thick]{\draw (0,0) --(-2,2); \draw
  (0,0) --(1,1); \draw (-1,3) --(-2,2); \draw (-1,3) --(1,1); \draw
  (-1,1) -- (0,2); \draw[very thick] (-.7,.3) -- (0,-.3) -- (.7,.4); 
\draw[very thick] (3.3,.4) -- (4,-.3) -- (4.7,.4); 
}
\qquad \qquad 
p_2=\tikz[baseline=5pt,scale=.3,thick]{\draw (0,0) --(2,2); \draw
  (0,0) --(-1,1); \draw (1,3) --(2,2); \draw (1,3) --(-1,1); \draw
  (1,1) -- (0,2); \draw[very thick] (-.7,.4) -- (0,-.3) -- (.7,.4); 
\draw[very thick] (3.3,.4) -- (4,-.3) -- (4.7,.4); 
}
\qquad \qquad 
p_3=\tikz[baseline=5pt,scale=.3,thick]{\draw (0,0) --(1,1); \draw
  (0,0) --(-1,1); \draw (0,2) --(-1,1); \draw
  (1,1) -- (0,2); \draw[very thick] (-.7,.4) -- (0,-.3) -- (.7,.4); 
\draw[very thick] (3.3,.4) -- (4,-.3) -- (4.7,.4); \draw (4,0)
--(3,1)-- (4,2) -- (5,1)--cycle;
}
\]
\[
p_4=\tikz[baseline=5pt,scale=.3,thick]{\draw (0,0) --(-2,2); \draw
  (0,0) --(1,1); \draw (-1,3) --(-2,2); \draw (-1,3) --(1,1); \draw
  (-1,1) -- (0,2); \draw[very thick] (-.7,.4) -- (0,-.3) -- (.7,.4); 
\draw[very thick] (-3.3,.4) -- (-4,-.3) -- (-4.7,.4); 
}
\qquad \qquad 
p_5=\tikz[baseline=5pt,scale=.3,thick]{\draw (0,0) --(2,2); \draw
  (0,0) --(-1,1); \draw (1,3) --(2,2); \draw (1,3) --(-1,1); \draw
  (1,1) -- (0,2); \draw[very thick] (-.7,.4) -- (0,-.3) -- (.7,.4); 
\draw[very thick] (-3.3,.4) -- (-4,-.3) -- (-4.7,.4); 
}
\]

\subsubsection*{Case 1: $\vartheta_1-\vartheta_2<-4$}
We'll exemplify this case with $\vartheta_1=0,\vartheta_2=9$.
In this case, our order is $p_1 > p_2> p_3>p_4>p_5$. 

Consider the set $\mathscr{D}=\{\{1,3\},\{2,7\},
\{8,10\}\}$.  For this collection, the tableaux with their
corresponding $\hB_\sS$'s are:
\[
\tikz[baseline=5pt,scale=.3,thick]{\draw (0,0) --(-2,2); \draw (0,0)
  --(1,1); \draw (-1,3) --(-2,2); \draw (-1,3) --(1,1); \draw (-1,1)
  -- (0,2); \draw[very thick] (-.7,.3) -- (0,-.3) -- (.7,.4);
  \draw[very thick] (3.3,.4) -- (4,-.3) -- (4.7,.4); \node[scale=.7]
  at (0,1) {$3$}; \node[scale=.7] at (-1,2) {$1$}; } \qquad
\tikz[baseline=5pt,xscale=.25, yscale=-1,thick]{\draw[wei] (0,0) -- node[below,at
  end]{$1$} (0,1); \draw[wei] (9,0) -- (9,1) node[below, at
  end]{$-1$};\draw[green!50!black] (-1,0) --  (1,1); \draw[green!50!black]
  (1,0) --  (3,1);\draw[dashed, green!50!black] (-5,0) --
  (-3,1); \draw[dashed, green!50!black] (-3,0) -- (-1,1);} \qquad \qquad
\tikz[baseline=5pt,scale=.3,thick]{\draw (0,0) --(-2,2); \draw (0,0)
  --(1,1); \draw (-1,3) --(-2,2); \draw (-1,3) --(1,1); \draw (-1,1)
  -- (0,2); \draw[very thick] (-.7,.3) -- (0,-.3) -- (.7,.4);
  \draw[very thick] (3.3,.4) -- (4,-.3) -- (4.7,.4); \node[scale=.7]
  at (0,1) {$10$}; \node[scale=.7] at (-1,2) {$8$};}\qquad
\tikz[baseline=5pt,xscale=.25, yscale=-1,thick]{\draw[wei] (0,0) -- node[below,at
  end]{$1$} (0,1); \draw[wei] (9,0) -- node[below, at end]{$-1$}
  (9,1);\draw[green!50!black] (-1,0) --
  (8,1); \draw[green!50!black] (1,0)-- (10,1);\draw[dashed, green!50!black]
  (-5,0) -- (4,1); \draw[dashed, green!50!black] (-3,0) -- (6,1);}\]
\[\tikz[baseline=5pt,scale=.3,thick]{\draw (0,0) --(-2,2); \draw (0,0)
  --(1,1); \draw (-1,3) --(-2,2); \draw (-1,3) --(1,1); \draw (-1,1)
  -- (0,2); \draw[very thick] (-.7,.3) -- (0,-.3) -- (.7,.4);
  \draw[very thick] (3.3,.4) -- (4,-.3) -- (4.7,.4); \node[scale=.7]
  at (0,1) {$1$}; \node[scale=.7] at (-1,2) {$3$}; } \qquad
\tikz[baseline=5pt,xscale=.25, yscale=-1,thick]{\draw[wei] (0,0) -- node[below,at
  end]{$1$} (0,1); \draw[wei] (9,0) -- (9,1) node[below, at
  end]{$-1$};\draw[green!50!black] (-1,0) --  (3,1); \draw[green!50!black]
  (1,0) --  (1,1);\draw[dashed, green!50!black] (-5,0) --
  (-1,1); \draw[dashed, green!50!black] (-3,0) -- (-3,1);} 
\qquad \qquad \tikz[baseline=5pt,scale=.3,thick]{\draw (0,0) --(-2,2); \draw
  (0,0) --(1,1); \draw (-1,3) --(-2,2); \draw (-1,3) --(1,1); \draw
  (-1,1) -- (0,2); \draw[very thick] (-.7,.3) -- (0,-.3) -- (.7,.4); 
\draw[very thick] (3.3,.4) -- (4,-.3) -- (4.7,.4); \node[scale=.7] at (0,1)
{$8$}; \node[scale=.7] at (-1,2) {$10$};}\qquad
\tikz[baseline=5pt,xscale=.25, yscale=-1,thick]{\draw[wei] (0,0) -- node[below,at
  end]{$1$} (0,1); \draw[wei] (9,0) --  node[below, at end]{$-1$}
  (9,1);\draw[green!50!black] (-1,0) to[out=10,in=-170]  
  (10,1); \draw[green!50!black] (1,0) to[out=10,in=-170]  (8,1);\draw[dashed, green!50!black] (-5,0) to[out=10,in=-170] (6,1); \draw[dashed, green!50!black] (-3,0) to[out=10,in=-170] (4,1);}
\]
\[ \tikz[baseline=5pt,scale=.3,thick]{\draw (0,0) --(-2,2); \draw
  (0,0) --(1,1); \draw (-1,3) --(-2,2); \draw (-1,3) --(1,1); \draw
  (-1,1) -- (0,2); \draw[very thick] (-.7,.3) -- (0,-.3) -- (.7,.4); 
\draw[very thick] (3.3,.4) -- (4,-.3) -- (4.7,.4); \node[scale=.7] at (0,1)
{$2$}; \node[scale=.7] at (-1,2) {$7$};}
\qquad \tikz[baseline=5pt,xscale=.25, yscale=-1,thick]{\draw[wei] (0,0) --
  node[below,at end]{$1$} (0,1); \draw[wei] (9,0) --  node[below, at
  end]{$-1$} (9,1);\draw[green!50!black] (-1,0) to[out=7,in=-173]  (7,1); \draw[green!50!black] (1,0) --   (2,1);\draw[dashed, green!50!black] (-5,0) to[out=7,in=-173] (3,1); \draw[dashed, green!50!black] (-3,0) -- (-2,1);}\qquad\qquad
\tikz[baseline=5pt,scale=.3,thick]{\draw (0,0) --(2,2); \draw
  (0,0) --(-1,1); \draw (1,3) --(2,2); \draw (1,3) --(-1,1); \draw
  (1,1) -- (0,2); \draw[very thick] (-.7,.4) -- (0,-.3) -- (.7,.4); 
\draw[very thick] (3.3,.4) -- (4,-.3) -- (4.7,.4); \node[scale=.7] at (0,1)
{$2$}; \node[scale=.7] at (1,2) {$7$};}\qquad 
\tikz[baseline=5pt,xscale=.25, yscale=-1,thick]{\draw[wei] (0,0) -- node[below,at
  end]{$1$} (0,1); \draw[wei] (9,0) --   node[below,at
  end]{$-1$}  (9,1);\draw[green!50!black] (6,0)
 --   (7,1); \draw[green!50!black] (1,0) --  (2,1);\draw[dashed, green!50!black] (2,0) -- (3,1); \draw[dashed, green!50!black] (-3,0) -- (-2,1);}
\]\[
\tikz[baseline=5pt,scale=.3,thick]{\draw (0,0) --(1,1); \draw
  (0,0) --(-1,1); \draw (0,2) --(-1,1); \draw
  (1,1) -- (0,2); \draw[very thick] (-.7,.4) -- (0,-.3) -- (.7,.4); 
\draw[very thick] (3.3,.4) -- (4,-.3) -- (4.7,.4); \draw (4,0)
--(3,1)-- (4,2) -- (5,1)--cycle;\node[scale=.7] at (0,1)
{$8$}; \node[scale=.7] at (4,1) {$10$};
}
\qquad 
\tikz[baseline=5pt,xscale=.25, yscale=-1,thick]{\draw[wei] (0,0) -- node[below,at
  end]{$1$} (0,1); \draw[wei] (9,0) -- node[below, at end]{$-1$}
  (9,1);\draw[green!50!black] (10,0) --   (10,1); \draw[green!50!black] (1,0)
  to[out=15,in=-165]  (8,1);\draw[dashed, green!50!black] (-3,0)
  to[out=15,in=-165] (4,1); \draw[dashed, green!50!black] (6,0) --
  (6,1);}\qquad \qquad
\tikz[baseline=5pt,scale=.3,thick]{\draw (0,0) --(-2,2); \draw
  (0,0) --(1,1); \draw (-1,3) --(-2,2); \draw (-1,3) --(1,1); \draw
  (-1,1) -- (0,2); \draw[very thick] (-.7,.4) -- (0,-.3) -- (.7,.4); 
\draw[very thick] (-3.3,.4) -- (-4,-.3) -- (-4.7,.4); 
\node[scale=.7] at (-1,2)
{$8$}; \node[scale=.7] at (0,1) {$10$};
} \qquad 
\tikz[baseline=5pt,xscale=.25, yscale=-1,thick]{\draw[wei] (0,0) -- node[below,at end]{$1$} (0,1);
  \draw[wei] (9,0) --  node[below, at end]{$-1$} (9,1);\draw[green!50!black] (10,0)
  --   (10,1); \draw[green!50!black] (8,0) to  (8,1);\draw[dashed, green!50!black] (4,0) to (4,1); \draw[dashed, green!50!black] (6,0) -- (6,1);}
\]
Note that \[\tikz[baseline=5pt,scale=.3,thick]{\draw (0,0) --(-2,2); \draw
  (0,0) --(1,1); \draw (-1,3) --(-2,2); \draw (-1,3) --(1,1); \draw
  (-1,1) -- (0,2); \draw[very thick] (-.7,.4) -- (0,-.3) -- (.7,.4); 
\draw[very thick] (-3.3,.4) -- (-4,-.3) -- (-4.7,.4); 
\node[scale=.7] at (-1,2)
{$8$}; \node[scale=.7] at (0,1) {$10$};
}\qquad\text{ and }\qquad \tikz[baseline=5pt,scale=.3,thick]{\draw (0,0) --(-2,2); \draw
  (0,0) --(1,1); \draw (-1,3) --(-2,2); \draw (-1,3) --(1,1); \draw
  (-1,1) -- (0,2); \draw[very thick] (-.7,.3) -- (0,-.3) -- (.7,.4); 
\draw[very thick] (3.3,.4) -- (4,-.3) -- (4.7,.4); \node[scale=.7] at (0,1)
{$3$}; \node[scale=.7] at (-1,2) {$1$};
}\] are not standard tableaux in the usual sense, but are
$D$-tableaux as defined above.

\subsubsection*{Case 2: $-4<\vartheta_1-\vartheta_2<4$}
We'll exemplify this case with $\vartheta_1=0,\vartheta_2=1.5$.
In this case, our partial order is $p_1, p_4> p_3>p_2,p_5$. 

A loading in this case is given by specifying the position the point $a$
labeled $1$ and the point $b$ labeled $2$.  We denote this loading
$\Bi_{a,b}$. With $\mathscr{D}$ as before, the tableaux with their
corresponding $\hB_\sS$'s are:
\[
\tikz[baseline=5pt,scale=.3,thick]{\draw (0,0) --(-2,2); \draw
  (0,0) --(1,1); \draw (-1,3) --(-2,2); \draw (-1,3) --(1,1); \draw
  (-1,1) -- (0,2); \draw[very thick] (-.7,.3) -- (0,-.3) -- (.7,.4); 
\draw[very thick] (3.3,.4) -- (4,-.3) -- (4.7,.4); \node[scale=.7] at (0,1)
{$3$}; \node[scale=.7] at (-1,2) {$1$};
}
\qquad 
\tikz[baseline=5pt,xscale=.25, yscale=-1,thick]{\draw[wei] (0,0) -- node[below,at end]{$1$} (0,1); \draw[wei] (1.5,0) -- node[below, at end] {$-1$} (1.5,1);\draw[green!50!black] (-1,0) --  (1,1); \draw[green!50!black] (1,0) -- (3,1);\draw[dashed, green!50!black] (-5,0) -- (-3,1); \draw[dashed, green!50!black] (-3,0) -- (-1,1);}
\qquad \qquad
\tikz[baseline=5pt,scale=.3,thick]{\draw (0,0) --(-2,2); \draw
  (0,0) --(1,1); \draw (-1,3) --(-2,2); \draw (-1,3) --(1,1); \draw
  (-1,1) -- (0,2); \draw[very thick] (-.7,.3) -- (0,-.3) -- (.7,.4); 
\draw[very thick] (3.3,.4) -- (4,-.3) -- (4.7,.4); \node[scale=.7] at (0,1)
{$1$}; \node[scale=.7] at (-1,2) {$3$};
}
\qquad 
\tikz[baseline=5pt,xscale=.25, yscale=-1,thick]{\draw[wei] (0,0) -- node[below,at
  end]{$1$} (0,1); \draw[wei] (1.5,0) -- node[below, at end]
  {$-1$} (1.5,1);\draw[green!50!black] (-1,0) --  (3,1);
  \draw[green!50!black] (1,0) -- (1,1);\draw[dashed, green!50!black]
  (-5,0) -- (-1,1); \draw[dashed, green!50!black] (-3,0) -- (-3,1);}\]
\[
\tikz[baseline=5pt,scale=.3,thick]{\draw (0,0) --(-2,2); \draw
  (0,0) --(1,1); \draw (-1,3) --(-2,2); \draw (-1,3) --(1,1); \draw
  (-1,1) -- (0,2); \draw[very thick] (-.7,.3) -- (0,-.3) -- (.7,.4); 
\draw[very thick] (3.3,.4) -- (4,-.3) -- (4.7,.4); \node[scale=.7] at (0,1)
{$2$}; \node[scale=.7] at (-1,2) {$7$};
}
\qquad 
\tikz[baseline=5pt,xscale=.25, yscale=-1,thick]{\draw[wei] (0,0) -- node[below,at end]{$1$} (0,1); \draw[wei] (1.5,0) -- node[below, at end] {$-1$} (1.5,1);\draw[green!50!black] (-1,0) --  (7,1); \draw[green!50!black] (1,0) -- (2,1);\draw[dashed, green!50!black] (-5,0) -- (3,1); \draw[dashed, green!50!black] (-3,0) -- (-2,1);}
\]
\[
\tikz[baseline=5pt,scale=.3,thick]{\draw (0,0) --(-2,2); \draw
  (0,0) --(1,1); \draw (-1,3) --(-2,2); \draw (-1,3) --(1,1); \draw
  (-1,1) -- (0,2); \draw[very thick] (-.7,.3) -- (0,-.3) -- (.7,.4); 
\draw[very thick] (3.3,.4) -- (4,-.3) -- (4.7,.4); \node[scale=.7] at (0,1)
{$8$}; \node[scale=.7] at (-1,2) {$10$};}\qquad 
\tikz[baseline=5pt,xscale=.25, yscale=-1,thick]{\draw[wei] (0,0) -- node[below,at
  end]{$1$} (0,1); \draw[wei] (1.5,0) --  node[below, at end]
  {$-1$}  (1.5,1);\draw[green!50!black] (-1,0) to[out=10,in=-170]
  (10,1); \draw[green!50!black] (1,0) -- (8,1);\draw[dashed,
  green!50!black] (-5,0) to[out=10,in=-170] (6,1); \draw[dashed,
  green!50!black] (-3,0) -- (4,1);}
\qquad\qquad\tikz[baseline=5pt,scale=.3,thick]{\draw (0,0) --(-2,2); \draw
  (0,0) --(1,1); \draw (-1,3) --(-2,2); \draw (-1,3) --(1,1); \draw
  (-1,1) -- (0,2); \draw[very thick] (-.7,.3) -- (0,-.3) -- (.7,.4); 
\draw[very thick] (3.3,.4) -- (4,-.3) -- (4.7,.4); \node[scale=.7] at (0,1)
{$10$}; \node[scale=.7] at (-1,2) {$8$};}\qquad 
\tikz[baseline=5pt,xscale=.25, yscale=-1,thick]{\draw[wei] (0,0) -- node[below,at
  end]{$1$} (0,1); \draw[wei] (1.5,0) --  node[below, at end]
  {$-1$}  (1.5,1);\draw[green!50!black] (-1,0) to (8,1);
  \draw[green!50!black] (1,0) -- (10,1);\draw[dashed, green!50!black]
  (-5,0) to (4,1); \draw[dashed, green!50!black] (-3,0) --
  (6,1);}\]
\[ \tikz[baseline=5pt,scale=.3,thick]{\draw (0,0) --(2,2); \draw
  (0,0) --(-1,1); \draw (1,3) --(2,2); \draw (1,3) --(-1,1); \draw
  (1,1) -- (0,2); \draw[very thick] (-.7,.4) -- (0,-.3) -- (.7,.4); 
\draw[very thick] (3.3,.4) -- (4,-.3) -- (4.7,.4); \node[scale=.7] at (0,1)
{$2$}; \node[scale=.7] at (1,2) {$7$};
}\qquad 
\tikz[baseline=5pt,xscale=.25, yscale=-1,thick]{\draw[wei] (0,0) -- node[below,at
  end]{$1$} (0,1); \draw[wei] (1.5,0) -- node[below, at end] {$-1$}
  (1.5,1);\draw[green!50!black] (6,0) -- (7,1); \draw[green!50!black]
  (1,0) -- (2,1) ;\draw[dashed, green!50!black] (-3,0) -- (-2,1);
  \draw[dashed, green!50!black] (2,0) -- (3,1);}
\qquad \qquad 
\tikz[baseline=5pt,scale=.3,thick]{\draw (0,0) --(1,1); \draw
  (0,0) --(-1,1); \draw (0,2) --(-1,1); \draw
  (1,1) -- (0,2); \draw[very thick] (-.7,.4) -- (0,-.3) -- (.7,.4); 
\draw[very thick] (3.3,.4) -- (4,-.3) -- (4.7,.4); \draw (4,0)
--(3,1)-- (4,2) -- (5,1)--cycle;\node[scale=.7] at (0,1)
{$1$}; \node[scale=.7] at (4,1) {$3$};
}
\qquad 
\tikz[baseline=5pt,xscale=.25, yscale=-1,thick]{\draw[wei] (0,0) -- node[below,at
  end]{$1$} (0,1); \draw[wei] (1.5,0) -- node[below, at end] {$-1$}
  (1.5,1);\draw[green!50!black] (2,0) --   (3,1);
  \draw[green!50!black] (1,0) --  (1,1);\draw[dashed, green!50!black]
  (-3,0) -- (-3,1); \draw[dashed, green!50!black] (-2,0) -- (-1,1);}\]
\[
\tikz[baseline=5pt,scale=.3,thick]{\draw (0,0) --(1,1); \draw
  (0,0) --(-1,1); \draw (0,2) --(-1,1); \draw
  (1,1) -- (0,2); \draw[very thick] (-.7,.4) -- (0,-.3) -- (.7,.4); 
\draw[very thick] (3.3,.4) -- (4,-.3) -- (4.7,.4); \draw (4,0)
--(3,1)-- (4,2) -- (5,1)--cycle;\node[scale=.7] at (0,1)
{$2$}; \node[scale=.7] at (4,1) {$7$};
}
\qquad 
\tikz[baseline=5pt,xscale=.25, yscale=-1,thick]{\draw[wei] (0,0) -- node[below,at
  end]{$1$} (0,1); \draw[wei] (1.5,0) -- node[below, at end] {$-1$}
  (1.5,1);\draw[green!50!black] (2,0) --   (7,1); \draw[green!50!black] (1,0) --  (2,1);\draw[dashed, green!50!black] (-3,0) -- (-2,1); \draw[dashed, green!50!black] (-2,0) -- (3,1);}
\qquad \qquad 
\tikz[baseline=5pt,scale=.3,thick]{\draw (0,0) --(1,1); \draw
  (0,0) --(-1,1); \draw (0,2) --(-1,1); \draw
  (1,1) -- (0,2); \draw[very thick] (-.7,.4) -- (0,-.3) -- (.7,.4); 
\draw[very thick] (3.3,.4) -- (4,-.3) -- (4.7,.4); \draw (4,0)
--(3,1)-- (4,2) -- (5,1)--cycle;\node[scale=.7] at (0,1)
{$7$}; \node[scale=.7] at (4,1) {$2$};
}
\qquad 
\tikz[baseline=5pt,xscale=.25, yscale=-1,thick]{\draw[wei] (0,0) -- node[below,at
  end]{$1$} (0,1); \draw[wei] (1.5,0) -- node[below, at end] {$-1$}
  (1.5,1);\draw[green!50!black] (2,0) --  (2,1);
  \draw[green!50!black] (1,0) to (7,1);\draw[dashed,
  green!50!black] (-3,0) to (3,1); \draw[dashed,
  green!50!black] (-2,0) -- (-2,1);}\]
\[
\tikz[baseline=5pt,scale=.3,thick]{\draw (0,0) --(1,1); \draw
  (0,0) --(-1,1); \draw (0,2) --(-1,1); \draw
  (1,1) -- (0,2); \draw[very thick] (-.7,.4) -- (0,-.3) -- (.7,.4); 
\draw[very thick] (3.3,.4) -- (4,-.3) -- (4.7,.4); \draw (4,0)
--(3,1)-- (4,2) -- (5,1)--cycle;\node[scale=.7] at (0,1)
{$8$}; \node[scale=.7] at (4,1) {$10$};
}
\qquad 
\tikz[baseline=5pt,xscale=.25, yscale=-1,thick]{\draw[wei] (0,0) -- node[below,at
  end]{$1$} (0,1); \draw[wei] (1.5,0) -- node[below, at end] {$-1$}
  (1.5,1);\draw[green!50!black] (2,0) --   (10,1); \draw[green!50!black] (1,0) --  (8,1);\draw[dashed, green!50!black] (-3,0) -- (4,1); \draw[dashed, green!50!black] (-2,0) -- (6,1);}
\qquad \qquad 
\tikz[baseline=5pt,scale=.3,thick]{\draw (0,0) --(1,1); \draw
  (0,0) --(-1,1); \draw (0,2) --(-1,1); \draw
  (1,1) -- (0,2); \draw[very thick] (-.7,.4) -- (0,-.3) -- (.7,.4); 
\draw[very thick] (3.3,.4) -- (4,-.3) -- (4.7,.4); \draw (4,0)
--(3,1)-- (4,2) -- (5,1)--cycle;\node[scale=.7] at (0,1)
{$10$}; \node[scale=.7] at (4,1) {$8$};
}
\qquad 
\tikz[baseline=5pt,xscale=.25, yscale=-1,thick]{\draw[wei] (0,0) -- node[below,at
  end]{$1$} (0,1); \draw[wei] (1.5,0) -- node[below, at end] {$-1$}
  (1.5,1);\draw[green!50!black] (2,0) --  (8,1); \draw[green!50!black] (1,0) to[in=-169,out=11] (10,1);\draw[dashed, green!50!black] (-3,0) to[in=-169,out=11] (6,1); \draw[dashed, green!50!black] (-2,0) -- (4,1);}\]
\[
\tikz[baseline=5pt,scale=.3,thick]{\draw (0,0) --(-2,2); \draw
  (0,0) --(1,1); \draw (-1,3) --(-2,2); \draw (-1,3) --(1,1); \draw
  (-1,1) -- (0,2); \draw[very thick] (-.7,.4) -- (0,-.3) -- (.7,.4); 
\draw[very thick] (-3.3,.4) -- (-4,-.3) -- (-4.7,.4); 
\node[scale=.7] at (-1,2)
{$1$}; \node[scale=.7] at (0,1) {$3$};
} \qquad 
\tikz[baseline=5pt,xscale=.25, yscale=-1,thick]{\draw[wei] (0,0) -- node[below,at
  end]{$1$} (0,1); \draw[wei] (1.5,0) -- node[below, at end] {$-1$}
  (1.5,1);\draw[green!50!black] (-1,0) --  (1,1); \draw[green!50!black] (2,0)
  to  (3,1);\draw[dashed, green!50!black] (-5,0) to (-3,1); \draw[dashed, green!50!black] (-2,0) -- (-1,1);}
\qquad\qquad\tikz[baseline=5pt,scale=.3,thick]{\draw (0,0) --(-2,2); \draw
  (0,0) --(1,1); \draw (-1,3) --(-2,2); \draw (-1,3) --(1,1); \draw
  (-1,1) -- (0,2); \draw[very thick] (-.7,.4) -- (0,-.3) -- (.7,.4); 
\draw[very thick] (-3.3,.4) -- (-4,-.3) -- (-4.7,.4); 
\node[scale=.7] at (-1,2)
{$7$}; \node[scale=.7] at (0,1) {$2$};
}
\qquad\tikz[baseline=5pt,xscale=.25, yscale=-1,thick]{\draw[wei] (0,0) -- node[below,at end]{$1$} (0,1);
  \draw[wei] (1.5,0) -- node[below, at end] {$-1$} (1.5,1);\draw[green!50!black]
  (2,0) --  (2,1); \draw[green!50!black] (-1,0)
  to[out=5,in=-175]  (7,1);\draw[dashed, green!50!black] (-5,0) to[out=5,in=-175]
  (3,1); \draw[dashed, green!50!black] (-2,0) -- (-2,1);}
\]
\[\tikz[baseline=5pt,scale=.3,thick]{\draw (0,0) --(-2,2); \draw
  (0,0) --(1,1); \draw (-1,3) --(-2,2); \draw (-1,3) --(1,1); \draw
  (-1,1) -- (0,2); \draw[very thick] (-.7,.4) -- (0,-.3) -- (.7,.4); 
\draw[very thick] (-3.3,.4) -- (-4,-.3) -- (-4.7,.4); 
\node[scale=.7] at (-1,2)
{$10$}; \node[scale=.7] at (0,1) {$8$};
}
\qquad\tikz[baseline=5pt,xscale=.25, yscale=-1,thick]{\draw[wei] (0,0) -- node[below,at end]{$1$} (0,1);
  \draw[wei] (1.5,0) -- node[below, at end] {$-1$} (1.5,1);\draw[green!50!black]
  (2,0) --  (8,1); \draw[green!50!black] (-1,0)
  to[out=5,in=-175]  (10,1);\draw[dashed, green!50!black] (-5,0) to[out=5,in=-175]
  (6,1); \draw[dashed, green!50!black] (-2,0) -- (4,1);}
\qquad \qquad \tikz[baseline=5pt,scale=.3,thick]{\draw (0,0) --(-2,2); \draw
  (0,0) --(1,1); \draw (-1,3) --(-2,2); \draw (-1,3) --(1,1); \draw
  (-1,1) -- (0,2); \draw[very thick] (-.7,.4) -- (0,-.3) -- (.7,.4); 
\draw[very thick] (-3.3,.4) -- (-4,-.3) -- (-4.7,.4); 
\node[scale=.7] at (-1,2)
{$8$}; \node[scale=.7] at (0,1) {$10$};
}
\qquad\tikz[baseline=5pt,xscale=.25, yscale=-1,thick]{\draw[wei] (0,0) -- node[below,at end]{$1$} (0,1);
  \draw[wei] (1.5,0) -- node[below, at end] {$-1$} (1.5,1);\draw[green!50!black]
  (2,0) --  (10,1); \draw[green!50!black] (-1,0)
  to (8,1);\draw[dashed, green!50!black] (-5,0) to
  (4,1); \draw[dashed, green!50!black] (-2,0) -- (6,1);}
\]
\[
\tikz[baseline=5pt,scale=.3,thick]{\draw (0,0) --(2,2); \draw
  (0,0) --(-1,1); \draw (1,3) --(2,2); \draw (1,3) --(-1,1); \draw
  (1,1) -- (0,2); \draw[very thick] (-.7,.4) -- (0,-.3) -- (.7,.4); 
\draw[very thick] (-3.3,.4) -- (-4,-.3) -- (-4.7,.4); \node[scale=.7] at (1,2)
{$7$}; \node[scale=.7] at (0,1) {$2$};
}\qquad 
\tikz[baseline=5pt,xscale=.25, yscale=-1,thick]{\draw[wei] (0,0) -- node[below,at
  end]{$1$} (0,1); \draw[wei] (1.5,0) -- node[below, at end] {$-1$}
  (1.5,1);\draw[green!50!black] (2,0) --  (2,1); \draw[green!50!black] (7,0) to  (7,1);\draw[dashed, green!50!black] (-2,0) to (-2,1); \draw[dashed, green!50!black] (3,0) -- (3,1);}
\]

\subsubsection*{Case 3: $\vartheta_1-\vartheta_2>4$}
This is essentially the same as Case 1 with components reversed; in
particular, the partial order is $p_4 > p_5 > p_3> p_1> p_2$.  
\end{example}

\begin{definition}
  For each pair $\sS,\sT$ of tableaux of the same shape, we let
  $\hC_{\sS,\sT}=\hB_{\sS}^*\hB_{\sT}$.
\end{definition}
For example:
\[
\bS=\tikz[baseline=5pt,scale=.3,thick]{\draw (0,0) --(-2,2); \draw (0,0)
  --(1,1); \draw (-1,3) --(-2,2); \draw (-1,3) --(1,1); \draw (-1,1)
  -- (0,2); \draw[very thick] (-.7,.3) -- (0,-.3) -- (.7,.4);
  \draw[very thick] (3.3,.4) -- (4,-.3) -- (4.7,.4); \node[scale=.7]
  at (0,1) {$3$}; \node[scale=.7] at (-1,2) {$1$}; } \qquad \sT=\tikz[baseline=5pt,scale=.3,thick]{\draw (0,0) --(-2,2); \draw
  (0,0) --(1,1); \draw (-1,3) --(-2,2); \draw (-1,3) --(1,1); \draw
  (-1,1) -- (0,2); \draw[very thick] (-.7,.3) -- (0,-.3) -- (.7,.4); 
\draw[very thick] (3.3,.4) -- (4,-.3) -- (4.7,.4); \node[scale=.7] at (0,1)
{$2$}; \node[scale=.7] at (-1,2) {$7$};}
\qquad 
\hB_{\sS,\sT}=\tikz[baseline=7.5pt,xscale=.25,thick,yscale=1.5]{\draw[wei] (0,0) -- node[below,at
  start]{$1$} (0,1); \draw[wei] (9,0) -- (9,1) node[below, at
  start]{$-1$};\draw[green!50!black] (-1,.5) --  (1,1); \draw[green!50!black]
  (1,.5) --  (3,1);\draw[dashed, green!50!black] (-5,.5) --
  (-3,1); \draw[dashed, green!50!black] (-3,.5) -- (-1,1);  \draw[green!50!black] (-1,.5) to[out=-7,in=173]  (7,0); \draw[green!50!black] (1,.5) --   (2,0);\draw[dashed, green!50!black] (-5,.5) to[out=-7,in=173] (3,0); \draw[dashed, green!50!black] (-3,.5) -- (-2,0);} \]

This yields $4^2+2^2+3^2+2^2+1^2=34$ basis vectors,
which we will not write all of in the interest of saving trees.

\begin{lemma}\label{lem:span} The
  space $e_{D}\PC^\vartheta e_{D'}$ is spanned by the elements
  $\hC_{\sS,\sT}$ for $\sS$ a $D$-tableau and $\sT$ a $D'$-tableau.  
\end{lemma}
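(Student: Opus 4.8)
The plan is to refine the reduction procedure in the proof of Lemma~\ref{lem:Morita}, keeping track of which $\hB$'s occur, and then to bootstrap with an induction on the dominance preorder on the finitely many sets $D_\eta$ with $|\eta|=m$ (we may assume $|D|=|D'|=m$, since otherwise a diagram would have to join unequal numbers of endpoints and both sides vanish). The argument of Lemma~\ref{lem:Morita}, applied at a single generic height $y=1/2$, already shows that any diagram in $e_D\PC^\vartheta e_{D'}$ equals, modulo the relations, a linear combination of diagrams whose slice at $y=1/2$ is some $D_\xi$; each such diagram factors as $u\cdot v$ with $u\in e_D\PC^\vartheta e_{D_\xi}$ the part in $[1/2,1]$ and $v\in e_{D_\xi}\PC^\vartheta e_{D'}$ the part in $[0,1/2]$. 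So the lemma reduces to the following local claim, together with its mirror under the reflection $*$: for each multipartition $\xi$ of size $m$, the space $e_D\PC^\vartheta e_{D_\xi}$ is spanned by the elements $\hB_{\sS}^*$ with $\sS$ a $D$-tableau of shape $\xi$, together with diagrams containing an internal horizontal slice equal to $D_\eta$ for some $\eta$ strictly above $\xi$ in the dominance preorder. Granting this, an induction running downward from the maximal $\xi$ (for which the correction terms are absent) expresses every element of $e_D\PC^\vartheta e_{D'}$ as a combination of diagrams obtained by gluing a representative of $\hB_{\sS}^*$ to one of $\hB_{\sT}$ along their common $D_\xi$-end, that is, of the $\hC_{\sS,\sT}$ with $\sS$ a $D$-tableau and $\sT$ a $D'$-tableau of the common shape.

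To establish the local claim I would run the reductions of Lemma~\ref{lem:Morita} inside the strip occupied by a diagram $u\in e_D\PC^\vartheta e_{D_\xi}$. Exactly as there, any dot is first transported toward a red line using \eqref{qHghost-bigon1}, \eqref{qHghost-bigon2}, \eqref{nilHecke-2} and \eqref{eq:2}, and then absorbed via \eqref{qHcost} and \eqref{smart-red-triple}, at the cost of diagrams with an internal slice strictly higher in dominance; so modulo higher terms $u$ is dot-free. A dot-free diagram is a wiring pattern of strands, ghosts and red lines, and removing every bigon between two strands or between a strand and a ghost --- using \eqref{NilHecke3}, \eqref{nilHecke-2}, \eqref{eq:triple-point-1}, \eqref{eq:triple-point-2} and \eqref{dumb-red-triple}, again modulo higher terms --- leaves a diagram determined up to isotopy and higher terms by the bijection $\sigma$ it induces between the boxes of $\xi$ and the points of $D$. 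Reading $\sigma$ as a filling $\sS$ of the diagram of $\xi$, the reduced diagram is a chosen representative of $\hB_{\sS}^*$ as soon as $\sS$ obeys the $D$-tableau inequalities; the ambiguity in $\hB_{\sS}$ coming from triple-point resolutions is harmless, since any two choices differ by \eqref{eq:triple-point-1}--\eqref{eq:triple-point-2}, whose correction terms lie among the higher terms already being discarded.

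What remains --- and this is the crux --- is to show that a filling $\sigma$ violating one of the $D$-tableau inequalities cannot survive as a reduced dot-free diagram. Here the geometry of $D_\xi$ from Figure~\ref{fig:partition} enters: two boxes adjacent in a column of a component give two strands that are $\epsilon$-close at the top of the strip, two boxes adjacent in a row give a strand $\epsilon$-close to another strand's ghost, and the box $(1,1,m)$ gives the strand just right of the red line at $\vartheta_m$. A violation of ``entry in $(1,1,m)>\vartheta_m$'' forces that strand across the red line, producing a configuration removable by \eqref{qHcost} or \eqref{dumb-red-triple}; a violation of one of the neighbour inequalities puts the two $\epsilon$-close boxes into the wrong order at the bottom $D$, so the corresponding strand--strand or strand--ghost pair must cross immediately below the $\epsilon$-close region, creating a bigon that \eqref{NilHecke3} or \eqref{eq:triple-point-1}--\eqref{eq:triple-point-2} removes --- and in every case the correction terms have an internal slice strictly higher in dominance, while the non-local relation kills any diagram whose reduction instead bunches strands a distance $>|\ck|$ from the red lines. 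This is precisely the box-labelling at the end of the proof of Lemma~\ref{lem:Morita} run in reverse, now over the bottom set $D$ rather than over $D_\xi$, and matching it to the relatively addable and removable boxes of Figure~\ref{fig:add-remove} is the bookkeeping I expect to be the main obstacle: one must check case by case that each failed inequality produces a genuinely higher correction, so that the induction on the dominance preorder terminates. Once that is in hand, the rest is a direct transcription of the Lemma~\ref{lem:Morita} argument.
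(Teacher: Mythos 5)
Your proposal is correct and follows essentially the same route as the paper: reduce via Lemma~\ref{lem:Morita} to diagrams factoring through some $e_{D_\xi}$, induct on weighted dominance order and on crossing number, read each half of the diagram as a filling of $\xi$, and observe that any violation of the $D$-tableau inequalities produces a ``bad crossing'' which, performed first, pushes the diagram into a strictly higher cell. The only cosmetic difference is that you isolate the one-sided statement about $e_D\PC^\vartheta e_{D_\xi}$ as an explicit local claim, whereas the paper treats both halves of $ae_\xi b$ simultaneously.
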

\begin{proof}
By Lemma \ref{lem:Morita}, every element can be written as a sum of
elements of the form $ae_{\xi}b$ for different multipartitions $\xi$.  

We need to show that $ae_{\xi}b$
can be written as a sum of the elements $\hC_{\sS,\sT}$.  Let us induct first on
$\xi$ according to
weighted dominance order, and then on the
number of crossings in the diagram below $e_{D_\xi}$ plus the number
above $e_{D_\xi}$.  Note that we can assume that any bigon which
appears is bisected by the line $y=\nicefrac 12$, and that all dots
lie on this line.  Thus,
we can associate the top half and the bottom half to two fillings of
the diagram of $\xi$, by filling each box with the top and bottom
endpoint of each strand.  

If a diagram has no crossings, it must be
ordered in Russian reading order.  There is only one way up to isotopy
of drawing this diagram (since there are no crossings of two strands
or two ghosts, and thus no triangles).

If there is a pair of entries which violate the partition condition,
that means either a strand for the box $(i,j,p)$ crosses a red strand
to its left if $\sS(1,1,p) < \vartheta_i$,  the ghost to its
left if $\sS(i,j,p) <\sS(i,j-1,p)+\kappa$, or the ghost to its
right if $\sS(i,j,p) >\sS(i+1,j,p)+\kappa$.  In either case, doing just this crossing will result in a
slice higher in dominance order, and we can isotope to assume that
this crossing is the first thing we do.  Thus, we can write this
element using those corresponding to $D$-tableaux, and elements
factoring through higher multipartitions.

Now, consider the general case.  First of all, any pair of diagrams
corresponding to the same tableau differ by shorter elements, which
lie in
the desired span by induction.  Thus, we need only show that this is
the span of some diagrams corresponding to tableaux (in our sense),
not the fixed ones $\hC_{\sS,\sT}$.

However, if $\sS$ is not a tableau, as we argued above, then either
\begin{enumerate}
\renewcommand{\labelenumi}{(\roman{enumi})}
\item $\sS(1,1,p) < \vartheta_p$, holds for some $p$,
\item or  $\sS(i,j,p) <\sS(i,j-1,p)+\kappa$, holds for some $i,j,p$,
\item  or  $\sS(i,j,p) >\sS(i+1,j,p)+\kappa$ holds for some $i,j,p$.
\end{enumerate}

Each of these inequalities implies that there is a ``bad crossing'':
\begin{enumerate}
\renewcommand{\labelenumi}{(\roman{enumi})}
\item the green strand corresponding to the box $(1,1,p)$ crosses the
  $p$th red strand,
\item or the green strand for the box $(i,j,p)$ crosses the ghost of that for $(i,j-1,p)$
\item  or the green strand for $(i,j,p)$ crosses the ghost of that for $(i+1,j)$.
\end{enumerate}
If we choose a diagram for
this filling where this ``bad crossing'' is the first that occurs,
then after isotopy, the slice after the ``bad crossing'' is higher in
dominance order than $e_{D_\xi}$.  Thus, this diagram is in the span
of diagrams factoring through a multipartition greater in weighted
dominance order.  Thus, these diagrams are in the span of
$\hC_{\sS',\sT'}$ for $\sS',\sT'$ tableaux by induction; this
completes the proof that these elements span.  
\end{proof}

\begin{lemma}\label{lem:hbasis}
The elements
  $\hC_{\sS,\sT}$ for $\sS$ a $D$-tableau and $\sT$ a $D'$-tableau of
  the same shape are a basis of $e_{D}\PC^\vartheta e_{D'}$ as a free $S$-module.
\end{lemma}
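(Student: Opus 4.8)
The preceding lemma already supplies the spanning statement, so what remains is to show that the $\hC_{\sS,\sT}$, for $\sS$ a $D$-tableau and $\sT$ a $D'$-tableau of a common shape, are linearly independent, and that $e_D\PC^\vartheta e_{D'}$ is free over $S$. The plan is to construct a faithful representation of $\PC^\vartheta$ of ``polynomial type,'' in the spirit of the basis proofs for weighted KLR algebras in \cite{WebwKLR}, and then to extract linear independence from a leading-term analysis of the operators attached to the $\hC_{\sS,\sT}$.

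Concretely, for each generic subset $D''\subset\R$ I would introduce a free $S$-module $M_{D''}:=S[y_d:d\in D'']$, with one polynomial generator per strand subject to the cyclotomic factor coming from the red lines, and set $M:=\bigoplus_{D''}M_{D''}$ over the generic subsets in the relevant weight. A straight-line idempotent $e_{D''}$ acts as projection onto $M_{D''}$; a square on the strand at position $d$ acts by multiplication by $y_d$ (and its formal inverse by the inverse of $y_d$ after localizing); a crossing acts by a Demazure-type divided-difference operator, deformed by $\pq$ exactly as forced by \eqref{nilHecke-2}, \eqref{qHghost-bigon1}, \eqref{qHghost-bigon2}, \eqref{eq:triple-point-1}, \eqref{eq:triple-point-2}; and a red strand labelled $\PQ_i$ immediately to the left of position $d$ contributes the factor $(y_d-\PQ_i)$, as dictated by \eqref{qHcost}. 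One then checks, relation by relation, that this is a well-defined action. The only genuinely subtle point is the non-local relation: one must show that when the strands split into two groups separated by a gap $>|\ck|$ with all red strands in the right-hand group, the corresponding composite operator vanishes; this is where the ghost bookkeeping of \eqref{dumb-red-triple} and the red-strand relations do their work, paralleling the left-justification argument used inside the proof of Lemma \ref{lem:Morita}.

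Granting such a representation, I would prove linear independence by a triangularity argument. Order the pairs $(\sS,\sT)$ first by the common shape $\xi$ under weighted dominance order, and then by length (the minimal number of crossings occurring in $\hB_\sS$ plus that in $\hB_\sT$). Applying $\hC_{\sS,\sT}=\hB_\sS\hB_\sT^*$ to the canonical cyclic vector $1\in M_D$, one shows the result lies in $M_{D_\xi}$ and that, modulo contributions of strictly higher shape or strictly greater length (which, by the relations \eqref{nilHecke-2}--\eqref{dumb-red-triple} and \eqref{eq:2}, is precisely how products of shorter or ``more dominant'' diagrams enter), its leading term is a nonzero $S$-monomial that depends only on, and determines, the tableau data $(\sS,\sT)$. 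Hence no nontrivial $S$-linear combination of the $\hC_{\sS,\sT}$ acts by zero on $M$, so these elements are linearly independent in $\End_S(M)$, a fortiori in $\PC^\vartheta$; together with the spanning lemma this identifies $e_D\PC^\vartheta e_{D'}$ as an $S$-module generated by the $\hC_{\sS,\sT}$ with no relations among them, and freeness over $S$ follows since the whole construction is uniform in the base (one may also argue over a universal subring and base-change by flatness of free modules, or simply invoke the standard fact that a spanning set of the right finite cardinality in a free module of that rank over a local ring is automatically a basis).

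The step I expect to be the real obstacle is the leading-term computation: making precise that composition is triangular with respect to the combined (shape, length) order --- i.e.\ that rewriting $\hB_\sS\hB_\sT^*$ back into the spanning set produces $\hC_{\sS,\sT}$ itself plus terms of strictly higher shape or strictly greater length --- and pinning down the surviving monomial in $M_{D_\xi}$ explicitly enough to recover $(\sS,\sT)$. As an alternative to building the representation by hand, one can instead import the basis theorem of \cite{WebwKLR}: via the invertible reparametrization of generators of \cite{WebBKnote}, $\PC^\vartheta$ is a reduced steadied quotient of a weighted KLR algebra with red strands, for which \cite{WebwKLR} furnishes a basis of crossing-minimal decorated diagrams that do not factor through an unsteady idempotent; one then needs only a combinatorial bijection between that basis of $e_D(\cdots)e_{D'}$ and pairs of same-shape tableaux --- the dictionary of Lemma \ref{lem:Morita}, refined to record dots and crossings as tableau entries --- after which spanning plus the cardinality match finishes the proof. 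Either way, the crux is this combinatorial/triangularity matching.
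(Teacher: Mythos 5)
Your overall frame (spanning is already known, so the content is linear independence plus freeness) is right, but the route you propose for linear independence is not the paper's, and it is left incomplete at exactly the point you yourself flag as the obstacle. Constructing a faithful polynomial representation of $\PC^\vartheta$ and then doing a leading-term analysis is essentially as hard as the basis theorem itself: you would have to verify well-definedness of the action against the non-local steadying relation, and then carry out the triangularity computation showing that $\hC_{\sS,\sT}$ applied to the cyclic vector has a surviving monomial that determines $(\sS,\sT)$. Neither is done, and the second is precisely where all the combinatorial difficulty lives. Your fallback observation that ``a spanning set of the right cardinality in a free module of known rank is a basis'' is in fact the key trick, but it only applies once you have an independent computation of the rank --- which you do not set up for general $D,D'$. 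Your alternative route through the weighted KLR basis of \cite{WebwKLR} also runs backwards relative to the paper's logic: the paper proves the basis for $\PC^\vartheta$ first and only later transfers it to the reduced steadied quotient $T^\vartheta$ via Theorem \ref{thm:Hecke-KLR}, so the KLR-side count you want to import is not available at this stage.

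The paper's actual argument is a reduction to an anchor case rather than a representation-theoretic one. When $D=D'=D_{s,m}$ with $s\gg 0$, Proposition \ref{prop:Hecke-quotient} identifies $e_{D_{s,m}}\PC^\vartheta e_{D_{s,m}}$ with the cyclotomic Hecke algebra $H_m(\pq,\PQ_\bullet)$, which is free of rank $m!\ell^m$; since $D_{s,m}$-tableaux are ordinary standard tableaux, the spanning set has exactly this cardinality and is therefore a basis there. For general $D,D'$, given a vanishing combination $\sum c_{\sS,\sT}\hC_{\sS,\sT}=0$, one picks $(\sS,\sT)$ with shape $\xi$ minimal in weighted dominance order and crossing number maximal, builds straightening diagrams $\phi_{\sS}\in e_{D_{s,m}}\PC^\vartheta e_D$ and $\phi_{\sT}^*\in e_{D'}\PC^\vartheta e_{D_{s,m}}$ that reorder the strands according to the normalized filling $\sS^\circ(i,j,p)=\sS(i,j,p)+\ck(j-i)$, and checks that $\phi_{\sS}\hC_{\sS,\sT}\phi_{\sT}^*$ equals the anchor basis vector $\hC_{\sS',\sT'}$ while every other surviving term lands on higher shapes or different tableaux; the known basis in the anchor case then forces $c_{\sS,\sT}=0$. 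If you want to salvage your approach, you would need to either supply the full leading-term computation in your polynomial module (including faithfulness), or recast your argument as this kind of reduction to the $D_{s,m}$ case, where the rank is known for free from the Hecke algebra.
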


\begin{proof}
  Since we already know that these vectors span, we need only show
  that they are linearly independent.  Note that if $D,D'=D_{s,m}$ for
  $s\gg 0$, then we know that $e_{D_{s,m}}\PC^\vartheta e_{D_{s,m}}$
  is a free $S$-module of rank $m!\ell^m$.  Thus, any spanning set of
  this size must be a basis. The vectors $\hC_{\sS,\sT}$ are thus a
  basis in this case, since $D_{s,m}$-tableaux for $s\gg 0$ are in
  canonical bijection with usual standard tableaux.

  For a general choice of $D,D'$, assume that we find a linear
  combination
$\sum_{\sS,\sT} c_{\sS,\sT}\hC_{\sS,\sT}=0$.  Assume $\sS$ has shape
$\xi$ which is minimal in dominance order among those with non-zero
coefficients and that the number of
crossings in $\hC_{\sS,\sT}$ is maximal among those corresponding to
$\xi$ with non-zero
coefficients.  

We order the boxes in the diagram of $\xi$ according to the
difference\footnote{We thank Chris Bowman-Scargill for pointing out to
  us that this is the correct order on these boxes to make the proof
  below work.}
between the value of the box $(i,j,k)$ under  $\sS$ and the  $x$-value corresponding to the box in $\xi$, which is
$\vartheta_k+(i+j)\epsilon+\ck (j-i)$.  We call this statistic \[\gamma
(i,j,k)=\sS (i,j,k)-\vartheta_k-(i+j)\epsilon-\ck (j-i).\]  

Consider the diagram $\phi_{\sS}$ in $e_{D_{s,m}}\PC^\vartheta e_{D}$ which
connects $s$ at $y=1$ to the strand corresponding to the 
dot corresponding to the box with the smallest  $\gamma
(i,j,k)$ at $y=0$, connects $2s$ at $y=1$ to the strand
corresponding to the second smallest $\gamma
(i,j,k)$, and so on; if there are any ties, we choose an arbitrary way
of breaking them.  We can define a
similar diagram $\phi_{\sT}^*$ in  $e_{D'}\PC^\vartheta
e_{D_{s,m}} $.  Consider the tableaux $\sS',\sT'$ with the filling
$s,\dots, ms$ which induce the same order on boxes as
$\gamma$.  Note that if two strands are
crossed in $\hB_{\sS}^*$ or the ghost of one crosses the other, then
the crossing strand or ghost that began further right must
correspond to a strictly smaller value of $\gamma$.  Thus, the same pair of
strands or of strand and ghost
do not cross
again in $\phi_{\sS}$.  This shows that $\phi_{\sS}\hB_{\sS}^*$ is an
acceptable choice for $\hB_{\sS'}^*$.  That is, we can take the product $\phi_{\sS}\hC_{\sS,\sT}\phi^*_{\sT}$
to be the basis vector $\hC_{\sS',\sT'}$.  For every $\sS'',\sT''$ such
that $c_{\sS'',\sT''}\neq 0$, we have that
$\phi_{\sS}\hC_{\sS'',\sT''}\phi^*_{\sT}$ is a sum of diagrams
with no more crossings than $\hC_{\sS',\sT'}$, and is thus a sum of
basis vectors for higher multipartitions in dominance order, and ones
for $\xi$ with tableaux different from $\sS$ and $\sT$.  

Thus, if this linear combination is 0, it must be that the coefficient
$c_{\sS,\sT}$ is 0, since we have a basis of
$\PC^\vartheta_{D_{s,m}}$.  This is a contradiction and shows that the
vectors $\hC_{\sS,\sT}$ are linearly independent.  
\end{proof}

\begin{definition}\label{def:cellular}
  A {\bf cellular $S$-algebra} is an associative unital $S$-algebra
  $A$, free of finite rank, 
  together with a {\bf cell datum} $(\cP, M, C, *)$ such
  that \begin{itemize} \item[(1)] $\cP$ is a partially ordered set and
    $M(p)$ is a finite set for each $p \in \cP$; \item[(2)]
    $C:\dot\bigsqcup_{p \in \cP} M(p) \times M(p) \rightarrow A,
    (\sT,\sS) \mapsto C^p_{\sT,\sS}$ is an injective map whose image is a
    basis for $A$;
  \item[(3)] the map $*:A \rightarrow A$ is an algebra
    anti-automorphism such that $(C^p_{\sT,\sS})^* = C_{\sS,\sT}^p$ for
    all $p \in \cP$ and $\sS, \sT \in M(p)$; \item[(4)] if
    $p \in \cP$ and $\sS, \sT \in M(p)$ then for any $x \in
    A$ we have that $$ x C_{\sS,\sT}^p \equiv \sum_{\sS'
      \in M(p)} r_x(\sS',\sS) C_{\sS',\sT}^p \pmod{A(>
      p)}$$ where the scalar $r_x(\sS',\sS)$ is independent of
    $\sT$ and $A(> \mu)$ denotes the subspace of $A$ generated by
    $\{C_{\sS'',\sT''}^q\mid q > p, \sS'',\sT'' \in
    M(q)\}$. \end{itemize} The basis consisting of the $C^p_{\sS,\sT}$
  is then a {\bf cellular basis} of $A $.
\end{definition}
Recall that if $A$ is an algebra with cellular basis, there is a
natural {\bf cell representation} $S_\xi$ of $A$ for each $\xi\in \cP$ which is
freely generated over $S$ by symbols $c_{\sT}$ for each $\sT\in
M(\xi)$, with the action rule $xc_{\sT}=\sum_{\sS\in M(\xi)}r_x(\sS,\sT)c_{\sS}$.

Fix a collection $\mathscr{D}$ of subsets of $\R$ such that each $D\in
\mathscr{D}$ is generic.  
Let $M_{\mathscr{D}} (\xi)$ for a multipartition $\xi$ be the set of
tableaux whose entries form a set $D\in \mathscr{D}$.  Let
$\cP^\vartheta_\ell$ be the set of $\ell$-multipartitions with
$\vartheta$-weighted dominance order.  Let $*\colon
\PC^\vartheta_{\mathscr{D}}\to \PC^\vartheta_{\mathscr{D}}$ be the
anti-automorphism given by reflection in a horizontal axis.

\begin{theorem}\label{th:cellular}
  The data $(\cP^\vartheta_\ell,M_{\mathscr{D}},\hC,*)$ define a
  cellular $S$-algebra structure on $\PC^\vartheta_{\mathscr{D}}$.
\end{theorem}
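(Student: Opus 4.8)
The plan is to verify the four axioms of Definition \ref{def:cellular} for the data $(\cP^\vartheta_\ell, M_{\mathscr{D}}, \hC, *)$ on $\PC^\vartheta_{\mathscr{D}}$, where almost all of the work has already been done. Axiom (1) is immediate: $\cP^\vartheta_\ell$ is a poset under weighted dominance order, and $M_{\mathscr{D}}(\xi)$ is the finite set of $D$-tableaux of shape $\xi$ with $D \in \mathscr{D}$. Axiom (2)—that $\hC$ is injective with image a basis—is precisely the content of Lemma \ref{lem:hbasis} (the elements $\hC_{\sS,\sT}$ form a basis of each $e_D \PC^\vartheta e_{D'}$, hence of $\PC^\vartheta_{\mathscr{D}}$ as $\mathscr{D}$ ranges over finite collections), and injectivity follows from linear independence there. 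Axiom (3), that $*$ is an anti-automorphism with $(\hC^\xi_{\sS,\sT})^* = \hC^\xi_{\sT,\sS}$, is immediate from the definition $\hC_{\sS,\sT} = \hB_{\sS}\hB_{\sT}^*$ and the fact that $*$ is reflection through a horizontal axis, which reverses the order of vertical composition and satisfies $(a^*)^* = a$.

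The only substantive point is axiom (4): for $x \in \PC^\vartheta_{\mathscr{D}}$ and $\sS, \sT$ tableaux of shape $\xi$, one must show $x\,\hC^\xi_{\sS,\sT} \equiv \sum_{\sS'} r_x(\sS',\sS)\,\hC^\xi_{\sS',\sT} \pmod{\PC^\vartheta_{\mathscr{D}}(>\xi)}$ with $r_x(\sS',\sS)$ independent of $\sT$. First I would reduce to the case where $x$ is a single diagram and, by the multiplicative structure of diagrams, further to the case where $x$ attaches along the top of $\hB_{\sS}$. The key observation is that $\hC^\xi_{\sS,\sT} = \hB_{\sS}\hB_{\sT}^*$ already factors through the idempotent $e_{D_\xi}$, so $x\hB_{\sS}$ is a diagram (or sum of diagrams) from $D$ up to whatever set $x$ lands in. I would then run the straightening argument already used in the proof that the $\hC_{\sS,\sT}$ span (the lemma preceding Lemma \ref{lem:hbasis}): expand $x\hB_{\sS}$ as a linear combination of diagrams $\hB_{\sS'}$ over tableaux $\sS'$ plus terms that factor through a multipartition $\eta$ strictly greater than $\xi$ in weighted dominance order. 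Multiplying on the right by $\hB_{\sT}^*$, the first group gives $\sum_{\sS'} r_x(\sS',\sS)\hC^\xi_{\sS',\sT}$ and the second group lands in $\PC^\vartheta_{\mathscr{D}}(>\xi)$ (a term factoring through $e_{D_\eta}$ with $\eta > \xi$ composed with $\hB_{\sT}^*$ is a sum of $\hC^{\eta'}_{\sS'',\sT''}$ with $\eta' \geq \eta > \xi$, by the span argument applied to $e_{D_\eta}\PC^\vartheta e_{D'}$). The crucial independence of $r_x(\sS',\sS)$ from $\sT$ is structural: the coefficients arise entirely from rewriting the \emph{upper} half $x\hB_{\sS}$ above the slice $y = \tfrac12$, a process which never touches $\hB_{\sT}^*$ below that slice; formally, one fixes once and for all the straightening of each $\hB_{\sS'}$ and then $r_x(\sS',\sS)$ is read off from $x\hB_{\sS} = \sum_{\sS'} r_x(\sS',\sS)\hB_{\sS'} + (\text{higher})$ in $e_{D_\xi}\PC^\vartheta e_D \oplus \PC^\vartheta(>\xi)$, an identity with no reference to $\sT$.

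The main obstacle I anticipate is handling the non-uniqueness of the diagrams $\hB_{\sS}$ (resolution of triple points was chosen arbitrarily) together with the appearance of dots. Two diagrams realizing the same tableau $\sS$ differ by diagrams with strictly fewer crossings or by terms through higher multipartitions, so one needs the same induction on dominance order and crossing number used in Lemma \ref{lem:hbasis} to see that the ambiguity only affects the lower-order (``higher $\xi$'') part and is absorbed into $\PC^\vartheta_{\mathscr{D}}(>\xi)$, leaving $r_x(\sS',\sS)$ well defined modulo that ideal. Dots produced by the relations (\ref{nilHecke-2})--(\ref{qHcost}) are dealt with exactly as in the proof of Lemma \ref{lem:Morita}: each application of a bigon or triple-point relation either raises the slice in dominance order or can be pushed into a term factoring through a strictly larger multipartition. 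Once these bookkeeping points are in place, all four axioms hold and $\PC^\vartheta_{\mathscr{D}}$ is cellular with the asserted cell datum.
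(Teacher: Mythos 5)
Your proposal is correct and follows essentially the same route as the paper: axioms (1)--(3) are immediate or follow from Lemma \ref{lem:hbasis} and the definition of $*$, and axiom (4) is reduced to the identity $x\hB_{\sS}\equiv\sum_{\sS'}r_x(\sS',\sS)\hB_{\sS'}$ modulo terms factoring through higher multipartitions, with independence from $\sT$ coming from the fact that this identity makes no reference to $\sT$. The paper states this last step very tersely; your elaboration of the straightening argument and the well-definedness of $r_x(\sS',\sS)$ modulo the higher-cell ideal is exactly the implicit content.
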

\begin{proof}
  Consider the axioms of a cellular algebra, as given in Definition 
 \ref{def:cellular}.  Condition (1) is manifest.

 Condition (2), that a basis is formed by the vectors $\hC_{\sS,\sT}$
 where $\sS$ and $\sT$ range over tableaux for loadings from
 $\mathscr{D}$ of the same shape, follows from Lemma \ref{lem:hbasis}.

Condition (3) is clear from the calculation \[\hC_{\sS,\sT}^*=(\hB_{\sS}^*\hB_{\sT})^*=\hB_{\sT}^*\hB_{\sS}=\hC_{\sT,\sS},\]

Thus, we need only check the final axiom, that for all $x$, we have an
equality
\begin{equation*}\label{eq:cell1}
 x\hC_{\sS,\sT}\equiv \sum_{\sS'\in M_B(\xi)} r_x(\sS',\sS)\hC_{\sS',\sT} \tag{$\star$}
\end{equation*}
modulo the vectors associated to partitions higher in dominance
order.  The numbers $r_x(\sS',\sS)$ are just the structure
coefficients of $x^*$ acting on the basis of $S_\xi$ given by $\hB_\sS$.
By Lemma \ref{lem:span},  we have that $ x \hB_{\sS}^*$ can be
rewritten as a sum of $\hC_{\sS,\sT}$.  Furthermore, since the top of
the diagram is the loading corresponding to $D_\xi$, the vectors
$\hC_{\sS,\sT}$ appearing must correspond to multipartitions $\geq
\xi$ in weighted dominance order.  Thus, we have $x \hB_{\sS}^*\equiv\sum_{\sS'\in M_B(\xi)}
r_x(\sS',\sS)\hB_{\sS'}^*$ modulo diagrams factoring through loadings that
are higher in weighted dominance order.  Multiplying by $\hB_{\sT}$ on
the right, we see that the equation \eqref{eq:cell1}
holds.  This completes the proof.
\end{proof}
If $A$ is an finite $S$-algebra with cellular basis $(\cP, M, C, *)$,
then for any basis vector $C_{\sS,\sT}^\xi$, we have that
$(C_{\sS,\sT}^\xi)^2=a_{\sS,\sT}^\xi C_{\sS,\sT}^\xi+\cdots$ where other
terms are in higher cells. A standard lemma (see \cite[2.1(3)]{KX} for
the case of a field) shows that:
\begin{lemma}\label{lem:cell-hw}
  The category $A\mmod$ is highest weight with standard modules given
  by the cell modules if for every $\xi\in \cP$, there is
  some $\sS,\sT$ with $a_{\sS,\sT}^\xi $ a unit.
\end{lemma}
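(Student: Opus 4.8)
The plan is to run the classical argument of König--Xi \cite[2.1(3)]{KX}, which treats cellular algebras over a field, while tracking carefully that the base here is the local complete ring $S$ rather than a field.

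First I would set up the ideal filtration attached to the cellular basis. For a coideal (up-set) $\Gamma\subseteq\cP$, let $A(\Gamma)$ be the $S$-span of the basis vectors $C^\eta_{\sS,\sT}$ with $\eta\in\Gamma$; axiom (4) of Definition \ref{def:cellular} together with its image under $*$ shows that $A(\Gamma)$ is a two-sided ideal. I would then choose a chain of coideals $\emptyset=\Gamma_0\subset\Gamma_1\subset\cdots\subset\Gamma_n=\cP$ with $\Gamma_i\setminus\Gamma_{i-1}=\{\xi_i\}$, so that $\xi_1$ is $\cP$-maximal and $\xi_n$ is $\cP$-minimal. Writing $\bar A_i=A/A(\Gamma_{i-1})$, the layer $J_i:=A(\Gamma_i)/A(\Gamma_{i-1})$ is, by the cellular multiplication rule, isomorphic as an $(A,A)$-bimodule to $S_{\xi_i}\otimes_S\bar S_{\xi_i}$ (with $\bar S_{\xi_i}$ the right cell module) via $C^{\xi_i}_{\sS,\sT}\mapsto c_\sS\otimes c_\sT$, and the induced product on $J_i$ is $(c_\sS\otimes c_\sT)(c_{\mathsf U}\otimes c_{\mathsf V})=\langle c_\sT,c_{\mathsf U}\rangle_{\xi_i}\,c_\sS\otimes c_{\mathsf V}$, where $\langle\,,\rangle_\xi$ is the cellular bilinear form on $S_\xi$ and $a^\xi_{\sS,\sT}=\langle c_\sT,c_\sS\rangle_\xi$.

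Then I would feed in the hypothesis. Fixing $i$ and choosing $\sS_0,\sT_0\in M(\xi_i)$ with $a:=a^{\xi_i}_{\sS_0,\sT_0}$ a unit of $S$, the element $e_i:=a^{-1}(c_{\sS_0}\otimes c_{\sT_0})$ is an idempotent of $\bar A_i$ lying in $J_i$; a short calculation with the product above gives $J_i e_i J_i=J_i$ (this is exactly where the unit is used), hence $J_i=\bar A_i e_i\bar A_i$. Moreover $\bar A_i e_i\cong S_{\xi_i}$ as left $\bar A_i$-modules, so as a left module $J_i\cong S_{\xi_i}^{\oplus|M(\xi_i)|}$ is projective over $\bar A_i$, and $e_i\bar A_i e_i=S e_i\cong S$. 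Thus each $J_i$ is a split heredity ideal of $\bar A_i$ and $A(\Gamma_0)\subset\cdots\subset A(\Gamma_n)$ is a heredity chain; invoking the standard machinery of split highest weight categories over a commutative ring in the form used in \cite{RouqSchur} then shows that $A\mmod$ is highest weight over $S$ with poset $\cP$, the standard object attached to $\xi_i$ being the left module $\bar A_i e_i$, which the bimodule isomorphism above identifies with the cell module $S_{\xi_i}$.

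The step I expect to be the main obstacle is the ring-theoretic bookkeeping that replaces ``nonzero $\Rightarrow$ invertible'' over a field: there one only needs $\phi_\xi\neq 0$ for every $\xi$, and a nonzero entry of the Gram matrix is automatically a unit, whereas here the assumption that some $a^\xi_{\sS,\sT}$ is a \emph{unit} is genuinely required both to manufacture the idempotent $e_i$ and to obtain $e_i\bar A_i e_i\cong S$. The explicit bimodule description $J_i\cong S_{\xi_i}\otimes_S\bar S_{\xi_i}$---which is what makes the one-sided projectivity of the layers transparent over $S$---is the other point needing a little care; once it and the bilinear-form computation are in place, the remainder follows the field case essentially verbatim.
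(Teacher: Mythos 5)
Your proof is correct and is precisely the argument the paper leaves implicit: the text only cites \cite[2.1(3)]{KX} for the field case and calls the ring version standard, and your heredity-chain construction --- using the unit hypothesis (in place of ``nonzero'') to produce the idempotent $e_i$, the identification $e_i\bar A_ie_i=Se_i\cong S$, and the isomorphism $\bar A_ie_i\cong S_{\xi_i}$ giving projectivity of each layer --- is exactly the intended adaptation to the base ring $S$ within the split quasi-hereditary framework of \cite{RouqSchur}. I see no gaps.
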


  \begin{corollary}\label{cor:PC-hw}
    The category $\PC_{\mathscr{D}^\circ_m}^\vartheta\mmod$ is highest weight. 
  \end{corollary}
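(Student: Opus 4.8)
The plan is to read this off from the cellular structure already established in Theorem~\ref{th:cellular} together with the general criterion of Lemma~\ref{lem:cell-hw}. By that lemma it suffices to produce, for every multipartition $\xi$ that indexes a cell of $\PC^\vartheta_{\mathscr{D}^\circ_m}$ --- that is, for every $\ell$-multipartition of size $m$ --- a pair $\sS,\sT$ of $D_\xi$-tableaux of shape $\xi$ for which the leading self-product coefficient $a^\xi_{\sS,\sT}$ is a unit of $S$. Since $D_\xi\in\mathscr{D}^\circ_m$, the cell attached to $\xi$ is nonempty, so there is no issue of missing cells; the entire content of the proof is finding one such pair.

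The first step is to write down the \emph{ground-state} tableau $\sT_\xi$ of shape $\xi$: fill the box $(i,j,m)$ with the real number $\vartheta_m+(i+j)\epsilon+\ck(j-i)$, namely its own projected coordinate, so that the entries of $\sT_\xi$ run over exactly $D_\xi$. I would then check, straight from the definition of a $D$-tableau, that $\sT_\xi$ is admissible: the entries are distinct whenever $\epsilon$ is small and generic (this is precisely the genericity hypothesis on $D_\xi$); the loading condition holds tautologically, since box $(i,j,m)$ carries the residue attached to its own projected point; the inequality $\sT_\xi(1,1,m)>\vartheta_m$ reads $\vartheta_m+2\epsilon>\vartheta_m$; and the two growth inequalities $\sT_\xi(i,j,m)>\sT_\xi(i-1,j,m)-\ck$ and $\sT_\xi(i,j,m)>\sT_\xi(i,j-1,m)+\ck$ both collapse to $\epsilon>0$. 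This routine verification is the only bookkeeping the argument requires.

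The second step is to observe that $\sT_\xi$ may be realized by the straight-line diagram $\hB_{\sT_\xi}=e_{D_\xi}$: this diagram matches $D_\xi$ at top and bottom, induces the identity bijection on boxes, and creates no bigons, so it is an allowed choice in the construction of $\hB_{\sT_\xi}$. Reflection through a horizontal axis fixes $e_{D_\xi}$, so $\hC_{\sT_\xi,\sT_\xi}=\hB_{\sT_\xi}\hB_{\sT_\xi}^*=e_{D_\xi}$, which is a genuine idempotent (and a nonzero basis vector by Lemma~\ref{lem:hbasis}, hence not killed by the unsteadiness relation). Therefore $(\hC_{\sT_\xi,\sT_\xi})^2=\hC_{\sT_\xi,\sT_\xi}$, so $a^\xi_{\sT_\xi,\sT_\xi}=1$ is a unit, and Lemma~\ref{lem:cell-hw} applies, giving that $\PC^\vartheta_{\mathscr{D}^\circ_m}\mmod$ is highest weight with standard modules the cell modules $S_\xi$. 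The only point that deserves any care --- the ``main obstacle,'' such as it is --- is making sure that the obvious identity filling really satisfies all the $D_\xi$-tableau constraints and that its cell diagram can be taken to be the straight-line idempotent rather than some representative with crossings, whose square would only agree with $\hC_{\sT_\xi,\sT_\xi}$ modulo higher cells; both reduce to the smallness and genericity of $\epsilon$.
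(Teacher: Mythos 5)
Your argument is correct and is exactly the paper's proof: the paper likewise takes the tautological tableau $\sT$ filling each box of $\xi$ with the $x$-value of its corresponding point in $D_\xi$, notes $\hC^\xi_{\sT,\sT}=e_{D_\xi}$ is an idempotent, and invokes Lemma~\ref{lem:cell-hw}. Your extra verification that the tautological filling satisfies the $D$-tableau inequalities (all reducing to $\epsilon>0$ and genericity) is left implicit in the paper but is the right thing to check.
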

  \begin{proof}
    For any multipartition $\xi$, there's a tautological tableau $\sT$
    filling each box with the $x$-value of the corresponding point in
    $D_\xi$.  Since $\hC^\xi_{\sT,\sT}=e_{D_\xi}$, this is an
    idempotent, and thus satisfies the conditions of Lemma
    \ref{lem:cell-hw}.
  \end{proof}

This cellular structure is also useful because it allows one to check
that maps are isomorphisms by means of dimension counting.  For
example, this shows:
\begin{proposition}\label{ind-res-filt}
  For any multipartition $\eta$, the restriction of a cell module
  $\operatorname{res}(S_\eta)$
  has a filtration $N_n\subset N_{n-1}\subset \cdots \subset N_1$,
  such that $N_p/N_{p+1}\cong S_{\xi_p}$, where $\xi_p$ is the multipartition given by removing from
  $\xi$ the $p$th
  removable box (read from left to right in Russian notation with
  weightings given by $\vartheta_i$).

Similarly, $\operatorname{ind}(S_\eta)$ has a filtration $M_1\subset
\cdots \subset M_q$ such that $M_p/M_{p-1}\cong S_{\xi^p}$, where $\xi^p$ is
the multipartition given by adding to 
  $\xi$ the $p$th
  addable box.
\end{proposition}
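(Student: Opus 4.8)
The plan is to prove the statement for $\operatorname{res}$ directly, treating $\operatorname{ind}$ by the parallel argument (draw the auxiliary strand on the opposite side, or deduce it from the restriction statement using the $\operatorname{ind}$--$\operatorname{res}$ adjunction together with the cellular anti-automorphism $*$). First I would apply the Morita equivalence of Lemma \ref{lem:Morita} to reduce to the case $\mathscr{D}=\mathscr{D}^\circ_m$, $\mathscr{E}=\mathscr{D}^\circ_{m+1}$, so that cell modules are indexed by honest multipartitions and the tautological tableaux of Corollary \ref{cor:PC-hw} are available.

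The second step is to identify the underlying $S$-module of $\operatorname{res}(S_\eta)$. Since $\operatorname{res}$ is computed by the bimodule of diagrams carrying an extra straight strand at $x=s$, one unwinds the definitions and invokes the cellular basis of Lemma \ref{lem:hbasis} to see that $\operatorname{res}(S_\eta)$ is free over $S$ with a basis indexed by the $D'$-tableaux of shape $\eta$, as $D'$ ranges over $\mathscr{D}'$. The decisive combinatorial point is that, because $s$ exceeds $|\ck|$ plus every coordinate in sight, the tableau conditions force the box carrying the entry $s$ to be a removable corner of $\eta$; erasing that box and its strand then gives a bijection between $D'$-tableaux of shape $\eta$ with $s$ in a fixed corner $b$ and $D$-tableaux of shape $\eta\setminus b$. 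Summing over $D'\in\mathscr{D}'$, this yields $\dim_S\operatorname{res}(S_\eta)=\sum_{p=1}^n\dim_S S_{\xi_p}$, which is exactly the input the dimension-counting technique advertised above the statement requires.

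Given this, I would order the removable corners $b_1,\dots,b_n$ of $\eta$ from left to right (the order in which they are visited by the loading $\Bi_\eta$), set $N_p$ equal to the $S$-span of the basis vectors whose $s$-strand terminates at one of $b_p,\dots,b_n$, and show that $N_n\subseteq\cdots\subseteq N_1=\operatorname{res}(S_\eta)$ is a filtration by $\PC^\vartheta_{\mathscr{D}}$-submodules with $N_p/N_{p+1}\cong S_{\xi_p}$. The isomorphisms are constructed by hand: fixing a tableau $\sS_0$ of shape $\xi_p$ and adjoining the corner $b_p$ filled with $s$ to obtain a $D'$-tableau $\sS$ of shape $\eta$, the assignment $c_{\sS_0}\mapsto[\hB_\sS]$ should be a well-defined surjection $S_{\xi_p}\twoheadrightarrow N_p/N_{p+1}$, because away from the far-right strand the $\PC^\vartheta_{\mathscr{D}}$-action is literally the action on the cell module $S_{\xi_p}$. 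Once each of these maps is known to be well defined and onto, the dimension identity of the previous paragraph forces every one of them to be an isomorphism, which finishes the proof; the $\operatorname{ind}$ statement then follows by the same scheme, with ``removable corner'' and $\eta\setminus b$ replaced by ``addable corner'' and $\eta\cup b$ throughout.

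The hard part --- and the only point where one must genuinely compute --- is the assertion that the $N_p$ are submodules, i.e.\ that multiplying $\hB_\sS$ by a diagram of $\PC^\vartheta_{\mathscr{D}}$ (which interacts only with the $D$-strands) and rewriting the result in the cellular basis produces nothing but terms in which the $s$-strand has moved to a corner no further left than where it began, plus terms that factor through a multipartition strictly higher in $\vartheta$-weighted dominance order and hence vanish in $S_\eta$. I expect this to proceed exactly in the style of the proof of Lemma \ref{lem:Morita}: the strand attached to the far-right point $s$ can be involved only in a short list of local configurations --- a crossing or a ghost-crossing with a strand on its left, or a bigon --- and one resolves each of these using \eqref{eq:2}, \eqref{qHghost-bigon1}, \eqref{qHghost-bigon2} and the triple-point relations \eqref{eq:triple-point-1}, \eqref{eq:triple-point-2}, checking case by case that the correction terms either fix the $s$-endpoint, push it rightward into a later stage of the filtration, or raise the dominance class. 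Getting the bookkeeping of ``rightward'' right, with the sign conventions on $\ck$ handled as in the Russian-notation conventions of the statement, is the one genuinely fiddly ingredient; everything else is formal.
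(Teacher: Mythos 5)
Your proposal is correct and follows essentially the same route as the paper: identify the basis of $\operatorname{res}(S_\eta)$ by tableaux in which the entry $s$ is forced into a removable corner, filter by the position of that corner, observe that the relations can only move the auxiliary strand toward $s$ (so the $N_p$ are submodules), and match the subquotients with cell modules via $\sS\mapsto\sS\setminus\{s\}$. The only cosmetic differences are that you close the argument with a dimension count where the paper reads off the isomorphism directly from the cellular structure coefficients $r_x$, and your left-to-right indexing of the $N_p$ is actually the consistent one (the paper's ``further leftward'' should read ``rightward'' to agree with $N_n\subset\cdots\subset N_1$).
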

\begin{proof}
  The module $e_{\mathscr{D}}\operatorname{res}(S_\eta)$ is spanned by
  a basis $c_{\sS}$ indexed by tableaux $\sS$ where the filling is given by a set in
  $\mathscr{D}$ with $\{s\}$ for $s\gg 0$ added.  The entry $s$ must
  be in a removable box, since it is more than $|\ck|$  greater than
  any other entry.  In terms of the diagram, this means we can factor it $c_{\sS}=ab$ into
  two parts:  in the bottom part $b$, we grab the strand
  corresponding to this removable box at $y=0$, and pull it over to
  match with $x=s$; in the top part $a$, the strand at $x=s$
  remains unchanged, and we act on the other strands by the tableau
  $\sS\setminus \{s\}$, the tableau with the box labeled by $s$
  removed. 

By definition, $N_p$ is the span of the basis
  vectors where this removable box is the $p$th, or one further
  leftward.  
The relations show that when multiplying by
  a diagram that doesn't touch the strand at $s$, this strand can only
  be shortened, not lengthened.  After all, we can't create new
  crossings with this strand, only break them with the correction
  terms in (\ref{nilHecke-2}--\ref{dumb-red-triple}).  
  Thus, $N_p$ is a submodule.  

Now, assume that $\sT$ is a tableau with $\{s\}$ in the $p$th
removable box.  When we act on $c_\sT$ by
  $x\in \PC^\vartheta_{\mathscr{D}}$ (so not acting on the strand at
  $s$), we have \[xc_\sT=x\hB_{\sT\setminus \{s\}}b=\sum_{\sS\in
    M(\xi_p)}r_x(\sS,\sT\setminus\{s\})\hB_\sS b+\cdots \]The
  remaining terms all lie in $N_{p+1}$, so we have seen that the
  map sending $c_{\sT}\mapsto c_{\sT\setminus\{s\})}$ is an
  isomorphism $N_p/N_{p+1}\cong S_{\xi_p}$.  

The proof for $\operatorname{ind}(S_\eta)$ proceeds along similar
lines; now we add a new strand at the bottom of the diagram, and $O_p$
is the submodule spanned by all diagrams where the new strand goes no
further left than the $x$-value for the $p$th addable box.  
\end{proof}

Let us make a useful observation on the structure of the cell modules
of this cell structure.  Fix a set $D\subset \R$, and let $X_d$ for
$d\in D$ be the idempotent $e_D$ with a square added on the strand at
$x=d$.  This defines an action $\K[X_{d_1}^{\pm 1},\dots, X_{d_m}^{\pm 1}]$ on any
$\PC^\vartheta_{\{D\}}$-module.  One can easily check that the
symmetric polynomials in these variables are central.  Summing over
all $D\in \mathscr{D}$, we obtain a map $\zeta\colon \K[X_1^{\pm 1},\dots, X_m^{\pm 1}]^{S_m}\to
Z(\PC^\vartheta_{\mathscr{D}})$ whenever all sets in $ \mathscr{D}$
have size $m$.
 
Let $\sigma$ be the sign of $\kappa$.
\begin{lemma}\label{lem:joint-spectrum}
  The joint spectrum of $\K[X_{d_1},\dots, X_{d_m}]$ acting on
  $e_{D}S_\eta$ is the image of the map sending a $D$-tableau $\sS$ of shape
  $\eta$ to the point in $(\C^*)^D$ to the vector whose entry for
  $d\in D$ is
 $Q_pq^{\sigma(i-j)}$ where $(i,j,p)$ in the diagram of $\eta$ is the unique
 box with $\sS(i,j,p)=d$.  In particular, a symmetric Laurent
 polynomial $p(X_1,\dots, X_m)$ acts as a unipotent transformation
 times this polynomial applied to the set $\{Q_pq^{\sigma(i-j)}\}$ for
 $(i,j,p)$ ranging over the diagram of $\eta$.  
\end{lemma}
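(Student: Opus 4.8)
The plan is to show that in the cellular basis $\{c_\sS\}$ of $e_D S_\eta$ (indexed by the $D$-tableaux $\sS$ of shape $\eta$) the commuting operators $X_{d_1},\dots,X_{d_m}$ are simultaneously triangularised with a prescribed diagonal. Order the tableaux by the number $N(\sS)$ of crossings in the chosen diagram $\hB_\sS$; the claim to prove is that in $S_\eta$
\[
X_d\, c_\sS \;=\; \PQ_p\,\pq^{\sigma(i-j)}\, c_\sS \;+\!\!\! \sum_{N(\sS')<N(\sS)} \!\!\! r\cdot c_{\sS'},
\]
where $(i,j,p)$ is the box of $\eta$ with $\sS(i,j,p)=d$, and $\PQ_p,\pq$ reduce to $Q_p,q$ in $\K$. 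Granting this, the lemma is linear algebra: the ordering by $N$ does not involve $d$, so it simultaneously triangularises all the $X_d$, and a commuting triangular family has joint spectrum (as a set) equal to the set of diagonal vectors, which here is exactly the image of $\sS\mapsto(\PQ_p\pq^{\sigma(i-j)})_{d\in D}$. For a symmetric Laurent polynomial $f$ the $(\sS,\sS)$ entry of $f(X_{d_1},\dots,X_{d_m})$ is $f$ evaluated on the multiset $\{\PQ_p\pq^{\sigma(i-j)}:(i,j,p)\in\eta\}$, which is independent of $\sS$, so $f(X_\bullet)$ is that scalar times a unitriangular, hence unipotent, transformation. (One even sees the unipotent factor is trivial: $\zeta(f)$ is central and $\PC^\vartheta\otimes_S R$ is semisimple by condition~(4) of Theorem~\ref{thm:Cherednik-unique}, so $S_\eta\otimes_S R$ is a simple module on which $\zeta(f)$ acts by its central character --- the classical value $f(\{\text{exponentiated residues of }\eta\})$ --- and since $S_\eta$ is $S$-free this scalar already governs its action on $S_\eta$.)

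To prove the claim, recall from the cellular structure of Theorem~\ref{th:cellular} that $S_\eta$ is cyclic on $c_\sT$ for the tautological tableau $\sT$ (where $\hB_\sT=e_{D_\eta}$), with $c_\sS=\hB_\sS^{\,*}\cdot c_\sT$; hence $X_d\,c_\sS=(X_d\hB_\sS^{\,*})\,c_\sT$. In $\hB_\sS^{\,*}$ there is a unique strand $\gamma$ running from $x=d$ at the top to the box $(i,j,p)$ at the bottom, and $X_d$ places a square at its top end. Slide this square down $\gamma$: by \eqref{nilHecke-2} it passes a crossing of two solid strands at the cost of a diagram with one fewer crossing; it passes crossings with ghosts freely; and it passes transversal crossings with red strands freely (no such relation being listed). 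So modulo diagrams with strictly fewer crossings, $X_d\hB_\sS^{\,*}\equiv \hB_\sS^{\,*}\cdot Y$, where $Y\in e_{D_\eta}\PC^\vartheta e_{D_\eta}$ is $e_{D_\eta}$ carrying one square on the strand for box $(i,j,p)$. By Lemma~\ref{lem:hbasis} the lower-crossing error terms lie either in the span of cells for multipartitions strictly above $\eta$ (which vanish in $S_\eta$) or in $\sum_{N(\sS')<N(\sS)}\K\,c_{\sS'}$. Thus the claim reduces to the purely local statement that $Y$ equals $\PQ_p\pq^{\sigma(i-j)}e_{D_\eta}$ modulo the span of cells for multipartitions strictly greater than $\eta$.

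I would prove that local statement by induction through the Young diagram of the $p$th component of $\eta$. For the corner $(1,1,p)$ the strand carrying the square is the one blocked by the $p$th red strand (as identified in the proof of Lemma~\ref{lem:Morita}), and \eqref{qHcost} lets one trade the square for the scalar $\PQ_p$, up to a term in which that strand bulges to the left of the red strand --- a configuration factoring through a multipartition strictly above $\eta$ in weighted dominance order, hence zero in $S_\eta$. Passing from a box $(a,b,p)$ to a neighbour $(a{+}1,b,p)$ or $(a,b{+}1,p)$, whose strand is the ghost-neighbour of the first on the appropriate side, one applies the ghost bigon relations \eqref{qHghost-bigon1}--\eqref{qHghost-bigon2} and triple-point relations \eqref{eq:triple-point-1}--\eqref{eq:triple-point-2}, again discarding terms that poke a strand across a red strand (which land in higher cells); the surviving $-\pq$ coefficients multiply the scalar by $\pq^{\mp1}$, exactly matching the change in exponentiated residue. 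Summing along a path from $(1,1,p)$ to $(i,j,p)$ yields $\PQ_p\pq^{\sigma(i-j)}$. The main obstacle is precisely this bookkeeping of $\pq$-powers --- in essence the assertion that ``a dot acts by its residue modulo the top cell,'' the weighted analogue of the Brundan--Kleshchev description of cyclotomic KLR algebras; the cleanest alternative is to first verify the claim for $D=D_{s,m}$ with $s\gg0$, where $\PC^\vartheta_{D_{s,m}}\cong H_m(\pq,\PQ_\bullet)$ by Proposition~\ref{prop:Hecke-quotient}, $D_{s,m}$-tableaux are ordinary standard tableaux, and the action of $X_1,\dots,X_m$ on cell modules is the classical content formula, and then to transport this to arbitrary $D$ via the sorting diagrams $\phi_\sS,\phi_\sT^{\,*}$ from the proof of Lemma~\ref{lem:hbasis}, which intertwine the $X$-actions up to strictly lower terms.
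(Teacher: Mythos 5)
Your proposal is correct and follows essentially the same route as the paper: triangularise the operators $X_d$ on the cellular basis of $S_\eta$ with respect to the crossing-number filtration, reduce the diagonal entry to a square sitting on the tautological idempotent $e_{D_\eta}$, and evaluate it by induction over the boxes using \eqref{qHcost} at the corner $(1,1,p)$ and the ghost bigon relations \eqref{qHghost-bigon1}--\eqref{qHghost-bigon2} for the steps, discarding terms that factor through higher cells. The only cosmetic difference is that the paper phrases the triangularity as invariance of a crossing-number filtration rather than as an explicit upper-triangular matrix, and does not spell out the ``unipotent'' conclusion for symmetric Laurent polynomials, which you supply correctly.
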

\begin{proof}
  Assume that $\kappa<0$.  Now, filter $S_\eta$ by all the span $T_g$ of the
  basis vectors with $\geq g$ crossings of strands.  The subspace
  $T_g$ is invariant under 
  $\K[X_{d_1}^{\pm 1},\dots, X_{d_m}^{\pm 1}]$, and on the associated
  graded, we have that $X_dc_{\sS}=\hB_{\sS}X_{d'}c_{\sT}$ where $\sT$ is
  the tautological tableau with filling $D_\xi$ where
  $d'=b_p+\kappa(j-i)+\epsilon(j-i)$ and $d$ fills the box $(i,j,p)$
  in $\sS$. 

If $i=j=0$ then  by (\ref{qHcost}), we have that \begin{equation*}%\label{qHcost}\subeqn
  \begin{tikzpicture}[very thick,baseline,scale=.7]
    \draw [wei]  (-1.5,0)  +(0,-1) -- +(0,1);
       \draw[green!50!black](-1,0)  +(0,-1) .. controls (-2.6,0) ..  +(0,1);
           \node at (0,0) {=};
    \draw [green!50!black] (2.2,0)  +(0,-1) -- node[midway,
       fill=green!50!black,inner sep=2.5pt]{}+(0,1);
       \draw[wei] (1.2,0)  +(0,-1) -- +(0,1);
 \node at (-5.2,0) {$\PQ_p$};
        \draw[green!50!black] (-3.5,0)  +(0,-1) -- +(0,1);
       \draw [wei] (-4.5,0)  +(0,-1) -- +(0,1);
           \node at (-2.7,0) {+};
  \end{tikzpicture}
\end{equation*}
The second term of the LHS is 0 in $S_\eta$, so this is a $X_d$
eigenvector with eigenvalue $\PQ_p$.  

If $j\geq i$, then the strand corresponding to   $(i,j,p)$
  is protected to the left by a ghost corresponding to $(i,j-1,p)$.
  Using the relation (\ref{qHghost-bigon2}):
\[ %\subeqn\label{qHghost-bigon2}
\begin{tikzpicture}[very thick,xscale=1.3,baseline=25pt,green!50!black]
 \node[black] at (.5,1) {$-$};
\draw[dashed]  (1,0) to[in=-90,out=90]  (1.5,1) to[in=-90,out=90] (1,2)
;
  \draw(1.5,0) to[in=-90,out=90] (1,1) to[in=-90,out=90] (1.5,2);
  \draw (2,0) to[in=-90,out=90]  (2.5,1) to[in=-90,out=90] (2,2);
\node[black] at (3,1) {+};
  \draw[dashed] (3.7,0) --(3.7,2) 
 ;
  \draw (4.2,0) to node[midway,fill,inner sep=3pt]{}  (4.2,2);
  \draw (4.7,0) -- (4.7,2);
\node[black] at (5.6,1) {$=\pq$};
  \draw[dashed] (6.2,0) -- (6.2,2);
  \draw (6.7,0)-- (6.7,2);
  \draw (7.2,0) -- node[midway,fill,inner sep=3pt]{} (7.2,2);
\end{tikzpicture}
\] 
Since the RHS is $\pq\cdot\PQ_pq^{j-1-i}$ times $c_{\sS}$ (in the associated
graded), and thus $X_d$ has eigenvalue $\PQ_pq^{j-i}$.  Similarly, if
$j<i$, then the strand is protected by a strand to the left of its
ghost, and a similar argument using (\ref{qHghost-bigon1}) shows that
the eigenvalue is the same in this case.
\end{proof}

\section{Comparison of Cherednik and WF Hecke algebras}
\label{sec:comp-honest-pict}

In this section, we'll prove a comparison theorem between the WF Hecke
algebra and category $\cO$ for a Cherednik algebra, using Theorem
\ref{thm:Cherednik-unique}.  Before moving to this proof, we need some
preparatory lemmata.

\subsection{Preparation}
\label{sec:preparation}

If $\pq-\zeta$ is a unit for every a root of unity $\zeta$, and for
  every  $i,j,p$, we have that
  $\PQ_i-\pq^p\PQ_j$ is a unit, then the Hecke algebra
  $H_m(\pq,\PQ_\bullet)$ is semi-simple by \cite{ArikiRQA}.  In particular:
  \begin{corollary}
    After base change to the fraction field
    $R=\C((h,z_1,\cdots,z_\ell))$, the Hecke algebra
    $H_m(\pq,\PQ_\bullet)\otimes_{\mathscr{R}}R$ is semi-simple. 
  \end{corollary}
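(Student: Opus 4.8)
The plan is to verify the hypotheses of the semisimplicity criterion of \cite{ArikiRQA} recalled just above, now over the \emph{field} $R=\C((h,z_1,\dots,z_\ell))$, in which ``unit'' means simply ``nonzero''. Under the base change $\mathscr{R}\to R$ the Hecke parameters become $\pq=qe^h$ and $\PQ_i=Q_ie^{-z_i}$, which are units already in $\mathscr{R}$ and hence nonzero in $R$; it then remains to check that the images in $R$ of $\pq-\zeta$, for each root of unity $\zeta\in\C^\times$, and of $\PQ_i-\pq^p\PQ_j$, for $i\neq j$ and $p\in\Z$, are nonzero.

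First I would isolate the elementary fact doing all the work: if $f$ is a nonzero element of the maximal ideal $\mathfrak{m}=(h,z_1,\dots,z_\ell)$ of $\mathscr{R}$, then $e^f$ is not a scalar, since its non-constant part $f+\frac{1}{2}f^2+\cdots$ is nonzero (its lowest-degree homogeneous component agrees with that of $f$). Taking $f=h$ gives $\pq=qe^h\notin\C$, whence $\pq-\zeta\neq 0$ for all $\zeta\in\C$; taking $f=Nh$ with $N\neq 0$ gives $\pq^N=q^Ne^{Nh}\neq 1$, so $\pq$ is not a root of unity and in particular the relevant $\pq$-factorial $\prod_{j=1}^{m}(1+\pq+\cdots+\pq^{j-1})$ is invertible.

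I would then treat the mixed conditions. Fix $i\neq j$ and $p\in\Z$; if $\PQ_i-\pq^p\PQ_j=Q_ie^{-z_i}-q^pQ_je^{ph-z_j}$ vanished, then $e^{z_j-z_i-ph}$ would equal the scalar $q^pQ_j/Q_i$, which is impossible since $z_j-z_i-ph$ is a nonzero element of $\mathfrak{m}$ (its $z_j$-coefficient being $1$). The case $i=j$, $p\neq 0$ is subsumed by the first condition, as $\PQ_i-\pq^p\PQ_i=\PQ_i(1-\pq^p)$ and $\pq^p-1$ is, up to a unit, a product of factors $\pq-\zeta$ over the $p$-th roots of unity $\zeta$. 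Hence every hypothesis of the criterion holds over $R$, and $H_m(\pq,\PQ_\bullet)\otimes_{\mathscr{R}}R$ is semisimple.

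There is no genuine obstacle here; I would only emphasize that the passage to the fraction field is what does the work, since over $\mathscr{R}$ itself the elements $\pq-q$ (when $q$ is a root of unity, which occurs precisely when $k\in\Q$) and $\PQ_i-\PQ_j$ (when $Q_i=Q_j$) have vanishing constant term and so are not units there.
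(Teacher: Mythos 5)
Your proposal is correct and follows exactly the route the paper intends: the corollary is stated as an immediate consequence of Ariki's semisimplicity criterion, and you simply verify its hypotheses over the fraction field $R$ via the observation that $e^{f}$ is non-scalar for nonzero $f$ in the maximal ideal. The paper leaves this verification implicit, so your write-up just supplies the routine details.
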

\begin{lemma}\label{lem:Hecke-Morita}
 The isomorphism of Proposition
  \ref{prop:Hecke-quotient} induces a Morita equivalence of
  $\PC^\vartheta_{\mathscr{D}}$ and $H_m(\pq,\PQ_\bullet)$ for every
  $\mathscr{D}$ of sets of size $m$ containing $D_{s,m}$ if and only if
  the latter algebra is semi-simple.  In particular, it is an
  equivalence after the base change $-\otimes_{\mathscr{R}}R$.
\end{lemma}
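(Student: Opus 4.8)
The plan is to exhibit the functor of Proposition~\ref{prop:Hecke-quotient} as an idempotent truncation of the cellular algebra $\PC^\vartheta_{\mathscr{D}}$ and to decide when it is an equivalence by counting simple modules. Write $A=\PC^\vartheta_{\mathscr{D}}$ and let $e=e_{D_{s,m}}$ be the straight-line idempotent on $D_{s,m}$, which is fixed by the anti-automorphism $*$. By Proposition~\ref{prop:Hecke-quotient} we have $eAe=\PC^\vartheta_{D_{s,m}}\cong H_m(\pq,\PQ_\bullet)$, and the functor in question is $M\mapsto eM$; this is an equivalence $A\mmod\to eAe\mmod$ exactly when $AeA=A$, equivalently when $e$ annihilates no simple $A$-module (and by Nakayama's lemma this may be tested after reducing to the residue field of $S$). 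Since $D_{s,m}\in\mathscr{D}$ provides a genuine standard tableau for every $\ell$-multipartition of $m$, Theorem~\ref{th:cellular} presents both $\PC^\vartheta_{\mathscr{D}}$ and $\PC^\vartheta_{D_{s,m}}$ as cellular algebras whose cells are exactly the $\ell$-multipartitions of $m$; let $N$ be their number. Then $\PC^\vartheta_{\mathscr{D}}$ has at most $N$ simple modules, and $H_m(\pq,\PQ_\bullet)\cong eAe$ has $\#\{L\in\operatorname{Irr}(\PC^\vartheta_{\mathscr{D}}):eL\neq0\}\le N$ simple modules.

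Suppose first that $H_m(\pq,\PQ_\bullet)$ is semisimple. Then $\PC^\vartheta_{D_{s,m}}$, being isomorphic to it, is a semisimple cellular algebra, and so has exactly as many simple modules as cells, namely $N$. Feeding $\#\operatorname{Irr}(H_m(\pq,\PQ_\bullet))=N$ back into the inequalities above forces $\#\operatorname{Irr}(\PC^\vartheta_{\mathscr{D}})=N=\#\{L:eL\neq0\}$, so $e$ annihilates no simple $A$-module; hence $AeA=A$ and $M\mapsto eM$ is an equivalence, for every collection $\mathscr{D}$ of $m$-element sets containing $D_{s,m}$. In particular, after the base change $-\otimes_{\mathscr{R}}R$ the algebra $H_m(\pq,\PQ_\bullet)$ becomes semisimple by the preceding corollary, and $\PC^\vartheta_{\mathscr{D}}$ together with its cell datum base changes along $\mathscr{R}\to R$, so the functor is an equivalence after base change.

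For the converse, assume $M\mapsto eM$ is an equivalence for every such $\mathscr{D}$ and apply this to $\mathscr{D}_0=\mathscr{D}^\circ_m\cup\{D_{s,m}\}$. Then $H_m(\pq,\PQ_\bullet)\cong e\,\PC^\vartheta_{\mathscr{D}_0}\,e$ is Morita equivalent to $\PC^\vartheta_{\mathscr{D}_0}$, which is Morita equivalent to $\PC^\vartheta_{\mathscr{D}^\circ_m}$ by Lemma~\ref{lem:Morita}; the latter has highest weight module category by Corollary~\ref{cor:PC-hw}, hence is quasi-hereditary and of finite global dimension, and global dimension is a Morita invariant, so $H_m(\pq,\PQ_\bullet)$ has finite global dimension. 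But a cyclotomic Hecke algebra is a symmetric algebra, and a symmetric algebra of finite global dimension is semisimple; hence $H_m(\pq,\PQ_\bullet)$ is semisimple, as desired.

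The steps I expect to require the most care are administrative rather than conceptual: confirming that the cell poset of $\PC^\vartheta_{\mathscr{D}}$ is exactly the set of $\ell$-multipartitions of $m$ as soon as $D_{s,m}\in\mathscr{D}$ (immediate from Theorem~\ref{th:cellular} and the identification of $D_{s,m}$-tableaux with ordinary standard tableaux), and carrying the counts of simple modules, the Morita equivalences, and quasi-heredity through the reduction to the residue field when $S$ is not a field. The only ingredients external to the development above are the standard facts that cyclotomic Hecke algebras are symmetric and that a symmetric algebra of finite global dimension is semisimple; everything else is a formal consequence of the cellular and highest-weight structures already established.
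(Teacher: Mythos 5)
Your argument is correct, and its forward direction coincides with the paper's: both rest on counting simple modules via the cellular structure (at most one per $\ell$-multipartition of $m$), on the fact that this bound is attained for any $\mathscr{D}\supseteq\mathscr{D}^\circ_m$ thanks to the highest-weight structure of Corollary \ref{cor:PC-hw}, and on the observation that the quotient functor $M\mapsto e_{D_{s,m}}M$ is an equivalence exactly when it kills no simple, i.e.\ when the two simple counts agree. Where you diverge is in the converse: the paper simply invokes the known criterion that $H_m(\pq,\PQ_\bullet)$ has as many simples as there are $\ell$-multipartitions of $m$ if and only if it is semisimple, whereas you derive semisimplicity by transporting finite global dimension across the Morita equivalence with the quasi-hereditary algebra $\PC^\vartheta_{\mathscr{D}^\circ_m}$ and then using that a symmetric algebra of finite global dimension is semisimple. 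The two routes rest on essentially the same underlying fact (a symmetric cellular algebra that is quasi-hereditary must be semisimple), so nothing substantive is gained or lost; your version has the small virtue of making the external input explicit, at the cost of needing the symmetry of the cyclotomic Hecke algebra and a little care about what ``finite global dimension'' means before reducing modulo the maximal ideal of $\mathscr{R}$ --- a point you rightly flag and which is handled by running the whole argument over the residue field, as Nakayama permits.
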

\begin{proof}
  Since  $\PC^\vartheta_{\mathscr{D}}$ is cellular with the number of
  cells given by the number of $\ell$-multipartitions of $m$, this
  gives an upper bound on the number of simple modules this algebra
  can have.  Corollary \ref{cor:PC-hw} shows that for at least one
  choice of ${\mathscr{D}}$, this bound is achieved.  On the other
  hand, $H_m(\pq,\PQ_\bullet)$ has this number of non-isomorphic
  simples if and only if it is semisimple.

We know that $H_m(\pq,\PQ_\bullet)\mmod$ is a quotient category of
$\PC^\vartheta_{\mathscr{D}}\mmod$.  Since both categories are
Noetherian, this quotient functor kills no module iff it kills no
simple iff the number of simples over the two algebras coincide.  This
can only occur for all ${\mathscr{D}}$ if $H_m(\pq,\PQ_\bullet)$ is semi-simple.
\end{proof}
\excise{\begin{lemma}\label{cor:hstandard-socle}
 No simple in the socle of a cell module is killed by
 $e_{s,m}$ for $s\gg 0$.  
\end{lemma}
\begin{proof}
  Assume there is a submodule a cell module killed by $e_{s,m}$.  Then it must contain a non-trivial linear combination of
  basis vectors.  As argued in the proof of Lemma \ref{lem:hbasis},  composing these with the diagram that pulls all
  green strands to the far right while increasing the distance between
  them (preserving all ratios of distances) results in another non-trivial linear
  combination of basis vectors, which is thus non-zero.
\end{proof} }

\begin{lemma}\label{lem:-1-faithful}
  The functor $K\colon \PC_{\mathscr{D}^\circ_m}^\vartheta\mmod\to
  H_m(\pq,\PQ_\bullet)\mmod$ is faithful on standard filtered objects, that
  is, $-1$-faithful.
\end{lemma}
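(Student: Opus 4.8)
The plan is to deduce $(-1)$-faithfulness of $K$ from the fact that $K$ becomes a Morita equivalence after inverting the maximal ideal of $\mathscr{R}$, together with $\mathscr{R}$-freeness of cell-filtered modules.

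First I would rewrite $K$ concretely. Fix $s\gg 0$ and set $\mathscr{D}=\mathscr{D}^\circ_m\cup\{D_{s,m}\}$; by Lemma~\ref{lem:Morita} the inclusion $\PC^\vartheta_{\mathscr{D}^\circ_m}\hookrightarrow\PC^\vartheta_{\mathscr{D}}$ is a Morita equivalence, and by Proposition~\ref{prop:Hecke-quotient} the corner $e_{D_{s,m}}\PC^\vartheta_{\mathscr{D}}e_{D_{s,m}}$ is identified with $H_m(\pq,\PQ_\bullet)$. Under these identifications $K$ is the composite of a Morita equivalence with the exact quotient functor $e_{D_{s,m}}\otimes_{\PC^\vartheta_{\mathscr{D}}}(-)$. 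Since a Morita equivalence preserves $\Hom$-spaces and matches up cell modules on both sides (both algebras being cellular over $\cP^\vartheta_\ell$ by Theorem~\ref{th:cellular}), it is enough to show that $e_{D_{s,m}}\otimes(-)$ is faithful on the full subcategory of cell-filtered $\PC^\vartheta_{\mathscr{D}}$-modules. For such modules $M,N$ let $\Phi\colon\Hom_{\PC^\vartheta_{\mathscr{D}}}(M,N)\to\Hom_{H_m(\pq,\PQ_\bullet)}(e_{D_{s,m}}M,e_{D_{s,m}}N)$ be the natural map; I must show $\Phi$ is injective.

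The heart of the argument is then two observations together with a base-change comparison. First, every cell-filtered module is free of finite rank over $\mathscr{R}$: each cell module $S_\xi$ is, being freely generated by the symbols $c_{\sT}$, and an extension of an $\mathscr{R}$-free module by an $\mathscr{R}$-free module splits over $\mathscr{R}$; hence for $M,N$ cell-filtered $\Hom_{\PC^\vartheta_{\mathscr{D}}}(M,N)$ sits inside the free $\mathscr{R}$-module $\Hom_{\mathscr{R}}(M,N)$ and so is torsion-free over the domain $\mathscr{R}$. Second, passing to the fraction field $R=\C((h,z_1,\dots,z_\ell))$, Lemma~\ref{lem:Hecke-Morita} says $e_{D_{s,m}}\otimes(-)$ is an equivalence after $\otimes_{\mathscr{R}}R$ (the point being that $H_m(\pq,\PQ_\bullet)\otimes_{\mathscr{R}}R$ is semisimple), hence faithful there. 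Since $\mathscr{R}$ is Noetherian and $M,N$ are finitely generated, $\Hom$ commutes with the flat base change $\mathscr{R}\to R$ on both sides, and these isomorphisms intertwine $\Phi\otimes_{\mathscr{R}}R$ with the map induced by the equivalence $e_{D_{s,m}}\otimes(-)$ over $R$; thus $\Phi\otimes_{\mathscr{R}}R$ is injective. Because $\otimes_{\mathscr{R}}R$ is exact, $\ker\Phi\otimes_{\mathscr{R}}R=0$, so $\ker\Phi$ is $\mathscr{R}$-torsion; being also torsion-free by the first observation, it vanishes. Therefore $K$ is faithful on cell-filtered modules, i.e.\ $(-1)$-faithful.

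None of these steps is individually difficult given the preparatory lemmas, so the real care lies in the bookkeeping: checking that $K$ really does coincide, up to the Morita equivalence of Lemma~\ref{lem:Morita} and the identification of Proposition~\ref{prop:Hecke-quotient}, with $e_{D_{s,m}}\otimes(-)$; that this Morita equivalence carries cell-filtered modules to cell-filtered modules; and that the flat-base-change isomorphism for $\Hom$ is compatible with $\Phi$. The one place where this strategy would genuinely fail is if one needed $(-1)$-faithfulness after specializing $\mathscr{R}$ to its residue field $\C$, where the Hecke algebra is no longer semisimple; there one would instead have to argue that the relevant cell-filtered modules and morphisms over $\C$ arise by base change from $\mathscr{R}$-flat data, and that is the delicate point I would expect to need separate treatment.
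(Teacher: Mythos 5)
Your argument correctly establishes $(-1)$-faithfulness when the base ring is $\mathscr{R}=\C[[h,z_1,\dots,z_\ell]]$: the identification of $K$ with $e_{D_{s,m}}\otimes(-)$ via Lemma \ref{lem:Morita} and Proposition \ref{prop:Hecke-quotient}, the $\mathscr{R}$-freeness of cell-filtered modules, flat base change for $\Hom$, and the torsion/torsion-free dichotomy are all sound. But the caveat you flag at the end is not a side issue — it is the whole point. This lemma is what supplies condition (4) of Theorem \ref{thm:Cherednik-unique} in the proof of Theorem \ref{theorem:diagrammatic-Cherednik}, and that condition explicitly demands $-1$-faithfulness \emph{after base change to $\C$}; the ensuing corollary ($0$-faithfulness via \cite[2.18]{RSVV}) likewise needs the residue-field statement, since the generic statement is already an equivalence and carries no extra information. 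Over $\C$ there is no fraction field to pass to, the Hecke algebra is not semisimple, and $\Hom$ between standard-filtered modules over $\C$ need not be the base change of $\Hom$ over $\mathscr{R}$ (this fails already for non-projective standard-filtered objects), so your kernel-is-torsion argument has nothing to bite on. As written, the proposal proves a weaker statement than the one the paper uses.

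The paper's proof is different in kind and closes this gap for free: it is a direct basis argument, valid over any coefficient ring $S$. From the proof of Lemma \ref{lem:hbasis}, for every nonzero $a\in e_D S_\xi$ there is an explicit element $\phi_{\sS}\in e_{D_{s,m}}\PC^\vartheta e_D$ (the diagram pulling all strands far to the right onto the Hecke loading $D_{s,m}$) with $\phi_{\sS}a\neq 0$ — this is the upper-triangularity of such products against the cellular basis. Hence no nonzero submodule of a cell module, and by induction along the filtration no nonzero submodule of a standard-filtered module, is annihilated by $e_{D_{s,m}}$; applying this to the image of a nonzero map $M\to N$ between standard-filtered modules gives a nonzero map $e_{D_{s,m}}M\to e_{D_{s,m}}N$. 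If you want to salvage your approach you must either reprove the statement over $\C$ by this kind of explicit argument, or show that every morphism between standard-filtered $\C$-modules lifts to $\mathscr{R}$ — which is essentially as hard as the lemma itself.
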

\begin{proof}
In the proof of Lemma \ref{lem:hbasis}, we showed that for any
non-zero element
$a\in e_DA_\xi$, we can choose $\phi_\sS\in e_{D_{s,m}}
\PC_{\mathscr{D}^\circ_m}^\vartheta e_D$ such that $\phi_\sS a\neq 0$.  That is,
no submodule of 
a cell module is killed by $e_D$.  Thus, the same is true of any
module with a cell filtration.  In particular, if $M\to N$ is a
non-zero map between cell filtered modules, then the image of this map
is not killed by $e_D$, so we have a non-zero map $e_DM\to e_DN$.
\end{proof}

As noted in the proof of Theorem \ref{thm:Cherednik-unique},
\cite[2.18]{RSVV} now implies that:
\begin{corollary}\label{cor:0-faithful}
  The functor $K$ is $0$-faithful, and thus, in particular, fully
  faithful on projectives.  
\end{corollary}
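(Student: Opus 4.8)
The plan is to upgrade the $(-1)$-faithfulness of Lemma~\ref{lem:-1-faithful} to $0$-faithfulness by exactly the device used in the proof of Theorem~\ref{thm:Cherednik-unique}: the criterion \cite[2.18]{RSVV}, which says that a highest weight cover over $\mathscr{R}$ that is $(-1)$-faithful and whose base change to the fraction field is $0$-faithful is itself $0$-faithful. So the work is just to record the three hypotheses and feed them in.

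First I would check that $K$ genuinely presents $\PC^\vartheta_{\mathscr{D}^\circ_m}\mmod$ as a highest weight cover of $H_m(\pq,\PQ_\bullet)\mmod$: by Corollary~\ref{cor:PC-hw} the source is a highest weight category over $\mathscr{R}$, and it is free of finite rank over $\mathscr{R}$ since it is cellular (Lemma~\ref{lem:hbasis}); moreover $K$ is the quotient functor $e_{D_{s,m}}\cdot(-)$ under the isomorphism $H_m(\pq,\PQ_\bullet)\cong e_{D_{s,m}}\PC^\vartheta_{\mathscr{D}^\circ_m}e_{D_{s,m}}$ of Proposition~\ref{prop:Hecke-quotient} (together with the Morita equivalence of Lemma~\ref{lem:Morita}). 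Second, Lemma~\ref{lem:-1-faithful} supplies the $(-1)$-faithfulness. Third, by Lemma~\ref{lem:Hecke-Morita} the base change along $\mathscr{R}\to R=\C((h,z_1,\dots,z_\ell))$ makes $H_m(\pq,\PQ_\bullet)\otimes_{\mathscr{R}}R$ semisimple and turns $K$ into a Morita equivalence, hence an equivalence of categories, which is $0$-faithful a fortiori. Plugging these into \cite[2.18]{RSVV} yields the first assertion.

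For the ``in particular'' clause I would simply remark that in any highest weight category every projective object admits a $\Delta$-filtration, so a $0$-faithful cover — being fully faithful on $\Delta$-filtered modules — restricts to a fully faithful functor on the subcategory of projectives; this is precisely the input needed to match $K$ with the $\KZ$ functor in the comparison theorem.

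I do not anticipate any genuine obstacle here: the two substantive ingredients have already been dispatched — the $(-1)$-faithfulness rests on the basis argument inside the proof of Lemma~\ref{lem:hbasis} (no submodule of a cell module is annihilated by $e_D$), and the generic equivalence rests on Ariki's semisimplicity criterion via Lemma~\ref{lem:Hecke-Morita} — so what remains is the formal citation of \cite[2.18]{RSVV}. The only point demanding any care is making sure the flatness and highest-weight hypotheses of that lemma are literally met for $\PC^\vartheta_{\mathscr{D}^\circ_m}$ over $\mathscr{R}$, which the cellular structure and Corollary~\ref{cor:PC-hw} guarantee.
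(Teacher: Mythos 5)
Your proposal is correct and matches the paper's argument exactly: the corollary is obtained by feeding the $(-1)$-faithfulness of Lemma~\ref{lem:-1-faithful} and the generic equivalence of Lemma~\ref{lem:Hecke-Morita} into \cite[2.18]{RSVV}, just as you describe. The extra care you take in verifying the highest-weight-cover hypotheses is sensible but not a departure from the paper's route.
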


A further corollary that will be quite useful for us regards the
natural transformations of functors. For any monomials $F,F'\colon \PC_{\mathscr{D}^\circ_m}^\vartheta\to \PC_{\mathscr{D}^\circ_{m'}}^\vartheta$ in the functors
  $\mathrm{ind},\mathrm{res}$, there are functors
  $F_{\mathsf{H}},F'_{\mathsf{H}}\colon H_m(\pq,\PQ_\bullet)\mmod\to
  H_{m'}(\pq,\PQ_\bullet)\mmod$ given by the same monomials applied to
  the restriction and induction functors of these algebras.
  \begin{lemma}
    $F_{\mathsf{H}}\circ K\cong K\circ F$
  \end{lemma}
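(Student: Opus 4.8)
The plan is to reduce the statement to a single application of $\operatorname{ind}$ or $\operatorname{res}$, to rewrite both composites as functors of the form ``tensor with a bimodule,'' and then to identify the two bimodules by a direct diagrammatic computation whose surjectivity is controlled by the cellular basis of Lemma~\ref{lem:hbasis}. Write a monomial as $F=F_k\circ\cdots\circ F_1$ with each $F_i\in\{\operatorname{ind},\operatorname{res}\}$ changing $m$ by one. It is enough to produce natural isomorphisms $K\circ\operatorname{ind}\cong\Ind\circ K$ and $K\circ\operatorname{res}\cong\Res\circ K$ for a single step: one then inserts these one factor at a time and inducts on $k$. Moreover, since $(\operatorname{ind},\operatorname{res})$ is a biadjoint pair (as for the Cherednik functors it models) and $(\Ind,\Res)$ is biadjoint because $H_m(\pq,\PQ_\bullet)\subset H_{m+1}(\pq,\PQ_\bullet)$ is a Frobenius extension, the restriction isomorphism is the mate of the induction one under these biadjunctions, so only $F=\operatorname{ind}$ needs a direct argument. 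Throughout I would use Proposition~\ref{prop:Hecke-quotient} together with the Morita equivalence of Lemma~\ref{lem:Morita} to realize $K$ on $\PC^\vartheta_{\mathscr{D}}\mmod$, for any $\mathscr{D}\ni D_{s,m}$ with $s\gg|\kappa|$, as the idempotent truncation $M\mapsto e_{D_{s,m}}M$ landing in $e_{D_{s,m}}\PC^\vartheta e_{D_{s,m}}\mmod=H_m(\pq,\PQ_\bullet)\mmod$.

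Next I would make both composites explicit as bimodule functors. Fix $\mathscr{D}\ni D_{s,m}$, $\mathscr{E}\ni D_{s',m+1}$ with $s,s'\gg|\kappa|$, and let $\mathscr{D}'$ be $\mathscr{D}$ with one far-right point $\{s''\}$ adjoined, as in the definition of $\operatorname{ind}$ from Lemma~\ref{lem:horizontal-Hecke}. Since $\operatorname{ind}=e_{\mathscr{E}}\PC^\vartheta e_{\mathscr{D}'}\otimes_{\PC^\vartheta_{\mathscr{D}}}(-)$, using $e_{D_{s',m+1}}e_{\mathscr{E}}=e_{D_{s',m+1}}$ and associativity of tensor products one gets $K\circ\operatorname{ind}\cong e_{D_{s',m+1}}\PC^\vartheta e_{\mathscr{D}'}\otimes_{\PC^\vartheta_{\mathscr{D}}}(-)$, while $\Ind\circ K\cong\bigl(H_{m+1}\otimes_{H_m}e_{D_{s,m}}\PC^\vartheta_{\mathscr{D}}\bigr)\otimes_{\PC^\vartheta_{\mathscr{D}}}(-)$; no exactness is needed, as each is literally a functor $B\otimes_{\PC^\vartheta_{\mathscr{D}}}(-)$. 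So the whole statement reduces to producing an isomorphism of $(H_{m+1},\PC^\vartheta_{\mathscr{D}})$-bimodules
\[
\Phi\colon\ H_{m+1}\otimes_{H_m}e_{D_{s,m}}\PC^\vartheta_{\mathscr{D}}\ \longrightarrow\ e_{D_{s',m+1}}\PC^\vartheta e_{\mathscr{D}'}.
\]

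To build $\Phi$ I would use that, under Proposition~\ref{prop:Hecke-quotient} and the identification $H_m=\PC^\vartheta_{D_{s,m}}$, the classical inclusion $H_m\hookrightarrow H_{m+1}$ is exactly ``adjoin a new strand far to the right'': $X_{m+1}$ becomes a square on that strand and $T_m$ a crossing (with its correction) between it and the previous strand, and the identification of $\PC^\vartheta$ on an $(m+1)$-point, far-apart set with $H_{m+1}$ is canonical by isotopy. Horizontal composition in the sense of Lemma~\ref{lem:horizontal-Hecke} then sends $h\otimes x$ to $h$ stacked on top of the diagram $x$ drawn on $D$ with the extra strand at $\{s''\}$ left alone; bimodule-linearity and well-definedness over $H_m$ are formal. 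For bijectivity, both source and target are free $S$-modules by Lemma~\ref{lem:hbasis}, and $\Phi$ is surjective: each cellular basis vector $\hC_{\sS',\sT'}$ of the target (with $\sS'$ a standard tableau of an $(m+1)$-multipartition and $\sT'$ a $\mathscr{D}'$-tableau) is, modulo cells higher in weighted dominance order, the image under $\Phi$ of an element assembled from $H_{m+1}$ and a cellular basis vector of $e_{D_{s,m}}\PC^\vartheta e_D$ — here the combing argument from the proof of Lemma~\ref{lem:Morita} applies, the point $s''$ being more than $|\kappa|$ to the right so that the extra strand can be pulled into standard position without forcing crossings away from the new block — so induction on the cell poset gives surjectivity. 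Finally, the two ranks agree: $H_{m+1}$ is free of rank $(m+1)\ell$ over $H_m$, and the identity $\sum_{b}\#\{\text{standard tableaux of }\xi+b\}=(|\xi|+1)\ell\cdot\#\{\text{standard tableaux of }\xi\}$ (sum over addable boxes $b$ of $\xi$), which is the content of the cell-module branching in Proposition~\ref{ind-res-filt} (equivalently the classical branching rule for $\Ind S_\eta$), matches the two counts. A surjection between free $S$-modules of equal finite rank is an isomorphism, so $\Phi$ is an isomorphism, which completes the single-step case.

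The main obstacle is this surjectivity/triangularity step: verifying that $\Phi$ is compatible with the cellular filtration, i.e.\ that adjoining the far-right strand is compatible with weighted dominance order and with the bijection between $\mathscr{D}'$-tableaux of an $(m+1)$-multipartition and pairs consisting of a standard tableau of an $m$-multipartition together with an addable box. In diagrammatic language this is precisely the statement that diagrammatic induction realizes the expected addable-box branching, matching the classical branching of Specht modules for cyclotomic Hecke algebras; the combinatorial content is unsurprising, but aligning the two bookkeepings carefully is where the care lies. As a sanity check, after base change to $R=\C((h,z_1,\dots,z_\ell))$ everything in sight is semisimple (Lemma~\ref{lem:Hecke-Morita} and the preceding corollary), where $\Phi$ is visibly an isomorphism.
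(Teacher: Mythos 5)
Your treatment of the induction half is essentially the paper's argument: both proofs rewrite $\operatorname{ind}_{\mathsf H}\circ K$ and $K\circ\operatorname{ind}$ as tensor functors with the bimodules $H_{m+1}(\pq,\PQ_\bullet)\otimes_{H_m(\pq,\PQ_\bullet)}e_{D_{s,m}}\PC^\vartheta e_{\mathscr{D}^\circ_m}$ and $e_{D_{s,m+1}}\PC^\vartheta e_{\mathscr{D}'_m}$, construct the comparison map by horizontally attaching the Hecke diagram to the far-right block and extending the free terminal, observe surjectivity, and conclude by a dimension count resting on the fact that both inductions preserve cell filtrations with the same multiplicities (Proposition \ref{ind-res-filt}). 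The additional detail you give on triangularity with respect to the cellular filtration is harmless but not needed once the ranks are matched; the branching identity you quote is exactly the content of that proposition.

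The one genuine problem is your reduction of the restriction case to the induction case via mates. You appeal to biadjointness of $(\operatorname{ind},\operatorname{res})$ on the WF Hecke side, but in this paper that biadjointness is Corollary \ref{biadjoint}, which is deduced from the corollary on natural transformations that itself depends on the present lemma — so the argument is circular. At this stage only the one-sided adjunction $\operatorname{ind}\dashv\operatorname{res}$ (tensor versus $\Hom$ with the bimodule) is available, and the mate of an isomorphism $\operatorname{ind}_{\mathsf H}\circ K\cong K\circ\operatorname{ind}$ under a single adjunction is merely a natural transformation $K\circ\operatorname{res}\to\operatorname{res}_{\mathsf H}\circ K$, not automatically invertible. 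Fortunately no such machinery is needed: as in the paper, both $\operatorname{res}_{\mathsf H}\circ K$ and $K\circ\operatorname{res}$ are canonically the vector space $e_{D_{s,m}}M$ with $H_{m-1}(\pq,\PQ_\bullet)$ acting through the left-most $m-1$ terminals — the two composites differ only in whether one truncates to the far-right block before or after forgetting the last terminal — so the identity map is the required natural isomorphism. Replacing the mate argument by this direct observation closes the gap.
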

  \begin{proof}
    It's enough to prove this when $F=\mathrm{ind},\mathrm{res}$
    itself.  

The composition $\mathrm{res}_{\mathsf{H}}\circ K(M)$ is
    given by the vector space $e_{D_{s,m}}M$, where
    $H_{m-1}(\pq,\PQ_\bullet)$ acts on the left-most $m-1$ terminals.
    The functor $K\circ \mathrm{res}(M)$ goes to the same
    vector space, but first separates the right-most terminal, and
    then acts by $e_{D_{s,m-1}}$ on the remaining terminals.  Thus,
    these functors are canonically isomorphic by the identity map.

The functor $K\circ \mathrm{ind}$ is given by tensor product with the
bimodule $e_{D_{s,m+1}}\PC^\vartheta e_{\mathscr{D}'_m}$ where as
before $\mathscr{D}'_m$ is $\mathscr{D}^\circ_m$ with one point added
to each set, which we may as well take at $\{s(m+1)\}$.  On the other
hand, $\mathrm{ind}_{\mathsf{H}}\circ K$ is given by
$H_{m+1}(\pq,\PQ_\bullet)\otimes_{H_{m}(\pq,\PQ_\bullet)}e_{D_{s,m}}\PC^\vartheta
e_{\mathscr{D}^\circ_m}$.  The map 
\[H_{m+1}(\pq,\PQ_\bullet)\otimes_{H_{m}(\pq,\PQ_\bullet)}e_{D_{s,m}}\PC^\vartheta
e_{\mathscr{D}^\circ_m}\to e_{D_{s,m+1}}\PC^\vartheta
e_{\mathscr{D}'_m}\]
is given by considering an element of $H_{m+1}(\pq,\PQ_\bullet)$ as a
diagram between the slices $D_{s,m+1}$, attaching this to a diagram
in $e_{D_{s,m}}\PC^\vartheta
e_{\mathscr{D}^\circ_m}$ leaving the terminal at $s(m+1)$ free, and
then attaching a segment to the strand at $s(m+1)$ to extend to the
top of the diagram.  This map is obviously surjective.  

Since both sides deform flatly as we change $\pq$ and $\PQ_\bullet$,
it suffices to show we have an isomorphism when these values are
generic, and the corresponding algebras are semi-simple.  In this
case, the cell modules are just the irreducibles, with $K$ giving an
equivalence of categories.  In this case, Frobenius reciprocity shows
that the match of dimensions for $\mathrm{ind}$ also shows that  $K\circ
\mathrm{ind}(M)$ and $\mathrm{ind}_{\mathsf{H}}\circ K(M)$ have the
same dimension for all $M$; thus our surjective map is an isomorphism. 
  \end{proof}
\begin{corollary}\label{cor:trans-iso}
  We have a canonical isomorphism respecting composition between
  the natural transformations $\Hom(F,F')$ and $\Hom(F_{\mathsf{H}},F'_{\mathsf{H}})$.
\end{corollary}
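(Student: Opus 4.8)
The plan is to transport natural transformations back and forth across the comparison functor $K$, using the natural isomorphism $F_{\mathsf{H}}\circ K\cong K\circ F$ established in the previous lemma. Write $\beta_F\colon F_{\mathsf{H}}\circ K\xrightarrow{\sim}K\circ F$ and $\beta_{F'}\colon F'_{\mathsf{H}}\circ K\xrightarrow{\sim}K\circ F'$ for that isomorphism and its analogue for $F'$. To a natural transformation $\eta\colon F\Rightarrow F'$ we attach the unique $\eta_{\mathsf{H}}\colon F_{\mathsf{H}}\Rightarrow F'_{\mathsf{H}}$ with $\eta_{\mathsf{H}}\ast K=\beta_{F'}\circ(K\ast\eta)\circ\beta_F^{-1}$, where $\ast$ denotes whiskering, and the claim will be that $\eta\mapsto\eta_{\mathsf{H}}$ is the asserted bijection. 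The only facts needed are: $K\colon\PC^\vartheta_{\mathscr{D}^\circ_m}\mmod\to H_m(\pq,\PQ_\bullet)\mmod$ is a quotient functor (as noted after Proposition~\ref{prop:Hecke-quotient}); $K$ is fully faithful on projectives (a consequence of its $0$-faithfulness, proved above); and $F,F',F_{\mathsf{H}},F'_{\mathsf{H}}$, being compositions of $\mathrm{ind}$ and $\mathrm{res}$, are exact and preserve projective modules (on the $\PC^\vartheta$ side because $\mathrm{ind}$ and $\mathrm{res}$ are tensor and $\Hom$ against the bimodule $e_{\mathscr{E}}\PC^\vartheta e_{\mathscr{D}'}$, which is finitely generated projective as a module on each side; on the Hecke side classically).

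Well-definedness of $\eta_{\mathsf{H}}$ rests on the universal property of the quotient: since $K$ is a Serre localization, composition with $K$ is fully faithful on functor categories, so in particular $\gamma\mapsto\gamma\ast K$ is a bijection $\Hom(F_{\mathsf{H}},F'_{\mathsf{H}})\xrightarrow{\sim}\Hom(F_{\mathsf{H}}\circ K,F'_{\mathsf{H}}\circ K)$. The element $\beta_{F'}\circ(K\ast\eta)\circ\beta_F^{-1}$ lies in the target, hence has a unique preimage, which we call $\eta_{\mathsf{H}}$.

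For bijectivity: the assignment $\eta\mapsto\eta_{\mathsf{H}}$ is injective, because $\eta_{\mathsf{H}}$ determines $K\ast\eta$, hence $K\eta_M$ for all $M$; for $M$ projective both $FM$ and $F'M$ are projective, so faithfulness of $K$ on projectives recovers $\eta_M$, and exactness of $F,F'$ (apply them to a projective presentation of an arbitrary $M$) then recovers $\eta$. It is surjective, because, given $\gamma\colon F_{\mathsf{H}}\Rightarrow F'_{\mathsf{H}}$, the transformation $\delta:=\beta_{F'}^{-1}\circ(\gamma\ast K)\circ\beta_F\colon K\circ F\Rightarrow K\circ F'$ can be lifted: for $M$ projective, $KFM$ and $KF'M$ are projective, so fullness of $K$ on projectives produces $\eta_M\colon FM\to F'M$ lifting $\delta_M$, uniquely by faithfulness, and that same faithfulness forces naturality of $\eta$ on the full subcategory of projectives; extending along projective presentations by the exactness of $F,F'$ yields $\eta\colon F\Rightarrow F'$ with $K\ast\eta=\delta$, whence $\eta_{\mathsf{H}}=\gamma$ by construction.

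Compatibility with vertical composition of natural transformations is immediate once $\gamma\mapsto\gamma\ast K$ is known to be a bijection, since the characterizing equation is manifestly functorial in $\eta$; compatibility with the whiskering (monoidal) structure on both sides uses in addition that the isomorphisms $\beta$ are coherent, $\beta_{F\circ G}=(\beta_F\ast G)\circ(F_{\mathsf{H}}\ast\beta_G)$, which one reads off from the construction in the previous lemma. I expect the only genuine work to be in the surjectivity step — verifying that a natural transformation produced on projectives extends coherently, and independently of the chosen presentations, to all modules — but this is exactly the standard fact that a right exact functor out of a module category is determined by and freely prescribable on projective modules, so no new idea is required.
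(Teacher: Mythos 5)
Your proof is correct and follows essentially the same route as the paper: both reduce the statement to showing that the two whiskering maps $\Hom(F,F')\to \Hom(K\circ F,K\circ F')$ and $\Hom(F_{\mathsf{H}},F'_{\mathsf{H}})\to \Hom(F_{\mathsf{H}}\circ K,F'_{\mathsf{H}}\circ K)$ are bijections and then identify the targets via $F_{\mathsf{H}}\circ K\cong K\circ F$. The differences are minor: for the first map the paper works with standard-filtered objects (on which $K$ is fully faithful by $0$-faithfulness) where you use projectives, and for the second the paper argues concretely via copresentations by modules induced from the Hecke algebra where you invoke the general fact that precomposition with a Serre quotient functor is fully faithful on natural-transformation spaces; both versions are valid. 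One small slip worth noting: you should not claim that $KFM$ is projective over the Hecke algebra for $M$ projective --- the image of a projective under $K$ is in general only cell-filtered --- but full faithfulness of $K$ on projectives concerns the source objects $FM$ and $F'M$, so your argument is unaffected.
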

\begin{proof}
  We have natural maps
\[A\colon\Hom(F,F')\to \Hom(K\circ F,K\circ F')\qquad
B\colon \Hom(F_{\mathsf{H}},F'_{\mathsf{H}}) \to\Hom(F_{\mathsf{H}}\circ
K,F'_{\mathsf{H}}\circ K).\]
It suffices to prove that both these maps are isomorphisms.  We can
modify the 
argument of \cite[2.4]{ShanCrystal} to show this: we know from
Proposition \ref{ind-res-filt} that induction and restriction preserve
the categories of standard filtered modules, and by 0-faithfulness,
the functor $K$ is fully faithful on the subcategory of standard
filtereds.  Thus any element of the kernel of $A$ must kill all
standard filtered modules and be 0; on the other hand, the
surjectivity follows from fullness, since any object has a
representation by projectives, which are standard filtered.

The map $B$ is injective because $K$ is a quotient functor.  On other
hand, $0$-faithfulness implies that any projective has a
copresentation by modules induced from $H_{m}(\pq,\PQ_\bullet)$.
Thus, the action of any natural transformation a projective is
determined by its action on an induction.  This shows the surjectivity
of $B$.
\end{proof}

Note that this shows that any property of $\mathrm{ind},\mathrm{res}$
that can be phrased in terms of natural transformations can be
transfered from the analogous properties of the Hecke algebra.  

Recall that functors $\operatorname{ind}$ and
$\operatorname{res}$ have a natural action of $H_1(\mathbf{q})\cong \C[X^{\pm
  1}]$.   The generalized $u$-eigenspace of the natural transformation $X$
defines a subfunctor of $\eE_u\subset \operatorname{ind}$ and
$\eF_u\subset \operatorname{res}$, usually called $u$-induction or $u$-restriction. Since we are only considering the
action on finite dimensional modules, these functors are in fact sums 
\[ \operatorname{ind}\cong \oplus_{u\in U} \eF_u\qquad
\operatorname{res}\cong \oplus_{u\in U} \eE_u.\]  Corollary
\ref{cor:trans-iso} similarly shows that any statement involving these
functors phrased in terms of natural transformations can be
transfered from the $u$-induction and $u$-restriction functors of the
Hecke algebra.  In particular:
\begin{corollary}\label{biadjoint}
  The functors $\mathrm{ind},\mathrm{res}$ are biadjoint and commute
  with duality (this holds in the Hecke case by
  \cite[Lem. 2.6]{ShanCrystal}).  If $e\neq 1$, then the functors $\eF_u,\eE_u$ induce a categorical
  $\gu$-action (following the argument of \cite[Lem. 5.1]{ShanCrystal}).  
\end{corollary}

\begin{lemma}\label{q=-1}
  If $q=-1$, then $H_2(T+1)$ is
  in the image of the functor $K\colon \PC^{\Bs}_2\mmod\to H_2(\pq,\PQ_\bullet)\mmod$.  
\end{lemma}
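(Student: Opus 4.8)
The plan is to exhibit an explicit preimage; the only real content is that, for a WF Hecke algebra, the functor $K$ is an idempotent truncation and therefore essentially surjective, and everything else is bookkeeping. Concretely, fix $s\gg 0$ and set $\mathscr D=\mathscr D^\circ_2\cup\{D_{s,2}\}$, a collection of $2$-element subsets containing $\mathscr D^\circ_2$ and $D_{s,2}$. By Lemma \ref{lem:Morita} the inclusion $\PC^{\Bs}_2\hookrightarrow\PC^\vartheta_{\mathscr D}$ is a Morita equivalence, and under this equivalence $K$ is identified with the functor $e_{D_{s,2}}(-)$ followed by the isomorphism $e_{D_{s,2}}\PC^\vartheta_{\mathscr D}e_{D_{s,2}}=\PC^\vartheta_{D_{s,2}}\cong H_2(\pq,\PQ_\bullet)$ of Proposition \ref{prop:Hecke-quotient}. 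First I would name the element $\phi\in\PC^\vartheta_{D_{s,2}}$ corresponding to $T_1+1$ under this isomorphism (a single crossing of the two strands, possibly with a scalar multiple of $e_{D_{s,2}}$ added, depending on the sign of $\kappa$); since the isomorphism is an isomorphism of algebras it carries the left ideal $H_2(T+1)$ onto $\PC^\vartheta_{D_{s,2}}\phi$.

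Next I would take $M:=\PC^\vartheta_{\mathscr D}\phi$, the left ideal of the larger algebra generated by $\phi$. This is finite dimensional because $\PC^\vartheta_{\mathscr D}$ is, so via the Morita equivalence it defines an object of $\PC^{\Bs}_2\mmod$. It then remains only to compute $K(M)=e_{D_{s,2}}\PC^\vartheta_{\mathscr D}\phi$, and the sole thing to check is the equality $e_{D_{s,2}}\PC^\vartheta_{\mathscr D}\phi=(e_{D_{s,2}}\PC^\vartheta_{\mathscr D}e_{D_{s,2}})\phi$. This holds because $\phi$ has top and bottom endpoints $D_{s,2}$, so a WF Hecke diagram $x$ can satisfy $x\phi\neq 0$ only if its bottom endpoints are $D_{s,2}$, and then $e_{D_{s,2}}x\neq 0$ forces the top endpoints of $x$ to be $D_{s,2}$ as well. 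Hence $K(M)=\PC^\vartheta_{D_{s,2}}\phi\cong H_2(T+1)$, which is exactly the assertion.

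I do not anticipate a genuine obstacle here: the statement really just records that the quotient functor $K$, realized by the idempotent $e_{D_{s,2}}$, is essentially surjective, so in fact \emph{any} $H_2(\pq,\PQ_\bullet)$-module lies in its image. The hypothesis $q=-1$ is what makes the statement worth recording rather than part of the proof: when $q\neq-1$ the element $T_1+1$ is a unit times an idempotent, so $H_2(T+1)$ is $K$ of a projective and is already accounted for by the general discussion of images of projectives under $\KZ$, whereas when $q=-1$ it is topologically nilpotent and $H_2(T+1)$ is a genuinely new, non-projective module in the image. Should one ever want the preimage $M$ to be \emph{projective}, the $M$ above would not serve, and one would instead realize $H_2(T+1)$ through the induction functor together with the compatibility $\operatorname{ind}_{\mathsf H}\circ K\cong K\circ\operatorname{ind}$; but the statement as given requires nothing of the sort.
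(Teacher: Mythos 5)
Your computation that $K(M)=H_2(T+1)$ for $M=\PC^\vartheta_{\mathscr D}\phi$ is fine as far as it goes, but it proves the wrong statement, and you have in fact identified the gap yourself and then waved it away. As you observe, $K$ is an idempotent truncation and hence essentially surjective on \emph{all} modules, so under your reading the lemma (and condition (7) of Theorem \ref{thm:Cherednik-unique}) would be vacuous. It is not: the lemma is invoked to verify condition (7), which feeds into the \cite[2.20]{RSVV} comparison of covers carried out in the proof of Theorem \ref{thm:Cherednik-unique}, and there ``the image of $K$'' means the additive category $\{K(P): P\ \text{projective}\}$ --- the condition being checked is that $\KZ(P)$ for certain projectives $P$ of $\bO^{\Bs}$ is isomorphic to $K(Q)$ for $Q$ a \emph{projective} of $\bQ^{\Bs}_2$. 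Your $M$ is the left ideal generated by the square-zero element $\phi$ (at $q=-1$ one has $(T_1+1)^2=0$), and it is not projective, so it does not witness the condition that is actually needed.

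Your proposed repair also fails: realizing $H_2(T_1+1)$ via $\operatorname{ind}_{\mathsf H}\circ K\cong K\circ\operatorname{ind}$ would exhibit it as $H_2\otimes_{H_1}M'$ for some $M'$ in the image of the rank-one projectives, and at $q=-1$ the module $H_2(T_1+1)$ is precisely \emph{not} of that form --- that is why \cite[6.3]{RSVV} lists it as a separate case and why this lemma exists at all. The actual argument has real content: one takes the collection $D=\{s,s+\nicefrac{\kappa}{2}\}$ with the two strands within $|\kappa|/2$ of each other, so that $\PC^{\Bs}_2e_D$ is a projective not induced from rank one, exhibits two explicit diagrams in $e_{D_{s,2}}\PC^{\Bs}_2e_D$ killed by $T_1+1$ (hence two maps from $H_2(T_1+1)\cong H_2/H_2(T_1+1)$), and then counts tableaux to show $\dim e_{D_{s,2}}\PC^{\Bs}_2e_D=2\ell^2=2\dim H_2(T_1+1)$, concluding that $K(\PC^{\Bs}_2e_D)\cong H_2(T_1+1)^{\oplus 2}$; full faithfulness of $K$ on projectives then lets one split off a projective summand mapping to a single copy. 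None of this is bookkeeping, and the hypothesis $q=-1$ enters the proof (via $(T_1+1)^2=0$ and the tableau count for the closely spaced loading), not merely the statement.
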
 
\begin{proof}
  We claim that if $D$ is the set $\{s,s+\nicefrac{\kappa}2\}$ for
  $s\gg 0$ then $e_{d,s}\PC^\Bs_d e_D$ is isomorphic to 2 copies of
  this module.  The module $e_{d,s}\PC^\Bs_d e_D$ is generated by the
  two elements
  \begin{equation}
  \tikz[baseline, very thick,xscale=1.7,yscale=1.2]{\draw[wei]
    (0,-.5) -- node[at start,below]{$\PQ_1$}(0,.5);\node at
    (.5,0){$\cdots$};\draw[wei] (1,-.5) -- node[at
    start,below]{$\PQ_\ell$}(1,.5); \draw[green!50!black] (1.75,-.5)
    -- (1.25,.5); \draw[green!50!black,dashed] (2.25,-.5) --
    (1.75,.5); \draw[green!50!black] (2,-.5) -- (2.5,.5);
    \draw[green!50!black,dashed] (2.5,-.5) -- (3,.5); }\qquad \qquad
  \tikz[baseline, very thick,xscale=2,yscale=1.2]{\draw[wei] (0,-.5)
    -- node[at start,below]{$\PQ_1$}(0,.5);\node at
    (.5,0){$\cdots$};\draw[wei] (1,-.5) -- node[at
    start,below]{$\PQ_\ell$}(1,.5); \draw[green!50!black] (1.75,-.5)
    -- (2.5,.5); \draw[green!50!black,dashed] (2.25,-.5) -- (3,.5);
    \draw[green!50!black] (2,-.5) -- (1.25,.5);
    \draw[green!50!black,dashed] (2.5,-.5) -- (1.75,.5); }\label{eq:3}
\end{equation}
Both of
  these elements are killed by $T+1$, and thus give maps from
  $H_2(T+1)\cong H_2/H_2(T+1)\to e_{d,s}\PC^\Bs_d e_D$.  The dimension of this module is
  $\ell^2$.  On the other hand, the dimension of $e_{d,s}\PC^\Bs_d
  e_D$ is the number of pairs of tableaux of the same shape on
  $\ell$-multipartitions of 2, one with filling $s,2s$ and the other
  with filling $s,s+\nicefrac{\kappa}/2$.

  Each of $\ell(\ell-1)/2$ different $\ell$-multipartitions
  consisting of 2 different 1 box diagrams give $4$ basis vectors, so
  together they contribute $2\ell(\ell-1)$ basis vectors.  For a
  multipartition with a single 2-box diagram, we can only have a
  tableau with filling $s,s+\nicefrac{\kappa}2$ on $(2)$ if $\kappa<0$
  or $(1,1)$ if $\kappa>0$.  In either case, the $\ell$ ways of
  placing this in different components contribute $2$ basis vectors
  each, since either filling with $s,s+\nicefrac{\kappa}2$ gives a
  tableau, but only one filling with $s,2s$ does.  Thus, we have
  dimension $2\ell(\ell-1)+2\ell=2\ell^2$.  This shows that the map
  from $H_2(T+1)^{\oplus 2}$ is an isomorphism.

  Thus, either of the elements shown in \eqref{eq:3} generate a
  summand of $e_{d,s}\PC^\Bs_d e_D$ whose image under $K$ is $H_2(T+1)$.
\end{proof}

\subsection{A comparison theorem}
\label{sec:comparison-theorem}

Now, we'll consider the case where $\K=\C$, and $S$ is one of $\C,\mathscr{R}=\C[[h,z_1,\dots, z_\ell]]$ or
$R=\C((h,z_1,\dots, z_\ell))$.  As before, we have parameters
$\kappa,s_1,\dots, s_\ell\in \C$ for the rational Cherednik algebra,
and we consider \[\sk=k+\frac{h}{2\pi i}\qquad\sss_j=
(ks_j-\frac{z_j}{2\pi i})/\sk\]\[ q=\exp(2\pi i k)\qquad
Q_i=\exp(2\pi i k s_i) \qquad \pq=qe^h\qquad \PQ_i=Q_ie^{-z_i}.\]

We let $\kappa=\Re(k)$ and $\vartheta_i=\Re(ks_i)-i/\ell$, and let
$\PC^{\Bs}_d:=\PC^\vartheta_{\mathscr{D}^\circ_d}$ denote the
WF Hecke algebra over $\mathscr{R}$ defined above
attached to the collection $\mathscr{D}=\{D_\xi\}$ for $\xi$ all
$\ell$-multipartitions of $d$.   

\begin{theorem}\label{theorem:diagrammatic-Cherednik}
  We have an equivalence of categories $\bO^{\Bs}_d\cong
  \PC^{\Bs}_d\mmod$ intertwining the functor $\KZ$ with the quotient
  functor $ M\mapsto e_{D_{s,d}}M$.
\end{theorem}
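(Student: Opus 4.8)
The plan is to deduce this from Theorem \ref{thm:Cherednik-unique} applied to the family $\bQ^\Bs_d:=\PC^\Bs_d\mmod$, with $K$ taken to be the $d$-fold restriction functor $\operatorname{res}^d$ assembled from the induction/restriction bimodules introduced just after Lemma \ref{lem:horizontal-Hecke}. Under the isomorphism $H_d(\pq,\PQ_\bullet)\cong\PC^\vartheta_{D_{s,d}}$ of Proposition \ref{prop:Hecke-quotient}, this functor is naturally $M\mapsto e_{D_{s,d}}M$ on underlying $\mathscr{R}$-modules, so the assertion that Theorem \ref{thm:Cherednik-unique} matches $K$ with $\KZ$ is precisely the intertwining claimed here. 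Thus the whole proof reduces to checking hypotheses (1)--(7) of that theorem, and each has been prepared by the lemmata above.

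Concretely, I would verify: (1) is immediate, since $\PC^\Bs_0$ is the algebra on no strands, namely $\mathscr{R}$ itself; (2) is Corollary \ref{cor:PC-hw}, noting that the cellular structure of Theorem \ref{th:cellular} and the criterion of Lemma \ref{lem:cell-hw} are stated over an arbitrary complete local base, in particular over $S=\mathscr{R}$; (3) follows because $\operatorname{ind}$ and $\operatorname{res}$ are $\mathscr{R}$-linear and biadjoint by Corollary \ref{biadjoint}, hence each is both a left and a right adjoint, hence exact, hence (being a left adjoint of an exact functor) preserves projectives, while Lemma \ref{lem:horizontal-Hecke} supplies the compatible action of $H_c(\pq)$ on $\operatorname{ind}^c$; (4) combines Proposition \ref{prop:Hecke-quotient} (that $\operatorname{res}^d$ is a quotient functor onto $H_d(\pq,\PQ_\bullet)\mmod$), Lemma \ref{lem:Hecke-Morita} (that it becomes an equivalence after $-\otimes_{\mathscr{R}}R$), and Lemma \ref{lem:-1-faithful} (that it is $-1$-faithful after $-\otimes_{\mathscr{R}}\C$); (5) holds because the horizontal reflection $*$ fixes $e_{D_{s,d}}$, so it descends to $H_d(\pq,\PQ_\bullet)$, and the resulting duality on $\mathscr{R}$-flat modules commutes with $\operatorname{ind}$ and $\operatorname{res}$ by Corollary \ref{biadjoint}; (6) holds because the cellular poset of $\PC^\Bs_d$ is weighted dominance order while $\bO^\Bs_d$ carries the $c$-function order, which refines weighted dominance order by Proposition \ref{prop:c-functions}, so the $c$-function order is a common refinement; and (7) is exactly Lemma \ref{q=-1}. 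Given all of this, Theorem \ref{thm:Cherednik-unique} produces the equivalence $\PC^\Bs_d\mmod\cong\bO^\Bs_d$ matching $K$ with $\KZ$.

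The substantive mathematics is already distributed over the preceding results, so the remaining work is mostly bookkeeping: one must keep careful track of which statements live over $\mathscr{R}$, over the fraction field $R$, and over the residue field $\C$, since hypotheses (4) and (7) mix all three, and one must check that base change along $\mathscr{R}\to\C$ and $\mathscr{R}\to R$ respects the cellular basis, so that cell modules, the count of simples, and the highest weight order behave compatibly (this is where Proposition \ref{ind-res-filt} is useful, giving cell filtrations stable under $\operatorname{ind}$ and $\operatorname{res}$). The step I expect to require the most care is hypothesis (3): confirming that the abstractly defined bimodule functors really are biadjoint and $\mathscr{R}$-linear and that the $H_c(\pq)$-action furnished by Lemma \ref{lem:horizontal-Hecke} is the compatible one, since this is the point at which the comparison with the Bezrukavnikov--Etingof induction functors is forced. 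The order hypothesis (6) is the other place a reader should slow down, but Proposition \ref{prop:c-functions} disposes of it cleanly.
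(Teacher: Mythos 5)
Your proposal is correct and follows essentially the same route as the paper: the paper's proof is exactly a checklist of hypotheses (1)--(7) of Theorem \ref{thm:Cherednik-unique}, verified by the same citations you give (Lemma \ref{lem:cell-hw}, Lemma \ref{lem:horizontal-Hecke}, Lemma \ref{lem:Hecke-Morita}, Proposition \ref{prop:c-functions}, Lemma \ref{q=-1}), with only cosmetic differences in how (3) and (5) are justified (the paper notes that induction is extension of scalars, hence preserves projectives, and that the duality commutes with $K$ because $e_{s,d}^*=e_{s,d}$).
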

\begin{proof}
  Of course, we'll use Theorem \ref{thm:Cherednik-unique}.  Let's
  confirm the conditions of this theorem:
  \begin{enumerate}
  \item we have an isomorphism $\PC^{\Bs}_0\cong \mathscr{R}$.
  \item the highest weight structure follows from Lemma
    \ref{lem:cell-hw}.
  \item the desired induction functors are induced by the map of Lemma
    \ref{lem:horizontal-Hecke}; extension of scalars always preserves
    projectives.
\item The image $\operatorname{ind}(\mathcal{R},H_q(\pq,\PQ_\bullet))$
  is the projective $\PC^\Bs_d e_{s,d}$.  Thus, the functor $K$ is
  just $ M\mapsto e_{D_{s,d}}M$.  This is clearly a quotient functor,
  and becomes an equivalence after base change by Lemma
  \ref{lem:Hecke-Morita}.
\item The desired duality is just $M^\star:=\Hom(M,\mathscr{R})$, which is
  naturally a $(\PC^\Bs_d)^{\operatorname{op}}$-module.  We use the
  anti-automorphism $*$ to make this a $ \PC^\Bs_d$-module again.
We have $eM^\star\cong (e^*M)^\star$, so the commutation of this
duality with the analogous one on the Hecke algebra follows from the
fact that $e_{s,d}^*=e_{s,d}$.  The duality on the Hecke algebra
corresponds to the anti-automorphism sending $T_i\mapsto T_i$ and
$X_i\mapsto X_i$.  
\item in both cases, the order induced on simples is a coarsening of
  $c$-function ordering.  These match as calculated in Proposition
  \ref{prop:c-functions}.
\item Finally, we need that if $q=-1$, then $H_2(T+1)$ is
  in the image.  This is precisely Lemma \ref{q=-1}. 
  \end{enumerate}
This confirms all the hypotheses, and thus shows that we have an equivalence.
\end{proof}
Let $\bar{\PC}^{\Bs}_d:=\C\otimes_{\mathscr{R}}\PC^\Bs_d$:
\begin{corollary}
  The category $\cO^{\Bs}_d$ over $\mathsf{H}$ is equivalent to the
  category $\bar{\PC}^{\Bs}_d\mmod$.
\end{corollary}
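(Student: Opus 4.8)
The plan is to deduce this as the specialization of Theorem~\ref{theorem:diagrammatic-Cherednik} along the residue map $\mathscr{R}\to\C$, $\mathfrak{m}=(h,z_1,\dots,z_\ell)$, by passing to projective generators and comparing endomorphism algebras. First I would record that by Lemma~\ref{lem:hbasis} the algebra $\PC^{\Bs}_d$ is free of finite rank over $\mathscr{R}$, so that $\bar{\PC}^{\Bs}_d=\PC^{\Bs}_d\otimes_{\mathscr R}\C$ is a finite-dimensional $\C$-algebra and $\PC^{\Bs}_d$, viewed as a left module over itself, is a projective generator of $\PC^{\Bs}_d\mmod$ with $\End_{\PC^{\Bs}_d}(\PC^{\Bs}_d)^{\mathrm{op}}=\PC^{\Bs}_d$. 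Transporting this along the $\mathscr{R}$-linear equivalence of Theorem~\ref{theorem:diagrammatic-Cherednik} produces a projective generator $P$ of $\bO^{\Bs}_d$ with $\End_{\bO^{\Bs}_d}(P)^{\mathrm{op}}\cong\PC^{\Bs}_d$.

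Next I would invoke the standard base-change behaviour of deformed category $\cO$ for cyclotomic rational Cherednik algebras, as set up in \cite{GGOR} and used in \cite{RSVV}: the specialization functor $N\mapsto N\otimes_{\mathscr R}\C$ sends $P$ to a projective generator $\bar P$ of $\cO^{\Bs}_d$, the $\Hom$-spaces between objects carrying standard and costandard filtrations are $\mathscr{R}$-free and commute with $-\otimes_{\mathscr R}\C$, and hence $\End_{\cO^{\Bs}_d}(\bar P)^{\mathrm{op}}\cong\End_{\bO^{\Bs}_d}(P)^{\mathrm{op}}\otimes_{\mathscr R}\C\cong\PC^{\Bs}_d\otimes_{\mathscr R}\C=\bar{\PC}^{\Bs}_d$. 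Morita theory then gives the desired equivalence $\cO^{\Bs}_d\cong\bar{\PC}^{\Bs}_d\mmod$, and one checks along the way that the quotient functor $M\mapsto e_{D_{s,d}}M$ of Theorem~\ref{theorem:diagrammatic-Cherednik} specializes to the $\KZ$ functor at the special parameters, since $e_{D_{s,d}}$ is defined over $\mathscr{R}$ and reduces mod $\mathfrak{m}$ to the corresponding idempotent.

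The only genuine obstacle I anticipate is verifying that forming $\bO^{\Bs}_d$ and then reducing mod $\mathfrak{m}$ agrees with forming $\cO^{\Bs}_d$ directly -- that is, that $\bar P$ really is projective, is still a generator, and has endomorphism algebra $\End_{\bO^{\Bs}_d}(P)^{\mathrm{op}}\otimes_{\mathscr R}\C$. This is not formal for an arbitrary flat family of abelian categories, but it is precisely the content of the compatibility of (deformed) category $\cO$ with base change established in the references above, so no new argument is required; everything else in the corollary is bookkeeping.
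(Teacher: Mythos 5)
Your argument is correct and is exactly the base-change argument the paper leaves implicit: the corollary is stated without proof as an immediate specialization of Theorem~\ref{theorem:diagrammatic-Cherednik}, using the $\mathscr{R}$-freeness of $\PC^{\Bs}_d$ from Lemma~\ref{lem:hbasis} and the compatibility of deformed category $\cO$ (as a highest weight category over $\mathscr{R}$ in the sense of \cite{RouqSchur}) with reduction modulo the maximal ideal. Your worry about $\bar P$ remaining a projective generator with the expected endomorphism algebra is indeed resolved by that framework, so nothing further is needed.
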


While this equivalence is somewhat abstract, at least it gives us a
concrete description of the image of projectives under the $\KZ$ functor.  This image is
generated as an additive category by the $H_d(q,Q_\bullet)$-modules
$e_{s,d}\bar{\PC}^{\Bs}_de_{\xi}$ for different partitions $\xi$.
This is an explicit cell-filtered module over $H_d(q,Q_\bullet)$, with
a basis we can compute with, though of course, not without some
effort.

\subsection{Cyclotomic $q$-Schur algebras}
\label{sec:cyclotomic-q-schur}

 This comparison theorem can also be applied to 
cyclotomic $q$-Schur algebras.  The cyclotomic $q$-Schur algebra
$\mathscr{S}_d(\pq,\PQ^\bullet)$ over the ring $\mathscr{R}$ attached to the
data $(\pq,\PQ^\bullet)$ was defined by Dipper, James and Mathas
\cite[6.1]{DJM} (for the set $\Lambda$, we will use all multi-compositions
with $d$ parts).    One can easily confirm that the category of
representations of this algebra satisfies all the properties of $\cQ^\Bs_d$
in Theorem \ref{thm:Cherednik-unique}, except that the order does not necessarily
have a common refinement with the ordering on the simples of the
Cherednik algebra.  Thus, Theorem \ref{thm:Cherednik-unique} shows
that
\begin{corollary}
  If the $c$-function order for $k,\Bs$ on charged $\ell$ partitions
  refines the usual dominance order on $\ell$-multipartitions of
  $d\leq D$, then we have an equivalence of highest weight categories
  $\cO^\Bs_d\cong \mathscr{S}_d(\pq,\PQ^\bullet)\mmod\cong
  \bar{\PC}^\bS_d\mmod $ for all $d\leq D$.
\end{corollary}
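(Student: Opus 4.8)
The plan is to deduce this from the uniqueness theorem, Theorem \ref{thm:Cherednik-unique}, applied with $\bQ^\Bs_d=\mathscr{S}_d(\pq,\PQ^\bullet)\mmod$, and then to transport the resulting equivalence to the diagrammatic side via Theorem \ref{theorem:diagrammatic-Cherednik}. The seven hypotheses of Theorem \ref{thm:Cherednik-unique} split into a ``routine'' part and the one point where the hypothesis of the corollary enters. Conditions (1)--(5) and (7) are insensitive to the parameters and belong to the standard theory of the cyclotomic $q$-Schur algebra, as indicated in the remark before the statement: by \cite{DJM} the category $\mathscr{S}_d(\pq,\PQ^\bullet)\mmod$ is highest weight over $\mathscr{R}$, with Weyl modules indexed by $\ell$-multipartitions under dominance order; it carries biadjoint induction and restriction functors preserving projectives, whose iterates carry affine Hecke actions; the $d$-fold restriction functor is the cyclotomic $q$-Schur functor to $H_d(\pq,\PQ_\bullet)\mmod$, which is a quotient functor, an equivalence after base change to $R$ (where the Hecke algebra is semisimple), and $-1$-faithful after base change to $\C$; the contravariant form on Weyl modules supplies the duality of (5); and when $q=-1$ the module $H_2(T+1)$, being the restriction of a projective, lies in the image of the $q$-Schur functor. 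I would only sketch these verifications, since none of them uses the hypothesis.

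The only condition of Theorem \ref{thm:Cherednik-unique} that requires the hypothesis is (6): the highest weight order on $\mathscr{S}_d(\pq,\PQ^\bullet)\mmod$, namely dominance order on $\ell$-multipartitions, must admit a common refinement with the order induced from $\bO^\Bs_d$. Here I would invoke Proposition \ref{prop:c-functions}: the order coming from $\bO^\Bs_d$ is a coarsening of the $c$-function order attached to $k,\Bs$, and the hypothesis of the corollary says exactly that this $c$-function order refines dominance order on $\ell$-multipartitions of size $d\leq D$. Passing to the simples of $H_d(\pq,\PQ_\bullet)\otimes_{\mathscr{R}}R$, which are in bijection with $\ell$-multipartitions since that algebra is semisimple over $R$, the $c$-function order is therefore itself a common refinement of the two orders in question, so (6) holds for all $d\leq D$.

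With all seven conditions in place, Theorem \ref{thm:Cherednik-unique} produces an equivalence of highest weight categories $\mathscr{S}_d(\pq,\PQ^\bullet)\mmod\cong\bO^\Bs_d$ over $\mathscr{R}$ for all $d\leq D$, intertwining the $q$-Schur functor with $\KZ$; composing with Theorem \ref{theorem:diagrammatic-Cherednik} gives $\mathscr{S}_d(\pq,\PQ^\bullet)\mmod\cong\PC^\Bs_d\mmod$, and base changing along $\mathscr{R}\to\C$ (an $\mathscr{R}$-linear equivalence of module categories of $\mathscr{R}$-free algebras being given by a Morita bimodule, which base changes) yields $\mathscr{S}_d(q,Q_\bullet)\mmod\cong\bar{\PC}^\Bs_d\mmod$, which the corollary following Theorem \ref{theorem:diagrammatic-Cherednik} identifies with $\cO^\Bs_d$; that is the assertion. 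I do not expect a serious obstacle here: the $q$-Schur verifications are standard and condition (6) is engineered to match the hypothesis. The one place to be careful is that the order comparison in (6) is carried out on the correct index set --- the simples of the Hecke algebra over $R$ --- and that Proposition \ref{prop:c-functions} is used to identify the Cherednik highest weight order with the $c$-function order appearing in the hypothesis.
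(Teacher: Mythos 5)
Your proposal is correct and follows exactly the route the paper takes: the paper simply observes that $\mathscr{S}_d(\pq,\PQ^\bullet)\mmod$ satisfies all hypotheses of Theorem \ref{thm:Cherednik-unique} except possibly the order condition, which is supplied by the assumption that the $c$-function order refines dominance order, and then concludes via Theorem \ref{thm:Cherednik-unique} together with the earlier comparison to $\bar{\PC}^{\Bs}_d$. Your fleshing out of the routine $q$-Schur verifications and the logic that the $c$-function order serves as the common refinement in condition (6) is exactly what the paper leaves implicit.
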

This condition will necessarily hold whenever $D|\ck|
<\vartheta_{j+1}-\vartheta_j$ for all $i,j$, but there is no uniform
choice of $\Bs$ where we have this Morita equivalence for all $D$;
eventually, the orders will start to differ.
Note that in \cite[\ref{n-cqs-morita}]{WebBKnote}, we showed the latter Morita
equivalence directly when the inequality above holds.

\subsection{Change-of-charge functors: Hecke case}
\label{sec:derived-equivalences}

In the algebra $\PC^\vartheta$, we have required that the red lines
are vertical, that is, the quantities $\vartheta_i$, as well as
$\kappa$ are fixed.  However, a natural and important question is how
these algebras compare if these quantities are changed.  We can relate
them using natural bimodules between such pairs of algebras.  

Given different choices $\vartheta_i,\kappa$ and
$\vartheta'_i,\kappa'$ of these parameters, we can define a bimodule
over $\PC^\vartheta$ and $\PC^{\vartheta'}$ (we'll leave the use of
$\kappa$ and $\kappa'$ in the two algebras implicit).
\begin{definition}\label{WF-definition}
  We let a {\bf WF $\vartheta\operatorname{-}\vartheta'$
    diagram} be a diagram like the a WF Hecke
  diagram with
  \begin{itemize}
  \item $\ell$ red line segments which go from $(\vartheta_i',0)$ to
    $(\vartheta_i,1)$.
  \item green strands, which as usual project diffeomorphically to
    $[0,1]$ on the $y$-axis and can carry squares.  Each strand has a
    ghost whose distance from the strand now varies with the value of
    $y$: it is $y\kappa+(1-y)\kappa'$ units to the right of the
    strand.
  \end{itemize}
These diagrams must satisfy the genericity conditions from before,
though these must be interpreted carefully: if two red strands cross,
or a strand crosses its own ghost, this is not a ``true crossing'' and
it can be ignored for purposes of genericity.  In particular, 
we can isotope another strand through it without issues.
\end{definition}
Here is one example of a WF $\vartheta\operatorname{-}\vartheta'$
diagram, with $\kappa<0$ and $\kappa'>0$.  
\[ \begin{tikzpicture}[very thick,xscale=1.5,yscale=1.5,baseline,green!50!black]
  \draw[wei] (-1,0) -- (-2,1); \draw[wei] (0,0) -- (2,1); \draw[wei]
  (1,0) -- (-1,1);
\draw (1.4, 0) to (.7,1) ;
\draw[dashed] (1.7, 0) to  (.5,1);
\draw (.8, 0) to(1.8,1);  
\draw[dashed] (1.1, 0) to (1.6,1);
\draw (-.6, 0) to (-.4,1);  
\draw[dashed] (-.3, 0) to (-.6,1);
\draw (-.45, 0) to (.3,1);  
\draw[dashed] (-.15, 0) to (.1,1);
    \end{tikzpicture}\]

    \begin{definition}
      Let $\EuScript{K}^{\vartheta,\vartheta'}$ be the $\K$-span of
      the WF $\vartheta\operatorname{-}\vartheta'$ diagrams modulo the
      relations (\ref{nilHecke-2}--\ref{dumb-red-triple}) and the
      steadying relation that a diagram is 0 if at some fixed
      $y$-value, the strands can be divided into two groups with all
      strands and ghosts of the left hand group with $x$-values $<a$
      and all strands and ghosts of the right hand group, which
      contains all red strands, with $x$-values $>a$, for some real
      number $a$.
    \end{definition}

\nc{\bK}{\EuScript{K}}
\begin{proposition}
  The space $\EuScript{K}^{\vartheta,\vartheta'}$ is naturally a
  $\PC^\vartheta \operatorname{-}\PC^{\vartheta'}$-bimodule.  
\end{proposition}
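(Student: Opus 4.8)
The plan is to build the two actions by vertical juxtaposition of diagrams and then to check that they pass to the quotients and commute. First I would define the left action: given a WF Hecke diagram $a$ representing an element of $\PC^\vartheta$ (so with vertical red lines at the $\vartheta_i$ and ghosts $\kappa$ units to the right of their strands) and a WF $\vartheta\operatorname{-}\vartheta'$ diagram $m$ whose top boundary matches the bottom boundary of $a$, stack $a$ above $m$ and rescale the $y$-axis back to $[0,1]$. Near $y=1$ the red lines of $m$ reach the points $\vartheta_i$ and each ghost sits $\kappa$ units to the right of its strand, which is exactly the local picture at the bottom of $a$; so up to a small boundary-fixing isotopy the result is again a WF $\vartheta\operatorname{-}\vartheta'$ diagram: the red strands run along a diagonal-then-vertical path isotopic to the straight segment from $(\vartheta_i',0)$ to $(\vartheta_i,1)$, and the ghost displacement interpolates monotonically between $\kappa'$ at $y=0$ and $\kappa$ at $y=1$. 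Extending $\K$-bilinearly over generating diagrams gives a candidate map $\PC^\vartheta\otimes_\K\EuScript{K}^{\vartheta,\vartheta'}\to\EuScript{K}^{\vartheta,\vartheta'}$; the right action $\EuScript{K}^{\vartheta,\vartheta'}\otimes_\K\PC^{\vartheta'}\to\EuScript{K}^{\vartheta,\vartheta'}$ is defined symmetrically by stacking below.

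Next I would check that these maps factor through the defining relations of all three algebras. The key point is that each relation in the list (\ref{nilHecke-2}--\ref{dumb-red-triple}) is \emph{local} and, as in the weighted KLR formalism of \cite{WebwKLR}, holds for an arbitrary, not necessarily constant, ghost displacement: it is an identity supported in a small disk in which only the combinatorial pattern of strands, ghosts and red lines matters. Hence applying such a relation inside the top portion of a stacked diagram produces the same identity whether it is read as a relation of $\PC^\vartheta$ or of $\EuScript{K}^{\vartheta,\vartheta'}$, and symmetrically for the bottom portion and $\PC^{\vartheta'}$; this is precisely what it means for the juxtaposition maps to descend modulo the relations on the relevant tensor factor. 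For the non-local steadying relation one notes that a horizontal slice exhibiting a forbidden splitting of the strands into two groups separated by a wide gap with all reds on the right, whether it occurs in $a$, in $m$, or in the $\PC^{\vartheta'}$-factor, persists unchanged after gluing a diagram above or below, so a factor that vanishes by steadiness forces the product to vanish as well; here one also checks that the gap thresholds match up, which is fine since the ghost displacement in a $\vartheta\operatorname{-}\vartheta'$ diagram stays between $\kappa$ and $\kappa'$.

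Finally the bimodule axioms are routine once the above is in place: associativity of each action and the identity $(a\cdot m)\cdot b=a\cdot(m\cdot b)$ follow because vertical juxtaposition of diagrams is associative up to isotopy, and the straight-line diagrams supply the required local identities. The step I expect to be the real obstacle is the bookkeeping around the ``fake crossings'' that the definition of a WF $\vartheta\operatorname{-}\vartheta'$ diagram explicitly permits — two red strands crossing, or a strand crossing its own ghost — which are unavoidable here because the orders of the $\vartheta_i$ and of the $\vartheta'_i$, and possibly the signs of $\kappa$ and $\kappa'$, may disagree. One must verify that any such crossing can be isotoped freely past other strands and ghosts without changing the class of the diagram, so that the genericity conventions and every local relation behave exactly as in the fixed-parameter algebra $\PC^\vartheta$; this is handled as in \cite{WebwKLR}, and granting it, the verification outlined above goes through.
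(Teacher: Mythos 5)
Your proposal is correct and follows essentially the same route as the paper: both define the actions by vertical stacking and identify the only real issue as the kink where the straight red lines (and the varying ghost offsets) of a $\vartheta\operatorname{-}\vartheta'$ diagram meet the vertical red lines of an algebra diagram, resolved by deforming the diagrams in a small collar near the junction so that the glued picture is isotopic to a genuine $\vartheta\operatorname{-}\vartheta'$ diagram. The extra verifications you carry out (locality of the relations, persistence of unsteadiness, the fake-crossing bookkeeping) are left implicit in the paper but are consistent with its argument.
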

\begin{proof}
  We wish to stack a diagram $a$ from $\PC^\vartheta$ on top of one
  $b$ from
  $\EuScript{K}^{\vartheta,\vartheta'}$.  This will not literally be
  the case, since we require a diagram from
  $\EuScript{K}^{\vartheta,\vartheta'}$ to have its red lines to be
  straight, and the composition will have a kink where the diagrams
  join, and similarly a kink in each ghost at this point.
However, we can apply a combination of isotopies and the relations to
get rid of this kink.  There is some $\epsilon$ such that replacing
the red strands in $a$ by ones going from $(\epsilon\vartheta_i'+(1-\epsilon)\vartheta_i,0)$ to
    $(\vartheta_i,1)$, and placing the ghosts 
    $\kappa+(1-y)\epsilon(\kappa'-\kappa)$ units right of each strand
    results in an isotopic diagram.   We can further choose this
    $\epsilon$ so that in the diagram $b$, replacing the red strands
    by ones going from $(\vartheta_i',0)$ to
    $(\epsilon\vartheta_i'+(1-\epsilon)\vartheta_i,1)$ and placing the
    ghosts 
    $\kappa'+y(1-\epsilon)(\kappa-\kappa')$ units right of each strand
    results in an isotopic diagram as well.  Now, we can stack these
    diagrams, with $a$ scaled to fit between $y=1-\epsilon$ and $y=1$,
    and $b$ to fit between $y=0$ and $y=1-\epsilon$.  
\end{proof}

In this bimodule, we can construct analogues of the elements $\hCST$,
which we will also denote $\hCST$ by abuse of notation (the original
elements $\hCST$ will be a special case of these where
$\vartheta=\vartheta'$).  Unlike the algebra $\PC^\vartheta$, the
construction of these requires
breaking the symmetry between top and bottom of the diagram.  Thus, we can
make one choice to obtain a cellular basis of
$\bK^{\vartheta,\vartheta'}$ as a left module and another to obtain a
cellular basis as a right module.

Let us first describe the basis which is cellular for the right module
structure.  Let $\hD_\sS$ be the element of the bimodule
$\bK^{\vartheta,\vartheta'}$ defined analogously with $\hB_{\sS}$. Its
top is given by the set $\hD_\eta$ (for the weighting $\vartheta$).  Its
bottom is given by the entries of $\sS$, with each entry giving the
$x$-coordinate of a strand.  The diagram proceeds by connecting the points
in the loading associated to the same box in the top and bottom, while
introducing the smallest number of crossings.  As usual, this diagram
is not unique; we choose any such diagram and fix it from now on.

\begin{definition}
  The right cellular basis for $e_{\Bi}\bK^{\vartheta,\vartheta'}e_{\Bj}$ is given
  by $\hD_\sS ^*\hB_{\sT}$ for $\sS$ an $\Bi$-tableau for some
loading $\Bi$ and the weighting $\vartheta$ (upon which the definition of
$\Bi$-tableau depends),  and $\sT$ a $\Bj$-tableau for some loading
$\Bj$ and the weighting  $\vartheta'$.

The left cellular basis for
$e_{\Bj}\bK^{\vartheta',\vartheta}e_{\Bi}$ is given by the
reflections of these vectors, that is by $\hB_{\sT}^*\hD_{\sS}$.  
\end{definition}

\begin{example}
  Let us illustrate with a small example.  
  Consider $\PC^\vartheta$ with two red lines, both labeled with 1, and
  a single green line.    Let $\vartheta=(1,-1)$ and
  $\vartheta'=(-1,1)$.  Thus, in each diagram, we have a red cross.  A
  loading is determined by the position of its single dot.  Let $e_0$
  be the loading where it is at $y=0$ and $e_2$ that where it is at
  $y=2$.  Each basis vector is attached to a pair of Young diagrams with one box
  total, so one is a single box and the other empty.  A tableau on
  such a diagram is a single number, which is greater than the
  associated value of $\vartheta$ or $\vartheta'$.

Thus, if the box is in the first component, its filling in $\sS$ must
be $>1$ and in $\sT$ must be  $>-1$; if the box is in the second
component, the filling in $\sS$ must
be $>-1$ and in $\sT$ must be  $>1$.
Thus, $e_{0}\bra^{\vartheta',\vartheta}e_{0}$ is the 0 space, since 0
cannot give a tableau for both $\vartheta$ and $\vartheta'$ for either
diagram.  On the other hand, $e_{2}\bra^{\vartheta',\vartheta}e_{0}$
and $e_{0}\bra^{\vartheta',\vartheta}e_{2}$ are both 1-dimensional,
with the only basis vector associated to $((1),\emptyset)$ in the
first case, and to $(\emptyset ,(1))$ in the second.  Both these
diagrams have a tableau with filling with all 2's, so
$e_{2}\bra^{\vartheta',\vartheta}e_{2}$ is 2-dimensional.  For the
right basis, these vectors are given by:
\[
\begin{tikzpicture}[xscale=2]
  \node at (-3,0){
    \begin{tikzpicture}[xscale=.6]
      \draw[wei] (-1,-.5) to[out=90,in=-90] (-1,0) to[out=90,in=-135]
      (1,1); \draw[wei] (1,-.5) to[out=90,in=-90] (1,0)
      to[out=90,in=-45] (-1,1); \draw[very thick,green!50!black] (0,-.5) to[out=90,in=-90] (0,-.1)
      to[out=90,in=-135] (2,1);
    \end{tikzpicture}
};
\node at (-3,-1.5){\tikz[baseline=5pt,scale=.3,thick]{\draw (0,0) --(1,1); \draw
  (0,0) --(-1,1); \draw (0,2) --(-1,1); \draw
  (1,1) -- (0,2); \draw[very thick] (-.7,.4) -- (0,-.3) -- (.7,.4); \draw[very thick] (3.3,.4) -- (4,-.3) -- (4.7,.4); 
\node[scale=.7] at (0,1)
{$2$}; 
}};
\node at (-3,-2.5){\tikz[baseline=5pt,scale=.3,thick]{\draw (0,0) --(1,1); \draw
  (0,0) --(-1,1); \draw (0,2) --(-1,1); \draw
  (1,1) -- (0,2); \draw[very thick] (-.7,.4) -- (0,-.3) -- (.7,.4); \draw[very thick] (3.3,.4) -- (4,-.3) -- (4.7,.4); 
\node[scale=.7] at (0,1)
{$0$}; 
}};
  \node at (-1,0){
    \begin{tikzpicture}[xscale=.6]
      \draw[wei] (-1,-.5) to[out=90,in=-90] (-1,0) to[out=90,in=-135]
      (1,1); \draw[wei] (1,-.5) to[out=90,in=-90] (1,0)
      to[out=90,in=-45] (-1,1); \draw[very thick, green!50!black] (2,-.5) to[out=90,in=-90] (2,-.1)
      to[out=90,in=-45] (0,1);
    \end{tikzpicture}
};
\node at (-1,-1.5){\tikz[baseline=5pt,scale=.3,thick]{\draw (4,0) --(5,1); \draw
  (4,0) --(3,1); \draw (4,2) --(3,1); \draw
  (5,1) -- (4,2); \draw[very thick] (-.7,.4) -- (0,-.3) -- (.7,.4); \draw[very thick] (3.3,.4) -- (4,-.3) -- (4.7,.4); 
\node[scale=.7] at (4,1)
{$0$}; 
}};
\node at (-1,-2.5){\tikz[baseline=5pt,scale=.3,thick]{\draw (4,0) --(5,1); \draw
  (4,0) --(3,1); \draw (4,2) --(3,1); \draw
  (5,1) -- (4,2);\draw[very thick] (-.7,.4) -- (0,-.3) -- (.7,.4); \draw[very thick] (3.3,.4) -- (4,-.3) -- (4.7,.4); 
\node[scale=.7] at (4,1)
{$2$}; 
}};
  \node at (1,0){
    \begin{tikzpicture}[xscale=.6]
      \draw[wei] (-1,-.5) to[out=90,in=-90] (-1,0) to[out=90,in=-135]
      (1,1); \draw[wei] (1,-.5) to[out=90,in=-90] (1,0)
      to[out=90,in=-45] (-1,1);  \draw[very thick, green!50!black] (2,-.5) to[out=145,in=-40] (.25,-.1) to[out=140,in=-90] (.2,0)
      to[out=90,in=-135] (2,1);
    \end{tikzpicture}
};
  \node at (1,-1.5){\tikz[baseline=5pt,scale=.3,thick]{\draw (0,0) --(1,1); \draw
  (0,0) --(-1,1); \draw (0,2) --(-1,1); \draw
  (1,1) -- (0,2); \draw[very thick] (-.7,.4) -- (0,-.3) -- (.7,.4); \draw[very thick] (3.3,.4) -- (4,-.3) -- (4.7,.4); 
\node[scale=.7] at (0,1)
{$2$}; 
}};
\node at (1,-2.5){\tikz[baseline=5pt,scale=.3,thick]{\draw (0,0) --(1,1); \draw
  (0,0) --(-1,1); \draw (0,2) --(-1,1); \draw
  (1,1) -- (0,2); \draw[very thick] (-.7,.4) -- (0,-.3) -- (.7,.4); \draw[very thick] (3.3,.4) -- (4,-.3) -- (4.7,.4); 
\node[scale=.7] at (0,1)
{$2$}; 
}};
  \node at (3,0){
    \begin{tikzpicture}[xscale=.6]
      \draw[wei] (-1,-.5) to[out=90,in=-90] (-1,0) to[out=90,in=-135]
      (1,1); \draw[wei] (1,-.5) to[out=90,in=-90] (1,0)
      to[out=90,in=-45] (-1,1); \draw[very thick, green!50!black] (2,-.5) to[out=135,in=-90] (1.5,.25)
      to[out=90,in=-135] (2,1);
    \end{tikzpicture}
};
\node at (3,-1.5){\tikz[baseline=5pt,scale=.3,thick]{\draw (4,0) --(5,1); \draw
  (4,0) --(3,1); \draw (4,2) --(3,1); \draw
  (5,1) -- (4,2); \draw[very thick] (-.7,.4) -- (0,-.3) -- (.7,.4); \draw[very thick] (3.3,.4) -- (4,-.3) -- (4.7,.4); 
\node[scale=.7] at (4,1)
{$2$}; 
}};
\node at (3,-2.5){\tikz[baseline=5pt,scale=.3,thick]{\draw (4,0) --(5,1); \draw
  (4,0) --(3,1); \draw (4,2) --(3,1); \draw
  (5,1) -- (4,2);\draw[very thick] (-.7,.4) -- (0,-.3) -- (.7,.4); \draw[very thick] (3.3,.4) -- (4,-.3) -- (4.7,.4); 
\node[scale=.7] at (4,1)
{$2$}; 
}};
\end{tikzpicture}
\]
Note that we have drawn these in a way that the factorization into two
diagrams is clear, but according the definition, we should really
perform isotopies of these so that the red lines are straight. 
\end{example}

\begin{lemma}\label{lem:hbim-basis}
The vectors $\hD_\sS ^*\hB_{\sT}$ are a basis for the  bimodule
$\bK^{\vartheta,\vartheta'}$.  Furthermore, the sum of vectors
attached to partitions $\leq \xi$ in $\vartheta'$-weighted order is a
right submodule. In particular, as a
right module, $\bK^{\vartheta,\vartheta'}$ is standard filtered.

Similarly, the left cellular basis shows that the bimodule
$\bK^{\vartheta,\vartheta'}$ is standard filtered as a left module.
\end{lemma}

\begin{proof}[Proof of Lemma \ref{lem:hbim-basis}]
  First, we wish to show that these elements span.  By the Morita
  equivalence of Lemma \ref{lem:Morita}, the bimodule
  $\bK^{\vartheta,\vartheta'}$ is spanned by elements of the form
  $ae_\xi b$ where $a \in \bK^{\vartheta,\vartheta'}$, $\xi$ a multipartition and
  $b\in \PC^{\vartheta'}$.   We prove by induction that
  $ae_\xi b$ lies in the span of the vectors  $\hD_\sS ^* \hB_{\sT}$
  for $\sS,\sT$ of shape $\geq \xi$ in $\vartheta'$-weighted dominance
  order.   

Without loss of generality, we can assume that
  $b$ is one of the vectors of our cellular basis of Theorem
  \ref{th:cellular}.  If the associated cell is not $\xi$, then $b$
  factors through $e_\nu$ for $\nu>\xi$, and the result follows by
  induction.  If it is $\xi$, then we must have $b=\hB_{\sT}$ for
  some $\sT$.  

We can also assume that $a$ is a single diagram, with no bigons
between pairs of strands or strands and ghosts.  The slice at $y=0$ of
$b$ is precisely $D_\xi$, and we can use this identification to match
the strands with boxes of the diagram of this multipartition.
Now, we can apply the argument of Lemma \ref{lem:hbasis} to $a$: we
can fill the diagram of $\xi$ by the $x$-value at $y=1$ of the strand
corresponding to that box at $y=0$.  Let $D$ be the set given by the
slice at $y=1$. If this filling isn't a $D$-tableau
for the weighting $\vartheta$, then the corresponding diagram must
have a ``bad crossing'' in the same sense of the proof of Lemma
\ref{lem:hbasis}, which we can slide to the bottom of the diagram,
showing it factors through $e_\nu$ for $\nu>\xi$ in
$\vartheta'$-dominance order.  Thus, we can assume that this filling
is a $D$-tableau. As usual, any two diagrams for the same tableau
differ by diagrams with fewer crossings, so by induction, choosing one
diagram for each tableau suffices to span.

Thus, we need only show that these are linearly independent.  As
before, we can reduce to the case where $D=D'=D_{s,m}$ for $s\gg 0$ by
Lemma~\ref{lem:-1-faithful}; in
this case, the bimodule $ e_{D}\bK^{\vartheta,\vartheta'}e_{D}$ is
precisely the same as $e_{D}\PC^\vartheta e_{D}$.  We can identify this
space with the image of the corresponding idempotents acting on the
cyclotomic Hecke algebra $\PC^\la$, so it has the correct dimension by
Lemma \ref{lem:hbasis}.
\end{proof}
As with any cellularly filtered module, we can study the
multiplicities of cell modules $S_\xi$ for $\PC^{\vartheta}$ in $\bK^{\vartheta,\vartheta'}e_{D'}$.

\begin{corollary}\label{cor:B-multiplicities}
  We have an equality of multiplicities 
  \[[\bK^{\vartheta,\vartheta'}e_{D}
  :S_\xi]=[\PC^{\vartheta'}
  e_{D}:S_\xi'] .\] 
\end{corollary}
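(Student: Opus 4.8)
The plan is to read both multiplicities off the cellular bases already in hand. Recall the standard bookkeeping for a cellular algebra with cell datum $(\cP,M,C,*)$ (Definition~\ref{def:cellular}): if $N$ is a left module equipped with an $S$-basis compatible with a cell filtration $N(>\xi)\subseteq N(\ge\xi)$ in which $N(\ge\xi)/N(>\xi)$ is a direct sum of copies of $S_\xi$ indexed by the ``second index'' of the basis vectors of shape $\xi$, then $[N:S_\xi]$ is exactly the number of such second indices. Passing from a bimodule $N$ to $Ne_D$ retains exactly those basis vectors whose second index survives multiplication by $e_D$, so it replaces $[N:S_\xi]$ by the number of surviving second indices of shape $\xi$; this is the only feature of the situation we will need.

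First I would apply this with the left $\PC^{\vartheta}$-module $N=\bK^{\vartheta,\vartheta'}e_D$ (working, after Lemma~\ref{lem:Morita}, with a finite collection of top loadings so that the algebras are finite over $S$; cell-module multiplicities do not see this choice). By Lemma~\ref{lem:hbim-basis}, $\bK^{\vartheta,\vartheta'}$ is standard filtered as a left $\PC^\vartheta$-module via its left cellular basis, whose vectors of shape $\xi$ have the form $\hB_\sS\hD_\sT^{*}$ with $\sS$ a $\vartheta$-tableau of shape $\xi$ (the first index, running over the basis of $S_\xi$) and $\sT$ a $\vartheta'$-tableau of shape $\xi$ (the second index). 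Since $\hD_\sT^{*}$ has bottom loading equal to the loading of $\sT$, right multiplication by $e_D$ keeps precisely the vectors for which $\sT$ is a $D$-tableau for the weighting $\vartheta'$, so
\[
[\bK^{\vartheta,\vartheta'}e_D:S_\xi]=\#\{\,D\text{-tableaux of shape }\xi\text{ for the weighting }\vartheta'\,\}.
\]

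Next I would make the same count for $\PC^{\vartheta'}e_D$ as a left $\PC^{\vartheta'}$-module using the cellular basis $\hC_{\sS,\sT}=\hB_\sS\hB_\sT^{*}$ of Theorem~\ref{th:cellular}: here $\hB_\sT^{*}$ has bottom loading the loading of $\sT$, so $\hC_{\sS,\sT}e_D=\hC_{\sS,\sT}$ when $\sT$ is a $D$-tableau and $0$ otherwise, whence
\[
[\PC^{\vartheta'}e_D:S_\xi']=\#\{\,D\text{-tableaux of shape }\xi\text{ for the weighting }\vartheta'\,\},
\]
the same set of tableaux as before. Comparing the two displays gives the asserted equality.

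The one point that needs care, and the step I expect to be the main obstacle, is the identification of index sets: one must be certain that the second index governing the left cell filtration of $\bK^{\vartheta,\vartheta'}$ is genuinely the set of $\vartheta'$-tableaux, i.e.\ the very objects indexing the bottoms of the diagrams $\hB_\sT^{*}$ in the cellular basis of $\PC^{\vartheta'}$, so that imposing ``bottom loading $=D$'' cuts down both index sets identically. This is exactly what is built into the construction of the left cellular basis in Lemma~\ref{lem:hbim-basis}, where $\hD_\sT^{*}$ plays the structural role of $\hB_\sT^{*}$; granting that, the remainder is routine. If a basis-free phrasing is preferred, one can instead note that the subquotient $\bK^{\vartheta,\vartheta'}(\ge\xi)/\bK^{\vartheta,\vartheta'}(>\xi)$ is $S_\xi\otimes_S V_\xi$ for a right $\PC^{\vartheta'}$-module $V_\xi$ canonically isomorphic to the corresponding multiplicity space for $\PC^{\vartheta'}$ itself, so that applying $-\,e_D$ changes both ranks by the same amount and $[\bK^{\vartheta,\vartheta'}e_D:S_\xi]=\rk_S(V_\xi e_D)=[\PC^{\vartheta'}e_D:S_\xi']$.
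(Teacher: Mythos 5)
Your proof is correct and is exactly the argument the paper intends: the corollary is stated as an immediate consequence of Lemma \ref{lem:hbim-basis}, and both multiplicities are read off as the number of $D$-tableaux of shape $\xi$ for the weighting $\vartheta'$ — the second index of the left cellular basis of $\bK^{\vartheta,\vartheta'}$ on one side and of the cellular basis $\hC_{\sS,\sT}$ of $\PC^{\vartheta'}$ on the other. The point you flag as the potential obstacle (that the second index of the left cell filtration of the bimodule really is the set of $\vartheta'$-tableaux) is indeed precisely what Lemma \ref{lem:hbim-basis} supplies, so nothing further is needed.
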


We'll prove later (Lemma \ref{lem:c-o-c-equiv}) that derived tensor
product with this bimodule is an equivalence of derived categories,
and in fact, that these can be organized into an action of the affine
braid group.

One way to think about the significance of a weighting is that it
induces a total order on the columns of the diagram of $\xi$ (remember, we
are always using Russian notation; in the usual notation for
partitions, these would be diagonals).  Let $>_{\vartheta}$ be this
order.
\begin{definition}
  For total orders on a finite set, we say $>'$ is {\bf between} $>$ and
  $>''$ if there is no pair of elements $a, b$ such that $a>b, a>''b$
  and $b>'a$.

  We say that a weighting $\vartheta'$ is {\bf between} $\vartheta$ and
  $\vartheta''$ if for any multi-partition $\xi$, the induced
 order $>_{\vartheta'}$ on columns of $\xi$ is between
 $>_{\vartheta}$ and $>_{\vartheta''}$
\end{definition}
\begin{lemma}
  If $\vartheta'$ is between $\vartheta$ and
  $\vartheta''$, then we have that \[\bK^{\vartheta,\vartheta'}
  \Lotimes \bK^{\vartheta',\vartheta''}\cong \bK^{\vartheta,\vartheta''}.\]
\end{lemma}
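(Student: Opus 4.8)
The plan is to realize the isomorphism by an explicit ``stacking'' map and then check it is an isomorphism, and computes the derived tensor product, using the standard filtered structure of $\bK^{\vartheta,\vartheta'}$ from Lemma~\ref{lem:hbim-basis} and the $-1$-faithfulness of the idempotent $e_{D_{s,m}}$ from Lemma~\ref{lem:-1-faithful}; by Lemma~\ref{lem:Morita} we may work throughout with the finite truncations $\PC^\vartheta_{\mathscr D^\circ_m}$. First I would define
\[
\mu\colon \bK^{\vartheta,\vartheta'}\otimes_{\PC^{\vartheta'}}\bK^{\vartheta',\vartheta''}\longrightarrow \bK^{\vartheta,\vartheta''}
\]
by stacking a WF $\vartheta$-$\vartheta'$ diagram on top of a WF $\vartheta'$-$\vartheta''$ diagram and removing the kinks produced in the red strands and ghosts, exactly as in the proof that $\bK^{\vartheta,\vartheta'}$ is a bimodule; well-definedness over $\PC^{\vartheta'}$ is the associativity of stacking. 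The essential point is surjectivity of $\mu$, and this is where the hypothesis enters: given a WF $\vartheta$-$\vartheta''$ diagram, I isotope it so that its slice at $y=\tfrac12$ has its green strands sitting at the points $D_\xi$ of some multipartition $\xi$ and its red strands in $\vartheta'$-order, using the left-justification procedure of Lemmas~\ref{lem:Morita} and~\ref{lem:hbim-basis} for the weighting $\vartheta'$, and cut there. The betweenness of $\vartheta'$ is precisely what guarantees that such a slice can be reached without the intermediate slice being forced into an unsteady (hence zero) configuration, since it says exactly that the orders on columns visible at the top and at the bottom of a diagram admit a common intermediate order realized by $\vartheta'$.

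Next I would show the derived tensor product is concentrated in degree $0$. Since $\bK^{\vartheta',\vartheta''}$ is standard filtered as a left $\PC^{\vartheta'}$-module by cell modules $S'_\xi$, it suffices to prove that $\bK^{\vartheta,\vartheta'}\Lotimes_{\PC^{\vartheta'}}S'_\xi$ is a module (placed in degree $0$) and standard filtered over $\PC^\vartheta$. I would do this by downward induction on $\xi$ in $\vartheta'$-weighted dominance order, applying $\bK^{\vartheta,\vartheta'}\Lotimes_{\PC^{\vartheta'}}(-)$ to the presentation $0\to\PC^{\vartheta'}(>\!\xi)e_{D_\xi}\to\PC^{\vartheta'}e_{D_\xi}\to S'_\xi\to0$, in which $\PC^{\vartheta'}e_{D_\xi}$ is projective and the submodule $\PC^{\vartheta'}(>\!\xi)e_{D_\xi}$ is standard filtered with sections $S'_\nu$, $\nu>\xi$. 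The two outer terms go to standard filtered $\PC^\vartheta$-modules in degree $0$ (a direct summand of $\bK^{\vartheta,\vartheta'}$, and by the inductive hypothesis a standard filtered module), so the only possible obstruction to $\bK^{\vartheta,\vartheta'}\Lotimes_{\PC^{\vartheta'}}S'_\xi$ being a module is an $H^{-1}$ term, namely the kernel of $\bK^{\vartheta,\vartheta'}\otimes_{\PC^{\vartheta'}}\PC^{\vartheta'}(>\!\xi)e_{D_\xi}\to\bK^{\vartheta,\vartheta'}e_{D_\xi}$. But $e_{D_{s,m}}\bK^{\vartheta,\vartheta'}$ is projective as a right $\PC^{\vartheta'}$-module (canonically $e_{D_{s,m}}\PC^{\vartheta'}$, as in the proof of Lemma~\ref{lem:hbim-basis}), so $e_{D_{s,m}}\bK^{\vartheta,\vartheta'}\otimes_{\PC^{\vartheta'}}(-)$ is exact and this kernel dies after applying $e_{D_{s,m}}$; being a submodule of a standard filtered module, it is therefore zero by Lemma~\ref{lem:-1-faithful}. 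Hence $\bK^{\vartheta,\vartheta'}\Lotimes_{\PC^{\vartheta'}}\bK^{\vartheta',\vartheta''}$ is a standard filtered $\PC^\vartheta$-module placed in degree $0$.

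Finally I would compare ranks. Since $\bK^{\vartheta,\vartheta'}\otimes_{\PC^{\vartheta'}}(-)$ is now exact, tracking $S_\xi$-multiplicities through the filtration of $\bK^{\vartheta',\vartheta''}$ and applying Corollary~\ref{cor:B-multiplicities} twice (once for the pair $(\vartheta',\vartheta'')$ and once for $(\vartheta,\vartheta'')$, both reductions landing on the same expression in the cell multiplicities of $\PC^{\vartheta''}e_{D_{s,m}}$) shows that $\bK^{\vartheta,\vartheta'}\otimes_{\PC^{\vartheta'}}\bK^{\vartheta',\vartheta''}$ and $\bK^{\vartheta,\vartheta''}$ are standard filtered $\PC^\vartheta$-modules with the same multiplicities, hence free $S$-modules of the same finite rank. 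A surjection between free $S$-modules of equal rank is an isomorphism, so $\mu$ is an isomorphism, and by the previous paragraph it also computes the derived tensor product; this proves the lemma.

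The step I expect to be the genuine obstacle is surjectivity of $\mu$: one must make the left-justification argument precise enough to see that betweenness of $\vartheta'$ is exactly the condition under which every WF $\vartheta$-$\vartheta''$ diagram can be cut into two nonzero halves — without betweenness the image of $\mu$ is only a proper subbimodule. The remaining steps are essentially bookkeeping with the cellular structure and the faithfulness properties already in place.
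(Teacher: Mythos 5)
Your proposal is correct and follows the same skeleton as the paper's proof: the stacking map, surjectivity obtained by isotoping a WF $\vartheta$-$\vartheta''$ diagram so its red strands pass through the positions $\vartheta'$ at $y=\nicefrac12$ (which is exactly where betweenness is used), a rank count via the standard filtrations of Lemma \ref{lem:hbim-basis}, and the observation that a surjection of free $S$-modules of equal finite rank is an isomorphism. The one place you diverge is the middle step: the paper disposes of both the degree-$0$ concentration and the rank count in one stroke by invoking the orthogonality $\dot{S}_\xi\Lotimes S_{\xi'}=\K$ if $\xi=\xi'$ and $0$ otherwise, so that the derived tensor product of the two standard/costandard-filtered bimodules is automatically concentrated in degree $0$ with dimension given by the product of filtration multiplicities. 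Your replacement — downward induction on the cell order plus $-1$-faithfulness of $e_{D_{s,m}}$ to kill $H^{-1}$, then Corollary \ref{cor:B-multiplicities} for the count — reaches the same place but is longer, and the short exact sequence $0\to\PC^{\vartheta'}(>\!\xi)e_{D_\xi}\to\PC^{\vartheta'}e_{D_\xi}\to S'_\xi\to0$ you start from is not quite right as stated ($\PC^{\vartheta'}e_{D_\xi}$ is generally a direct sum of several indecomposable projectives, so its top cell quotient need not be a single copy of $S'_\xi$); this is repairable by working with the cell filtration of the whole projective, but the cell-module orthogonality is the cleaner tool and is the one the paper uses.
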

\begin{proof}
There is an obvious map $\bK^{\vartheta,\vartheta'}
  \Lotimes \bK^{\vartheta',\vartheta''}\to
  \bK^{\vartheta,\vartheta''}$ given by stacking the diagrams.  First, we need to show that this map
  is surjective if
  $\vartheta'$ is between $\vartheta$ to
  $\vartheta''$.  This follows since after applying an isotopy, any
  diagram in $\bK^{\vartheta,\vartheta''}$ can have its red strands
  meet with $\vartheta'$ at $y=\nicefrac{1}{2}$.  Thus, slicing this
  diagram in half, we obtain diagrams from
  $\bK^{\vartheta,\vartheta'}$ and $\bK^{\vartheta',\vartheta''}$
  which will hit this one under the stacking map.

  Note that \[\dot{S}_\xi\Lotimes S_{\xi'}=
  \begin{cases}
    \K & \xi =\xi'\\
    0 & \xi\neq \xi'\\
  \end{cases}.\]
Furthermore, the multiplicity of $\dot{S}_\xi$ in
$\bK^{\vartheta,\vartheta'}$ as a right module is the number of $D$-tableau for $\vartheta$ of shape
  $\xi$ and the multiplicity of $S_\xi$ as a left module is the number
  of $D'$-tableau for $\vartheta''$ of shape $\xi$. 
Thus, the dimension of $e_D \bK^{\vartheta,\vartheta'}
  \Lotimes \bK^{\vartheta',\vartheta''}e_{D'}$ is exactly the number
  of pairs of these with the same shape, which is the dimension of  $e_D \bK^{\vartheta,\vartheta''}e_{D'}$.  Since a surjective map
  between finite dimensional vector spaces of the same dimension is an
  isomorphism, we are done.
\end{proof}

  \section{Gradings and weighted KLR algebras}
  \label{sec:basic}

One great advantage of having a concrete presentation of the category
$\cO$ for a Cherednik
algebra is that it allows us to think in a straightforward way about
graded lifts of this category: they simply correspond to gradings on
this algebra.  The presentation we gave before is not homogenous for
an obvious grading, but we can give a different presentation which is,
in the spirit of Brundan and Kleshchev's approach to gradings on Hecke
algebras \cite{BKKL}.  

  \subsection{Weighted KLR algebras}
  \label{sec:weighted-algebras}

As before, we choose $(r_1,\dots,r_\ell)\in (\C/\Z)^\ell$ and a scalar
$k\in \C$.  Given this data, we have a graph $U\subset \C/\Z$ and
associated Lie algebra $\gu$, as defined
in Section \ref{sec:comb-prel-}.  We have an associated highest weight $\la=\sum_i\omega_{r_i}$ of $\gu$
of level $\ell$.  Attached to this choice, we have a {\bf Crawley-Boevey
  quiver} $U_\la$, as defined in \cite[3.1]{WebwKLR}.  This adds a single
vertex, which in this paper we index by $\infty$, and $\la^u=\al_u^
\vee(\la)$ new
edges from $u$ to $\infty$.  We let $\Omega_\la$ be the edge set of
this new graph.  We'll often refer to the edges of the original cycle
as {\bf old} and those we have added to the Crawley-Boevey vertex as
{\bf new}.

\excise{
\begin{figure}[tbh]
  \centering
  \begin{tikzpicture}[very thick, ->, scale=2]
    \node (a) at (0,-1){$1$}; 
    \node (b) at (-1,0){$2$};
    \node (c) at (0,1){$3$};
    \node (d) at (1,0){$4$};
    \node (e) at (0,0){$\infty$};
\draw (a) to[out=180,in=-90] (b);
\draw (b) to[out=90,in=180] (c);
\draw (c) to[out=0,in=90] (d);
\draw (d) to[out=-90,in=0] (a);
\draw[red] (d) to[out=-170,in=-10] (e);
\draw[red] (a) to[out=75,in=-75] (e);
\draw[red] (c) to[out=-80,in=80] (e);
\draw[red] (a) to[out=105,in=-105] (e);
  \end{tikzpicture}
  \caption{The Crawley-Boevey quiver of $2\omega_1+\omega_3+\omega_4$ for $\mathfrak{\widehat{sl}}_4$.}
  \label{fig:crawley-boevey}
\end{figure}}

The data $\vartheta_i$ and $\ck=\Re(k)$ gives a weighting on the Crawley-Boevey graph, that
is, a function $\vartheta\colon \Omega_\la \to \R$, such that
every edge of $U$ has weight $\ck$ and
$\vartheta_i$ giving the weights of the new edges.

As before, we let $\K$ be a field and we now assume that ${S}$
is a $\K$-algebra  with a choice of elements
$h,z_1,\dots,z_\ell\in{S}$.  The most interesting choices for
us will be $\K$ itself with $h=z_1=\cdots=z_\ell=0$ or $\mathscr{R}$.  
For each edge, we set the polynomials
$Q_e(u,v)=u-v+h\in \K[u,v]$ for old edges and $Q_{e_i}(u,v)=v-u-z_i\in \K[u,v]$
for new edges, and consider
the {\bf weighted KLR algebra} $ W^\vartheta$ of
the Crawley-Boevey quiver as defined in \cite[\S \ref{w-sec:Crawley-Boevey}]{WebwKLR}.  As in
that paper, we will only consider dimension vectors with $d_\infty=1$.

Let us briefly recall the definition of this algebra. 

\begin{definition}
  We let a  {\bf weighted
KLR diagram} be a collection of curves in
  $\R\times [0,1]$ with each curve mapping diffeomorphically to
  $[0,1]$ via the projection to the $y$-axis.  Each curve is allowed
  to carry any number of dots, and has a label that lies in $U$. We draw:
  \begin{itemize}
  \item a dashed line $\ck$ units to the right of each strand,
    which we call a {\bf ghost},
\item red
lines at $x=\vartheta_i$ each of which carries a label
$\omega_{r_j}$. 
  \end{itemize} 
We now require that there are no
  triple points or tangencies involving any combination of strands,
  ghosts or red lines and no dots lie on crossings.  We consider these diagrams equivalent if they
  are related by an isotopy that avoids these tangencies, double
  points and dots on crossings.
\end{definition} 
Note that this is a bit different from the description in
\cite{WebwKLR}; we've specialized to the case of Crawley-Boevey quiver
with one vertical strand at $x=0$ labeled with the vertex $\infty$.  The
red lines
are the ghosts of this single vertical stand with label $\infty$.

 This definition is quite similar to the
conditions we considered in Section \ref{sec:pict-cher-algebr}; the
only difference is that we use black in place of green, label each of
these strands with an element of $U$ and denote the
polynomial generators with a dot instead of a square (and don't allow
negative powers of them).

For example, consider the case where $k=\nicefrac 34$ and $r_1=r_2=0,r_3=\nicefrac
34,r_4=\nicefrac 12$ and
$\vartheta_1=4,\vartheta_2=1,\vartheta_3=6,\vartheta_4=-4$.  Thus, the
diagram with no black strands for this choice of weighting looks like:
\[
\begin{tikzpicture}
  \draw[wei] (4,0) -- (4,1) node[below, at start]{$0$};
  \draw[wei] (1,0) -- (1,1)  node[below, at start]{$0$};
  \draw[wei] (6,0) -- (6,1) node[below, at start]{$\nicefrac
34$};
  \draw[wei] (-4,0) -- (-4,1)  node[below, at start]{$\nicefrac 12$};  
\end{tikzpicture}
\]
Adding in black strands will result in a diagram which
looks (for example) like:
\[
\begin{tikzpicture}[very thick]
  \draw[wei] (4,0) -- (4,1) node[below, at start]{$0$};
  \draw[wei] (1,0) -- (1,1)  node[below, at start]{$0$};
  \draw[wei] (6,0) -- (6,1) node[below, at start]{$\nicefrac
34$};
  \draw[wei] (-4,0) -- (-4,1)  node[below, at start]{$\nicefrac 12$}; 
\draw (-1,0) to[out=90,in=-90] node[below, at start]{$2$}(-2.5,1);
\draw[dashed] (0,0) to[out=90,in=-90] (-1.5,1);
\draw (-1.4,0) to[out=50,in=-130] node[below, at start]{$2$}(1.2,1);
\draw[dashed] (-.4,0) to[out=50,in=-130] (2.2,1);
\draw (-1.4,0) to[out=50,in=-130] node [pos=.5,fill,color=black,inner
sep=2pt,circle]{} node[below, at start]{$2$}(1.2,1);
\draw[dashed] (-.4,0) to[out=50,in=-130] (2.2,1);
\draw (4.3,0) to[out=80,in=-100] node[below, at start]{$1$}(5.2,1);
\draw[dashed] (5.3,0) to[out=80,in=-100] (6.2,1);
\draw (4.6,0) to[out=90,in=-90]  node [pos=.4,fill,color=black,inner
sep=2pt,circle]{} node[below, at start]{$3$}(3.8,1);
\draw[dashed] (5.6,0) to[out=90,in=-90] (4.8,1);
\draw (2.8,0) to[out=90,in=-90] node[below, at start]{$4$}(3.5,1);
\draw[dashed] (3.8,0) to[out=90,in=-90] (4.5,1);
\end{tikzpicture}
\]
In $W^\vartheta$, we
have idempotents $e_\Bi$ indexed not just by sequences of nodes in the Dynkin
diagram, but by combinatorial objects we call {\bf loadings},
discussed earlier.  A loading is
a function from the real line to $U\cup\{\emptyset\}$ which is $\emptyset$ at all
but finitely many points.  Diagrammatically, we think of this as
encoding the positions of the black strands on a horizontal line.
Thus, a loading will arise from a generic horizontal slice of a
weighted KLR diagram, and the idempotent corresponding to a loading
has exactly that slice at every value of $y$.  

Of course, there are infinitely many such loadings.  Typically, we
will only consider these loadings up to {\bf equivalence}, as defined in
\cite[\ref{w-def:equivalent}]{WebwKLR}.  There only finitely many equivalence classes, so the
resulting algebra is more tractable.  

\begin{definition}
The weighted KLR algebra $\tilde{T}^\vartheta$ is the quotient of the span of
weighted KLR diagrams by the local relations:
\newseq
\begin{equation*}\subeqn\label{dots-1}
    \begin{tikzpicture}[scale=.45,baseline]
      \draw[very thick](-4,0) +(-1,-1) -- +(1,1) node[below,at start]
      {$i$}; \draw[very thick](-4,0) +(1,-1) -- +(-1,1) node[below,at
      start] {$j$}; \fill (-4.5,.5) circle (5pt);
      % \draw[very thick] (0,0) +(0,-1) -- +(0,1) node[below, at
      % start]{$i$}; \fill (0,0) circle (5pt);
      \node at (-2,0){=}; \draw[very thick](0,0) +(-1,-1) -- +(1,1)
      node[below,at start] {$i$}; \draw[very thick](0,0) +(1,-1) --
      +(-1,1) node[below,at start] {$j$}; \fill (.5,-.5) circle (5pt);
      \node at (4,0){for $i\neq j$};
    \end{tikzpicture}\end{equation*}
\begin{equation*}\label{dots-2}\subeqn
    \begin{tikzpicture}[scale=.45,baseline]
      \draw[very thick](-4,0) +(-1,-1) -- +(1,1) node[below,at start]
      {$i$}; \draw[very thick](-4,0) +(1,-1) -- +(-1,1) node[below,at
      start] {$i$}; \fill (-4.5,.5) circle (5pt);
      % \draw[very thick] (0,0) +(0,-1) -- +(0,1) node[below, at
      % start]{$i$}; \fill (0,0) circle (5pt);
      \node at (-2,0){=}; \draw[very thick](0,0) +(-1,-1) -- +(1,1)
      node[below,at start] {$i$}; \draw[very thick](0,0) +(1,-1) --
      +(-1,1) node[below,at start] {$i$}; \fill (.5,-.5) circle (5pt);
      \node at (2,0){+}; \draw[very thick](4,0) +(-1,-1) -- +(-1,1)
      node[below,at start] {$i$}; \draw[very thick](4,0) +(0,-1) --
      +(0,1) node[below,at start] {$i$};
    \end{tikzpicture}\qquad 
    \begin{tikzpicture}[scale=.45,baseline]
      \draw[very thick](-4,0) +(-1,-1) -- +(1,1) node[below,at start]
      {$i$}; \draw[very thick](-4,0) +(1,-1) -- +(-1,1) node[below,at
      start] {$i$}; \fill (-4.5,-.5) circle (5pt);
      % \draw[very thick] (0,0) +(0,-1) -- +(0,1) node[below, at
      % start]{$i$}; \fill (0,0) circle (5pt);
      \node at (-2,0){=}; \draw[very thick](0,0) +(-1,-1) -- +(1,1)
      node[below,at start] {$i$}; \draw[very thick](0,0) +(1,-1) --
      +(-1,1) node[below,at start] {$i$}; \fill (.5,.5) circle (5pt);
      \node at (2,0){+}; \draw[very thick](4,0) +(-1,-1) -- +(-1,1)
      node[below,at start] {$i$}; \draw[very thick](4,0) +(0,-1) --
      +(0,1) node[below,at start] {$i$};
    \end{tikzpicture}
  \end{equation*}
\begin{equation*}\label{strand-bigon}\subeqn
    \begin{tikzpicture}[very thick,scale=.8,baseline]
      \draw (-2.8,0) +(0,-1) .. controls (-1.2,0) ..  +(0,1)
      node[below,at start]{$i$}; \draw (-1.2,0) +(0,-1) .. controls
      (-2.8,0) ..  +(0,1) node[below,at start]{$i$}; \node at (-.5,0)
      {=}; \node at (0.4,0) {$0$};
\node at (1.5,.05) {and};
    \end{tikzpicture}
\hspace{.4cm}
    \begin{tikzpicture}[very thick,scale=.8 ,baseline]

      \draw (-2.8,0) +(0,-1) .. controls (-1.2,0) ..  +(0,1)
      node[below,at start]{$i$}; \draw (-1.2,0) +(0,-1) .. controls
      (-2.8,0) ..  +(0,1) node[below,at start]{$j$}; \node at (-.5,0)
      {=};

\draw (1.8,0) +(0,-1) -- +(0,1) node[below,at start]{$j$};
      \draw (1,0) +(0,-1) -- +(0,1) node[below,at start]{$i$}; 
    \end{tikzpicture}
  \end{equation*} 
\begin{equation*}\label{ghost-bigon1}\subeqn
\begin{tikzpicture}[very thick,xscale=1.6 ,yscale=.8,baseline]
 \draw (1,-1) to[in=-90,out=90]  node[below, at start]{$i$} (1.5,0) to[in=-90,out=90] (1,1)
;
  \draw[dashed] (1.5,-1) to[in=-90,out=90] (1,0) to[in=-90,out=90] (1.5,1);
  \draw (2.5,-1) to[in=-90,out=90]  node[below, at start]{$j$} (2,0) to[in=-90,out=90] (2.5,1);
\node at (3,0) {=};
  \draw (3.7,-1) -- (3.7,1) node[below, at start]{$i$}
 ;
  \draw[dashed] (4.2,-1) to (4.2,1);
  \draw (5.2,-1) -- (5.2,1) node[below, at start]{$j$};  \node at (6.2,0){for $i+k\neq j$};
\end{tikzpicture}
\end{equation*} 
\begin{equation*}\label{ghost-bigon1a}\subeqn
\begin{tikzpicture}[very thick,xscale=1.6 ,yscale=.8,baseline]
 \draw (1.5,-1) to[in=-90,out=90]  node[below, at start]{$i$} (1,0) to[in=-90,out=90] (1.5,1)
;
  \draw[dashed] (1,-1) to[in=-90,out=90] (1.5,0) to[in=-90,out=90] (1,1);
  \draw (2,-1) to[in=-90,out=90]  node[below, at start]{$j$} (2.5,0) to[in=-90,out=90] (2,1);
\node at (3,0) {=};
  \draw (4.2,-1) -- (4.2,1) node[below, at start]{$i$}
 ;
  \draw[dashed] (3.7,-1) to (3.7,1);
  \draw (4.7,-1) -- (4.7,1) node[below, at start]{$j$};  \node at (6.2,0){for $i+k\neq j$};
\end{tikzpicture}
\end{equation*} \begin{equation*}\label{ghost-bigon2}\subeqn
\begin{tikzpicture}[very thick,xscale=1.4,baseline=25pt]
 \draw (1,0) to[in=-90,out=90]  node[below, at start]{$i$} (1.5,1) to[in=-90,out=90] (1,2)
;
  \draw[dashed] (1.5,0) to[in=-90,out=90] (1,1) to[in=-90,out=90] (1.5,2);
  \draw (2.5,0) to[in=-90,out=90]  node[below, at start]{$i+k$} (2,1) to[in=-90,out=90] (2.5,2);
\node at (3,1) {=};
  \draw (3.7,0) -- (3.7,2) node[below, at start]{$i$}
 ;
  \draw[dashed] (4.2,0) to (4.2,2);
  \draw (5.2,0) -- (5.2,2) node[below, at start]{$i+k$} node[midway,fill,inner sep=2.5pt,circle]{};
\node at (5.75,1) {$-$};

  \draw (6.2,0) -- (6.2,2) node[below, at start]{$i$} node[midway,fill,inner sep=2.5pt,circle]{};
  \draw[dashed] (6.7,0)-- (6.7,2);
  \draw (7.7,0) -- (7.7,2) node[below, at start]{$i+k$};
\node at (8.15,1) {$+$}; \node at (8.45,1){$h$};
  \draw (8.7,0) -- (8.7,2) node[below, at start]{$i$};
  \draw[dashed] (9.2,0)-- (9.2,2);
  \draw (10.2,0) -- (10.2,2) node[below, at start]{$i+k$};
\end{tikzpicture}
\end{equation*}
 \begin{equation*}\label{ghost-bigon2a}\subeqn
\begin{tikzpicture}[very thick,xscale=1.4,baseline=25pt]
 \draw (1.5,0) to[in=-90,out=90]  node[below, at start]{$i$} (1,1) to[in=-90,out=90] (1.5,2)
;
  \draw[dashed] (1,0) to[in=-90,out=90] (1.5,1) to[in=-90,out=90] (1,2);
  \draw (2,0) to[in=-90,out=90]  node[below, at start]{$i+k$} (2.5,1) to[in=-90,out=90] (2,2);
\node at (3,1) {=};
  \draw (4.2,0) -- (4.2,2) node[below, at start]{$i$}
 ;
  \draw[dashed] (3.7,0) to (3.7,2);
  \draw (4.7,0) -- (4.7,2) node[below, at start]{$i+k$} node[midway,fill,inner sep=2.5pt,circle]{};
\node at (5.25,1) {$-$};

  \draw (6.2,0) -- (6.2,2) node[below, at start]{$i$} node[midway,fill,inner sep=2.5pt,circle]{};
  \draw[dashed] (5.7,0)-- (5.7,2);
  \draw (6.7,0) -- (6.7,2) node[below, at start]{$i+k$};
\node at (7.15,1) {$+$}; \node at (7.45,1){$h$};
  \draw (8.2,0) -- (8.2,2) node[below, at start]{$i$};
  \draw[dashed] (7.7,0)-- (7.7,2);
  \draw (8.7,0) -- (8.7,2) node[below, at start]{$i+k$};
\end{tikzpicture}
\end{equation*}
 \begin{equation*}\subeqn\label{triple-boring}
    \begin{tikzpicture}[very thick,scale=1 ,baseline]
      \draw (-3,0) +(1,-1) -- +(-1,1) node[below,at start]{$m$}; \draw
      (-3,0) +(-1,-1) -- +(1,1) node[below,at start]{$i$}; \draw
      (-3,0) +(0,-1) .. controls (-4,0) ..  +(0,1) node[below,at
      start]{$j$}; \node at (-1,0) {=}; \draw (1,0) +(1,-1) -- +(-1,1)
      node[below,at start]{$m$}; \draw (1,0) +(-1,-1) -- +(1,1)
      node[below,at start]{$i$}; \draw (1,0) +(0,-1) .. controls
      (2,0) ..  +(0,1) node[below,at start]{$j$};
    \end{tikzpicture}
  \end{equation*}
\begin{equation*}\subeqn \label{eq:triple-point1}
    \begin{tikzpicture}[very thick,xscale=1.6,yscale=.8,baseline]
      \draw[dashed] (-3,0) +(.4,-1) -- +(-.4,1);
 \draw[dashed]      (-3,0) +(-.4,-1) -- +(.4,1); 
    \draw (-1.5,0) +(.4,-1) -- +(-.4,1) node[below,at start]{$i+k$}; \draw
      (-1.5,0) +(-.4,-1) -- +(.4,1) node[below,at start]{$i+k$}; 
 \draw (-3,0) +(0,-1) .. controls (-3.5,0) ..  +(0,1) node[below,at
      start]{$i$};\node at (-.75,0) {=};  \draw[dashed] (0,0) +(.4,-1) -- +(-.4,1);
 \draw[dashed]      (0,0) +(-.4,-1) -- +(.4,1); 
    \draw (1.5,0) +(.4,-1) -- +(-.4,1) node[below,at start]{$i+k$}; \draw
      (1.5,0) +(-.4,-1) -- +(.4,1) node[below,at start]{$i+k$}; 
 \draw (0,0) +(0,-1) .. controls (.5,0) ..  +(0,1) node[below,at
      start]{$i$};
\node at (2.25,0)
      {$-$};   
     \draw (4.5,0)
      +(.4,-1) -- +(.4,1) node[below,at start]{$i+k$}; \draw (4.5,0)
      +(-.4,-1) -- +(-.4,1) node[below,at start]{$i+k$}; 
 \draw[dashed] (3,0)
      +(.4,-1) -- +(.4,1); \draw[dashed] (3,0)
      +(-.4,-1) -- +(-.4,1); 
\draw (3,0)
      +(0,-1) -- +(0,1) node[below,at start]{$i$};
%\node[inner ysep=8pt,inner xsep=5pt,fill=white,draw,scale=.8] at (6.2,0){$\displaystyle \frac{Q_{ij}(y_3,y_2)-Q_{ij}(y_1,y_2)}{y_3-y_1}$};
    \end{tikzpicture}
  \end{equation*}
\begin{equation*}\subeqn\label{eq:triple-point2}
    \begin{tikzpicture}[very thick,xscale=1.6,yscale=.8,baseline]
\draw[dashed] (-3,0) +(0,-1) .. controls (-3.5,0) ..  +(0,1) ;  
  \draw (-3,0) +(.4,-1) -- +(-.4,1) node[below,at start]{$i$}; \draw
      (-3,0) +(-.4,-1) -- +(.4,1) node[below,at start]{$i$}; 
 \draw (-1.5,0) +(0,-1) .. controls (-2,0) ..  +(0,1) node[below,at
      start]{$i+k$};\node at (-.75,0) {=};  
    \draw (0,0) +(.4,-1) -- +(-.4,1) node[below,at start]{$i$}; \draw
      (0,0) +(-.4,-1) -- +(.4,1) node[below,at start]{$i$}; 
 \draw[dashed] (0,0) +(0,-1) .. controls (.5,0) ..  +(0,1);
 \draw (1.5,0) +(0,-1) .. controls (2,0) ..  +(0,1) node[below,at
      start]{$i+k$};
\node at (2.25,0)
      {$+$};   
     \draw (3,0)
      +(.4,-1) -- +(.4,1) node[below,at start]{$i$}; \draw (3,0)
      +(-.4,-1) -- +(-.4,1) node[below,at start]{$i$}; 
\draw[dashed] (3,0)
      +(0,-1) -- +(0,1);\draw (4.5,0)
      +(0,-1) -- +(0,1) node[below,at start]{$i+k$};
%\node[inner ysep=8pt,inner xsep=5pt,fill=white,draw,scale=.8] at (6.2,0){$\displaystyle \frac{Q_{ij}(y_3,y_2)-Q_{ij}(y_1,y_2)}{y_3-y_1}$};
    \end{tikzpicture}.
  \end{equation*}
  \begin{equation*}\subeqn\label{cost}
  \begin{tikzpicture}[very thick,baseline,scale=.8,xscale=.7]
    \draw (-2.8,0)  +(0,-1) .. controls (-1.2,0) ..  +(0,1) node[below,at start]{$i$};
       \draw[wei] (-1.2,0)  +(0,-1) .. controls (-2.8,0) ..  +(0,1) node[below,at start]{$i$};
           \node at (-.3,0) {=};
    \draw[wei] (2.8,0)  +(0,-1) -- +(0,1) node[below,at start]{$i$};
       \draw (1.2,0)  +(0,-1) -- +(0,1) node[below,at start]{$i$};
       \fill (1.2,0) circle (3pt);
 \node at (3.8,0) {$-$};\node at (4.5,0){$z_k$};
        \draw[wei] (6.8,0)  +(0,-1) -- +(0,1) node[below,at start]{$i$};
       \draw (5.2,0)  +(0,-1) -- +(0,1) node[below,at start]{$i$};
  \end{tikzpicture}\qquad\qquad
  \begin{tikzpicture}[very thick,baseline,scale=.8,xscale=.7]
          \draw[wei] (6.8,0)  +(0,-1) .. controls (5.2,0) ..  +(0,1) node[below,at start]{$j$};
  \draw (5.2,0)  +(0,-1) .. controls (6.8,0) ..  +(0,1) node[below,at start]{$i$};
           \node at (7.7,0) {=};
    \draw (9.2,0)  +(0,-1) -- +(0,1) node[below,at start]{$i$};
       \draw[wei] (10.8,0)  +(0,-1) -- +(0,1) node[below,at start]{$j$};
  \end{tikzpicture}
\end{equation*} 
  \begin{equation*}\subeqn
    \begin{tikzpicture}[very thick,scale=.8,baseline]
      \draw (-3,0)  +(1,-1) -- +(-1,1) node[at start,below]{$i$};
      \draw (-3,0) +(-1,-1) -- +(1,1)node [at start,below]{$j$};
      \draw[wei] (-3,0)  +(0,-1) .. controls (-4,0) ..  node[at start,below]{$m$}+(0,1);
      \node at (-1,0) {=};
      \draw (1,0)  +(1,-1) -- +(-1,1) node[at start,below]{$i$};
      \draw (1,0) +(-1,-1) -- +(1,1) node [at start,below]{$j$};
      \draw[wei] (1,0) +(0,-1) .. controls (2,0) .. node[at start,below]{$m$} +(0,1);   
\node at (2.8,0) {$+ $};
      \draw (6.5,0)  +(1,-1) -- +(1,1)  node[at start,below]{$i$};
      \draw (6.5,0) +(-1,-1) -- +(-1,1) node [at start,below]{$j$};
      \draw[wei] (6.5,0) +(0,-1) -- node[at start,below]{$m$} +(0,1);
\node at (3.8,-.2){$\delta_{i,j,m} $}  ;
 \end{tikzpicture}
  \end{equation*}
\begin{equation*}\subeqn\label{dumb}
    \begin{tikzpicture}[very thick,scale=.8,baseline]
      \draw[wei] (-3,0)  +(1,-1) -- +(-1,1);
      \draw (-3,0)  +(0,-1) .. controls (-4,0) ..  +(0,1);
      \draw (-3,0) +(-1,-1) -- +(1,1);
      \node at (-1,0) {=};
      \draw[wei] (1,0)  +(1,-1) -- +(-1,1);
  \draw (1,0)  +(0,-1) .. controls (2,0) ..  +(0,1);
      \draw (1,0) +(-1,-1) -- +(1,1);    \end{tikzpicture}\qquad \qquad
    \begin{tikzpicture}[very thick,scale=.8,baseline]
  \draw(-3,0) +(-1,-1) -- +(1,1);
  \draw[wei](-3,0) +(1,-1) -- +(-1,1);
\fill (-3.5,-.5) circle (3pt);
\node at (-1,0) {=};
 \draw(1,0) +(-1,-1) -- +(1,1);
  \draw[wei](1,0) +(1,-1) -- +(-1,1);
\fill (1.5,.5) circle (3pt);
    \end{tikzpicture}
  \end{equation*}
For the relations (\ref{dumb}), we also include their mirror images.

Some care must be used when understanding what it means to apply these
relations locally.  In each case, the LHS and RHS have a dominant term
which are related to each other via an isotopy through a disallowed
diagram with a tangency, triple point or a dot on a crossing.  You can only apply the
relations if this isotopy avoids tangencies, triple points and dots on crossings
everywhere else in the diagram; one can always choose isotopy
representatives sufficiently generic for this to hold.
\end{definition}

This algebra is graded if $S$ is graded with $h,z_i$ having degree 2.
This is satisfied if $S=\K$ or $S=\K[h,z_1,\dots, z_\ell]$.
\begin{itemize}
\item As usual, the dot has degree 2.
\item The crossing of two strands has degree 0, unless they have the
  same label, in which case it's $-2$.  
\item The crossing of a strand with label $i$ from right of a ghost to left of it has
  degree 1 if the
  ghost has label $i+k$ and degree 0 otherwise. 
\item Such a crossing from left to right has degree 1 if the
  ghost has label $i+k$ and degree 0 otherwise.
\end{itemize}
That is,
\[
\deg\tikz[baseline,very thick,scale=1.5]{\draw (.2,.3) --
  (-.2,-.1) node[at end,below, scale=.8]{$i$}; \draw
  (.2,-.1) -- (-.2,.3) node[at start,below,scale=.8]{$j$};} =-2\delta_{i,j} \qquad  \deg\tikz[baseline,very thick,scale=1.5]{\draw
  (0,.3) -- (0,-.1) node[at end,below,scale=.8]{$i$}
  node[midway,circle,fill=black,inner
  sep=2pt]{};}=2 \qquad \deg\tikz[baseline,very thick,scale=1.5]{\draw[densely dashed] 
  (-.2,-.1)-- (.2,.3) node[at start,below, scale=.8]{$i$}; \draw
  (.2,-.1) -- (-.2,.3) node[at start,below,scale=.8]{$j$};} =\delta_{j,i-k} \qquad \deg\tikz[baseline,very thick,scale=1.5]{\draw (.2,.3) --
  (-.2,-.1) node[at end,below, scale=.8]{$i$}; \draw [densely dashed]
  (.2,-.1) -- (-.2,.3) node[at start,below,scale=.8]{$j$};} =\delta_{j,i+k}\]

This algebra has a {\bf reduced steadied quotient}, which we will denote
$T^\vartheta$.  This is obtained by
\begin{itemize}
\item killing all idempotents where the strands can be broken into two
  groups separated by a blank space of size $>|\ck|$ (so no ghost from
  the right group can be left of a strand in the left group and {\it
    vice versa}) and all red strands in the right group; we call such idempotents {\bf unsteady}.
\item killing all dots on the strand with label $\infty$.  
\end{itemize}

We'll just remind the reader that we allow the case where $k\in \Z$
(so $e=1$).
In this case, the graph $U$ is just elements of $\C/\Z$ equal to one
of the $r_i$, connected to itself by a loop and the equations $i=j,i=j+k,i=j-k$ are all equivalent.

\begin{remark}
  We should note that unlike in the tensor product algebras for
  $\slehat$ in \cite[\S 3]{Webmerged}, a black line being left of a red is not
  enough to conclude the diagram is 0; it must be far enough left to
  avoid all entanglements with ghosts.  See Example \ref{e-is-2} below.
\end{remark}

We can associate the elements of $U$ to roots of $\gu$.
As in \cite{Webmerged}, we'll let $T^\vartheta_\nu$ for $\nu$ a weight
of $\glehat$ be the subalgebra where the sum of the weights $\la_i$
minus the sum of the roots labeling the black strands is $\nu$. For
$e\neq 1$, it is sufficient to consider the $\gu$-weight, but for
$e=1$, it is not quite clear what this means.  The algebra $\mathfrak{\widehat{gl}}_1$
has a ``Cartan algebra'' which is 2-dimensional
with basis $c,\partial$; we let $\omega, \al$ be the dual basis.  The
weights of the highest weight Fock representation are $\om,\om-\al,\om-2\al,\dots$.

\begin{example}\label{e-is-2}
  Let $k=-\nicefrac{1}{2}$, $Q_1=0$.  Rather than list idempotents up to
  equivalence, which is still a bit redundant, let us implicitly
  identify idempotents easily found to be isomorphic using the
  relations above.  If we have one black strand, then we can 
  see that we obtain the trivial algebra if it is labeled $\nicefrac{1}{2}$, and a
  1-dimensional algebra if it is labeled $0$ (in both cases, this is
  just the corresponding cyclotomic quotient).
Similarly, if we have two black strands with the same label we get the
trivial algebra  again.  

On the other hand, for one strand labeled $0$ and one labeled $\nicefrac{1}{2}$, the
picture is more interesting.  We get 2 interesting idempotents, which
can be represented visually by 
\[e_1=\:\tikz[baseline=-2pt,very thick, xscale=3] { 
\draw (.05,-.5) -- node[below, at start]{$\nicefrac{1}{2}$} (0.05,.5);
\draw[dashed] (-.55,-.5) -- (-.55,.5);
\draw[dashed] (-.1,-.5) -- (-.1,.5);
\draw (.5,-.5) -- node[below, at start]{$0$} (.5,.5);
\draw[wei](.35,-.5) -- node[below, at start]{$\omega_0$} (.35,.5);
 }
\qquad e_2=\:\tikz[baseline=-2pt,very thick, xscale=3] { 
\draw (1.25,-.5) -- node[below, at start]{$\nicefrac{1}{2}$} (1.25,.5);
\draw[dashed] (.65,-.5) -- (.65,.5);
\draw[dashed] (-.1,-.5) -- (-.1,.5);
\draw (.5,-.5) -- node[below, at start]{$0$} (.5,.5);
\draw[wei](.35,-.5) -- node[below, at start]{$\omega_0$} (.35,.5);
 }\]
One can easily calculate that $e_1T^{\vartheta}e_1\cong \K$ and that
$e_2T^{\vartheta}e_2\cong \K[y_2]/(y_2^2)$ where $y_2$ represents the dot on
the rightward strand. 

Note that $e_1$ is not unsteady (and in fact is nonzero in the
steadied quotient), even though it contains a black strand left of
a red one, since that strand is ``protected'' by a ghost.  The
idempotents 
\[\tikz[baseline=-2pt,very thick, xscale=3] { 
\draw (-.25,-.5) -- node[below, at start]{$\nicefrac{1}{2}$} (-0.25,.5);
\draw[dashed] (-.85,-.5) -- (-.85,.5);
\draw[dashed] (-.1,-.5) -- (-.1,.5);
\draw (.5,-.5) -- node[below, at start]{$0$} (.5,.5);
\draw[wei](.35,-.5) -- node[below, at start]{$\omega_0$} (.35,.5);
 } \qquad\text{\rm and} \qquad\tikz[baseline=-2pt,very thick, xscale=3] { 
\draw (-.25,-.5) -- node[below, at start]{$\nicefrac{1}{2}$} (-0.25,.5);
\draw[dashed] (-.85,-.5) -- (-.85,.5);
\draw[dashed] (-.4,-.5) -- (-.4,.5);
\draw (.2,-.5) -- node[below, at start]{$0$} (.2,.5);
\draw[wei](.35,-.5) -- node[below, at start]{$\omega_0$} (.35,.5);
 }\] are unsteady, and thus sent to 0.  Note that the idempotent \[\tikz[baseline=-2pt,very thick, xscale=3] { 
\draw (-.25,-.5) -- node[below, at start]{$\nicefrac{1}{2}$} (-0.25,.5);
\draw[dashed] (-.85,-.5) -- (-.85,.5);
\draw[dashed] (-.1,-.5) -- (-.1,.5);
\draw (.5,-.5) -- node[below, at start]{$0$} (.5,.5);
\draw[wei](-.5,-.5) -- node[below, at start]{$\omega_0$} (-.5,.5);
 }\] isn't unsteady, but it is isomorphic to the left-hand unsteady
 idempotent above, by relation (\ref{cost}).

Savvy representation theorists will have already guessed that we've
arrived at the familiar highest weight category with these
endomorphism rings for its projectives; for example, this is given by
a regular integral block of category $\cO$ for $\mathfrak{sl}_2$.  A
basis of this ring is given by $e_1,e_2$ as above and
\[a=\:\tikz[baseline=-2pt,very thick, xscale=3] { 
\draw (.05,-.5) -- node[below, at start]{$\nicefrac{1}{2}$} (1.25,.5);
\draw[dashed] (-.55,-.5) -- (.65,.5);
\draw[dashed] (-.1,-.5) -- (-.1,.5);
\draw (.5,-.5) -- node[below, at start]{$0$} (.5,.5);
\draw[wei](.35,-.5) -- node[below, at start]{$\omega_0$} (.35,.5);
 }
\qquad b=\:\tikz[baseline=-2pt,very thick, xscale=3] { 
\draw (1.25,-.5) -- node[below, at start]{$\nicefrac{1}{2}$} (0.05,.5);
\draw[dashed] (.65,-.5) -- (-.55,.5);
\draw[dashed] (-.1,-.5) -- (-.1,.5);
\draw (.5,-.5) -- node[below, at start]{$0$} (.5,.5);
\draw[wei](.35,-.5) -- node[below, at start]{$\omega_0$} (.35,.5);
 }\]
\[x=\:\tikz[baseline=-2pt,very thick, xscale=3] { 
\draw (1.25,-.5) -- node[below, at start]{$\nicefrac{1}{2}$} node [pos=.5,fill,color=black,inner
sep=2pt,circle]{}  (1.25,.5);
\draw[dashed] (.65,-.5) -- (.65,.5);
\draw[dashed] (-.1,-.5) -- (-.1,.5);
\draw (.5,-.5) -- node[below, at start]{$0$} (.5,.5);
\draw[wei](.35,-.5) -- node[below, at start]{$\omega_0$} (.35,.5);
 }\]
The product $x=ab$ follows from the relation (\ref{ghost-bigon2})
since
\[ab=\:\tikz[baseline=-2pt,very thick, xscale=2] { 
\draw (1.25,-1) --node[below, at start]{$\nicefrac{1}{2}$} (0.05,0)--  (1.25,1);
\draw[dashed] (.65,-1) --  (-.55,0) --(.65,1);
\draw[dashed] (-.1,-1) -- (-.1,1);
\draw (.5,-1) -- node[below, at start]{$0$} (.5,1);
\draw[wei](.3,-1) -- node[below, at start]{$\omega_0$} (.3,1);
 }\hspace{3mm}=\hspace{3mm}\tikz[baseline=-2pt,very thick, xscale=3] { 
\draw (1.25,-.5) -- node[below, at start]{$\nicefrac{1}{2}$} node [pos=.5,fill,color=black,inner
sep=2pt,circle]{}  (1.25,.5);
\draw[dashed] (.65,-.5) -- (.65,.5);
\draw[dashed] (-.1,-.5) -- (-.1,.5);
\draw (.5,-.5) -- node[below, at start]{$0$} (.5,.5);
\draw[wei](.35,-.5) -- node[below, at start]{$\omega_0$} (.35,.5);
 }\hspace{2mm}-\hspace{4mm}\tikz[baseline=-2pt,very thick, xscale=3] { 
\draw (1.25,-.5) -- node[below, at start]{$\nicefrac{1}{2}$} (1.25,.5);
\draw[dashed] (.65,-.5) -- (.65,.5);
\draw[dashed] (-.1,-.5) -- (-.1,.5);
\draw (.5,-.5) -- node[below, at start]{$0$} node [pos=.5,fill,color=black,inner
sep=2pt,circle]{}  (.5,.5);
\draw[wei](.35,-.5) -- node[below, at start]{$\omega_0$} (.35,.5);
 }
\]
The latter term is 0, by the calculation 
\[\tikz[baseline=-2pt,very thick, xscale=3] { 
\draw (1.25,-.5) -- node[below, at start]{$\nicefrac{1}{2}$} (1.25,.5);
\draw[dashed] (.65,-.5) -- (.65,.5);
\draw[dashed] (-.1,-.5) -- (-.1,.5);
\draw (.5,-.5) -- node[below, at start]{$0$} node [pos=.5,fill,color=black,inner
sep=2pt,circle]{}  (.5,.5);
\draw[wei](.35,-.5) -- node[below, at start]{$\omega_0$} (.35,.5);
 }\hspace{3mm}=\hspace{3mm}\tikz[baseline=-2pt,very thick, xscale=3] { 
\draw (1.25,-.5) -- node[below, at start]{$\nicefrac{1}{2}$} (1.25,.5);
\draw[dashed] (.65,-.5) -- (.65,.5);
\draw[dashed] (-.1,-.5) to[out=120,in=-90]  (-.4,0) to[out=90,in=-120] (-.1,.5);
\draw (.5,-.5) to[out=120,in=-90]  node[below, at start]{$0$} (.2,0) to[out=90,in=-120] (.5,.5);
\draw[wei](.3,-.5) -- node[below, at start]{$\omega_0$} (.3,.5);
 }=0
\]
since the second diagram factors through an unsteady idempotent.  One
can similarly calculate that $ba=ax=xa=bx=xb=0$.
\end{example}

If $\ck=0$, then we recover the tensor product algebra $T^\bla$
described in \cite[\S 3]{Webmerged} for the Lie algebra $\gu$.  We
view moving to $\ck\neq 0$ as passing from $\slehat$ to $\glehat$ in a
way that we shall make more precise.  This idea has appeared in
several places, for example, the work of Frenkel and Savage on quiver
varieties \cite{FS03}.

We'll generally be interested in the category $T^\vartheta\mmod$ of {\it
  graded} $T^\vartheta$-modules.  When we consider the category of
modules without a grading (for a graded or ungraded algebra), we'll
use the symbol $T^\vartheta\umod$.

Assuming that $e\neq 1$, the category $T^\vartheta\mmod$ carries a
categorical action of $\gu$, via functors $\eF_i$ and $\eE_i$,
which basically correspond to the addition and removal of a black line
with label $i\in U$, defined in \cite[\S
\ref{w-sec:Crawley-Boevey}]{WebwKLR}.   We can view these are the
induction and restriction functors for the map of rings
$\alg^\vartheta_\nu\to \alg^\vartheta_{\nu-\al_i}$ which adds a black
strand with label $i$ at least $\ck$ units right of any other strand.

 If $e=1$, we still have functors $\eF_i$ and $\eE_i$ corresponding to
 the different points in $i\in U$ given by induction and restriction functors for
the same inclusion $\alg^\vartheta_\nu\to \alg^\vartheta_{\nu-\al_i}$. The functors $\eE_i$ and $\eF_i$ have a structure
 reminiscent of, but not identical with a categorical Heisenberg action
in the sense of Cautis and Licata \cite{CaLi}.  In particular, they do
categorify a level $\ell$-Fock space representation of
$U_q(\gu)$, as we'll prove later.

There is a symmetry of this picture:
\begin{proposition}\label{prop:symmetry}
  The map on a weighted KLR diagrams which keeps all red and black strands in the
  same place, reindexes their labels sending $\omega_i\mapsto
  \omega_{-i},\al_{i}\mapsto\al_{-i}$ and sends $\ck\mapsto -\ck$ is
  an isomorphism.
\end{proposition}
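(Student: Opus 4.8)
The plan is to verify that the proposed relabeling operation respects the defining relations of $\tilde T^\vartheta$; since the operation is visibly a bijection on the set of weighted KLR diagrams — it moves no solid or red strand, it only relabels each black strand $u\mapsto -u$, relabels each red line $\omega_{r_j}\mapsto\omega_{-r_j}$, and redraws the (black) ghosts on the opposite side to account for $\ck\mapsto-\ck$ — and since it manifestly commutes with vertical stacking, that is all that is needed. The target algebra is the weighted KLR algebra $\tilde T^{\vartheta^\vee}$ attached to the data $\vartheta^\vee$ with the same red positions $\vartheta_i$, with $\ck$ replaced by $-\ck$, and with the residues $r_i$ replaced by $-r_i$ (so the cyclic quiver $U$ becomes $-U$); conceptually the operation is the composite of the ``opposite quiver'' symmetry of the weighted KLR formalism of \cite{WebwKLR} — which reverses the old arrows, transposes each $Q_e$, and hence flips the sign of the old edge weight $\ck$ — with the automorphism of $\gu$ negating every root and fundamental weight, which reidentifies $-U$ with $U$ (the old edge $u\to u+k$ becoming $-u\to-u-k$) and accounts for $\al_i\mapsto\al_{-i}$, $\omega_i\mapsto\omega_{-i}$.

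For the relations that do not mention ghosts — the dot slides (\ref{dots-1}), (\ref{dots-2}), the strand bigons (\ref{strand-bigon}), and the plain triple point (\ref{triple-boring}) — preservation is immediate, since the only data appearing are equalities or inequalities of labels and the symbols $\delta_{i,j}$, $\delta_{i,j,m}$, all invariant under $i\mapsto-i$, and the degree of a pure crossing depends only on $\delta_{i,j}$. The substance is in the ghost relations (\ref{ghost-bigon1})--(\ref{ghost-bigon2a}), the ghost triple points (\ref{eq:triple-point1}), (\ref{eq:triple-point2}), and the mixed black--red relations (\ref{cost}), (\ref{dumb}). Here one checks that the operation carries ``a strand labelled $i$ lying next to the ghost of a strand labelled $i+k$'' to ``a strand labelled $-(i+k)$ lying next to the ghost of a strand labelled $-i$'', and that in the target quiver these two labels again differ by the ambient shift; thus the combinatorial condition singling out the nontrivial ghost bigons is preserved, and so are the degrees, which depend only on whether the relevant pair of labels differs by $k$. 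The correction terms match because transposing $Q_e$ turns the old-edge polynomial $u-v+h$ into $v-u+h$, which is exactly what appears on the right-hand side of the transformed (\ref{ghost-bigon2}); the new-edge polynomials $v-u-z_i$ are treated identically and produce the $-z_i$ term of (\ref{cost}), and the parameters $h,z_i$ themselves are left untouched since only the old arrows are reversed.

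Finally, the operation descends to the reduced steadied quotient $T^\vartheta$: an idempotent is unsteady precisely when its strands split into a left and a right group separated by a gap larger than $|\ck|$ with all red strands on the right, and this is preserved because no solid or red strand moves and $|{-\ck}|=|\ck|$; moreover the operation fixes the strand labelled $\infty$ and sends dots on it to dots on it, so killing those dots is respected. This yields the asserted isomorphism $T^\vartheta\cong T^{\vartheta^\vee}$, and the case $e=1$ needs no separate treatment, since there $i=j$, $i=j\pm k$ all coincide and every check only simplifies. The main obstacle will be exactly the sign bookkeeping in the ghost-involving relations: confirming, with the conventions of \cite{WebwKLR}, that reversing the quiver's old arrows (hence $\ck\mapsto-\ck$) together with transposing each $Q_e$ carries (\ref{ghost-bigon2}), (\ref{eq:triple-point1}) and (\ref{cost}) to the genuine relations of $\tilde T^{\vartheta^\vee}$ — on the correct side of each ghost and with the correct sign — rather than to near-misses; once that handful of relations is settled, the remainder of the argument is formal.
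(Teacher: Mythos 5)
The paper states this proposition with no proof at all, so there is nothing to compare your argument against; it has to be judged on its own terms. Your overall strategy --- factor the map as the ``opposite quiver'' symmetry of \cite{WebwKLR} (reverse the old arrows, transpose each $Q_e$, negate the old edge weight) composed with the vertex relabelling $u\mapsto -u$ --- is the natural one, and your handling of the ghost-free relations, the steadying condition and the Crawley--Boevey vertex is fine. But the step you yourself defer as ``the main obstacle'' is not a routine sign check: as written, it does not close. Track one degree-one strand--ghost crossing. In $T^\vartheta$ a strand labelled $i$ at position $x$ meets the ghost of a strand labelled $i+k$ sitting at $x-\ck$; by (\ref{ghost-bigon1})--(\ref{ghost-bigon2a}) and the degree rules, the crossing is nontrivial precisely because the ghost's label equals the strand's label plus $k$. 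Your map fixes both curves, negates both labels and replaces $\ck$ by $-\ck$; since the new ghost of the strand at $x$ now sits at $x-\ck$, ghost ownership swaps, and the image configuration is the strand labelled $-(i+k)$ meeting the ghost of the strand labelled $-i$. In the target algebra (parameter $-k$, relations of the same form) this is nontrivial only if $-i=-(i+k)+(-k)$, i.e.\ only if $2k\equiv 0$: your image pair satisfies ``ghost label $=$ strand label $-(-k)$'' rather than ``$+(-k)$''. So for $e>2$ the map sends degree-one crossings to degree-zero ones, and sends the nontrivial bigon (\ref{ghost-bigon2}) to a configuration governed by the trivial relation (\ref{ghost-bigon1}). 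Your assertion that the two labels ``again differ by the ambient shift'' is true but hides exactly the sign that matters --- they differ by the shift in the wrong direction relative to which strand owns the ghost.

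Put structurally: of the two moves you compose, vertex negation alone produces the correct orientation $v\to v+(-k)$ of the cycle $-U$ but leaves the weight at $+\ck$, while adding the edge-reversal fixes the weight but flips the orientation back to $v\to v+k$; no composite of these two fixes both. What is missing is therefore an identification, by a strand-fixing map, of the weighted KLR algebras of the two orientations of the cycle with equal edge weights (or a correct alternative decomposition of the proposed isomorphism) --- and that identification is essentially the whole content of the proposition. It is possible that the conventions of \cite{WebwKLR} for which endpoint of an edge carries the ghost resolve this, but your write-up does not pin those conventions down, and until the handful of ghost relations (\ref{ghost-bigon2}), (\ref{eq:triple-point1}), (\ref{cost}) is pushed all the way through with explicit signs, the proof is incomplete at exactly its crux.
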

In terms of Uglov weightings, this sends $\vartheta_\Bs^\pm\mapsto
\vartheta_{\Bs^\star}^\mp$, where $\Bs^\star=(-s_{\ell},\dots,-s_1)$.  

\excise{
\subsection{Connection to Schur algebras}
\label{sec:conn-schur-algebr}

One particularly interesting case is when $\ell=1$, in which case the
algebras $T^\vartheta_\mu$ are Morita equivalent to $q$-Schur algebras
at an $e$th root of unity or classical Schur algebras over a field of
characteristic $e$ (if $e$ is prime).  While this case follows from
the main results of \cite{SWschur}, it seems worth recounting as an
example, especially since our conventions here are a bit more
convenient for matching with the $q$-Schur algebra.

To fix our conventions, put the only red line at $x=0$, and let
$\ck=-\nicefrac 12$; since this fixes our weighting, the algebra only
depends on the choice of $r:=r_1$, so we denote this algebra $T^{r}$.
There is an idempotent $e_{\KLR}$ in $T^r$ given by the sum of the
loadings that label the points $x=1,2,\dots, n$ for some integer $n$.
Note that if two strands attached to these points cross, their ghosts
do as well.  Thus, as discussed in \cite[2.14]{WebwKLR}, diagrams with
tops and bottoms given by these loadings satisfy the usual KLR
relations.  That is:
\begin{proposition}
  We have an isomorphism of algebras $\displaystyle
  e_{\KLR}T^{r}e_{\KLR}\cong R^{\omega_r}$ to the usual cyclotomic
  quotient $R^{\omega_r}$ associated to the weight $\omega_r$.  
\end{proposition}
This does not induce a Morita equivalence.  Many simple
modules over $T^r$ are killed by this idempotent.  However, the
functor $M\mapsto e_{\KLR}M$ is fully faithful on projectives; this is
a special case of Proposition \ref{prop:double-centralizer} in this
paper, but also follows from \cite[Th. A]{SWschur}
and the Morita equivalence \cite[Th. A]{WebwKLR}.  Thus, $T^r$ is the opposite endomorphism algebra of
the left $R^{\omega_r}$ module $e_{\KLR}T^r$.

Applying the main theorem of \cite{BKKL}, the algebra
$R^{\omega_r}\cong \oplus_{n\geq 0} H_n(q)$
is isomorphic to the direct sum of Hecke algebras of all ranks for
$q\in \K$ an element such that $e$ is the smallest integer such that
$1+q+\cdots+q^{e-1}=0$.  Thus, we can think of $e_{\KLR}T^r$ as a
module over $\oplus_{n\geq 0} H_n(q)$ by transfer of structure, and
consider $T^r$ as its endomorphism algebra.  

Thus, we wish to understand $e_{\KLR}T^r$ as a module over the KLR
algebra.  In complete generality, this is not easy, but for some
idempotents, it is not so difficult.
\begin{definition}
  Let $\xi=(\xi_1,\dots, \xi_m)$ be a composition of $n$.  Let $e_\xi'$
  be the idempotent given by the sum of the loadings such that:
  \begin{itemize}
  \item the points of the loading are positioned at
    $j+k\epsilon$ where $j=1,\xi_1+1,\xi_1+\xi_2+1,\xi_1+\xi_2+\xi_3,\dots$, and $0\leq k <\xi_j$.  That
    is, to the right of each integer of the form $1+\sum_{i=1}^q\xi_i$
    for $q<m$, we have a cluster of $\xi_j$
    dots.  We'll assume that here that $\epsilon$ is chosen so that
    $\epsilon \xi_j \ll \nicefrac 12$ for all $j$.  
\item the labels on the cluster of dots near $j$ are ordered from $0$
  to $e-1$ with smaller numbers at the left (where we put no
  restriction on the multiplicities with which each label occurs).
  \end{itemize}

\end{definition}
We'll actually want to use a lower rank idempotent $e_\xi$ which is
the sum of idempotents on the same loadings obtained by 
multiplying each block of strands with the same label near each $j$
with a primitive idempotent in the nilHecke algebra which is killed by
multiplication on the right by each $\psi_i$.   
One such idempotent on $h$ strands is given by 
\[y_1^{h-1}y_2^{h-2}\cdots y_{h-1}\psi_1\psi_2\psi_1\psi_3\psi_2\cdots \psi_1\psi_h\psi_{h-1} \cdots
\psi_2\psi_1.\] 

\begin{proposition}\label{signed-permute}
  As a right module over $H_n(q)$, the module $e_{\KLR} T^r e_\xi $ is
  isomorphic to the signed permutation module $P_\xi$ for the composition  $\xi$.
\end{proposition}
\begin{proof}
  First, we need to find an anti-invariant vector in $e_{\KLR} T^r e_\xi $ for the subalgebra $H_{\xi}(q)$.  First, we must fix an
  isomorphism $H_{\xi}(q)$  and $T^r
  e_{\KLR}$.   We will use the isomorphism which is a particular
  choice of a family given in \cite{BKKL}, given by 
\[Q_{t}(\Bi)=
\begin{cases} 1-q+qy_{t+1}-y_r  & i_{t+1}=i_{t}\\\frac{P_t(\Bi)+q}{y_t-y_{t+1}} & i_{t+1}=i_{t}+ 1\\
P_t(\Bi)+q & i_{t+1}\neq i_{t},i_{t}+ 1\\
	\end{cases}.
\]
Using the formula for the Hecke generators \cite[4.43]{BKKL}, we see
that
\[e(\Bi)(T_t+1)=Q_t(\Bi^{s_{t}})^{s_t}\sum_{\Bi}e(\Bi)\Big(\psi-\frac{P_t(\Bi)-1}{Q_t(\Bi^{s_{t}})^{s_t}}\Big).\]
Since \cite[4.28]{BKKL} shows that \[\frac{P_t(\Bi)-1}{Q_t(\Bi^{s_{t}})^{s_t}}=
\begin{cases}0 & i_{t+1}=i_{t}\\y_{t+1}-y_{t} & i_{t+1}=i_{t}+ 1\\
1 & i_{t+1}\neq i_{t},i_{t}+ 1\\
	\end{cases},
\] we have that a vector is anti-invariant
  if whenever $(i,i+1)$ is in $S_{\xi}$, we have that 
  \begin{equation}
\psi_te_{\Bi}v=\begin{cases} 0 & i_{t+1}=i_{t}\\({y_{t+1}-y_{t}})e_{\Bi^{s_t}}v & i_{t+1}=i_{t}+ 1\\
e_{\Bi^{s_t}}v & i_{t+1}\neq i_{t},i_{t}+ 1\\
	\end{cases}.\label{eq:1}
      \end{equation}

Fix one of the idempotent diagrams we summed to obtain $\xi$, that is
one with clusters of dots near the integers $1,\dots, m$ with sizes
given by $\xi$, and with the strands of the same color in a cluster
acted on by a nilHecke idempotent.  If we choose permutations of size
$\xi_i$, we can apply these to the strands in each individual
cluster, while keeping the strands in the zone $[j,j+\nicefrac 12)$.
If this switches two strands of the same color, we get 0. 
Otherwise, we'll get an invertible diagram, since the ghosts of these
strands are in $[j-\nicefrac 12,j-1)$, and we
will introduce no new crossings of strands and ghosts.    

Let a {\bf chicken-foot} diagram be one of the form where we have a non-zero diagram attached to
a permutation in $S_\mu$ in the region $y\in [0,\nicefrac 12]$, and
which linearly interpolates between the loading at $y=\nicefrac 12$
(which is clustered according to $\xi$) to one at $y=1$ where the
points on the loading are on $1,\dots, n$ without introducing any new
crossings.  In contrast to the top half, in the bottom half, we will
have lots of crossings of ghosts and strands, but none of pairs of
strands. 

For example, if $r=0$, $\xi=(2,1,2,2)$, and the permutation for $\xi_1,\xi_2$ is the
identity, and for $\xi_3$ is the unique transposition, then the
associated chicken foot diagram will look like:
\[\tikz[very thick,yscale=-1] {
\draw[wei](0,0) --node[below, at end,scale=.8]{$0$} (0,1);
\draw (1,0) to[in=-90,out=90] (1,.5) to[in=-90,out=90] node[below, at end,scale=.8]{$0$}(1.2,1);
\draw (2,0) to[in=-90,out=150] (1.2,.5) to[in=-90,out=90] node[below,
at end,scale=.8]  {$0$} node[circle, fill=black,inner sep=1.5pt, pos=.8]{} (1,1);
\draw (3,0) to[in=-90,out=90] (3,.5) to[in=-90,out=90]node[below, at end,scale=.8]{$0$} (3,1);
\draw (4,0) to[in=-90,out=90] (4,.7) to[in=-90,out=90] node[below, at end,scale=.8]{$1$} (4,1);
\draw (5,0) to[in=-90,out=150] (4.2,.7) to[in=-90,out=90] node[below,
at end,scale=.8]{$3$} (4.2,1);
\draw (6,0) to[in=-90,out=90] (6,.7) to[in=-90,out=90] node[below, at end,scale=.8]{$1$} (6.2,1);
\draw (7,0) to[in=-90,out=150] (6.2,.7) to[in=-90,out=90] node[below, at end,scale=.8]{$2$} (6,1);
\draw[dashed] (.5,0) to[in=-90,out=90] (0.5,.5) to[in=-90,out=90] (0.7,1);
\draw[dashed] (1.5,0) to[in=-90,out=150] (0.7,.5) to[in=-90,out=90] (0.5,1);
\draw[dashed] (2.5,0) to[in=-90,out=90] (2.5,.5) to[in=-90,out=90] (2.5,1);
\draw[dashed] (3.5,0) to[in=-90,out=90] (3.5,.7) to[in=-90,out=90] (3.5,1);
\draw[dashed] (4.5,0) to[in=-90,out=150] (3.7,.7) to[in=-90,out=90]
(3.7,1);
\draw[dashed] (5.5,0) to[in=-90,out=90] (5.5,.7) to[in=-90,out=90] (5.7,1);
\draw[dashed] (6.5,0) to[in=-90,out=150] (5.7,.7) to[in=-90,out=90] (5.5,1);
}\]
Let $v$ be the sum of each chicken-foot diagrams for $\mu$.  We claim
that $v$ is anti-invariant.  First, note that if we act by a crossing
that switches two strands from the same cluster, there will be one
bigon made by the right hand ghost and the left hand strand.  If the
labels are $i$ and $j$ respectively, then
(\ref{ghost-bigon1}--\ref{ghost-bigon2}) shows that when $i+1\neq j$,
this crossing is undone with no correction, whereas when $i+1=j$, we
must multiply by $y_{t+1}-y_{t}$. If $i=j$, then the relations
(\ref{strand-bigon}) show that we get 0, whereas if $i\neq j$,
(\ref{strand-bigon}) shows that we
simply get a new chicken-foot diagram, perhaps times $y_{t+1}-y_{t}$
if $i+1=j$.   All 3 possibilities can be seen in the example above.  

Thus, the sum of all these diagrams satisfies the relations
\eqref{eq:1}, and is an anti-invariant vector. This vector obviously
generates $e_\xi T^r e_{\KLR}$, so we have a surjective map $ P_\xi
\to e_\xi T^r e_{\KLR}$.  To see that this map has
no kernel, we can either refer to \cite[6.3]{SWschur}, or use the basis
introduced in Section \ref{sec:cellular-basis} of this paper.  In
either case, one proceeds by
showing that the dimension of $e_\xi T^r e_{\KLR}$ is equal to the
number of pairs consisting of a standard tableau and a semi-standard
tableau of type $\mu$ with the same shape.  
\end{proof}

Let $e_{n,m}$ be the sum of $e_{\xi}$ for all $m$-part compositions of
$n$.  Let $T^r_n$ be the sum of the algebras $T^r_\mu$ when
$\mu=\omega_r-v_1\al_1-\cdots - v_e\al_e$ with $\sum_{i=1}^e v_i=n$,
that is the span of the diagrams with $n$ black strands.
\begin{corollary} We have an isomorphism of algebras
  \[\displaystyle e_{\xi} T^r e_{\xi}\cong \End_{R^{\omega_r}}(e_{\KLR}
  T^re_{\xi})^{op} \cong \End_{H_q(n)}(P_\xi)\cong
  S_q(m,n).\]
This actually induces a Morita equivalence between $T^r_n$ and $S_q(m,n)$ if $m\geq n$.
\end{corollary}
\begin{proof}
The steps of the displayed equation follow from full faithfulness, Proposition \ref{signed-permute},
and the usual definition of $q$-Schur algebra respectively.

Morita equivalence is equivalent to showing that $T^r e_{n,n} T^r=T^r$ or
  equivalently, that no simple module is killed by $e_{n,n}$.  We
  already know that the number of modules not killed is the same as
  the number of partitions of $n$.  However, by \cite[7.6]{SWschur},
  the same is true of $T^r$; as with most results from \cite{SWschur},
  this is a special case of a result from this paper, in this case,
  Theorem \ref{Fock-space}.
\end{proof}

\begin{remark}
  The reader may, of course, wonder why we took signed permutation
  modules rather than regular permutation modules.  If one is to be
  consistent with Brundan and Kleshchev's isomorphism, this is
  necessary.  However, if one is willing to choose a different
  isomorphism between the KLR and Hecke algebras, then it is possible.
  As shown in \cite{WebBKnote}, Brundan and Kleshchev's isomorphism
  (and variants upon it) can be found by identifying a polynomial
  representation of the affine Hecke algebra with a polynomial
  representation of the KLR algebra.  The isomorphism of \cite{BKKL}
  arises when one uses the polynomial representation generated over
  the Hecke algebra by an anti-invariant element. A presentation
  compatible with the permutation modules will arise if one uses a
  polynomial representation generated by an invariant element instead.
  In order to follow the proof of \cite{WebBKnote}, one must use the
  element 
\[\Xi_t=T_t+\sum_{i_r\neq
  i_{r+1}}\frac{1-q}{1-X_rX_{r+1}^{-1}}e_\Bi+\sum_{i_r=i_{r+1}}-qe_{\Bi}\]
  in the place of Brundan and Kleshchev's $\Phi_t$. The isomorphism
  compatible with invariants will send $\Xi_t$ to 
  \begin{multline*}
    \sum_{i_t=i_{t+1}}
    (1-q+y_{r+1}-qy_r)\psi_te(\Bi)+\sum_{i_t+1=i_{t+1}}\frac{1}{1-q^{-1}+y_{r+1}-q^{-1}y_r}\psi_te(\Bi)\\+\sum_{i_t=i_{t+1},i_{t+1}-1}
    \frac{q^{i_{r+1}}(1+y_{r+1})-q^{i_r+1}(1+y_{r})}{q^{i_{r+1}}(1+y_{r+1})-q^{i_r}(1+y_{r})}\psi_te(\Bi)
  \end{multline*}
We'll obtain the Schur algebra which uses permutation modules by
switching $\ck$ from $-\nicefrac{1}{2}$ to $\nicefrac{1}{2}$, and
leaving all other aspects of the construction the same.
\end{remark}

\begin{example} Again, we'll assume that $r=0$ here.
  In the case where $m=n=e=2$, there are two signed permutation
  modules up to isomorphism, corresponding to $(1,1)$ and $(2,0)$.
  The former is just the KLR algebra itself, which is 2-dimensional,
  and isomorphic to $\K[y_2]/(y_2^2)$, and the latter is
  1-dimensional, generated by the vector
\[\tikz[very thick, yscale=-1 ] {
\draw[wei](0,0) -- node[below, at end,scale=.8]{$0$} (0,1);
\draw (1,0) to[in=-90,out=90] (1,.5) to[in=-90,out=90] node[below, at end,scale=.8]{$0$}(1,1);
\draw (2,0) to[in=-90,out=150] (1.2,.5) to[in=-90,out=90] node[below,
at end,scale=.8]  {$1$} (1.2,1);
\draw[dashed] (0.5,0) to[in=-90,out=90] (0.5,.5) to[in=-90,out=90] (0.5,1);
\draw[dashed] (1.5,0) to[in=-90,out=150] (0.7,.5) to[in=-90,out=90] (0.7,1);
}\]
This shows that the $q$-Schur algebra in this case is Morita
equivalent to the algebra given in Example \ref{e-is-2}.  Adding in
vectors corresponding to $(0,2)$ simply gives a different Morita
equivalent algebra with a 2-dimensional simple module, rather than
both being 1-dimensional.

Now, let $m=n=2,e=3$.  In this case, the Schur and Hecke algebras are
Morita equivalent and semi-simple; still, this case is not completely
boring: the KLR algebra is semi-simple, and spanned by the idempotents
$e_{(0,1)}$ and $e_{(0,2)}$. Since the signed permutation module is
1-dimensional, one of these idempotents must give the identity on it,
and the other 0.  The signed
permutation module is spanned by 
\[\tikz[very thick, yscale=-1 ,baseline=-18pt] {
\draw[wei](0,0) -- node[below, at end,scale=.8]{$0$} (0,1);
\draw (1,0) to[in=-90,out=90] (1,.5) to[in=-90,out=90] node[below, at end,scale=.8]{$0$}(1,1);
\draw (2,0) to[in=-90,out=150] (1.2,.5) to[in=-90,out=90] node[below,
at end,scale=.8]  {$1$} (1.2,1);
\draw[dashed] (0.5,0) to[in=-90,out=90] (0.5,.5) to[in=-90,out=90] (0.5,1);
\draw[dashed] (1.5,0) to[in=-90,out=150] (0.7,.5) to[in=-90,out=90] (0.7,1);
}\quad +\quad  \tikz[very thick, yscale=-1 ,baseline=-18pt] {
\draw[wei](0,0) -- node[below, at end,scale=.8]{$0$} (0,1);
\draw (1,0) to[in=-90,out=90] (1,.5) to[in=-90,out=90] node[below, at end,scale=.8]{$0$}(1,1);
\draw (2,0) to[in=-90,out=150] (1.2,.5) to[in=-90,out=90] node[below,
at end,scale=.8]  {$2$} (1.2,1);
\draw[dashed] (0.5,0) to[in=-90,out=90] (0.5,.5) to[in=-90,out=90] (.5,1);
\draw[dashed] (1.5,0) to[in=-90,out=150] (.7,.5) to[in=-90,out=90] (.7,1);
}\]
By the relation \ref{ghost-bigon1}, the first term is the same as 
\[\tikz[very thick, yscale=-1 ,baseline=-18pt] {
\draw[wei](0,0) to[out=90,in=-90] (.8,.5)  to[out=90,in=-90] node[below, at end,scale=.8]{$0$} (0,1);
\draw (1,0) to[in=-90,out=90] (1.5,.5) to[in=-90,out=90] node[below, at end,scale=.8]{$0$}(1,1);
\draw (2,0) to[in=-50,out=160] (.5,.5)  to[in=-130,out=130] (.5,.6) to[in=-150,out=50] node[below,
at end,scale=.8]  {$1$} (1.2,1);
\draw[dashed] (0.5,0) to[in=-90,out=90] (1,.5) to[in=-90,out=90] (0.5,1);
\draw[dashed] (1.5,0) to[in=-50,out=160] (0,.5)  to[in=-130,out=130] (0,.6) to[in=-150,out=50]  (0.7,1);
}=0\]
Thus, we have a 3-dimensional simple for the left action of the Schur
algebra spanned by the diagrams
\[\tikz[very thick, yscale=-1 ,baseline=-18pt] {
\draw[wei](0,0) -- node[below, at end,scale=.8]{$0$} (0,1);
\draw (1,0) to[in=-90,out=90] (1,.5) to[in=-90,out=90] node[below, at end,scale=.8]{$0$}(1,1);
\draw (2,0) to[in=-90,out=150] (1.2,.5) to[in=-90,out=90] node[below,
at end,scale=.8]  {$2$} (1.2,1);
\draw[dashed] (0.5,0) to[in=-90,out=90] (0.5,.5) to[in=-90,out=90] (0.5,1);
\draw[dashed] (1.5,0) to[in=-90,out=150] (0.7,.5) to[in=-90,out=90] (0.7,1);
}\qquad \qquad \tikz[very thick, yscale=-1 ,baseline=-18pt] {
\draw[wei](0,0) --node[below, at end,scale=.8]{$0$} (0,1);
\draw (1,0) to[in=-90,out=30] (2,.5) to[in=-90,out=90] node[below, at end,scale=.8]{$0$}(2,1);
\draw (2,0) to[in=-90,out=90] (2.2,.5) to[in=-90,out=90] node[below,
at end,scale=.8]  {$2$} (2.2,1);
\draw[dashed] (0.5,0) to[in=-90,out=30] (1.5,.5) to[in=-90,out=90] (1.5,1);
\draw[dashed] (1.5,0) to[in=-90,out=90] (1.7,.5) to[in=-90,out=90] (1.7,1);
}\qquad \qquad \tikz[very thick, yscale=-1 ,baseline=-18pt] {
\draw[wei](0,0) --node[below, at end,scale=.8]{$0$} (0,1);
\draw (1,0) -- node[below, at end,scale=.8]{$0$}(1,1);
\draw (2,0)--node[below,
at end,scale=.8]  {$2$} (2,1);
\draw[dashed] (0.5,0) -- (0.5,1);
\draw[dashed] (1.5,0) -- (1.5,1);
}\]
We should note that this provides a sanity check for our conventions
matching those of Brundan and Kleshchev.  Since when restricted to the
KLR algebra, this should be 3 copies of the sign representation,
indeed, it should be killed by $e_{(0,1)}$ and not by $e_{(0,2)}$.
\end{example}}

\subsection{An algebra isomorphism}
\label{sec:an-algebra-isom}

We use the same parameters as in Section \ref{sec:comparison-theorem}.
Let $\mathscr{D}$ be some collection of sets, and let $B$ be the
collection of all loadings on the graph $U$ where the underlying set is in
$\mathscr{D}$. 

If $\PC^\vartheta_{\mathscr{D}}$ is the WF Hecke algebra as
defined earlier over $\mathscr{R}$, then the spectrum of the action of a square lies in
this set by Lemma \ref{lem:joint-spectrum}.  Now, consider a
$U$-valued loading on a set $D\in \mathscr{D}$, that is, a
function $\Bi\colon D\to U$; we'll use $u_1,\dots,u_m$ be the
list of values of this function in increasing order. By abstract Jordan
decomposition, there's an idempotent $\epsilon_{\Bi}$ which projects
to the $\Bi(d)$ generalized eigenspace of $X_d$ for $d\in D$.  We'll
let $\dalg^\vartheta_B(\mathscr{R})$ denote the deformed steadied weighted KLR
algebra attached to the elements $r_i=k s_i\in U$ and the set of loadings $B$, base changed
by the natural map $\K[h,z_1,\dots, z_\ell]\to \mathscr{R}$.

We'll now define an algebra isomorphism between the WF Hecke algebras and steadied weighted KLR algebras.  This
isomorphism will be local in nature:  on each diagram, it operates by
replacing every crossing of strands or ghosts and every square with a
linear combination of diagrams in the weighted KLR algebra.
\begin{theorem}[\mbox{\cite[\ref{n-thm:Hecke-KLR}]{WebBKnote}}]\label{thm:Hecke-KLR}
  We have an isomorphism of $\mathscr{R}$-algebras $\PC^\vartheta_{\mathscr{D}}\cong
  \dalg^\vartheta_B$ sending
\[\epsilon_{\Bi}\mapsto e_{\Bi}\qquad\qquad X_p\mapsto\sum_{\Bi}u_pe^{y_p}e_{\Bi}\qquad \qquad \tikz[baseline,very thick,scale=1.5, green!50!black]{\draw (.2,.3) --
  (-.2,-.1); \draw [wei]
  (.2,-.1) -- node[ below, at start,black]{$\PQ_s$} (-.2,.3);} \mapsto\tikz[baseline,very
  thick,scale=1.5]{\draw (.2,.3) -- (-.2,-.1); \draw [wei] (.2,-.1) --
    node[ below, at start,black]{$r_s$} (-.2,.3);}\]
 \vspace{-3mm}

\[\tikz[baseline,very thick,scale=1.5, green!50!black]{\draw (-.2,.3) --
  (.2,-.1); \draw [wei]
  (-.2,-.1) --node[ below, at start,black]{$\PQ_s$} (.2,.3);} \mapsto
\begin{cases}\displaystyle
(u_pe^{y_p}-\PQ_s)\, \tikz[baseline,very thick,scale=1.5]{\draw (-.2,.3) --
  (.2,-.1); \draw [wei]
  (-.2,-.1) -- node[ below, at start,black]{$r_s$} (.2,.3);}&u_p\neq Q_s\\
  \displaystyle\frac{u_pe^{y_p}-\PQ_s}{y_p}\, \tikz[baseline,very thick,scale=1.5]{\draw (-.2,.3) --
  (.2,-.1); \draw [wei]
  (-.2,-.1) -- node[ below, at start,black]{$r_s$} (.2,.3);}&u_p=Q_s
\end{cases}
\]
\[\tikz[baseline,very thick,scale=1.5, green!50!black]{\draw (.2,.3) --
  (-.2,-.1); \draw
  (.2,-.1) -- (-.2,.3);}\, e_{\Bi}\mapsto
\begin{cases}
\displaystyle\frac{1}{u_{p+1}e^{y_{p+1}}-u_{p}e^{y_p}}\Big(\,\tikz[baseline,very thick,scale=1.5]{\draw (.2,.3) --
  (-.2,-.1); \draw
  (.2,-.1) -- (-.2,.3);}\,-\tikz[baseline,very thick,scale=1.5]{\draw (.1,.3) --
  (.1,-.1); \draw
  (-.1,-.1) -- (-.1,.3);}\,\Big) e_{\Bi} & u_p\neq u_{[+1}\\
 \displaystyle \frac{y_{p+1}-y_p}{u_p(e^{y_{p+1}}-e^{y_p})}\tikz[baseline,very thick,scale=1.5]{\draw (.2,.3) --
  (-.2,-.1); \draw
  (.2,-.1) -- (-.2,.3);}\, e_{\Bi}& u_p=u_{p+1}
\end{cases}\]
\[
\tikz[baseline,very thick,scale=1.5, green!50!black]{\draw[densely dashed] 
  (-.2,-.1)-- (.2,.3); \draw
  (.2,-.1) -- (-.2,.3);}\,e_{\Bi} \mapsto
\begin{cases}
\displaystyle (u_pe^{y_p}-\pq u_se^{y_s})\tikz[baseline,very thick,scale=1.5]{\draw[densely dashed]
    (-.2,-.1)-- (.2,.3); \draw
    (.2,-.1) -- (-.2,.3);}\, e_{\Bi}&
  u_r\neq qu_s\\ 
 \displaystyle \frac {u_p(e^{y_p}-e^{y_s+h})}{y_{s}-y_{p}+h}\tikz[baseline,very thick,scale=1.5]{\draw[densely dashed]
    (-.2,-.1)-- (.2,.3); \draw
    (.2,-.1) -- (-.2,.3);}\,e_{\Bi}& u_p=qu_s
\end{cases}
\qquad \qquad\tikz[baseline,very thick,scale=1.5, green!50!black]{\draw (.2,.3) --
  (-.2,-.1); \draw [densely dashed]
  (.2,-.1) -- (-.2,.3);} \mapsto \tikz[baseline,very thick,scale=1.5]{\draw (.2,.3) --
  (-.2,-.1); \draw [densely dashed]
  (.2,-.1) -- (-.2,.3);} \]
where the solid strand shown is the $p$th (and $p+1$st in the first line),
and the ghost is associated to the $s$th from the left, or the red
line is $s$th from the left.
\end{theorem}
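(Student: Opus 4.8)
The plan is to follow the template of Brundan and Kleshchev's isomorphism between cyclotomic Hecke and cyclotomic KLR algebras \cite{BKKL} (and its weighted version from \cite{WebBKnote}), adapted to the present deformed, weighted setting. First I would check that the displayed formulas define an algebra homomorphism $\Phi\colon \PC^\vartheta_{\mathscr{D}}\to\dalg^\vartheta_B$. The idempotents $\epsilon_{\Bi}$ make sense because the spectrum of each square $X_d$ lies in $U$ by Lemma \ref{lem:joint-spectrum}; a square $X_p$ is sent to $\sum_{\Bi}u_pe^{y_p}e_{\Bi}$ and its formal inverse to $\sum_{\Bi}u_p^{-1}e^{-y_p}e_{\Bi}$, which is a genuine inverse since $u_p$ is a unit and $y_p$ is topologically nilpotent on every block (the dots are nilpotent modulo $\mathfrak{m}=(h,z_1,\dots,z_\ell)$, so $e^{\pm y_p}$ converges $\mathfrak{m}$-adically in the finitely generated free $\mathscr{R}$-module $\dalg^\vartheta_B$). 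The substance of this step is to verify that each defining relation of $\PC^\vartheta_{\mathscr{D}}$ is sent to a relation of $\dalg^\vartheta_B$: relations (\ref{nilHecke-2})--(\ref{NilHecke3}) against the nilHecke relations (\ref{dots-1})--(\ref{strand-bigon}); the ghost-bigon relations (\ref{qHghost-bigon1})--(\ref{qHghost-bigon2}) against (\ref{ghost-bigon1})--(\ref{ghost-bigon2a}); the triple-point relations (\ref{eq:triple-point-1})--(\ref{eq:triple-point-2}) against (\ref{triple-boring})--(\ref{eq:triple-point2}); and the red relations (\ref{qHcost}), (\ref{smart-red-triple}), (\ref{dumb-red-triple}) against (\ref{cost})--(\ref{dumb}). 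The non-local steadying relation is sent to the steadying relation of the reduced steadied quotient verbatim, since the two non-local vanishing conditions are literally the same condition on horizontal slices of a diagram. The rational-function coefficients appearing in the formulas are engineered precisely so that the non-homogeneous Hecke-type relations become the homogeneous KLR-type ones, exactly as in \cite[\S4]{BKKL}.

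Next I would prove surjectivity: the dot $y_p$ lies in the image of $\Phi$, being a convergent logarithm $\log(u_p^{-1}\Phi(X_p))=\log(1+(e^{y_p}-1))$ (convergent by the same nilpotence, and lying in the image since the image, being a submodule of a finitely generated module over the Noetherian ring $\mathscr{R}$, is $\mathfrak{m}$-adically closed). Given the dots, each weighted KLR crossing --- solid--solid, solid--ghost of either handedness, solid--red, and ghost--red --- is a unit multiple of $\Phi$ of the corresponding WF Hecke crossing plus a polynomial in dots and idempotents, hence also lies in the image; red lines and the $e_{\Bi}$ are manifestly hit. Finally I would deduce bijectivity by a rank count: $\PC^\vartheta_{\mathscr{D}}$ is free over $\mathscr{R}$ with basis $\{\hC_{\sS,\sT}\}$ indexed by pairs of same-shape tableaux with fillings in $\mathscr{D}$ (Lemma \ref{lem:hbasis}), and $\dalg^\vartheta_B$ is free over $\mathscr{R}$ of the same rank by the cellular basis of reduced steadied weighted KLR algebras established in \cite{WebwKLR} (compare Theorem \ref{consequences}(1)); a surjective homomorphism between free $\mathscr{R}$-modules of equal finite rank is an isomorphism.

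The hard part will be the relation check in the first step, and within it the ghost-bigon and red-crossing relations in the degenerate cases $u_p=u_{p+1}$, $u_p=Q_s$, and $u_p=qu_s$, where the substitution coefficients are ratios that are only visibly a unit times a power series after Taylor expansion --- for instance $e^{y_{p+1}}-e^{y_p}=(y_{p+1}-y_p)(1+\tfrac12(y_{p+1}+y_p)+\cdots)$ --- and the resulting correction terms must be matched against the $\pm h$, $-z_k$ and Kronecker-delta corrections on the KLR side in (\ref{ghost-bigon2}), (\ref{cost}) and the red relations. These verifications are mechanical but fiddly, just as in \cite{BKKL,WebBKnote}. As an alternative that sidesteps most of this, one can transport $\Phi$ to a faithful ``polynomial'' representation shared by the two algebras --- coming on the Hecke side from Proposition \ref{prop:Hecke-quotient} and on the weighted KLR side from the polynomial representation constructed in \cite{WebwKLR} --- and check that $\Phi$ intertwines the two actions; this makes well-definedness and injectivity automatic and leaves only surjectivity, handled as above.
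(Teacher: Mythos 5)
First, a point of reference: the paper itself gives no proof of Theorem \ref{thm:Hecke-KLR} — it is stated without a proof environment and the argument lives in the companion note \cite{WebBKnote}, the same source as Proposition \ref{prop:Hecke-quotient}. So your proposal has to stand on its own. The first two steps do: the relation check is the right (fiddly) work, the steadying relations visibly correspond, and the surjectivity argument via $y_p=\log(u_p^{-1}\Phi(X_p))$ is sound, since the dots are nilpotent modulo $\mathfrak{m}=(h,z_1,\dots,z_\ell)$ and each KLR crossing is a unit multiple of the image of the corresponding Hecke crossing plus elements already known to lie in the image.

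The gap is in the final step. The freeness of $\dalg^\vartheta_B$ over $\mathscr{R}$ and the count of its rank by pairs of same-shape tableaux are \emph{not} established in \cite{WebwKLR}; they are established in the present paper as Theorem \ref{th:graded-cellular} and the flatness corollary immediately following Theorem \ref{thm:Hecke-KLR}, and both are deduced \emph{from} Theorem \ref{thm:Hecke-KLR} by transferring the basis $\hC_{\sS,\sT}$ across the isomorphism. Invoking them here is circular. What is available independently on the KLR side is only a spanning set, i.e.\ an upper bound on the number of generators, and a surjection from a free $\mathscr{R}$-module of rank $N$ onto a module generated by $N$ elements need not be injective (compare $\mathscr{R}\surj\mathscr{R}/\mathfrak{m}$). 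You need an independent lower bound or a direct injectivity argument.

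Two ways to close it. Either promote your ``alternative'' to the main line — realize both algebras faithfully on a common polynomial-type representation, which is essentially what \cite{WebBKnote} does, but then faithfulness on the steadied quotient is itself a claim that must be argued, not assumed. Or reduce injectivity to the known cyclotomic case: for $0\neq x\in\PC^\vartheta_{\mathscr{D}}$, the bootstrapping of Lemma \ref{lem:hbasis} produces $\phi_{\sS},\phi_{\sT}$ with $0\neq\phi_{\sS}x\phi_{\sT}^*\in e_{D_{s,m}}\PC^\vartheta e_{D_{s,m}}\cong H_m(\pq,\PQ_\bullet)$; since $\Phi$ restricted to this corner is the (deformed) Brundan--Kleshchev isomorphism onto the corresponding cyclotomic KLR corner of $\dalg^\vartheta_B$ \cite{BKKL,WebBKnote}, we get $\Phi(\phi_{\sS})\Phi(x)\Phi(\phi_{\sT}^*)\neq 0$ and hence $\Phi(x)\neq 0$. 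With either repair the proposal goes through; as written, the injectivity step does not.
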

Many interesting structures can be transported over from the Hecke
side to the KLR.  For example, if $\mathscr{D}_{s,m}\in \mathscr{D}
$, then
$e_{D_{s,m}}\PC^\vartheta_{\mathscr{D}}e_{D_{s,m}}\cong
H_m(\pq,\PQ_\bullet)$ by Proposition \ref{prop:Hecke-quotient}.  We
let $e^0$ be the
image of $e_{D_{s,m}}$ in $\dalg^\vartheta$; this is the sum of loadings on
the points $x=s,2s,\dots, ms$.   We call such loadings {\bf Hecke}.  The image of $H_m(\pq,\PQ_\bullet)$,
which is of course  $e^0\dalg^\vartheta_{B} e^0$,
is the deformed cyclotomic KLR algebra $\dalg^\lambda_m$ on $m$ strands, with the isomorphism being that of
\cite[Thm. \ref{n-O-isomorphism}]{WebBKnote} (which is a slight
modification of Brundan and Kleshchev's isomorphism from \cite{BKKL}).  
Thus, we see that:
\begin{lemma}\label{lem:Hecke-loadings}
  We have a commutative diagram of functors 
  \begin{equation*}
    \begin{tikzpicture}[->,very thick]{ \matrix[row sep=15mm,column sep=25mm,ampersand
    replacement=\&]{ \node (a) {$\PC^\vartheta_{\mathscr{D}}\mmod$}; \& \node (c)
      {$H_m(\pq,\PQ_\bullet)\mmod$}; \\
      \node (b) {$\dalg^\vartheta_{B}\mmod$}; \& \node (d) {$\dalg^\lambda_m\mmod$};\\
    }; \draw (a) -- (c)  node[above,midway]{$K$}; \draw (b)
    --(d)  node[below,midway]{$M\mapsto e^0M$}; \draw (a)--(b)
    node[right,midway]{$\sim$}; \draw (c)--(d)
    node[right,midway]{$\sim$}; }
    \end{tikzpicture}
  \end{equation*}
  In particular, the functor $M\mapsto e^0M$ is $0$-faithful by
  Corollary \ref{cor:0-faithful}, and the corresponding functor for
  $T^\vartheta$ is $-1$-faithful.  
\end{lemma}

We can combine this theorem with Theorem
\ref{theorem:diagrammatic-Cherednik} to compare category $\cO$ over a
Cherednik algebra to weighted KLR algebras.  Let $B^\circ_d$ be the
set of all loadings on sets in $\mathscr{D}^\circ_d$.  

\begin{theorem}\label{thm:cherednik-KLR}
  There is an equivalence of highest weight categories
  $T^{\vartheta^\sigma_{\Bs}}_{B^\circ_d}\umod\cong \cO^{\Bs}_d$,
  where $\sigma$ is the sign of $\kappa$.
\end{theorem}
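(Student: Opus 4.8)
The plan is to build the equivalence by concatenating results already in hand. By the Corollary following Theorem~\ref{theorem:diagrammatic-Cherednik}, $\cO^{\Bs}_d$ is equivalent, as a highest weight category, to $\bar{\PC}^{\Bs}_d\mmod$ where $\bar{\PC}^{\Bs}_d=\C\otimes_{\mathscr{R}}\PC^{\vartheta}_{\mathscr{D}^\circ_d}$ and $\vartheta$ is the weighting of Section~\ref{sec:comparison-theorem}; the highest weight structure on the right is the cellular one of Theorem~\ref{th:cellular}/Corollary~\ref{cor:PC-hw}, with standards the cell modules $S_\xi$ indexed by $\ell$-multipartitions of $d$ in $\vartheta$-weighted dominance order. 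On the other hand, Theorem~\ref{thm:Hecke-KLR} furnishes an isomorphism of $\mathscr{R}$-algebras $\PC^{\vartheta}_{\mathscr{D}^\circ_d}\cong\dalg^{\vartheta}_{B^\circ_d}$, where $B^\circ_d$ is the set of loadings on the sets of $\mathscr{D}^\circ_d$. So everything comes down to (i) base-changing this isomorphism from $\mathscr{R}$ to $\C$ to identify $\bar{\PC}^{\Bs}_d$ with the undeformed reduced steadied weighted KLR algebra $T^{\vartheta}_{B^\circ_d}$, and (ii) identifying $\vartheta$ with the Uglov weighting $\vartheta^\sigma_{\Bs}$, $\sigma=\mathrm{sign}(\kappa)$.

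First I would dispose of the disconnected case: if $U$ is disconnected, the defining relations (and the steadying relation) force all the algebras $\PC^{\vartheta}$, $\dalg^{\vartheta}$, $T^{\vartheta}$ to split as tensor products over the connected components of $U$, and category $\cO^{\Bs}_d$ splits correspondingly --- the diagrammatic shadow of the Dipper--Mathas decomposition of Ariki--Koike algebras alluded to after the definition of residues --- so we may assume $U$ connected. Next I would identify the weighting: the data of Section~\ref{sec:comparison-theorem} is $\kappa=\Re(k)$, $r_i=ks_i\bmod\Z$, $\vartheta_i=\Re(ks_i)-i/\ell$; by the remark that rescaling $k$ by an integer coprime to $e$ leaves the combinatorics (hence $T^{\vartheta}$) unchanged, together with the Uglovation construction which replaces an arbitrary connected weighting by an Uglov one yielding the same $T^{\vartheta}$, we may take $\vartheta=\vartheta^{\sigma}_{\Bs}$.

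Now I would perform the specialization $h=z_1=\cdots=z_\ell=0$. By the Corollary to Theorem~\ref{thm:Hecke-KLR}, $\dalg^{\vartheta}$ is flat over $\mathscr{R}$, so $\C\otimes_{\mathscr{R}}\dalg^{\vartheta}_{B^\circ_d}$ is obtained by simply imposing $h=z_i=0$ on the deformed weighted KLR relations. Since the reduced steadied quotient is produced by killing an explicit set of (unsteady) idempotents and killing dots on the strand labelled $\infty$ --- operations that manifestly commute with $-\otimes_{\mathscr{R}}\C$ --- this specialization is precisely $T^{\vartheta}_{B^\circ_d}$ over $\C$ (and ``reduced'' is automatic here: the image of the WF Hecke side carries no dots on the $\infty$-strand, and its non-local relation is exactly the steadying relation). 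Composing, $\cO^{\Bs}_d\cong\bar{\PC}^{\Bs}_d\mmod\cong T^{\vartheta^\sigma_{\Bs}}_{B^\circ_d}\umod$; the algebra isomorphism transports the cellular highest weight structure, so the equivalence is one of highest weight categories (standards $\leftrightarrow$ cell modules $S_\xi$). We land in $\umod$ rather than $\mmod$ because $T^{\vartheta}$ is graded and the present statement only concerns the underlying ungraded category.

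I expect the main obstacle to be the bookkeeping in the middle two steps rather than any genuinely new idea: pinning down the precise dictionary between the Cherednik-side triple $(\kappa,\vartheta_i,r_i)$ and a standard Uglov weighting $\vartheta^{\sigma}_{\Bs}$ (the factor-of-$e$ discrepancy between $\vartheta_i=\Re(ks_i)-i/\ell$ and the Uglov weights, and the exact role of $\sigma$ and of rescaling $k$), and checking carefully that forming the reduced steadied quotient really does commute with $-\otimes_{\mathscr{R}}\C$, so that $\C\otimes_{\mathscr{R}}\dalg^{\vartheta}\cong T^{\vartheta}$ on the nose rather than as a proper quotient or non-flat degeneration --- flatness of $\dalg^{\vartheta}$ is the tool that makes this go through. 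Everything else is a direct assembly of Theorems~\ref{theorem:diagrammatic-Cherednik} and~\ref{thm:Hecke-KLR}.
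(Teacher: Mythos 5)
Your proposal is correct and follows exactly the route the paper intends: the theorem is stated there as an immediate combination of the corollary to Theorem~\ref{theorem:diagrammatic-Cherednik} (giving $\cO^{\Bs}_d\cong\bar{\PC}^{\Bs}_d\mmod$) with the algebra isomorphism of Theorem~\ref{thm:Hecke-KLR}, specialized to $\C$ using the flatness of $\dalg^\vartheta$ over $\mathscr{R}$. Your additional care about the disconnected case, the Uglovation/rescaling dictionary, and the compatibility of the reduced steadied quotient with base change only makes explicit the bookkeeping the paper leaves implicit.
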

Thus, considering the category $T^{\vartheta^\sigma_{\Bs}}_{B^\circ_d}\mmod$, we
obtain a graded lift of the category $\cO^{\Bs}_d$ as a highest weight category.

\subsection{Basis vectors}
Now, we use the combinatorics described above to give a cellular basis
of $\dalg^\vartheta_\nu$ and $T^\vartheta_\nu$, generalizing those of
\cite{HM,SWschur}.  This basis, and the variants of it we will
construct are the key to understanding the structure of these
quotients and their representation theory.

For each
$\Bi $-tableau $\sS$ of fixed shape $\nu$, we draw a diagram $B_\sS\in
e_{\nu}T^\vartheta e_{\Bi}$ which
has no dots, and connects the point connected to a box in $\Bi_\eta$
at $y=1$
to the point on the real line which labels it in $\sS$ at $y=0$;
put another way, the strands are in bijection with boxes, with each
strand ending just right of the $x$-coordinate of the box, and
starting at the real number labeling the box. Note the similarity to
the definition of $\hB_{\sS}$ given in Section \ref{sec:cellular-structure}.

The permutation $w_{\sS}$ traced out by the strands when read from the
top is the unique one which puts the Russian reading word of the
tableau into order.  As usual, letting $(-)^*$ be the
anti-automorphism which flips diagrams, let $\CST=B_{\sS}^*B_{\sT}$.
These vectors will be shown to be a cellular basis.  This will perhaps
be clarified a little by an example:

\begin{example}\label{big-example-2}
 Now, we consider the example where $k=\nicefrac{-9}{2}$, with $Q_0=0$
 and $Q_1=\nicefrac{1}{2}$, so $U=\{0,\nicefrac{1}{2}\}$.  Consider
 the algebra attached to $\mu= \omega_0+\omega_{\nicefrac{1}{2}}-\delta$.  We
 label the new edges so that $e_i$ connects to the node $i$.
The only resulting category, weighted order, and basis only depend on
the difference of the weights $\vartheta_1-\vartheta_2$.  In fact,
there are only 3 different possibilities; the category changes when
this value passes $\pm \nicefrac{9}{2}$.
 
There are 5 multipartitions of the right residue:
\[
p_1=\tikz[baseline=5pt,scale=.3,thick]{\draw (0,0) --(-2,2); \draw
  (0,0) --(1,1); \draw (-1,3) --(-2,2); \draw (-1,3) --(1,1); \draw
  (-1,1) -- (0,2); \draw[very thick] (-.7,.3) -- (0,-.3) -- (.7,.4); 
\draw[very thick] (3.3,.4) -- (4,-.3) -- (4.7,.4); 
}
\qquad \qquad 
p_2=\tikz[baseline=5pt,scale=.3,thick]{\draw (0,0) --(2,2); \draw
  (0,0) --(-1,1); \draw (1,3) --(2,2); \draw (1,3) --(-1,1); \draw
  (1,1) -- (0,2); \draw[very thick] (-.7,.4) -- (0,-.3) -- (.7,.4); 
\draw[very thick] (3.3,.4) -- (4,-.3) -- (4.7,.4); 
}
\qquad \qquad 
p_3=\tikz[baseline=5pt,scale=.3,thick]{\draw (0,0) --(1,1); \draw
  (0,0) --(-1,1); \draw (0,2) --(-1,1); \draw
  (1,1) -- (0,2); \draw[very thick] (-.7,.4) -- (0,-.3) -- (.7,.4); 
\draw[very thick] (3.3,.4) -- (4,-.3) -- (4.7,.4); \draw (4,0)
--(3,1)-- (4,2) -- (5,1)--cycle;
}
\]
\[
p_4=\tikz[baseline=5pt,scale=.3,thick]{\draw (0,0) --(-2,2); \draw
  (0,0) --(1,1); \draw (-1,3) --(-2,2); \draw (-1,3) --(1,1); \draw
  (-1,1) -- (0,2); \draw[very thick] (-.7,.4) -- (0,-.3) -- (.7,.4); 
\draw[very thick] (-3.3,.4) -- (-4,-.3) -- (-4.7,.4); 
}
\qquad \qquad 
p_5=\tikz[baseline=5pt,scale=.3,thick]{\draw (0,0) --(2,2); \draw
  (0,0) --(-1,1); \draw (1,3) --(2,2); \draw (1,3) --(-1,1); \draw
  (1,1) -- (0,2); \draw[very thick] (-.7,.4) -- (0,-.3) -- (.7,.4); 
\draw[very thick] (-3.3,.4) -- (-4,-.3) -- (-4.7,.4); 
}
\]
The basis vectors we draw will look exactly like those of Example
\ref{big-example-1}, except that now we draw black lines instead of
green and must label with the black strands with simple roots.
Since the pictures are so similar, let us specialize to the case with $\vartheta_1=0,\vartheta_2=9$.
In this case, our order is $p_1 > p_2> p_3>p_4>p_5$. 

A loading in this case is given by specifying the position the point $a$
labeled $0$ and the point $b$ labeled $\nicefrac{1}{2}$.  We denote this loading
$\Bi_{a,b}$.  As we'll see later, every projective is a summand of
that for one of $\Bi_{(1,-1)},\Bi_{(1,6)}, \Bi_{(1,10)},\Bi_{(8,10)},
\Bi_{(15,10)}$.  For these loadings, the tableaux with their
corresponding $B_\sS$'s are:
\[
\tikz[baseline=5pt,scale=.3,thick]{\draw (0,0) --(-2,2); \draw
  (0,0) --(1,1); \draw (-1,3) --(-2,2); \draw (-1,3) --(1,1); \draw
  (-1,1) -- (0,2); \draw[very thick] (-.7,.3) -- (0,-.3) -- (.7,.4); 
\draw[very thick] (3.3,.4) -- (4,-.3) -- (4.7,.4); \node[scale=.7] at (0,1)
{$0$}; \node[scale=.7] at (-1,2) {$-1$};
}
\qquad 
\tikz[baseline=-20pt,xscale=.25,yscale=-1, thick]{\draw[wei] (0,0) -- node[below,at end]{$0$} (0,1);
  \draw[wei] (9,0) -- (9,1) node[below, at end]{$\nicefrac{1}{2}$};\draw (-1,0) -- node[above, at start]{$\nicefrac{1}{2}$} (-1,1); \draw (1,0) --  node[above, at start]{$0$} (1,1);\draw[dashed] (-5,0) -- (-5,1); \draw[dashed] (-3,0) -- (-3,1);}
\qquad \qquad
\tikz[baseline=5pt,scale=.3,thick]{\draw (0,0) --(-2,2); \draw
  (0,0) --(1,1); \draw (-1,3) --(-2,2); \draw (-1,3) --(1,1); \draw
  (-1,1) -- (0,2); \draw[very thick] (-.7,.3) -- (0,-.3) -- (.7,.4); 
\draw[very thick] (3.3,.4) -- (4,-.3) -- (4.7,.4); \node[scale=.7] at (0,1)
{$0$}; \node[scale=.7] at (-1,2) {$6$};}\qquad 
\tikz[baseline=-20pt,xscale=.25,yscale=-1,thick]{\draw[wei] (0,0) -- node[below,at
  end]{$0$} (0,1); \draw[wei] (9,0) --  node[below, at end]{$\nicefrac{1}{2}$}
  (9,1);\draw (-1,0) to[out=15,in=-165]  node[above, at start]{$\nicefrac{1}{2}$} (6,1); \draw (1,0) -- node[above, at start]{$0$} (1,1);\draw[dashed] (-5,0) to[out=15,in=-165] (2,1); \draw[dashed] (-3,0) -- (-3,1);}\]
\[ \tikz[baseline=5pt,scale=.3,thick]{\draw (0,0) --(-2,2); \draw
  (0,0) --(1,1); \draw (-1,3) --(-2,2); \draw (-1,3) --(1,1); \draw
  (-1,1) -- (0,2); \draw[very thick] (-.7,.3) -- (0,-.3) -- (.7,.4); 
\draw[very thick] (3.3,.4) -- (4,-.3) -- (4.7,.4); \node[scale=.7] at (0,1)
{$0$}; \node[scale=.7] at (-1,2) {$10$};}
\qquad \tikz[baseline=-20pt,xscale=.25,yscale=-1,thick]{\draw[wei] (0,0) --
  node[below,at end]{$0$} (0,1); \draw[wei] (9,0) --  node[below, at
  end]{$\nicefrac{1}{2}$} (9,1);\draw (-1,0) to[out=10,in=-170]  node[above, at
  start]{$\nicefrac{1}{2}$} (10,1); \draw (1,0) --  node[above, at start]{$0$} (1,1);\draw[dashed] (-5,0) to[out=10,in=-170] (6,1); \draw[dashed] (-3,0) -- (-3,1);}
\qquad \qquad \tikz[baseline=5pt,scale=.3,thick]{\draw (0,0) --(-2,2); \draw
  (0,0) --(1,1); \draw (-1,3) --(-2,2); \draw (-1,3) --(1,1); \draw
  (-1,1) -- (0,2); \draw[very thick] (-.7,.3) -- (0,-.3) -- (.7,.4); 
\draw[very thick] (3.3,.4) -- (4,-.3) -- (4.7,.4); \node[scale=.7] at (0,1)
{$8$}; \node[scale=.7] at (-1,2) {$10$};}\qquad
\tikz[baseline=-20pt,xscale=.25,yscale=-1,thick]{\draw[wei] (0,0) -- node[below,at
  end]{$0$} (0,1); \draw[wei] (9,0) --  node[below, at end]{$\nicefrac{1}{2}$}
  (9,1);\draw (-1,0) to[out=10,in=-170]  node[above, at start]{$\nicefrac{1}{2}$}
  (10,1); \draw (1,0) to[out=10,in=-170] node[above, at start]{$0$} (8,1);\draw[dashed] (-5,0) to[out=10,in=-170] (6,1); \draw[dashed] (-3,0) to[out=10,in=-170] (4,1);}
\]
\[\tikz[baseline=5pt,scale=.3,thick]{\draw (0,0) --(2,2); \draw
  (0,0) --(-1,1); \draw (1,3) --(2,2); \draw (1,3) --(-1,1); \draw
  (1,1) -- (0,2); \draw[very thick] (-.7,.4) -- (0,-.3) -- (.7,.4); 
\draw[very thick] (3.3,.4) -- (4,-.3) -- (4.7,.4); \node[scale=.7] at (0,1)
{$0$}; \node[scale=.7] at (1,2) {$6$};
}\qquad 
\tikz[baseline=-20pt,xscale=.25,yscale=-1,thick]{\draw[wei] (0,0) -- node[below,at end]{$0$} (0,1); \draw[wei] (9,0) --  node[below, at end]{$\nicefrac{1}{2}$} (9,1);\draw (6,0) --  node[above, at start]{$\nicefrac{1}{2}$} (6,1); \draw (1,0) -- node[above, at start]{$0$} (1,1);\draw[dashed] (-3,0) -- (-3,1); \draw[dashed] (2,0) -- (2,1);}
\qquad \qquad 
\tikz[baseline=5pt,scale=.3,thick]{\draw (0,0) --(2,2); \draw
  (0,0) --(-1,1); \draw (1,3) --(2,2); \draw (1,3) --(-1,1); \draw
  (1,1) -- (0,2); \draw[very thick] (-.7,.4) -- (0,-.3) -- (.7,.4); 
\draw[very thick] (3.3,.4) -- (4,-.3) -- (4.7,.4); \node[scale=.7] at (0,1)
{$0$}; \node[scale=.7] at (1,2) {$10$};}\qquad 
\tikz[baseline=-20pt,xscale=.25,yscale=-1,thick]{\draw[wei] (0,0) -- node[below,at
  end]{$0$} (0,1); \draw[wei] (9,0) --   node[below,at
  end]{$\nicefrac{1}{2}$}  (9,1);\draw (6,0)
  to[out=15,in=-165]  node[above, at start]{$\nicefrac{1}{2}$} (10,1); \draw (1,0) -- node[above, at start]{$0$} (1,1);\draw[dashed] (2,0) to[out=15,in=-165] (6,1); \draw[dashed] (-3,0) -- (-3,1);}
\]
\[
\tikz[baseline=5pt,scale=.3,thick]{\draw (0,0) --(1,1); \draw
  (0,0) --(-1,1); \draw (0,2) --(-1,1); \draw
  (1,1) -- (0,2); \draw[very thick] (-.7,.4) -- (0,-.3) -- (.7,.4); 
\draw[very thick] (3.3,.4) -- (4,-.3) -- (4.7,.4); \draw (4,0)
--(3,1)-- (4,2) -- (5,1)--cycle;\node[scale=.7] at (0,1)
{$0$}; \node[scale=.7] at (4,1) {$10$};
}
\qquad 
\tikz[baseline=-20pt,xscale=.25,yscale=-1,thick]{\draw[wei] (0,0) -- node[below,at end]{$0$} (0,1); \draw[wei] (9,0) --  node[below, at end]{$\nicefrac{1}{2}$} (9,1);\draw (10,0) --  node[above, at start]{$\nicefrac{1}{2}$} (10,1); \draw (1,0) --node[above, at start]{$0$} (1,1);\draw[dashed] (-3,0) -- (-3,1); \draw[dashed] (6,0) -- (6,1);}
\qquad \qquad 
\tikz[baseline=5pt,scale=.3,thick]{\draw (0,0) --(1,1); \draw
  (0,0) --(-1,1); \draw (0,2) --(-1,1); \draw
  (1,1) -- (0,2); \draw[very thick] (-.7,.4) -- (0,-.3) -- (.7,.4); 
\draw[very thick] (3.3,.4) -- (4,-.3) -- (4.7,.4); \draw (4,0)
--(3,1)-- (4,2) -- (5,1)--cycle;\node[scale=.7] at (0,1)
{$8$}; \node[scale=.7] at (4,1) {$10$};
}
\qquad 
\tikz[baseline=-20pt,xscale=.25,yscale=-1,thick]{\draw[wei] (0,0) -- node[below,at
  end]{$0$} (0,1); \draw[wei] (9,0) -- node[below, at end]{$\nicefrac{1}{2}$}
  (9,1);\draw (10,0) --  node[above, at start]{$\nicefrac{1}{2}$} (10,1); \draw (1,0)
  to[out=15,in=-165] node[above, at start]{$0$} (8,1);\draw[dashed] (-3,0) to[out=15,in=-165] (4,1); \draw[dashed] (6,0) -- (6,1);}\]
\[
\tikz[baseline=5pt,scale=.3,thick]{\draw (0,0) --(1,1); \draw
  (0,0) --(-1,1); \draw (0,2) --(-1,1); \draw
  (1,1) -- (0,2); \draw[very thick] (-.7,.4) -- (0,-.3) -- (.7,.4); 
\draw[very thick] (3.3,.4) -- (4,-.3) -- (4.7,.4); \draw (4,0)
--(3,1)-- (4,2) -- (5,1)--cycle;\node[scale=.7] at (0,1)
{$15$}; \node[scale=.7] at (4,1) {$10$};
}\qquad \qquad
\tikz[baseline=-20pt,xscale=.25,yscale=-1,thick]{\draw[wei] (0,0) -- node[below,at end]{$0$} (0,1);
  \draw[wei] (9,0) --  node[below, at end]{$\nicefrac{1}{2}$}  (9,1);\draw (10,0)
  --  node[above, at start]{$\nicefrac{1}{2}$} (10,1); \draw (1,0) to[out=5,in=-175] node[above, at start]{$0$} (15,1);\draw[dashed] (-3,0) to[out=5,in=-175] (11,1); \draw[dashed] (6,0) -- (6,1);}
\]
\[
\tikz[baseline=5pt,scale=.3,thick]{\draw (0,0) --(-2,2); \draw
  (0,0) --(1,1); \draw (-1,3) --(-2,2); \draw (-1,3) --(1,1); \draw
  (-1,1) -- (0,2); \draw[very thick] (-.7,.4) -- (0,-.3) -- (.7,.4); 
\draw[very thick] (-3.3,.4) -- (-4,-.3) -- (-4.7,.4); 
\node[scale=.7] at (-1,2)
{$8$}; \node[scale=.7] at (0,1) {$10$};
} \qquad 
\tikz[baseline=-20pt,xscale=.25,yscale=-1,thick]{\draw[wei] (0,0) -- node[below,at end]{$0$} (0,1);
  \draw[wei] (9,0) --  node[below, at end]{$\nicefrac{1}{2}$} (9,1);\draw (10,0)
  --  node[above, at start]{$\nicefrac{1}{2}$} (10,1); \draw (8,0) to  node[above, at start]{$0$}(8,1);\draw[dashed] (4,0) to (4,1); \draw[dashed] (6,0) -- (6,1);}
\qquad \qquad \tikz[baseline=5pt,scale=.3,thick]{\draw (0,0) --(-2,2); \draw
  (0,0) --(1,1); \draw (-1,3) --(-2,2); \draw (-1,3) --(1,1); \draw
  (-1,1) -- (0,2); \draw[very thick] (-.7,.4) -- (0,-.3) -- (.7,.4); 
\draw[very thick] (-3.3,.4) -- (-4,-.3) -- (-4.7,.4); 
\node[scale=.7] at (-1,2)
{$15$}; \node[scale=.7] at (0,1) {$10$};
}
\qquad\tikz[baseline=-20pt,xscale=.25,yscale=-1,thick]{\draw[wei] (0,0) -- node[below,at end]{$0$} (0,1);
  \draw[wei] (9,0) --  node[below, at end]{$\nicefrac{1}{2}$} (9,1);\draw (10,0)
  --   node[above, at start]{$\nicefrac{1}{2}$} (10,1); \draw (8,0)
  to[out=15,in=-165]  node[above, at start]{$0$}(15,1);\draw[dashed] (4,0) to[out=15,in=-165]
  (11,1); \draw[dashed] (6,0) -- (6,1);}\]
\[
\tikz[baseline=5pt,scale=.3,thick]{\draw (0,0) --(2,2); \draw
  (0,0) --(-1,1); \draw (1,3) --(2,2); \draw (1,3) --(-1,1); \draw
  (1,1) -- (0,2); \draw[very thick] (-.7,.4) -- (0,-.3) -- (.7,.4); 
\draw[very thick] (-3.3,.4) -- (-4,-.3) -- (-4.7,.4); \node[scale=.7] at (1,2)
{$15$}; \node[scale=.7] at (0,1) {$10$};
}\qquad 
\tikz[baseline=-20pt,xscale=.25,yscale=-1,thick]{\draw[wei] (0,0) -- node[below,at end]{$0$} (0,1);
  \draw[wei] (9,0) --  node[below, at end]{$\nicefrac{1}{2}$} (9,1);\draw (10,0)
  --  node[above, at start]{$\nicefrac{1}{2}$} (10,1); \draw (15,0) to node[above, at start]{$0$} (15,1);\draw[dashed] (11,0) to (11,1); \draw[dashed] (6,0) -- (6,1);}
\]
\end{example}

\begin{proposition}
  The elements $\CST$ are homogeneous of degree $\deg(\sS)+\deg(\sT)$.
\end{proposition}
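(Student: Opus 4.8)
The plan is to reduce the statement to a combinatorial count of the degrees of the crossings of the half-diagram $B_\sS$, and then to match that count with the definition of $\deg(\sS)$. First, the anti-automorphism $*$ is reflection in a horizontal axis, so it preserves the label of and the number of dots on each strand and carries every crossing to a crossing of the same local type; hence $\deg(B_\sS^{*})=\deg(B_\sS)$. Since the idempotents $e_\nu$ and $e_{\Bi}$ are straight-line diagrams, of degree $0$, this gives $\deg(\CST)=\deg(B_\sS^{*})+\deg(B_\sT)=\deg(B_\sS)+\deg(B_\sT)$, so it suffices to show $\deg(B_\sS)=\deg(\sS)$ for every $\Bi$-tableau $\sS$.

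Second, I would check that $\deg(B_\sS)$ is well defined, i.e.\ independent of the choices made in drawing $B_\sS$. The diagram carries no dots, so its degree is the sum of the degrees of its crossings, and each such degree depends only on the labels of the two objects crossing --- two strands, a strand and a ghost, or a strand and a red line --- and on the direction of the crossing. Because $B_\sS$ has no bigons between two strands or between a strand and a ghost, any two strands cross at most once, a strand crosses a given ghost at most once, and a strand crosses each red line at most once; whether each such crossing occurs, and in which direction, is then forced by the positions of the endpoints, namely the entries of $\sS$ below, the loading $\Bi_\nu$ above, and the residues. As two admissible diagrams for $B_\sS$ differ by triple-point moves, which leave this crossing data unchanged, the degree is unambiguous.

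For the equality $\deg(B_\sS)=\deg(\sS)$ I would attribute each crossing of $B_\sS$ to a single box of $\nu$ --- a crossing of a strand with a ghost or with a red line to the moving strand's box, and a crossing of two strands to the box with the larger $\sS$-entry --- so that summing over boxes it is enough to prove that the total degree of the crossings attributed to a box $\gamma$ equals the degree of $\gamma$ in $\sS$. Fixing $\gamma$, its strand runs from $\sS(\gamma)$ at the bottom to a point just right of the $x$-coordinate of $\gamma$ at the top, and the boxes whose strand or ghost it meets along the way are precisely those governed by the relatively-addable and relatively-removable conditions of Section~\ref{sec:load-mult}: each addable box of residue $\res(\gamma)$ that is to the right of $\gamma$ relative to $\sS(\gamma)$ contributes a single crossing of degree $+1$ (a strand past a ghost of matching label, or, in the degenerate corner case, a strand past a red line), each such removable box contributes a degree $-2$ crossing of two same-residue strands together with a compensating degree $+1$ ghost crossing of the relevant neighbouring box --- net $-1$ --- and every other crossing attributed to $\gamma$ has degree $0$. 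This is the cyclotomic KLR computation of Hu and Mathas \cite{HM}, carried out for an arbitrary weighting; one may equally phrase it as an induction on the number of boxes of $\nu$, peeling off a suitable box at a time.

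The main obstacle is this last matching. One has to make the enumeration of the crossings attributed to $\gamma$ exhaustive and sign-correct, verify that the three families of inequalities distinguishing non-tableaux are exactly the ones producing nonzero-degree crossings, treat the red lines (the ghosts of the $\infty$-strand) on the same footing as ordinary ghosts --- including the boundary conditions at the top-left corner of each component, where $\vartheta_m$ plays the role of a virtual entry --- and confirm that nothing degenerates in the range of small $e$, in particular for $e=1$, where $U$ has a loop at each vertex and the conditions $i=j$ and $i=j\pm k$ coincide. I expect this to be a careful but essentially routine adaptation of \cite{HM} to the weighted KLR setting, and I would present it in that spirit.
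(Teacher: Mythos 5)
Your overall strategy is the same as the paper's: reduce to counting the degrees of the crossings in $B_\sS$, attribute them to boxes, and match the per-box total against the count of relatively addable and removable boxes in the definition of $\deg(\sS)$. The reduction via $*$ and the well-definedness check are fine (the paper takes the first for granted and the second is implicit in the fact that admissible diagrams differ by triple-point moves).

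The gap is that the central combinatorial identity is asserted, not proven, and the sketch of it is not quite right. You claim that apart from the crossings matched to addable and removable boxes, ``every other crossing attributed to $\gamma$ has degree $0$.'' That is false: for a box $\gamma$ with entry $h$ there are in general many boxes of residue $\res(\gamma)$ to its right with smaller entry, each contributing a genuine degree $-2$ strand--strand crossing, and many boxes of adjacent residue contributing degree $+1$ ghost crossings, even when no addable or removable box is involved. What the paper actually proves is a cancellation statement: grouping the rightward boxes into \emph{strips} around each column of residue $i$, a strip with neither an addable nor a removable box relative to $h$ has exactly twice as many adjacent-residue boxes with entry $<h$ as same-residue ones (one more in the column of residue $i-k$, one fewer in the column of residue $i+k$, or vice versa), so $-2a+b=0$ in aggregate; an addable (resp.\ removable) box shifts $b$ by $+1$ (resp.\ $-1$), giving the net $\pm 1$; and a strip sitting over a nadir accounts for the red--black crossing. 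Your pairing of ``one $-2$ crossing with one compensating $+1$ ghost crossing per removable box'' does not survive scrutiny as a bijective bookkeeping; the count only closes strip by strip. Since this cancellation is the entire content of the proposition, deferring it to ``a routine adaptation of \cite{HM}'' leaves the proof incomplete --- and the adaptation is not purely routine, because the weighted setting changes which boxes count as addable/removable relative to $h$ and requires the separate treatment of nadirs (red strands) and the double-counting caveat when $e=1,2$.
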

\begin{proof}
 These elements are defined as a product of homogeneous elements, and thus
 obviously homogeneous.  In order to determine the degree, we must
 count
 \begin{itemize}
 \item crossings of like-labelled black strands with degree -2: these correspond to pairs of boxes
 with the same residue which are not in the same column, such that the
 rightward one is filled with a smaller number than the leftward.  
\item crossings of like-labelled red and black strands with degree 1:
  these correspond to  pairs of boxes and nadirs of tableaux where the box is to
 the left of the nadir, but is filled with a higher number than the
 nadir's $x$-coordinate. 
\item  between strands and
   ghosts of adjacent strands with degree 1: these correspond to pairs of boxes with
 adjacent residue more than $\ck$ units apart, such that the
 rightward one is filled with a smaller number than the leftward.
 \end{itemize}

We organize counting these by the leftward box, whose residue we call
$i$; if the entry there is
$h$, we look at all boxes to the right of this one with the same or
adjacent residue.  These naturally form into strips around each
vertical line of residue  $i$. This isn't
quite true when $e=1,2$, but our argument goes through there as well,
simply noting that we double count every strip of residue $i\pm k$.  

In each such strip, there are 3 possibilities: relative to $h$ either there is an
addable box of residue $i$, a removable box of residue $i$ or
neither.  Assume for now that this strip does not lie above a nadir of
residue $i$. Then, if there is no removable or addable box, the number
of boxes with label $<h$ of residue $i$ is one less than those of
residue $i-k$ and one more than those residue $i+k$, or vice versa.
Thus, the degree contributions of the boxes of residue $i$ and those
of residue $i\pm k$ exactly cancel, and there is no total contribution
to the degree.  

If there is an addable box of residue $i$, then there is one more box
of adjacent residue than in the first case, and there is a total
contribution of 1 to the degree; if there is a removable box of
residue $i$, then  there is one fewer box
of adjacent residue than in the first case, and there is a total
contribution of -1 to the degree.

Finally,  if the strip we consider lies above a nadir of residue $i$,
then then we have one fewer adjacent box than expect, and so the
contribution to the degree is increased by 1, as we expected from the
red and black crossing.  This completes the proof.
\end{proof}

\subsection{Graded cellular structure}
\label{sec:cellular-basis}
 
Fix any set $B$ of loadings for the weighting $\vartheta$.  For a
multipartition $\xi$, let $M_B(\xi)$ be the set of all $\Bi$-tableaux on
$\xi$ for $\Bi\in B$.  The elements $C_{\sS,\sT}$ define a map
$C\colon M_B(\xi)\times M_B(\xi)\to T^\vartheta_B$, where
$T^\vartheta_B$ is the reduced steadied quotient of the weighted KLR algebra on the
  loadings $B$, and similarly for $\dalg^\vartheta_B$.
\begin{theorem}\label{th:graded-cellular}
  The algebra $\dalg^\vartheta_B$ has a cellular structure with data given by $(\cP_\ell,M_B,C,*)$.
\end{theorem}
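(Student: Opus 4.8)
The plan is to verify the four axioms of Definition~\ref{def:cellular} for the datum $(\cP_\ell,M_B,C,*)$, following the template of Theorem~\ref{th:cellular} and borrowing one rank computation from Theorem~\ref{thm:Hecke-KLR}. Assume $B$ is finite up to equivalence, and let $\mathscr{D}$ be the set of underlying sets of the loadings in $B$; then Theorem~\ref{thm:Hecke-KLR} identifies $\dalg^\vartheta_B$ with the finite-dimensional algebra $\PC^\vartheta_{\mathscr{D}}$, so $\dalg^\vartheta_B$ is a free $\mathscr{R}$-module whose rank, by Lemma~\ref{lem:hbasis}, is the number of pairs $(\sS,\sT)$ of tableaux of a common shape with entries drawn from a set in $\mathscr{D}$. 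Since a $D$-tableau of shape $\xi$ determines its own residue function, it is an $\Bi$-tableau for exactly one loading $\Bi$ with underlying set $D$; hence for each $\xi$ the $D$-tableaux of shape $\xi$ with $D\in\mathscr{D}$ are precisely the disjoint union over $\Bi\in B$ of the $\Bi$-tableaux of shape $\xi$, and that rank equals $\sum_{\xi\in\cP_\ell}(\#M_B(\xi))^2$, which is exactly the number of elements $\CST$.

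Axioms (1) and (3) are immediate: $\cP_\ell$ is partially ordered by weighted dominance and each $M_B(\xi)$ is finite, and $(\CST)^*=(B_\sS^*B_\sT)^*=B_\sT^*B_\sS=C_{\sT,\sS}$ because $*$ is an involutive anti-automorphism which preserves dotlessness and fixes every idempotent $e_\nu$ and $e_{\Bi}$. For axiom (2) what must be shown is that the $\CST$ span. Here I would repeat, in the weighted KLR language, the argument of Lemma~\ref{lem:Morita} together with the spanning lemma preceding Lemma~\ref{lem:hbasis}: first straighten an arbitrary weighted KLR diagram by pushing its strands as far left as possible at $y=\nicefrac{1}{2}$, using the triple-point relations~\eqref{eq:triple-point1}--\eqref{eq:triple-point2} to exchange ``ghost between strands'' with ``strand between ghosts'' and~\eqref{cost} to absorb strands onto red lines, so that the surviving left-justified slices are exactly the tautological loadings $\Bi_\xi$ (all others being unsteady, hence zero); then push every dot down to $y=0$ via the bigon relations~\eqref{ghost-bigon2} and~\eqref{ghost-bigon2a}, whose correction terms have strictly fewer crossings; and finally observe, exactly as in that spanning lemma, that a dotless diagram whose top or bottom filling fails to be an $\Bi$-tableau contains a ``bad crossing'' which can be isotoped to the boundary, exhibiting the diagram as one factoring through a loading of strictly larger shape. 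Inducting on shape in weighted dominance order and then on crossing number, every element of $\dalg^\vartheta_B$ thus lies in the span of the $\CST$; since this spanning set has exactly as many elements as the $\mathscr{R}$-rank of the free module $\dalg^\vartheta_B$, it is a basis, which proves (2) and simultaneously the injectivity of $C$.

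Axiom (4) is then handled verbatim as in Theorem~\ref{th:cellular}: the scalars $r_x(\sS',\sS)$ are the structure constants of $x^*$ acting on the cell module $S_\xi$ in the basis $\{B_\sS\}$, and the same straightening argument gives $xB_\sS^*\equiv\sum_{\sS'}r_x(\sS',\sS)B_{\sS'}^*$ modulo diagrams factoring through loadings of strictly larger shape, hence $x\CST\equiv\sum_{\sS'}r_x(\sS',\sS)C_{\sS',\sT}\pmod{\dalg^\vartheta_B(>\xi)}$. Because the $\CST$ are homogeneous of degree $\deg(\sS)+\deg(\sT)$ by the preceding Proposition and $*$ preserves degree, this is automatically a graded cellular structure, and since the elements $\CST$ are defined over any base the same datum passes to the reduced steadied quotient $T^\vartheta_B$ and its specializations.

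The step I expect to be the real obstacle is the straightening/Morita reduction used for spanning: tracking the positions of the ghosts as strands are pushed leftward and checking that each simplifying move (a triple-point move, an application of~\eqref{cost}, or sliding a ``bad crossing'' to the boundary) genuinely increases the shape in weighted dominance order, rather than merely preserving it, so that the double induction terminates. This is precisely the delicate combinatorial bookkeeping of Lemmas~\ref{lem:Morita} and~\ref{lem:hbasis}, carried out here with labelled strands and with the unsteady-idempotent and $\infty$-strand-dot relations of the reduced steadied quotient in force; the remaining axioms then follow the pattern already established for the WF Hecke algebra.
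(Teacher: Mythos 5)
Your proposal is correct, and its overall scaffolding (rank count via Theorem \ref{thm:Hecke-KLR} and Lemma \ref{lem:hbasis}, axioms (1), (3), (4) handled as in Theorem \ref{th:cellular}) matches the paper. The genuine divergence is in how you establish spanning for axiom (2). The paper does not redo any diagrammatic straightening on the KLR side: after enlarging $B$ to the set of \emph{all} loadings on $\mathscr{D}$ and reducing to $\K$ by graded Nakayama, it observes (via Lemma \ref{lem:joint-spectrum}) that each square $X_d$ acts upper-triangularly on the basis $\hCST$, so the generalized-eigenspace projections $\epsilon_{\Bi_\sS}\hCST$ still form a basis of $\PC^\vartheta_{\mathscr{D}}\otimes\K$, and their images under the isomorphism $\gamma$ of Theorem \ref{thm:Hecke-KLR} are $\CST$ plus terms with fewer crossings or in higher cells; unitriangularity then gives the basis statement at once. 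You instead re-run the left-justification and ``bad crossing'' arguments of Lemmas \ref{lem:Morita} and \ref{lem:hbasis} directly in the weighted KLR algebra. That route does work --- the KLR relations \eqref{eq:triple-point1}--\eqref{eq:triple-point2}, \eqref{ghost-bigon2}--\eqref{ghost-bigon2a}, \eqref{cost} and \eqref{dots-2} are exact analogues of the ones used on the Hecke side, and the steadying relation plays the same terminating role --- but it duplicates precisely the delicate combinatorial bookkeeping you flag as the obstacle, which is the work the paper's triangularity trick is designed to avoid. Two small points you should make explicit: if $B$ is not all loadings on sets in $\mathscr{D}$, then $\dalg^\vartheta_B$ is only an idempotent truncation $e_B\PC^\vartheta_{\mathscr{D}}e_B$ (the paper handles this by noting one may always enlarge $B$ and then restrict the basis), and the passage from ``spanning set of cardinality equal to the rank of a free $\mathscr{R}$-module'' to ``basis'' should be justified by (graded) Nakayama, as in the paper.
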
 
\begin{proof}
  Consider the axioms of a cellular algebra, as given in Definition 
 \ref{def:cellular}.  Condition (1) is manifest.

Condition (2) is that a basis is formed by the vectors $\CST$ where $\sS$ and $\sT$
range over tableaux for loadings from $B$ of the same shape.  First,
note that it suffices to prove this for any set of loadings containing
the original $B$, so we can always add new loadings.
By the graded Nakayama's lemma, it suffices to check this after base change
to $\K$.   In this case, we can essentially just transfer structure from the algebra
  $\PC^\vartheta$  using Theorem \ref{thm:Hecke-KLR}.   We have an
  isomorphism $\gamma\colon
  \PC^\vartheta_{\mathscr{D}}\otimes_{\mathscr{R}}\K\cong T^\vartheta_B$ where
  after possibly adding more loadings to $B$, we may assume that it is
  the set of all loadings on some collection of sets $\mathscr{D}$.  

Thus, any $D$-tableau for $D\in \mathscr{D}$ can be turned into a
tableau for a loading in $B$ by simply labeling points with the
content of the box they fill in the Young diagram.  This shows that
the number of $\CST$ is the same as the number of basis vectors
$\hCST$ from $\PC^\vartheta_{\mathscr{D}}\otimes_{\mathscr{R}}\K$.
Thus, it suffices to show that the $\CST$ span $T^\vartheta_B$.

First, note that when we consider $\PC^\vartheta_{\mathscr{D}}$ just
as a module over the squares, as we calculated in the proof of
\ref{lem:joint-spectrum}, action of a square is upper triangular in
the basis vectors $\hCST$: if as before $X_d$ denotes a square at
$d\in \R$, then $X_d\hCST=Q_pq^{\sigma(i-j)}\hCST+\cdots $ where
$(i,j,p)$ is the box of diagram containing $d$ and
as before, $\sigma$ denotes the sign of $\kappa$; the higher
order terms are either in higher cells, or have fewer crossings.  In
particular, replacing each $\hCST$ with its projection to this generalized
eigenspace $e_{\Bi_{\sS}}\hCST$ still yields a basis of
$\PC^\vartheta_{\mathscr{D}}\otimes_{\mathscr{R}}\K$.  Under the
isomorphism $\gamma$, this diagram is sent to a linear combination of
$\CST+\cdots$ where the other terms either have fewer crossings, or
lie in a higher cell.  This upper triangularity shows that the $\CST$
form a basis. 

Condition (3) is clear from the calculation \[\CST^*=(B_{\sS}^*B_{\sT})^*=B_{\sT}^*B_{\sS}=C_{\sT,\sS},\]

Thus, we need only check the final axiom, that for all $x$, we have an
equality
\begin{equation*}\label{eq:cell}
 x\CST\equiv \sum_{\sS'\in M_B(\xi)} r_x(\sS',\sS)C_{\sS',\sT} \tag{$\star$}
\end{equation*}
modulo the vectors associated to partitions higher in dominance
order.  The numbers $r_x(\sS',\sS)$ are just the structure
coefficients of $x^*$ acting on the basis of $S_\xi$ given by $B_\sS$.
Since we have that $ x B_{\sS}^*\equiv\sum_{\sS'\in M_B(\xi)}
r_x(\sS',\sS)B_{\sS'}$ modulo diagrams factoring through loadings that
are higher in weighted dominance order, the equation \eqref{eq:cell}
holds.  This completes the proof.
\end{proof}
%As we note in the proof of Proposition \ref{prop:highest-weight},
%every cell module has a different simple quotient, so:
%\begin{corollary}
 % The simple modules over $\dalg^\vartheta$ and thus $\alg^\vartheta$
 % are in bijection with multipartitions.
%\end{corollary}

It is a standard fact about cellular algebras that any projective
module over them has a cell filtration; a graded version of this is
proven by Hu and Mathas \cite[2.14]{HM}, showing that each projective $P$ has a cell
filtration where the graded multiplicity space of $S_\xi$ is $\dot{S}_\xi\otimes_{\alg^\vartheta}P$.
\begin{proposition}\label{prop:projective-filtration}
  The projective $P_\Bi$ has a standard filtration, where the graded
  multiplicity of $S_\xi$ is exactly the number of $\Bi$-tableaux on
  $\xi$, weighted by their degree.
\end{proposition}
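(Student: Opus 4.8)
The plan is to deduce Proposition \ref{prop:projective-filtration} from the graded cellular structure established in Theorem \ref{th:graded-cellular} together with the graded version of the standard fact about projectives over cellular algebras. First I would invoke Hu and Mathas \cite[2.14]{HM}: for any graded cellular algebra $A$ with cell datum $(\cP, M, C, *)$ and any projective $P$, there is a filtration of $P$ whose subquotients are graded shifts of cell modules $S_\xi$, and the graded multiplicity of $S_\xi$ in $P$ is given by the graded dimension of $\dot S_\xi \otimes_A P$, where $\dot S_\xi$ is the right cell module. Applying this with $A = \dalg^\vartheta_B$ (or $T^\vartheta_B$) and $P = P_\Bi = \dalg^\vartheta_B e_\Bi$, the projective associated to the idempotent $e_\Bi$, the graded multiplicity of $S_\xi$ becomes the graded dimension of $\dot S_\xi \otimes_A \dalg^\vartheta_B e_\Bi = \dot S_\xi e_\Bi$.

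Next I would identify $\dot S_\xi e_\Bi$ explicitly. By the construction of the cellular basis in Theorem \ref{th:graded-cellular}, the cell module $S_\xi$ has basis $\{c_\sS : \sS \in M_B(\xi)\}$ indexed by $\Bi'$-tableaux of shape $\xi$ for various loadings $\Bi'\in B$, with $c_\sS$ corresponding (up to the cellular-algebra formalism) to the diagram $B_\sS$. Since $e_\Bi$ is the idempotent picking out the loading $\Bi$ at the bottom of a diagram, $c_\sS e_\Bi = c_\sS$ precisely when $\sS$ is an $\Bi$-tableau, and is $0$ otherwise. Hence $\dot S_\xi e_\Bi$ has a basis indexed exactly by the set of $\Bi$-tableaux on $\xi$. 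The degree bookkeeping then follows from the Proposition preceding this one: $B_\sS$ is homogeneous of degree $\deg(\sS)$, so the class of $c_\sS$ in the graded module $\dot S_\xi e_\Bi$ sits in degree $\deg(\sS)$ (with the appropriate sign/shift convention inherited from the cellular structure). Summing over $\sS$ gives that the graded multiplicity of $S_\xi$ in $P_\Bi$ is $\sum_{\sS} q^{\deg(\sS)}$ over all $\Bi$-tableaux $\sS$ of shape $\xi$, which is the claimed "number of $\Bi$-tableaux on $\xi$, weighted by their degree."

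The main obstacle I anticipate is purely bookkeeping rather than conceptual: namely pinning down the grading conventions so that the degree of the subquotient $S_\xi$ appearing in the filtration of $P_\Bi$ matches $\deg(\sS)$ on the nose, rather than $\deg(\sS)$ plus some global shift depending on $\Bi$ or $\xi$. One must check that the cell module $S_\xi$ is normalized so that its canonical generator (the tautological tableau, as in Corollary \ref{cor:PC-hw}) sits in degree $0$, and that the identification $\dot S_\xi \otimes_A P_\Bi \cong \dot S_\xi e_\Bi$ of \cite[2.14]{HM} is degree-preserving with this normalization. Since $B_\sS \in e_\nu T^\vartheta e_\Bi$ is a product of the homogeneous generators with the tautological diagram $B_\sT$ contributing degree $\deg(\sT)$, and $C_{\sS,\sT}$ has degree $\deg(\sS)+\deg(\sT)$, the claim is consistent; writing out this consistency check is the only real work. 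Everything else is a direct citation of \cite[2.14]{HM} combined with the explicit basis from Theorem \ref{th:graded-cellular}.
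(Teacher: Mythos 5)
Your proposal is correct and follows exactly the paper's own argument: the paper likewise cites Hu--Mathas \cite[2.14]{HM} for the graded cell filtration of projectives and then identifies the multiplicity space $\dot{S}_\xi\otimes_{\alg^\vartheta}P_\Bi$ with $e_{\Bi}S_\xi$, whose basis is given by the $\Bi$-tableaux of shape $\xi$ counted with their degrees. The degree-normalization check you flag is the only bookkeeping point, and it works out as you describe.
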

\begin{proof}
  Since $\dot{S}_\xi\otimes_{\alg^\vartheta}P_\Bi\cong e_{\Bi}S_\xi$,
  this follows instantly from the result of Hu and Mathas mentioned above. 
\end{proof}

\begin{example}
  Let us return to the case of Example \ref{big-example-2}.   In this
  case, if we let $B$ be the collections of loadings given there,
  every simple module is 1-dimensional, and so $T^\vartheta_Be_{\Bi}$
  is already indecomposable.  Thus, the multiplicities of standard
  modules in the indecomposable projectives are easily calculated from
  the bases of standard modules given in  Example \ref{big-example-1}.

The decomposition matrix in the 3 cases are given by 
\begin{equation*}
\begin{bmatrix}
  1 & q^{-1} & q^{-2} & q^{-1} & 0\\
0 & 1 & q^{-1} & 0 & 0\\
0 & 0 & 1 & q^{-1} & q^{-2}\\
0 & 0 & 0 & 1 & q^{-1}\\
0 & 0 & 0 & 0 & 1
\end{bmatrix}
\qquad \begin{bmatrix}
  1 & q^{-2} & q^{-1} & 0 & 0\\
0 & 1 & 0 & 0 & 0\\
0 & q^{-1} & 1 & 0 & q^{-1}\\
0 & 0 & q^{-1} & 1 & q^{-2}\\
0 & 0 & 0 & 0 & 1
\end{bmatrix}
\qquad \begin{bmatrix}
  1 & q^{-1} & 0& 0& 0\\
0 & 1 & 0& 0 & 0\\
q^{-1} & q^{-2} & 1 & 0 & \\
q^{-1} & 0 & q^{-1}  & 1 & q^{-1}\\
0 & 0 & q^{-2} & 0 & 1
\end{bmatrix}
\end{equation*}
\end{example}

\subsection{Generalization to bimodules}
\label{sec:gener-bimod}

As defined in \cite{WebwKLR}, there are natural bimodules
$B^{\vartheta,\vartheta'}$ attached to each pair of weightings; these
bimodules have steadied quotients $\bra^{\vartheta,\vartheta'}$, which
are $T^{\vartheta}\operatorname{-}T^{\vartheta'}$-bimodules.  
We call the functors $\bra^{\vartheta,\vartheta'}\Lotimes-\colon
T^{\vartheta'}\mmod \to T^{\vartheta}\mmod$ {\bf
  change-of-charge functors}; these are quite interesting functors.
In particular, we will eventually show that they induce equivalences
of derived categories.

These bimodules are spanned by the KLR analogues of the WF
$\vartheta\operatorname{-}\vartheta'$ diagrams of Definition
\ref{WF-definition} with green strands replaced by black ones, and
squares by dots.  We let $\bra^{\vartheta,\vartheta'}$ be the quotient
of the span of these diagrams modulo the local relations
(\ref{dots-1}--\ref{dumb}) and the same steadying relation.  

Since this is a bimodule over a graded algebra, we expect it will be
graded.  The simplest possible choice of grading would be to simply
use the same local contributions for KLR diagrams, and assign a local
contribution of 0 for all new types of crossings.  This is actually
not the most natural choice, though. We'll instead assign a degree of
$|m|$ to a diagram where two red strands with labels $r$ and $r'$ satisfying $r=r'+mk \pmod
\Z$ have $x$-coordinates satisfying $\theta=\theta'+m\kappa$ at a
single $y$-coordinate and no other crossings (including of this type)
occur, and the strands are diverging as we read the diagram from top
to bottom, that is, we must have $|\theta-\theta'|<|m\kappa|$ at the bottom of the
diagram, and $|\theta-\theta'|>|m\kappa|$ at the top.

In this bimodule, we can construct analogues of the
elements $\CST$, which we will also denote $\CST$ by abuse of notation
(the original elements $\CST$ will be a special case of these where
$\vartheta=\vartheta'$). These are similar in form and structure to
the basis $\hC_{\sS,\sT}$ defined in Section \ref{sec:derived-equivalences}.

Let us first describe the basis which is cellular for the right module
structure.  Let $D_\sS$ be the element of the bimodule
$\bra^{\vartheta,\vartheta'}$ defined analogously with $B_{\sS}$. Its
bottom is given by $\Bi_\eta$ (for the weighting $\vartheta'$).  Its
top is given by the entries of $\sS$, with each entry determining the position
on the real line of a point in the top loading, labeled with the root
associated to that box.  The diagram proceeds by connecting the points
in the loading associated to the same box in the top and bottom, while
introducing the smallest number of crossings.  As usual, this diagram
is not unique; we choose any such diagram and fix it from now on.

\begin{definition}
  The right cellular basis for $e_{\Bi}\bra^{\vartheta,\vartheta'}e_{\Bj}$ is given
  by $D_\sS B_{\sT}^*$ for $\sS$ an $\Bi$-tableau for some
loading $\Bi$ and the weighting $\vartheta$ (upon which the definition of
$\Bi$-tableau depends),  and $\sT$ a $\Bj$-tableau for some loading
$\Bj$ and the weighting  $\vartheta'$.

The left cellular basis for
$e_{\Bj}\bra^{\vartheta',\vartheta}e_{\Bi}$ is given by the
reflections of these vectors, that is by $B_{\sT}D_{\sS}^*$.  
\end{definition}

\excise{
If both tableaux are the tautological tableau, then $\CST$ is an
element that essentially interpolates between them; it is just the diagram where a straight
line connects the points in the two loadings that correspond to the
same box in the diagram, with triple points resolved arbitrarily. Fix $d^{\vartheta,\vartheta'}_\xi=\deg(C_{\operatorname{taut}}^\xi)$

\begin{lemma}
 $\displaystyle\deg(\CST)=\deg_{\vartheta'}(\sS)+d^{\vartheta,\vartheta'}_\xi+\deg_{\vartheta}(\sT).$
 \qed
\end{lemma}}

\begin{lemma}\label{lem:between}
The vectors $D_\sS B_{\sT}^*$ are a basis for the  bimodule
$\bra^{\vartheta,\vartheta'}$.  Furthermore, the sum of vectors
attached to partitions $\leq \xi$ in $\vartheta'$-weighted order is a
right submodule. In particular, as a
right module, $\bra^{\vartheta,\vartheta'}$ is standard filtered.

Similarly, the left cellular basis shows that the bimodule
$\bra^{\vartheta,\vartheta'}$ is standard filtered as a left module.
\end{lemma}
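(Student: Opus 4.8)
The plan is to mirror the proof of Lemma \ref{lem:hbasis} and Lemma \ref{lem:hbim-basis}, adapting the weighted-KLR bookkeeping in place of the WF Hecke bookkeeping. Since the two claims (left and right cellular bases) are interchanged by the flip $*$, which sends $B^{\vartheta,\vartheta'}$ to $B^{\vartheta',\vartheta}$ and $\ck$ to $-\ck$ (cf. Proposition \ref{prop:symmetry}), it suffices to treat the right-module statement, namely that the $D_\sS B_\sT^*$ span and are linearly independent, and that the span of those attached to shapes $\leq\xi$ is a right submodule.

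First I would prove spanning. By the Morita equivalence of Lemma \ref{lem:Morita} (whose proof uses only the relations (\ref{eq:triple-point-1}--\ref{dumb}), all of which hold in $\bra^{\vartheta,\vartheta'}$ since red crossings and strand-through-own-ghost crossings are not ``true'' crossings), any element of $\bra^{\vartheta,\vartheta'}$ is a sum of $a\, e_\xi\, b$ with $a\in\bra^{\vartheta,\vartheta'}$, $\xi$ a multipartition, and $b\in T^{\vartheta'}$. Using Theorem \ref{th:graded-cellular}, we may take $b = B_\sT^*$ modulo terms factoring through $\nu > \xi$ in $\vartheta'$-weighted dominance order; the ``modulo'' terms are handled by induction on weighted dominance order. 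For the top part $a$, I would run exactly the argument from the proof of Lemma \ref{lem:hbasis}: fill the diagram of $\xi$ using the $x$-value at $y=1$ of each strand, observe that a filling failing to be a $D$-tableau for the weighting $\vartheta$ forces a ``bad crossing'' (a strand crossing a red line of too-large index, or crossing an adjacent strand's ghost against the tableau inequalities), slide that bad crossing to the bottom so the slice just above it is strictly higher in $\vartheta'$-dominance order, and induct. Since any two diagrams realizing the same tableau differ by diagrams with strictly fewer crossings (a second induction), fixing one diagram per tableau suffices. This simultaneously shows the submodule claim: right-multiplying a diagram whose top is $\Bi_\xi$ by any element of $T^{\vartheta'}$ can only create bad crossings that raise the $\vartheta'$-dominance level, so the span of vectors with shape $\leq\xi$ is right-stable, hence $\bra^{\vartheta,\vartheta'}$ is standard filtered as a right module.

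For linear independence I would reduce to the stable situation $D = D' = D_{s,m}$ with $s \gg 0$ via Lemma \ref{lem:-1-faithful} (the key point being that no submodule of a cell module is killed by $e_{D}$, so pairing with the ``spreading'' diagrams $\phi_\sS$, $\phi_\sT^*$ detects nonvanishing of coefficients); this works verbatim here since those lemmas are proved at the level of the diagrammatics and the steadied quotient. In that stable case $e_D \bra^{\vartheta,\vartheta'} e_D$ coincides with $e_D T^\vartheta e_D$ (the red lines, though crossing, are isotopic to straight ones, and the change-of-charge bimodule degenerates to the algebra itself), which is identified via Theorem \ref{thm:Hecke-KLR} with the corresponding idempotent truncation of the cyclotomic Hecke algebra; its dimension is $m!\ell^m$, matching the count of pairs of standard tableaux, so the spanning set is a basis. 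Pulling this back along $\phi_\sS, \phi_\sT^*$ gives independence in general.

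The main obstacle, I expect, is the careful bookkeeping for the ``bad crossing'' step in the change-of-charge setting: because the ghost offset now interpolates between $\ck$ and $\ck'$ along the strand, and red lines may genuinely cross each other, one must check that ``bad crossings'' are still detected by a monotone quantity (the $\vartheta'$-weighted dominance order of the bottom slice), and that isotoping a bad crossing to $y=0$ does not inadvertently create or destroy other crossings in a way that breaks the induction. This is exactly the kind of argument carried out in Lemma \ref{lem:hbim-basis}, so I would import that reasoning, being attentive to the fact that here the relations are graded and the dots are honest polynomial generators (not invertible squares); the degree count plays no role in the span/independence argument, so this is a cosmetic rather than substantive change.
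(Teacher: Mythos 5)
Your proposal is correct in substance, but it takes a noticeably different route from the paper for the basis claim. The paper does not redo the diagrammatic spanning argument in the weighted KLR setting at all: it proves the Hecke-side analogue (Lemma \ref{lem:hbim-basis}, for the bimodule $\bK^{\vartheta,\vartheta'}$) by the Morita/bad-crossing/stable-slice argument you describe, and then obtains the KLR-side statement by the \emph{same transfer argument as Theorem \ref{th:graded-cellular}} --- i.e.\ via the isomorphism of Theorem \ref{thm:Hecke-KLR} together with the upper-triangularity of the square action on the $\hC_{\sS,\sT}$ basis, which shows that projecting to generalized eigenspaces and passing to the $\CST$-type vectors is unitriangular. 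You instead run the Morita equivalence and the bad-crossing induction directly on $\bra^{\vartheta,\vartheta'}$, and only invoke the Hecke--KLR comparison for the linear-independence/dimension count. This works, but it is where the real cost of your approach lies: Lemma \ref{lem:Morita} and the bad-crossing step of Lemma \ref{lem:hbasis} are proved with the WF Hecke relations (invertible squares, no $h$-correction terms), and transplanting them to the graded KLR relations --- with honest dots, the $+h$ terms in the ghost-bigon relations, and the interpolating ghost offset $y\kappa+(1-y)\kappa'$ --- requires exactly the re-verification you flag as ``the main obstacle.'' The paper's transfer-based route avoids this entirely, which is why its proof is three lines long. For the standard-filtration claim the two arguments also differ cosmetically: you deduce right-stability of the span from the observation that right multiplication only raises $\vartheta'$-dominance order, while the paper uses the cellular identity $B_{\sT}^*x\equiv \sum_{\sT'} B_{\sT'}^*r_x(\sT',\sT)+\cdots$ and multiplies on the left by $D_{\sS}$; these are two phrasings of the same fact. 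Your reduction of the left statement to the right one via $*$ is fine and matches how the paper sets up the two bases.
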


\begin{proof}
  First, we wish to show that these elements are a basis.  This follows from
  Lemma \ref{lem:hbim-basis} by the same argument as the proof of
  Theorem \ref{th:graded-cellular}.  That they are standard filtered follows
  from the calculation 
\begin{equation*}
 B_{\sT}^*x\equiv \sum_{\sS'\in M_B(\xi)} B_{\sT'}^*r_x(\sT',\sT) +\cdots 
\end{equation*}
where the additional terms are in higher cells; multiplying on the
left by $D_{\sS}$, we obtain the desired result.
\end{proof}

\section{The structure of the categories}
\label{sec:structure-categories}

\subsection{Highest weight categorifications}
\label{sec:high-weight-categ}

We let $\mathcal{S}^\vartheta_\nu$ denote the category of
finite dimensional representations of the reduced steadied quotient
$T^\vartheta_\nu$; we let $\mathcal{S}^\vartheta$ denote the sum of
these over all $\nu$.  As shown in Corollary \ref{biadjoint}, if
$e\neq 1$, this category carries a
categorical $\gu$-action induced from that on projective modules.
We'll use $\eE_u$ and $\eF_u$ to denote the transport of these functors
to the algebras $\alg^\vartheta,\dalg^\vartheta$.
%(to apply a functor to a module, one simply applies it to a morphism between projectives with the module as cokernel).

This categorical action also has a natural diagrammatic description,
given in \cite[Thm 3.1]{WebwKLR}.  Under the isomorphism of Theorem 
\ref{thm:Hecke-KLR}, the eigenvalues of the semi-simple part of $X$
translate into the labels on black strands.  Thus, $u$-induction/restriction
corresponds to the bimodule which adds/removes a strand at the far
right, which is fixed to have label $u$.  
That is, given a set of loadings $B$ with each of which has $m$
points, and set of loadings $C$, each of which has $m+1$ points, we
can define $B'$ to be the loadings where we take $\Bi$, and add a
point at with label $u$ at $x=s$, where $s$ is much greater than any other point
appearing in any of the loadings in $B$.  We let $e_B,e_{B'},e_C$ be
the sum of idempotents corresponding to these loadings.  Applying the
isomorphism to the definition of $\operatorname{ind}$ and
$\operatorname{res}$ in Section \ref{sec:relat-hecke-algebra}, we see that:
\begin{lemma}
  We have  isomorphisms of functors  $e_C
  \dalg^\vartheta e_{B'}\otimes_{\dalg^\vartheta_B} -\cong  \eF_u$ and $\Hom_{\dalg^\vartheta_C }(e_{C}
  \dalg^\vartheta e_{B'},-)\cong  \eE_u$
  where $e_C
  \dalg^\vartheta e_{B'}$ is made into a 
  $\dalg^\vartheta_C$-$\dalg^\vartheta_B$ bimodule by the obvious left
  action, and right action only on the leftmost $m$ strands.
\end{lemma}
If $B'\subset C$, then we can immediately see that
$\eE_u(M)=e_{B'}M$; of course, if $C$ is sufficiently large, its
Morita equivalence class will not be changed by adding any missing
elements of $B'$.

Since $e_C
  \dalg^\vartheta e_{B'}$ is a graded module, this allows us to define
  a graded lift of $\eF_i$ and $\eE_i$.  We'll use the obvious grading
  on $\eF_i$, and shift the obvious grading on $\eE_i$ acting on a
  module of weight $\mu$ downward by $\al_i^\vee(\mu)+1$.  The right
  adjoint to $\eF_i$ is $\eE_i(-\al_i^\vee(\mu)-1)$ (that is, the obvious grading above), and the
  left adjoint is $\eE_i(\al_i^\vee(\mu)+1)$.  This is a consequence of the main theorem of
  \cite{Brundandef}, in particular, of the form the adjunctions
  defined in \cite[(1.16-17)]{Brundandef}.

It follows immediately from Lemma \ref{lem:cell-hw} that:
\begin{proposition}\label{prop:highest-weight}
  The category $\mathcal{S}^\vartheta_\nu$ is highest weight with
  standards $S_\xi$ and 
  partial order given by weighted dominance order.  The category 
$\dalg^\vartheta_\nu\mmod$ is also highest weight, in the sense given
by Rouquier \cite[\S 4.1.3]{RouqSchur}.
\end{proposition}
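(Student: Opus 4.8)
The plan is to deduce Proposition~\ref{prop:highest-weight} from the cellular structure established in Theorem~\ref{th:graded-cellular} together with the general criterion recorded in Lemma~\ref{lem:cell-hw}. First I would recall that Lemma~\ref{lem:cell-hw} says: if $A$ is a cellular algebra and for every $\xi\in\cP$ there exist tableaux $\sS,\sT\in M(\xi)$ with the leading coefficient $a^\xi_{\sS,\sT}$ of $(\hC^\xi_{\sS,\sT})^2$ a unit, then $A\mmod$ is highest weight with standard modules the cell modules. So the entire content of the proposition is the verification of this unit condition for $\dalg^\vartheta_\nu$ (and hence for $\mathcal S^\vartheta_\nu$, which is its module category).

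Next I would exhibit the distinguished tableau. For each multipartition $\xi$ of residue $\nu$, take the \emph{tautological} tableau $\sS_\xi$ obtained by filling the box $(i,j,p)$ with its own $x$-coordinate in the loading $\Bi_\xi=D_\xi$ of Figure~\ref{fig:partition} (i.e.\ with $\vartheta_p+(i+j)\epsilon+\ck(j-i)$, or the appropriate reading-word value); this is manifestly an $\Bi_\xi$-tableau, and its Russian reading word is already in order, so the associated permutation $w_{\sS_\xi}$ is the identity. Consequently the diagram $B_{\sS_\xi}$ has no crossings and no dots, and $C^\xi_{\sS_\xi,\sS_\xi}=B^*_{\sS_\xi}B_{\sS_\xi}=e_{\Bi_\xi}$, the straight-line idempotent. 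Then $(C^\xi_{\sS_\xi,\sS_\xi})^2=e_{\Bi_\xi}=C^\xi_{\sS_\xi,\sS_\xi}$, so $a^\xi_{\sS_\xi,\sS_\xi}=1$, which is a unit; this is exactly the argument already used in Corollary~\ref{cor:PC-hw} for $\PC^\vartheta$, transported across the isomorphism of Theorem~\ref{thm:Hecke-KLR}, or done directly in the weighted KLR picture. Applying Lemma~\ref{lem:cell-hw} gives that $\dalg^\vartheta_\nu\mmod$ is highest weight with standards $S_\xi$ and order the $\vartheta$-weighted dominance order $\cP^\vartheta_\ell$, and likewise for $T^\vartheta_\nu$, hence for $\mathcal S^\vartheta_\nu$. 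Since $\mathcal S^\vartheta_\nu$ and $\dalg^\vartheta_\nu\mmod$ differ only by whether one takes the full reduced steadied quotient or its deformation, the same conclusion holds for both; for the deformed statement I would invoke that a cellular algebra flat over the base ring with the unit condition holding after base change to the residue field is a split highest weight category in Rouquier's sense (the graded Nakayama argument already used in the proof of Theorem~\ref{th:graded-cellular} lifts the unit condition from $\K$ to $\mathscr R$).

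The only genuinely substantive point, and the one I would spell out most carefully, is that the cell-module order really \emph{is} the weighted dominance order rather than some coarsening: this is built into the cellular datum $(\cP^\vartheta_\ell,M_B,C,*)$ of Theorem~\ref{th:graded-cellular}, where $A(>\xi)$ was defined using $\vartheta$-weighted dominance, so there is nothing further to check. The potential obstacle is purely bookkeeping: making sure that ``highest weight in the sense of \cite[\S4.1.3]{RouqSchur}'' — which requires the standard modules to have the right endomorphism rings and the $\Ext$-vanishing/filtration properties over the deformation ring — follows formally from the cellular structure over $\mathscr R$, rather than just over a field. I would handle this by citing the standard translation between cellular algebras free over a base ring and highest weight categories in Rouquier's sense (the unit condition guaranteeing that the ``quasi-heredity'' idempotents are defined integrally), exactly as in the analogous step for $\PC^\vartheta$. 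In short: the proposition is immediate once one points $B_{\sS_\xi}$ at $e_{\Bi_\xi}$ and quotes Lemma~\ref{lem:cell-hw}; I would write it as a two-sentence proof with a parenthetical remark on the deformed case.
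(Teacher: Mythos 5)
Your proposal is correct and is essentially the paper's own argument: the paper states the proposition as an immediate consequence of Lemma \ref{lem:cell-hw} applied to the cellular structure of Theorem \ref{th:graded-cellular}, with the unit condition witnessed by the tautological tableau whose $C_{\sS,\sT}$ is the straight-line idempotent $e_{\Bi_\xi}$ --- exactly the check you perform, mirroring Corollary \ref{cor:PC-hw}. Your extra care about the deformed case over $\mathscr R$ and Rouquier's definition is a reasonable elaboration of what the paper leaves implicit.
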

 \excise{As shows, the same argument shows the
category 
\begin{proof}
The category $\mathcal{S}^\vartheta_\nu$ is the category of representations of a
  cellular algebra, with the partial order on cells given by weighted
  dominance order.  Thus we need only show that the natural inner
  product on each cell module is non-zero.  Obviously, the vector
  corresponding to the superstandard tableau has inner product 1 with
  itself.  Thus, the standard modules are the standard modules of a
  highest weight structure.
\end{proof}}

\begin{lemma}\label{lem:FS-filtration}
  The module $\eF_iS_\xi$ carries a filtration $M_1\subset M_2\subset \cdots
  \subset M_j$ indexed by addable boxes of residue $i$
  in $\xi$ from left to right.  The quotient $M_h/M_{h-1}$ is
  $S_{\xi(h)}(\deg(\sT_h))$, where $\xi(h)$ is $\xi$ with the $h$th addable box of
  residue $i$ added, and $\sT_h$ is obtained by putting the
  tautological tableau in $\xi$, and $s\gg 0$ in the corresponding addable box.
\end{lemma}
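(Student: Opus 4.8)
The plan is to deduce this from Proposition~\ref{ind-res-filt} together with the graded cellular basis of Theorem~\ref{th:graded-cellular}. Recall that $\eF_i$ is induction along the ring map $\alg^\vartheta_\nu\to\alg^\vartheta_{\nu-\al_i}$ that inserts a black strand labelled $i$ far to the right of everything else, and that it is the direct summand of the functor $\operatorname{ind}$ picking out those contributions in which the added box has residue $i$, so that $\operatorname{ind}\cong\bigoplus_{j\in U}\eF_j$. Proposition~\ref{ind-res-filt}, transported to $\alg^\vartheta$ through the isomorphism of Theorem~\ref{thm:Hecke-KLR}, gives $\operatorname{ind}(S_\xi)$ a filtration whose successive subquotients are the modules $S_{\xi^p}$, for $\xi^p$ the result of adding the $p$-th addable box of $\xi$ (read left to right, in the $\vartheta$-order on columns), each appearing once. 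Since $S_{\xi^p}$ lives entirely in the block determined by the residue $i(p)$ of the added box, this filtration is homogeneous for the decomposition $\operatorname{ind}=\bigoplus_j\eF_j$; intersecting it with the summand $\eF_i$ therefore produces a filtration $M_1\subset M_2\subset\cdots\subset M_j$ of $\eF_iS_\xi$, indexed by the addable boxes of residue $i$ from left to right, with $M_h/M_{h-1}\cong S_{\xi(h)}$ as ungraded $\alg^\vartheta$-modules. This establishes everything except the grading shift.

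To pin down the shift I would make the subquotient isomorphism explicit in terms of the basis $C_{\sS,\sT}$. Using the diagrammatic description of $\eF_i$ and the fact that $S_\xi$ is generated by the class of $B_\sS$ for $\sS$ the tautological tableau, $\eF_iS_\xi$ is spanned by diagrams obtained by attaching a new $i$-strand at the far-right bottom of $B_\sS$ and threading it up to some terminal at the top; by the ``bad crossing'' manipulations from the proofs of Lemmas~\ref{lem:hbasis} and~\ref{lem:hbim-basis} one may reduce, modulo higher cells (or unsteady idempotents, hence $0$), to the case where the new strand ends just to the right of the $x$-coordinate of one of the addable boxes of $\xi$ of residue $i$ and the rest of the diagram is genuinely $B_\sS$ for a $\Bi$-tableau $\sS$, so that the whole picture is a choice of diagram for a cellular basis vector of $S_{\xi(h)}$. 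The submodule $M_h$ is then the span of those vectors whose new strand ends at or to the left of the $h$-th such addable box; it is a submodule because, exactly as in the proof of Proposition~\ref{ind-res-filt}, acting by any diagram not touching the new strand can only break crossings of that strand (via the correction terms in the relations), never create them. The map $M_h/M_{h-1}\to S_{\xi(h)}$ that deletes the new strand is then visibly an isomorphism of $\alg^\vartheta$-modules, since the relevant structure constants $r_x$ are unaffected by the presence of the inert new strand; and it changes degrees by the degree of the diagram threading the new strand from the far right to the $h$-th addable box over the tautological filling of $\xi$. By the degree formula for the $C_{\sS,\sT}$ proved just before Section~\ref{sec:cellular-basis}, this degree is exactly $\deg(\sT_h)$, where $\sT_h$ is the tautological tableau on $\xi$ with $\infty$ placed in the new box. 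Hence $M_h/M_{h-1}\cong S_{\xi(h)}(\deg(\sT_h))$.

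The main obstacle is this last step: verifying that the concrete degree of the threading diagram really matches $\deg(\sT_h)$ as defined, and that the deletion map is homogeneous of precisely that shift. Everything else is either inherited from Proposition~\ref{ind-res-filt} or a verbatim repeat of the submodule-and-structure-constant argument given there; one could alternatively bypass Proposition~\ref{ind-res-filt} altogether and run that diagrammatic argument directly in $\alg^\vartheta$, at the cost of redoing the dimension count using Theorem~\ref{th:graded-cellular} to see that the threaded-in diagrams indeed span (and hence, by cellularity, give a basis of) $\eF_iS_\xi$.
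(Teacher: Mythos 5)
Your argument is correct, but it reaches the conclusion by a different route than the paper. The paper's proof is an induction on the highest-weight order: it applies the exact functor $\eF_i$ to $P_{\Bi_\xi}\twoheadrightarrow S_\xi$, uses Proposition \ref{prop:projective-filtration} (the Hu--Mathas cell filtration of projectives) together with the inductive hypothesis to compute the dimension of the kernel, concludes that exactly the basis vectors $C_{\sS,\sT_h}$ with $\sT_h$ ``tautological on $\xi$ plus one new box'' survive, and then reads off the filtration and the shift $\deg(\sT_h)$ from these vectors. You instead obtain the ungraded statement for free by transporting Proposition \ref{ind-res-filt} through the isomorphism of Theorem \ref{thm:Hecke-KLR} and splitting $\operatorname{ind}\cong\bigoplus_j\eF_j$ along the weight-space (block) decomposition, and only then run the diagrammatic argument to produce the homogeneous basis and the shift. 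What your approach buys is the elimination of the induction on the partial order and of the kernel-dimension bookkeeping; what it costs is the need to verify carefully that the Hecke-side induction functor really decomposes as $\bigoplus_j\eF_j$ under the isomorphism (this follows from the generalized-eigenspace idempotents $\epsilon_{\Bi}\mapsto e_{\Bi}$ and Lemma \ref{lem:joint-spectrum}, but it is a step, not a tautology). Two small cautions: the parenthetical ``modulo higher cells\dots hence $0$'' is not literally right --- vectors factoring through higher cells of the \emph{larger} algebra need not vanish in $\eF_iS_\xi$ --- but your argument is saved because you close it with the dimension count coming from the transported filtration, exactly as the paper closes its version with the kernel count; and the final identification $B_{\sS}\mapsto C_{\sS,\sT_h}$ with shift $\deg(\sT_h)$ is verbatim the paper's last step, justified by the homogeneity statement $\deg C_{\sS,\sT}=\deg(\sS)+\deg(\sT)$.
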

\begin{proof}
  We induct on the partial order; if $\xi$ is maximal, then $S_{\xi}=P_{\Bi_\xi}$
  and the only $\Bi_\xi$-tableau on $\xi$ is the tautological one.
  Thus, the result follows from Proposition
  \ref{prop:projective-filtration} in this case.

  Now, we induct.  The module $P_{\Bi_{\xi}}$ has a standard
  filtration, with multiplicity
  given by counting $\Bi_\xi$-tableaux of a given shape; those which
  are not tautological correspond to the kernel of the map
  $P_{\Bi_{\xi}}\to S_\xi$.  Since $\eF_i$ is exact, $\eF_iP_{\Bi_{\xi}}$ is
  filtered by the images under $\eF_i$ of these standards.  On the other
  hand, $\eF_iP_{\Bi_{\xi}}$ is still a projective module over a
  quasi-hereditary algebra, and thus has a canonical standard
  filtration, which has multiplicities given by the numbers of $\Bi_\xi\circ
  i$-tableaux of a given shape.   Thus by the inductive hypothesis, the kernel $K$ of the map $\eF_iP_{\Bi_\xi}\to
  \eF_iS_\xi$ has a standard filtration where the multiplicities of a
  given shape correspond to the  $\Bi_\xi\circ
  i$ tableaux which are not a tautological tableau on $\xi$ with a box with
  entry $s$ added.

The module $\eF_iP_{\Bi_{\xi}}$ also has a cellular basis; the basis
vectors are $\CST$, where $\sT$ is a $\Bi_\xi\circ
  i$-tableau.  If the entries from $\Bi_\xi$ fit into any shape $\xi'$
  other
  than $\xi$ (necessarily higher in dominance order), this basis
  vector is killed by the map $\eF_iP_{\Bi_\xi}\to
  \eF_iS_\xi$.  
Thus,   $\eF_iS_\xi$ is spanned by the remaining basis vectors where 
  $\sT$ is the tautological tableau of $\xi$ with a box added; the
  dimension count above shows that these are a basis.  Furthermore, we can define a filtration compatible
  with this basis given by the span $M_h$ of vectors
  where the new box on $\sT$ is equal to or left of the $h$th addable
  box; this is a submodule by the cellular multiplication property \ref{eq:cell}.

This defines the desired filtration, and we have an isomorphism
$S_{\xi(h)}(\deg(\sT_h))\to M_h/M_{h+1}$ sending the basis vector
$B_{\sS}$ to the 
basis vector $C_{\sS,\sT_h}$.
\end{proof}
For simplicity, we let $\delta_h$ denote $\deg(\sT_h)$; is precisely the
number of $i$-addable boxes right of the $h$th, minus the number of
such which are removable.  On the other hand, let $\delta^h$ denote the
number of $i$-removable boxes {\it left} of the $h$th, minus the
number of such which are removable.

Note that we have $\dot{S}_\eta\overset{L}\otimes (\eF_i) S_{\xi}\cong
\dot{(\eE_iS_\eta)}\overset{L}\otimes S_\xi$.  Combining this with
the usual criterion that $M$ is a standard filtered if and only if
$\operatorname{Tor}^i(\dot M, S_\xi)=0$ for all $\xi$, this shows that
$\eE_iS_\xi$ is standard filtered.

The functors $\eE_i$ and $\eF_i$ are biadjoint up to shift.  Thus they also commute
with duality.  The result above also implies that:
\begin{corollary}\label{cor:ES-filtration}
\hfill
  \begin{enumerate}
 \item The module $\eE_iS_\xi$ carries a filtration $N_m\subset N_{m-1}\subset \cdots
  \subset N_j$ indexed by removable boxes of residue $i$
  in $\xi$ from left to right.  The quotient $N_h/N_{h+1}$ is
  $S_{\xi\{h\}}(\delta^h)$, where $\xi\{h\}$ is $\xi$ with the $h$th removable box of
  residue $i$ removed, and $\sT_h$ is obtained by putting the
  tautological tableau in $\xi$, and $\infty$ in the new box.
  \item The module $\eF_iS_\xi^\star$ carries a filtration $O_j\subset O_{j-1}\subset \cdots
  \subset O_1$ indexed by addable boxes of residue $i$
  in $\xi$ from left to right.  The quotient $O_h/O_{h+1}$ is
  $S_{\xi(h)}^\star (-\delta_h)$.
\item  The module $\eE_iS_\xi^\star$ carries a filtration $Q_1\subset Q_{2}\subset \cdots
  \subset Q_m$ indexed by addable boxes of residue $i$
  in $\xi$ from left to right.  The quotient $Q_h/Q_{h-1}$ is
  $S_{\xi\{h\}}^\star (-\delta^h)$.
  \end{enumerate}
\end{corollary}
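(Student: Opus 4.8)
The plan is to deduce all three parts of Corollary~\ref{cor:ES-filtration} from Lemma~\ref{lem:FS-filtration} together with the biadjointness (up to shift) of $\eE_i$ and $\eF_i$ and the compatibility of all these functors with the duality $\star$. The organizing principle is the homological criterion recalled just before the corollary: a module $M$ is standard filtered if and only if $\operatorname{Tor}^{>0}(\dot M, S_\xi)=0$ for all $\xi$, and when it is standard filtered, the graded multiplicity of $S_\xi$ is read off from $\dot S_\xi \overset{L}\otimes M = \dot S_\xi \otimes M$. Applied with $M = \eE_i S_\xi$ and using $\dot S_\eta \overset{L}\otimes (\eF_i S_\xi) \cong \dot{(\eE_i S_\eta)} \overset{L}\otimes S_\xi$ (stated in the excerpt), one already knows $\eE_i S_\xi$ is standard filtered; the content of part~(1) is to identify the successive quotients and the grading shifts.

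For part~(1), I would argue as follows. By the Tor-identity, the graded multiplicity of $S_\eta$ in $\eE_i S_\xi$ equals the graded multiplicity of $S_\xi$ in $\eF_i S_\eta$, which by Lemma~\ref{lem:FS-filtration} is nonzero exactly when $\xi = \eta(h)$ for some addable box of residue $i$ of $\eta$ — equivalently when $\eta = \xi\{h\}$ for the corresponding removable box of $\xi$ — and then the shift is $\deg(\sT_h)$ computed relative to $\eta$. One checks that this degree, viewed from $\xi$, is precisely $\delta^h$ (the number of $i$-removable boxes of $\xi$ left of the $h$th minus the number of $i$-addable such, matching the bookkeeping ``$\delta_h$ for $\eta$ becomes $\delta^h$ for $\xi$'' once a box is removed rather than added). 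To upgrade this multiplicity statement to an honest filtration of $\eE_i S_\xi$ with the named subquotients, I would run the argument of Lemma~\ref{lem:FS-filtration} directly: $\eE_i$ is exact, so $\eE_i P_{\Bi_\xi}$ is filtered by images of standards, the kernel of $\eE_i P_{\Bi_\xi}\to \eE_i S_\xi$ has the expected dimension, the surviving basis vectors $C_{\sS,\sT_h}$ are those where $\sT_h$ is the tautological tableau of $\xi$ with one box \emph{removed} (filled $\infty$), and the submodules $N_h$ spanned by vectors whose removed box is at or left of the $h$th removable box of residue $i$ give the filtration; the identification $N_h/N_{h+1}\cong S_{\xi\{h\}}(\delta^h)$ sends $B_\sS\mapsto C_{\sS,\sT_h}$ exactly as before.

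Parts~(2) and~(3) then follow formally by applying the duality $\star$. Since $\eF_i$ and $\eE_i$ commute with $\star$ (as noted, being biadjoint up to shift), dualizing the filtration of Lemma~\ref{lem:FS-filtration} gives a filtration of $\eF_i S_\xi^\star$ whose subquotients are $S_{\xi(h)}^\star(-\delta_h)$ — the sign flips on the shift because $\star$ reverses the internal grading — and the order of the filtration reverses, giving $O_j\subset\cdots\subset O_1$; similarly dualizing part~(1) yields part~(3), with subquotients $S_{\xi\{h\}}^\star(-\delta^h)$ and reversed indexing $Q_1\subset\cdots\subset Q_m$. I expect the main obstacle to be purely combinatorial: pinning down the exact grading shifts $\delta_h$ versus $\delta^h$ (and their signs under $\star$) consistently with the definition of the degree of a tableau and with the ``addable/removable relative to $h$'' conventions of Section~\ref{sec:load-mult}. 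The homological and diagrammatic scaffolding is entirely parallel to Lemma~\ref{lem:FS-filtration}, so once the degree bookkeeping is checked in one case the rest is a matter of transport along $\star$ and along the $\eE_i\dashv\eF_i$ adjunction.
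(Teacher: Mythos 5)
Your proposal matches the paper's own (essentially unwritten) proof: the corollary is derived exactly as you say, from Lemma \ref{lem:FS-filtration} via the identity $\dot{S}_\eta\overset{L}\otimes \eF_i S_{\xi}\cong \dot{(\eE_iS_\eta)}\overset{L}\otimes S_\xi$ and the Tor criterion for standard filtrations (giving part (1)), together with the fact that $\eE_i$ and $\eF_i$, being biadjoint up to shift, commute with duality (giving parts (2) and (3)). Your extra step of re-running the explicit basis argument of Lemma \ref{lem:FS-filtration} to produce the named submodules $N_h$ is a reasonable elaboration of what the paper leaves implicit, not a different route.
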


Losev has defined a notion of a {\bf highest-weight categorification} \cite[4.1]{LoHWCI};
this consists of the data of a 
\begin{enumerate}
\renewcommand{\labelenumi}{(\roman{enumi})}
\item a highest weight category $\mathcal{C}$ with index set $\Lambda$
  for its simples/standards/in\-de\-com\-posable projectives, together with
  a function $c\colon \Lambda\to \C$
\item a partition of $\Lambda$ into subsets $\Lambda_a$ with index set
  $\mathfrak{A}$ 
\item integers $n_a$ for each $a\in \mathfrak{A}$ and a
  function $d_a\colon \{1,\cdots,n_a\}\to \C$.
\item an isomorphism $\sigma_a\colon \{+,-\}^{n_a}\to \Lambda_a$,
  identifying $\Lambda_a$ with signed sequences of length $n_a$.
\end{enumerate}
Now, consider the highest weight category $\mathcal{S}^\vartheta$; we
aim to show that it is, in fact, a highest weight categorification in
the sense above.  The combinatorics of this structure are almost
exactly the same as those described by Losev for rational Cherednik
algebras \cite[\S 3.5]{LoHWCI}.
\begin{enumerate}
\renewcommand{\labelenumi}{(\roman{enumi})}
\item The indexing set $\Lambda=\cP_\ell$ is the set of
  $\ell$-multipartitions, and the function $c$ is the sum over all boxes of the partition of
  the $x$-coordinate of the box. 
\item The set $\mathfrak{A}$ is the set of partitions with no
  removable boxes of residue $i$, and  $\Lambda_a$ is the set of
  all partitions that contain $a$ with only boxes of residue $i$
  added.  
\item  The number of addable boxes of
  residue $i$ is $n_a$. The function $d_a$ records, from the left to
  right, the $x$-coordinates of the addable boxes.
\item The isomorphism $\Lambda_a\to \{+,-\}^{n_a}$ sends a partition $\xi$
to the sign vector where the first sign is $+$ if the leftmost addable
box of residue $i$ in $a$ is present in  $\xi$ and $-$ if it is not, and similarly
for the other addable boxes in order from left to right.  
\end{enumerate}
\begin{theorem}\label{thm:HWC}
When $e\neq 1$,  the categorical $\gu$-module  $\mathcal{S}^\vartheta$  is a
  highest weight categorification in the sense of Losev.
\end{theorem}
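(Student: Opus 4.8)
The task is to verify the four axioms (i)--(iv) of Losev's definition of a highest weight categorification for the categorical $\gu$-module $\mathcal{S}^\vartheta$, with the combinatorial data spelled out just before the statement. Axiom (i) is immediate: $\mathcal{S}^\vartheta$ is highest weight by Proposition \ref{prop:highest-weight}, and $c$ is the function recording the sum of $x$-coordinates, which by definition refines the weighted dominance order (this is exactly the content of the discussion in Section \ref{sec:dominance-order}, that $c$-function order refines weighted dominance). Axioms (ii), (iii), (iv) are set-theoretic bookkeeping about addable/removable boxes of a fixed residue $i$, and are purely combinatorial --- the partition of $\cP_\ell$ into the strata $\Lambda_a$ indexed by multipartitions $a$ with no removable $i$-box, the counting of addable boxes $n_a$, the function $d_a$ recording their $x$-coordinates, and the sign-vector identification $\sigma_a$. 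These hold by inspection once one knows that every $\xi$ lies over a unique such $a$ (obtained by deleting all $i$-boxes of $\xi$, using that $\kappa \neq 0$ makes the column containing each box unambiguous), and that adding a subset of the addable $i$-boxes of $a$ in left-to-right order recovers exactly the multipartitions in $\Lambda_a$.

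The real content --- and the reason $e \neq 1$ is assumed --- is the compatibility of the categorical $\gu$-action with the highest weight structure: one must check that the categorification functors $\eF_i, \eE_i$ interact with the standard filtrations in the way Losev's axioms demand. The key step is to invoke Lemma \ref{lem:FS-filtration} and Corollary \ref{cor:ES-filtration}: these already establish that $\eF_i S_\xi$ (resp.\ $\eE_i S_\xi$) is filtered by standards $S_{\xi(h)}$ (resp.\ $S_{\xi\{h\}}$) indexed precisely by the addable (resp.\ removable) $i$-boxes of $\xi$ read left to right, with the grading shifts $\delta_h, \delta^h$ being exactly the numerical quantities that appear in Losev's structure. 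The degree shifts $\delta_h$ = (number of $i$-addable boxes right of the $h$th) minus (number of those which are removable) match the combinatorial data $d_a$ up to the normalization built into $c$. So the proof is essentially: \emph{the filtrations constructed in Lemma \ref{lem:FS-filtration} and Corollary \ref{cor:ES-filtration} are the defining data of the highest weight categorification, and one reads off each of Losev's axioms from them.}

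**Main obstacle.** The genuinely delicate point will be matching the \emph{minimality/maximality} and crystal-theoretic conditions in Losev's definition --- namely that the standard $S_\xi$ appearing with a box added is the one determined by the sign-vector combinatorics, and that the ordering of the subquotients of $\eF_i S_\xi$ agrees with the ordering Losev imposes via the $d_a$ values. This amounts to checking that the left-to-right order on addable $i$-boxes (by $x$-coordinate) coincides with the order in which standards appear in the filtration of Lemma \ref{lem:FS-filtration}, which it does by construction since that filtration was defined by the span $M_h$ of basis vectors whose new box is at or left of the $h$th addable box. The remaining subtlety is purely in the case $e=1$ versus $e \neq 1$: for $e = 1$ the functors $\eF_i, \eE_i$ generate a Heisenberg-type rather than $\mathfrak{sl}$-type action (as noted after Corollary \ref{biadjoint} and in the discussion of $\mathfrak{\widehat{gl}}_1$), so Losev's $\mathfrak{sl}_2$-categorification axioms do not directly apply --- hence the hypothesis. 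With $e \neq 1$, Corollary \ref{biadjoint} gives that $\eF_i, \eE_i$ define a genuine categorical $\mathfrak{sl}_2$-action (in the Chuang--Rouquier sense) for each $i$, so combined with the filtration lemmas all of Losev's conditions (i)--(iv) are verified. I would therefore organize the proof as: (1) recall the combinatorial data; (2) cite Proposition \ref{prop:highest-weight} for axiom (i); (3) verify (ii)--(iv) by the box-counting described in the bullet list preceding the theorem; (4) invoke Corollary \ref{biadjoint}, Lemma \ref{lem:FS-filtration}, and Corollary \ref{cor:ES-filtration} to check the compatibility of the $\gu$-action with this structure, noting the grading shifts match the $d_a$.
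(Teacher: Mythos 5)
Your proposal matches the paper's proof: the combinatorial axioms are dispatched as immediate, and the substantive conditions in Losev's definition (that $\eF_i$, $\eE_i$ preserve standardly filtered objects and act correctly on the Grothendieck group) are verified by citing Lemma \ref{lem:FS-filtration} and Corollary \ref{cor:ES-filtration}, exactly as the paper does. Your identification of why $e\neq 1$ is required is also consistent with the paper's framework.
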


\begin{proof}
Let us consider the conditions from Losev's definition
\cite[4.1]{LoHWCI}:
\begin{enumerate}
\renewcommand{\theenumi}{HW\arabic{enumi}}
\setcounter{enumi}{-1}
\item We must show that $\eF_i$ and $\eE_i$ preserve the categories of
standard filtered objects;
by exactness, we need only check that the image of standards has a
standard filtration.  This follows from Lemma \ref{lem:FS-filtration}
and Corollary \ref{cor:ES-filtration}.
\item We must show that $\xi <\xi'$ implies that the sum of
  $x$-coordinates for $\xi$ is higher than that for $\xi'$.  This is
  clear from the definition of weighted dominance order, Definition
  \ref{def:dominance-order}.
\item We must show that the images $\eF_i S_\xi$ and $\eE_iS_\xi$ have
certain classes in the Gro\-then\-dieck group, which are exactly those
determined by  Lemma \ref{lem:FS-filtration}
and Corollary \ref{cor:ES-filtration}. 
\item We must show that changing the signs of the $k$th entry in $\{+,-\}^{n_a}$,
  which corresponds to adding or removing a box changes the sum of the
  $x$-coordinates by $\pm d_a(k)$.  Since $d_a(k)$ is the
  $x$-coordinate of the box added or removed, this is clear.  
\item  We must have $d_a(1)<d_a(2)<\cdots <d_a(n_a)$; this is simply a
  restatement of the fact that we read the boxes from left to right.
\end{enumerate}
This completes the proof.  
\end{proof}
\begin{remark}
  Annoyingly, Losev gives slightly different definitions of a highest
  weight categorification in the papers \cite{LoHWCI,LoHWCII}; we have
  used that of \cite{LoHWCI}.  In \cite{LoHWCII}, a stronger condition
  is imposed on the poset involved: it must carry a {\bf hierarchy}
  structure as defined in \cite[\S 3.1]{LoHWCII}.  The hierarchy structures on multipartitions discussed in
  \cite[\S 3.2]{LoHWCII} can easily be modified to apply in our
  situation as well, so Theorem \ref{thm:HWC} holds for either definition.
\end{remark}

Each simple module is the unique simple quotient of a unique standard module,
so we can index these by multipartitions as well; we denote the simple
quotient of $S_\xi$ by $L_\xi$, and its projective cover by
$P_\xi$. These simple modules (and also the projectives) carry a
natural crystal structure for $\gu$, induced by taking the unique
simple quotient of $\eF_iL_\xi$ or $\eE_iL_\xi$.  This gives a
crystal structure on multipartitions determined by the weighting
$\vartheta$.  

\begin{definition}
  The {\bf $\boldsymbol \vartheta$-weighted crystal structure} on the space of
  $\ell$-multipartitions is defined as follows:
  drawing the partitions in Russian style, one places a close
  parenthesis over each addable box of residue $i$, and an open
  parenthesis over each removable box of residue $i$.
  \begin{itemize}
  \item The Kashiwara operator $\tilde{e}_i$ removes the box under the leftmost
    uncancelled open parenthesis and sends the partition to 0 if there
    is no uncancelled open parenthesis.
\item The Kashiwara operator $\tilde{f}_i$ adds a box
    under the rightmost uncancelled closed parenthesis and sends the partition to 0 if there
    is no uncancelled closed parenthesis.
  \end{itemize}
\end{definition}
In the Uglov case, this crystal structure is precisely
that described by Tingley \cite[3.2]{Ting3} in terms of abaci; in
general, this crystal will coincide with that of the Uglovation. It
follows immediately from \cite[5.1]{LoHWCI} that:
\begin{corollary}
The map sending a multipartition to $L_\xi$ intertwines the  $
\vartheta$-weighted crystal structure with that defined by the
categorification functors.
\end{corollary}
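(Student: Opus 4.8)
The plan is to show that the combinatorially defined $\vartheta$-weighted crystal operators $\tilde e_i,\tilde f_i$ agree with the categorical crystal operators $L_\xi\mapsto (\text{head of }\eE_iL_\xi)$ and $L_\xi\mapsto(\text{head of }\eF_iL_\xi)$. The main tool is the highest weight categorification structure on $\mathcal{S}^\vartheta$ just established, together with the filtrations of $\eF_iS_\xi$ and $\eE_iS_\xi$ from Lemma \ref{lem:FS-filtration} and Corollary \ref{cor:ES-filtration}. The key abstract input is Losev's theorem \cite[4.1]{LoHWCI} (or its analogue): for a highest weight categorification, the crystal on the set of simples coincides with the crystal one reads off combinatorially from the signed-sequence description (ii)--(iv) in the definition of the categorification, namely the $\mathfrak{sl}_2$-crystal on $\{+,-\}^{n_a}$ glued along the $\Lambda_a$. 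So the proof reduces to checking that the parenthesization rule defining the $\vartheta$-weighted crystal is exactly the one coming from that signed-sequence data.

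First I would recall precisely how the signed sequence is assigned: by condition (iv) above, a multipartition $\xi\in\Lambda_a$ (so $\xi$ is obtained from the $i$-core-like partition $a$ by adding boxes of residue $i$) corresponds to the sign vector whose $h$th entry is $+$ if the $h$th addable box of residue $i$ of $a$ (ordered left to right by $x$-coordinate) is filled in $\xi$, and $-$ otherwise. The $\mathfrak{sl}_2$-crystal operator on $\{+,-\}^{n_a}$ is the standard one: cancel adjacent $+-$ pairs (a $+$ to the left of a $-$), then $\tilde e_i$ turns the leftmost uncancelled $-$ into a $+$ and $\tilde f_i$ turns the rightmost uncancelled $+$ into a $-$. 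I would then observe that ``$h$th addable box of $a$ is filled in $\xi$'' is the same as ``that position is a removable box of $\xi$'', so a $+$ in position $h$ corresponds to a box of $\xi$ that is removable of residue $i$, and a $-$ corresponds to an addable box of $\xi$ of residue $i$; and the left-to-right order on positions is the $x$-coordinate order, i.e. the order used in the parenthesization. This matches ``open parenthesis over removable, close parenthesis over addable'' up to the convention of which bracket faces which way — here one must be slightly careful, since in the weighted crystal definition $\tilde f_i$ \emph{adds} a box under the rightmost uncancelled \emph{closed} parenthesis (an addable box, i.e. a $-$ position becoming filled $=+$), which indeed corresponds to flipping the rightmost uncancelled $-$ — wait, this is the $\tilde e$ rule for signed sequences. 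This sign/direction bookkeeping is where care is genuinely needed, and I expect it to be the one real obstacle: one must pin down conventions so that adding a box (turning a $-$ position into a $+$ position in the sign vector, since the formerly addable box is now present hence removable) matches Losev's $\tilde f_i$ on sequences, and removing a box matches $\tilde e_i$; the cancellation of $+-$ pairs then visibly coincides with the cancellation of matched open/close parentheses.

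Concretely the steps are: (1) invoke Proposition \ref{prop:highest-weight} and the preceding theorem to know $\mathcal{S}^\vartheta$ is a highest weight categorification with the explicit combinatorial data (i)--(iv) listed before the theorem; (2) quote Losev's result that in a highest weight categorification the categorical crystal on simples is computed from that signed-sequence data by the standard $\mathfrak{sl}_2$ rule on each $\{+,-\}^{n_a}$; (3) carry out the dictionary: a position in the sequence $=$ an addable box of the core $a$ $=$ either a removable box of $\xi$ (if filled, sign $+$) or an addable box of $\xi$ (if empty, sign $-$), with the left-to-right order on positions being the $x$-coordinate order; (4) check that under this dictionary Losev's $\tilde e_i$ (flip leftmost uncancelled $-$ to $+$ $=$ remove the box just left-added) is exactly the operator of the $\vartheta$-weighted crystal, and likewise for $\tilde f_i$, including the matching of $+-$-cancellation with parenthesis-cancellation; (5) conclude that the bijection $\xi\mapsto L_\xi$ intertwines the two crystal structures. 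The only subtlety beyond bookkeeping is that a few ``addable boxes of $a$'' may lie strictly inside $\xi$ and not be boxes one can remove from $\xi$ if their removal breaks the column shape — but since all added boxes have residue $i$ and are placed in distinct columns, each filled position is genuinely $i$-removable from $\xi$ and each empty position genuinely $i$-addable, so the dictionary is clean; I would state this observation and then the result follows.

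Finally, since in the Uglov case the statement ``this crystal is Tingley's abacus crystal'' is already recorded just above, and in general equals that of the Uglovation (which is an Uglov weighting by the discussion in Section \ref{sec:comb-prel-}), the corollary gives both the general categorical identification and the concrete abacus description simultaneously; no extra argument is needed there beyond citing those earlier facts.
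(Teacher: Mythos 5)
Your proposal is correct and takes essentially the same route as the paper: the paper's entire proof is a one-line citation of Losev's result \cite[5.1]{LoHWCI}, which is exactly the reduction you make in step (2). The additional bookkeeping you carry out --- translating signed sequences into the open/close parenthesis rule over removable/addable boxes --- is the dictionary the paper leaves implicit, and your handling of it is sound.
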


\excise{

\subsection{Hecke loadings}
\label{sec:hecke-loadings}

Call a loading {\bf Hecke} if all black strands are right of
all red strands, and each pair of black strands is at least $\ck$ units apart.  Note
that these are determined up to equivalence by just the labels on the
strands in order; by \cite[2.11]{WebwKLR}, this is just the usual
cyclotomic quotient of the KLR algebra (the term ``Hecke'' thus refers
to the connection between these algebras and cyclotomic Hecke algebras
via work of Brundan and Kleshchev \cite{BKKL}).
\excise{\begin{lemma}\label{cor:standard-socle}
 No simple in the socle of a standard module $S_\xi$ is killed by all
 Hecke 
 loadings.
\end{lemma}
\begin{proof}
  Assume there is a submodule of $S_\xi$ killed by all Hecke
  loadings.  Then it must contain a non-trivial linear combination of
  basis vectors.  Composing these with the diagram that pulls all
  black strands to the far right while increasing the distance between
  them (preserving all ratios of distances) results in another non-trivial linear
  combination of basis vectors, which is thus non-zero.
\end{proof} }
The Hecke loadings are exactly those obtained from the loading with no
black strands by applying the functors $\eF_i$.  

\begin{proposition}\label{prop:self-dual}
Consider an indecomposable projective $\walg^\vartheta$-module $P$ for $e\neq 1$.
The following are equivalent: 
\begin{enumerate}
\item $P$ is a summand of a categorification functor applied to $P_
\emptyset$.
\item $P$ is self-dual.
\item $P$ is injective.
\item there is a Hecke loading $\Bi$ such that $e_\Bi$
  doesn't kill the cosocle of $P$.
\end{enumerate}
\end{proposition}
\begin{proof}
\noindent
  $(1)\Rightarrow (2)$: The categorification functors preserve 
  self-duality, since they possess biadjoints.

\noindent
 $(2)\Rightarrow (3)$: The dual of a projective is always injective.

\noindent
 $(3)\Rightarrow (4)$: Since $P$ is self-dual, its socle and cosocle
 are self-dual.  If the cosocle of $P$ were killed by all Hecke loadings, then so would the socle (since that property is
 preserved under duals).  This is impossible by Lemma \ref{lem:-1-faithful}.

\noindent
$(4)\Rightarrow (1)$: let $L$ denote the cosocle of $P$.  By
assumption, $L$ is not killed by a Hecke loading, so there is a some
list of indices $i_1,\dots, i_n$ such that
$\eE_{i_1}\cdots\eE_{i_n}L\neq 0$.  Since we have a non-zero map
$P_\emptyset\to \eE_{i_1}\cdots\eE_{i_n}L$, we must also have a
non-zero map $\eF_{i_n}\cdots\eF_{i_n}P_\emptyset \to L$.  Thus, $P$
is a summand of the former module.
\end{proof}

Fix some real number $s>\vartheta_i$ for all $i$.
Let $P^0$ be the
sum of the projectives $P_\Bi$ for all Hecke loadings $\Bi$ with dots positioned at $s+m(\ck+1)$ for
$m=1,\dots, d$ for some integer $d$. 
\begin{lemma}
 When $e\neq 1$, we have an isomorphism $\Hom_{T^\vartheta}(P^0,P^0)=T^\la$.
\end{lemma}
\begin{proof}
  This is exactly as in \cite[\S 3.4]{Webmerged}.
\end{proof}

The case $e=1$ is often the odd man out throughout this paper, as it
lacks a categorical action of a Kac-Moody Lie algebra. 
The string diagrams between Hecke
loadings when $e=1$ satisfy the relations of the weighted KLR algebra
where we give the loop trivial weight; by
\cite[\ref{w-prop:action}]{WebwKLR}, this weighted KLR algebra acts in its
polynomial representation by sending $\psi_k$ to the map $s_k-1$.  Thus, the
sum $\psi_k+1$ is sent to $s_k$.  

Fix some real number $s>\vartheta_i$ for all $i$.
Let $\Bj^\ell_r$ be the Hecke loading with dots positioned at $s+m(\ck+1)$ for
$m=1,\dots, r$.  The calculation and Theorem \ref{thm:Hecke-KLR} shows:
\begin{proposition}\label{e-1-symmetric}
  We have an isomorphism $\Hom_{T^\vartheta_{\ell\om-r\al}}(P_{\Bj^\ell_r},P_{\Bj^\ell_r})\cong
  (\K[x]/(x^\ell))\wr S_r$ which sends \[ \psi_k\to s_k-1\qquad
  y_1\mapsto x\otimes 1\otimes \cdots \otimes 1.\]
This map intertwines the functors $\eF$ and $\eE$ with the functors of
induction and restriction for maps between these wreath products.
\end{proposition}
Thus, when $e=1$, we interpret the cyclotomic KLR algebra $T^{r\om}$
to mean the algebra $(\K[x]/(x^\ell))\wr S_r$ in this weighted KLR presentation.
\excise{\begin{proof}
  As stated above, the fact that assignments above define a surjective
  map $ \K[x]\wr S_r\to \Hom_{T^\vartheta_{\ell\om-r\al}}(P_{\Bj^\ell_r},P_{\Bj^\ell_r})$ is
  confirmed by calculations in the polynomial representation of the
  corresponding wKLR algebra using \cite[\ref{w-prop:action}]{WebwKLR}.  That $y_1^\ell=0$ follows immediately
  from the steadying relations.  Thus, we have a surjective map $
  \K[x]/(x^\ell)\wr S_r\to
  \Hom_{T^\vartheta_{\ell\om-r\al}}(P_{\Bj^\ell_r},P_{\Bj^\ell_r})$.
  On the other hand, the basis of Section \ref{sec:cellular-basis}
  shows that the dimension of $
  \Hom_{T^\vartheta_{\ell\om-r\al}}(P_{\Bj^\ell_r},P_{\Bj^\ell_r})$
  is the number of pairs of standard tableaux on the same diagram of
  an $\ell$-multipartition with $r$ boxes, which is precisely
  $r!\ell^r$.  Thus, we are done.
\end{proof}}
%When $e=1$, we can give a different description of the ring
%$\Hom_{T^\vartheta}(P^0,P^0)$.  

For any $\vartheta$, we thus have a functor $p\colon \alg^\vartheta\mmod\to
T^\la\mmod$, given by $\Hom_{\alg^\vartheta}(P^0,-)$.  
\begin{proposition}\label{prop:double-centralizer}
The functor $p$ is fully faithful on projectives.
\end{proposition}
\begin{proof}
This follows from Lemma \ref{lem:-1-faithful}.
\end{proof}

\excise{
We can use these functors  to describe the functor
$\mathscr{B}^{\vartheta,\vartheta'}$: 
\begin{proposition}
  We have a functorial isomorphism of vector spaces
  $\Hom(P_{\Bj}',\mathscr{B}^{\vartheta,\vartheta'}\otimes
  P_{\Bi})\cong \Hom( p'(P_{\Bj}'),p(P_{\Bj})).$
\end{proposition}
\begin{proof}
  Since $\mathscr{B}^{\vartheta,\vartheta'}\otimes P^0\cong P^0$, we
  have that the modules $\PQ_i=\mathscr{B}^{\vartheta,\vartheta'}\otimes Q_i$ are still projective
  and self-dual. Thus, we still have a copresentation $0\to \mathscr{B}^{\vartheta,\vartheta'}\otimes P_{\Bi}
  \to Q_1' \to Q_2'$.
 \[
  \tikz[->,very thick]{ \matrix[row sep=15mm,column sep=9mm,ampersand
    replacement=\&]{ \node (a) {$0$}; \& \node (c)
      {$\Hom(P_{\Bj}',\mathscr{B}^{\vartheta,\vartheta'}\otimes P_{\Bi})$}; \& \node (e)
      {$\Hom(P_{\Bj}',Q_1' )$};\& \node (g) {$\Hom(P_{\Bj}',Q_2' )$};\\
      \node (b) {$0$}; \& \node (d) {$\Hom(p'(P_{\Bj}'),  p'(\mathscr{B}^{\vartheta,\vartheta'}\otimes P_{\Bi}))$};\& \node
      (f) {$\Hom(p'(P_{\Bj}'),p' (Q_1'))$};\& \node (h) {$\Hom(p'(P_{\Bj}'),p' (Q_2'))$};\\
    }; \draw (c) -- (e); \draw (e) -- (g); \draw (a) -- (c); \draw (b)
    --(d); \draw (d) --(f); \draw (f) --(h); \draw (c)--(d)
    node[right,midway]{$(*)$}; \draw (e)--(f)
    node[right,midway]{$\sim$}; \draw (g)--(h)
    node[right,midway]{$\sim$}; }
  \]
The rightward 2 arrows are isomorphisms by Proposition
\ref{prop:double-centralizer}, so $(*)$ is as well by the 5-lemma.
To complete the proof, we simply note that
$p'(\mathscr{B}^{\vartheta,\vartheta'}\otimes P_{\Bi}))\cong
p(P_{\Bi})$; since both duality and the right adjoint of $\mathscr{B}^{\vartheta,\vartheta'}$ 
right adjoint preserve $P_0$, its left adjoint does as well.
\end{proof}
}

\subsection{The $e=1$ case}
\label{sec:case-e-1}

Note that this is implies that:
\begin{proposition}\label{prop:self-dual-1}
  The module $P_{\Bj^\ell_r}$ is self-dual.
\end{proposition}
\begin{proof}
  The pairing is as follows: if $a,b\in P_{\Bj^\ell_r}$, then
  $\dot{a}b\in \Hom(P_{\Bj^\ell_r},P_{\Bj^\ell_r})\cong
  (\K[x]/(x^\ell))\wr S_r$ and we apply the symmetric Frobenius trace
  on this algebra.  We need only show that this is non-degenerate.
  The radical of this form must be killed by any Hecke loading, since
  the form is non-degenerate on $(\K[x]/(x^\ell))\wr S_r$, but by
  Lemma \ref{cor:standard-socle}, any such submodule of a projective
  is trivial.  Thus, this module is self-dual.
\end{proof}
\begin{proposition}\label{prop:double-centralizer}
  For $e=1$, any projective $P$ over $T^\vartheta$ has a copresentation $0\to P
  \to Q_1 \to Q_2$ with $Q_1,Q_2$ summands of sums of Hecke loadings.  In particular, the functor $p$ is fully faithful on projectives.
\end{proposition}
\begin{proof}
  Corollary \ref{cor:standard-socle} and Proposition
  \ref{prop:self-dual-1} together show that every injective hull of a
  standard module is a summand of $P^0$.  Thus, we are done by \cite[2.6]{MS}.
\end{proof}
This allows us to realize the category of
projective $T^\vartheta$-modules as a subcategory of modules over
$(\K[x]/(x^\ell))\wr S_r$.  In particular, the natural transformations
of compositions of the functors $\eF$ and $\eE$ are the same as the
space of natural transformations of induction and restriction functors
between the wreath product algebras $(\K[x]/(x^\ell))\wr S_r$.   
We leave it as an exercise to the savvy reader to read \cite[\S
9.4]{CaLi} and write up the graphical calculus describing natural
transformations between these functors.

This allows us to study the functors $\eE$ and $\eF$ more intensely.
Note that by definition, the functor $\eF$ is left adjoint is $\eE$,
so we have standard unit and counit maps $\iota\colon \id\to \eE\eF$
and $\epsilon \colon \eF\eE\to \id$.  We wish to also define an
opposite adjunction $\iota'\colon \id\to \eF\eE$ and $\epsilon' \colon
\eE\eF\to \id$.

As in Rouquier \cite{Rou2KM}, we can use the left adjunction to define a crossing
map \[\rho'= (1\otimes \epsilon)(1\otimes (\psi+1)\otimes 1)(\iota\otimes 1) \colon \eF\eE\to \eE\eF\] (but not the opposite direction,
as this would require the right adjunction).  Let $\rho$ be the map
$\rho\colon \eF\eE\oplus \mathrm{id}^{\oplus \ell}\to \eE\eF$ defined by the sum
$\rho'\oplus  \iota\oplus (y\otimes 1)\iota\oplus \cdots \oplus
(y^{\ell-1}\otimes 1) \iota$.

\begin{proposition}
  The map $\rho$ is an isomorphism.   The unique natural transformation  $\epsilon'\colon \eE\eF\cong \eF\eE\oplus
  \mathrm{id}^{\oplus \ell}\to
\id$ given $\rho^{-1}$ followed by projection to
the last summand is the counit of an adjunction $(\eE,\eF)$.
\end{proposition}
\begin{proof}Since
 projectives all carry standard filtrations, it suffices to prove this
 on standard modules.

  First, we check that the two sides have the same dimension.  The
  dimension of $e_{\Bi}\eE\eF S_\xi$ is the number of $\Bi\cup \{\infty\}$-tableaux
   on diagrams obtained by adding a box to $\xi$.  On the
  other hand, the dimension of $e_{\Bi}\eF \eE S_\xi$ is the number
  $\Bi$-tableaux on diagrams obtained by removing, then adding a box
  to $\Bi$ (where we count all different ways of removing a box and
  then adding it back separately).

  If the label $\infty$ is in one of the original boxes of $\xi$,
  removing that box before adding the new one gives one of the basis
  vectors of $e_{\Bi}\eF \eE S_\xi$.  Thus, we need only consider the
  case where $\infty$ lies in the added box; this is the number of
  addable boxes to $\xi$.  On the other hand, the basis vectors for
  $e_{\Bi}\eF \eE S_\xi$ we have not counted are those that correspond
  to removing a box and adding it back; this is the number of
  removable boxes.  For any $\ell$-multi-partition, there are $\ell$
  more addable boxes than removable.  Thus, the dimensions match.

Thus, we need only prove that this map is surjective. For this, it is
convenient to identify $\eE\eF T^\vartheta_{\ell\om-n\al}$ with
$\Hom(\eF,\eF)$, that is  with the
algebra of $ T^\vartheta_{\ell\om-(n+1)\al}$ where the loadings at
top and bottom are more that $\ck$ units right of the nearest strand.
The map $\rho'$ surjects on
to subspace of diagrams where the last strands at the right cross;
obviously, the additional summands hit any diagrams where those
strands don't cross and there are less than $\ell$ dots on the
rightmost strand.  We claim that any diagram with $\ell$ dots on any
strand is zero in the steadied quotient, or more generally if the
number of dots on the $k$th strand is equal to or greater than the number of
reds to its left.  This will establish the result, since this shows
that the cokernel of $\rho$ is trivial.

This is clear if the dots are on the left most strand. Thus, we need
only show that the dots can be ``moved'' to a strand further left.  If
the strand left of the $k$th black strand is red, we can create a bigon with
it at the cost of removing one dot, but this is accounted for by the
drop in the number of  reds to the left.  Thus, we can assume the
strand to the left is black.  In this case, we can apply the relation
$(\psi_{k-1}+1)y_{k-1}(\psi_{k-1}+1)=y_k$.  By induction, the inside
term is 0, so we are done.  

 Now we turn to proving the biadjunction.  Transporting structure to $(\K[x]/(x^\ell))\wr S_r$ this is the
  claim that the map \[\epsilon'((x_1\otimes \cdots \otimes x_\ell)g)=
  \begin{cases}
    \operatorname{tr}(x_\ell)\cdot (x_1\otimes \cdots \otimes x_{\ell-1})g & g\in
    S_{r-1}\\
    0 & g\notin S_{r-1}
  \end{cases}\]
  defines a biadjunction between induction and restriction between
  wreath products, which is a
  standard calculation with finite group algebras.
\end{proof}}

\subsection{Decategorification}
\label{sec:decategorification}

With this cellular basis in hand, we can extend all the results
showing how quiver Schur algebras categorify Fock spaces to this more
general case.

For our purposes, the {\bf Fock space $\mathbf{F}_\vartheta$ of level $\ell$} is the
$\C[q,q^{-1}]$ module freely spanned by $\ell$-multipartitions.  For
each multi-partition $\xi$, we denote the corresponding vector
$u_\xi$.  When our weighting is Uglov, we will also use the notation
$\mathbf{F}_\Bs$.  
Now, we choose weighting for our partitions; as before, this
corresponds to choosing a weighting on $U_{\Bw}$, with all edges
in the cycle given weight $\ck$, and an ordering on the new edges (which
is arbitrary), to put them in bijection with the constituents of the
multipartition.

Let $A=\C[q,q^{-1}]$.  We can also realize $\mathbf{F}_\vartheta$ as a subspace of a
semi-infinite wedge product, by what is sometimes called the
{\bf boson-fermion correspondence}.  For simplicity of notation, we assume
that $\ck>0$; this suffices, since we can cover the case of $\ck<0$ using
a symmetry as in Proposition \ref{prop:symmetry}.  For each
$r\in \C/\Z$ and $\vartheta\in\R$ , we let $A^\infty_{r,\vartheta}$
denote the free $A$-module with basis $w_i$ indexed by $i\in \Z$.
While this space does not depend on $r$ or $\vartheta$, we'll define
some auxilliary structures which do.  Each vectors has an {\bf
  $x$-coordinate} given by $\vartheta+i\ck $, and a {\bf residue}
given by $r+ik\in \C/\Z$.  The space $A^\infty_{r,\vartheta}$ has a
natural action of $\fg_U$, with
\[E_j\cdot w_i=
\begin{cases}
  w_{i+1} & r+ik\equiv j \pmod \Z\\
  0 & \text{otherwise}\end{cases}
\qquad F_j\cdot w_i=
\begin{cases}
  w_{i-1} & r+(i-1)k\equiv j \pmod \Z\\
  0 & \text{otherwise}\end{cases}
\]
In the case where $k=1/e$ and $r\in \nicefrac{1}{e}\Z$, we typically
identify $A^\infty_{r,\vartheta}$ with $A^e[t,t^{-1}]$ by sending identifying
$w_0,w_{1/e},\dots, w_{(e-1)/e}$ with the usual basis of $A^e$, and
letting $w_{i\pm 1}=w_it^{\pm 1}$.  

We can construct a semi-infinite wedge space
$\bigwedge{}^{\!\!\!\!\nicefrac{\infty}2}A^\infty_{r,\vartheta}$,
spanned by wedges of the form $v_{\xi_1}\wedge v_{\xi_{2}-1}\wedge
v_{\xi_3-2}\wedge v_{\xi_4-3}\wedge \cdots $ for some partition
$\xi$.  Ordered wedges form a basis of this space, but we must
exercise some care about the meaning of unordered wedges.  These are
calculated using the straightening rules, which we only write here in
the case $\ck >0$.  If $m<n$, we let $g=\lfloor
      \frac{n-m}{e}\rfloor$.  We have $w_m\wedge w_n=-w_n\wedge w_m$
      if $k(m-n)\in \Z$, and if $k(m-n)\notin \Z$
\begin{equation}
  \label{eq:straightening}
    w_m\wedge w_n=-q^{-1}w_n\wedge w_m +(1-q^{-2})\Big(\sum_{p=1}^{g}
    q^{-2p+1}w_{n-ep}\wedge w_{m+ep}-q^{-2p}w_{m+e(p+g)}\wedge
    w_{n-e(p+g)}\Big).
\end{equation}

Given a weighting, we can consider the direct sum $
A^\infty_{\mathbf{r},\boldsymbol{\vartheta}}=\oplus_{i=1}^\ell
A^\infty_{r_i,\vartheta_i},$ and consider the semi-infinite wedge space
$\bigwedge{}^{\!\!\!\!\nicefrac{\infty}2}A^\infty_{\mathbf{r},\boldsymbol{\vartheta}}$,
where now we order wedges according to the $x$-coordinates of the
vectors.  These require more complicated straightening rules, based on
\cite[Prop. 3.16]{Uglov}; the only important fact about these rules is
that it replaces a pair of basis vectors with another pair with
$x$-coordinates between this pair.  We have a natural isomorphism
$\mathbf{F}_\vartheta\cong
\bigwedge{}^{\!\!\!\!\nicefrac{\infty}2}A^\infty_{\mathbf{r},\boldsymbol{\vartheta}}$
sending the basis vectors to the ordered wedges.  Note that
the module structure on this semi-infinite wedge does not depend on
$\vartheta$, but it will change the choice of preferred basis.

In the Uglov case, $\ell$ and $e$ actually play a symmetric role;
since $U$ is connected, all the representations $A^\infty_r$ are
isomorphic, and 
\[A^\infty_{\mathbf{r},\boldsymbol{\vartheta}}\cong
A^\infty_0\otimes A^{\ell}\cong A^e\otimes A^\ell[t,t^{-1}].\]  Thus,
in this case, we have a commuting action of $U_q(\sllhat)$ on
$A^\infty_{\mathbf{r},\boldsymbol{\vartheta}}$.  We can use this to
define commuting actions of $U_q(\gu\cong \slehat)$ and $U_q(\sllhat)$
on the wedge powers of this representation; note that as discussed in
\cite{Uglov}, Uglov uses different coproducts for the two algebras.
However, on the semi-infinite wedge space, the induced
action does not preserve the semi-infinite conditions we have fixed.
Rather, the operators $E_i$ and $F_i$ induce maps
$\mathbf{F}_\Bs\to \mathbf{F}_{\Bs\pm \al_i}$ where $\al_i=(0,\dots,
1,-1,0,\dots)$ with $1$ in the $i$th coordinate; in fact $\Bs$ is the
weight for the action of the derived subalgebra of $\sllhat$.  %Using
%these coordinates for the weights, we have that
%$\al_i^\vee(\Bs)=s_i-s_{i+1}$ 

In the general case, we can divide $U$ into its connected components,
with the weighting connecting each component of the multipartition to
a component of $U$.  This allows us to think of $\mathbf{F}_\vartheta$
as a tensor product
\begin{equation}
\mathbf{F}_\vartheta\cong \mathbf{F}_{\vartheta_1}\otimes
\mathbf{F}_{\vartheta_2}\otimes\cdots
\otimes\mathbf{F}_{\vartheta_p}\label{eq:Fock-tensor}
\end{equation}
over the components of $U$.

In the Uglov case, the Fock space $\mathbf{F}_\vartheta$ has an
anti-linear bar involution $u\mapsto \bar{u}$, defined in
\cite[(39)]{Uglov}; up to sign and factors of $q$, this is defined on each
wedge by reversing the order of the variables in the wedge product,
and then applying the straightening rule to return to the usual order
(as in \cite[Prop. 3.23]{Uglov}).  For a general Fock space, we extend
the bar involution on each tensor factor in \eqref{eq:Fock-tensor} to
the tensor product.

The affine Lie algebra $U_q(\gu)$ acts in a natural way on this
higher level Fock space.   We let 
\begin{equation}
F_iu_\xi =\sum_{\operatorname{res}(\eta/\xi)=i}q^{-m(\eta/\xi)}
u_{\eta} \qquad E_iu_\xi =\sum_{\operatorname{res}(\xi/\eta)=i}q^{n(\xi/\eta)}
u_{\eta}. \label{eq:F-E-action}
\end{equation}
As usual,
\begin{itemize}
\item the sums are over all ways of adding (resp. removing a box) of
  residue $i$, 
\item $m(\eta/\xi)$ is the number of addable boxes of residue
  $i$ {\it right} of the single box $\eta/\xi$ minus the number of such
  boxes which are removable, and 
\item $n(\xi/\eta)$ is the number of
  addable boxes of residue $i$ {\it left} of the single box $\xi/\eta$
  minus the number of such boxes which are removable.
\end{itemize}
Note that as long as the weights of the partitions are generic, no two
addable or removable boxes will be at the same horizontal position, so
for each pair, the first is left of the second, or {\it vice versa}.
Note also that if  $U$ is disconnected, then the tensor product
decomposition of \eqref{eq:Fock-tensor} gives the module structure as
an outer tensor product over the Kac-Moody algebras (each isomorphic
to $\slehat$) corresponding to the different components of $U$.

\begin{theorem}\label{Fock-space}
  The Grothendieck group of the category of representations of 
  $\alg^\vartheta_\nu$ is isomorphic as a $U_q(\gu)$
  representation to the corresponding level $\ell$ Fock space under
  the isomorphism $ [S_{\xi}]\mapsto u_\xi$.  In particular, we have
  that $[P_\Bi]$ maps to the sum over $\ell$-multipartitions of the 
  graded multiplicity of $\Bi$-tableaux.
\end{theorem}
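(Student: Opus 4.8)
The plan is to use the graded cellular structure to identify the Grothendieck group as a $\C[q,q^{-1}]$-module with the Fock space on the nose, and then transport the $U_q(\gu)$-action across the resulting basis using the branching rules already established for $\eF_i$ and $\eE_i$. By Theorem~\ref{th:graded-cellular}, $T^\vartheta_\nu$ is graded cellular with cells indexed by those $\ell$-multipartitions $\xi$ with $\res(\xi)=\la-\nu$, so the classes $[S_\xi]$ of the cell modules form a $\C[q,q^{-1}]$-basis of $K^0_q(\mathcal{S}^\vartheta_\nu)$. Summing over $\nu$, we obtain an isomorphism of $\C[q,q^{-1}]$-modules $\Phi\colon K^0_q(\mathcal{S}^\vartheta)\cong \mathbf{F}_\vartheta$, $[S_\xi]\mapsto u_\xi$, which is graded for the decomposition of both sides by weight of $\gu$ (on the left this is the decomposition into the $\mathcal{S}^\vartheta_\nu$, on the right into weight spaces, and the weight of $S_\xi$ is $\la-\res(\xi)$ by definition of the grading by $\nu$, matching that of $u_\xi$). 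It remains only to check that $\Phi$ intertwines the actions of the generators $E_i$ and $F_i$.

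For this I would compute the operators $[\eF_i]$ and $[\eE_i]$ on standard classes. By Corollary~\ref{biadjoint} the functors $\eF_i,\eE_i$ are exact, so they descend to $K^0_q$. Lemma~\ref{lem:FS-filtration} equips $\eF_iS_\xi$ with a standard filtration whose subquotients are the $S_{\xi(h)}(\delta_h)$, with $\xi(h)$ obtained by adding the $h$th $i$-addable box and $\delta_h$ the number of $i$-addable boxes of $\xi$ strictly to the right of the $h$th minus the number of those which are removable; hence $[\eF_iS_\xi]=\sum_h q^{-\delta_h}[S_{\xi(h)}]$. This is precisely $F_iu_\xi=\sum_{\res(\eta/\xi)=i}q^{-m(\eta/\xi)}u_\eta$ under $\Phi$, since $\delta_h=m(\xi(h)/\xi)$ by construction. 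The same argument applied to Corollary~\ref{cor:ES-filtration}(1) gives $[\eE_iS_\xi]=\sum_h q^{\delta^h}[S_{\xi\{h\}}]$, which matches $E_iu_\xi$ because the exponent $\delta^h$ there is exactly the number $n(\xi/\xi\{h\})$ appearing in the Fock space formula. Together with the weight-compatibility noted above this shows $\Phi$ is an isomorphism of $U_q(\gu)$-modules. When $e=1$ the identical computation applies with $\eF_i,\eE_i$ the Heisenberg-type functors, since Lemma~\ref{lem:FS-filtration} and Corollary~\ref{cor:ES-filtration} are proved without assuming $e\neq 1$.

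Finally, the statement about $[P_\Bi]$ is immediate: Proposition~\ref{prop:projective-filtration} gives $P_\Bi$ a standard filtration in which $S_\xi$ occurs with graded multiplicity $\sum_{\sS}q^{\deg(\sS)}$, the sum over $\Bi$-tableaux $\sS$ of shape $\xi$, so $[P_\Bi]=\sum_\xi\bigl(\sum_{\sS}q^{\deg(\sS)}\bigr)[S_\xi]$ maps under $\Phi$ to the displayed sum over $\ell$-multipartitions of the graded count of $\Bi$-tableaux; and in the Uglov case the preceding proposition identifies $\mathbf{F}_\vartheta$ with Uglov's Fock space. The one point requiring genuine care is the bookkeeping of grading shifts: one must verify that the normalization of $\deg$ in Lemma~\ref{lem:FS-filtration} and of the shift functor on $K^0_q$ is the one producing $q^{-\delta_h}$ rather than $q^{+\delta_h}$ or an overall twist --- this is exactly the normalization fixed in \cite{WebwKLR} --- after which the equivariance is the formal consequence above, so this is the main (if modest) obstacle.
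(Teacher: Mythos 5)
Your proposal is correct and takes essentially the same route as the paper: the paper's proof likewise observes that the classes $[S_\xi]$ form a basis (citing the highest weight structure of Proposition~\ref{prop:highest-weight} rather than cellularity alone --- which is the precise reason the cell classes are a basis, since for a merely cellular algebra they need not be) and then reads off the action of $E_i$ and $F_i$ from Lemma~\ref{lem:FS-filtration} and Corollary~\ref{cor:ES-filtration}. The only difference is that you spell out the weight-space and grading-shift bookkeeping that the paper leaves implicit.
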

\begin{proof}
 The classes $[S_\xi]$ are a basis of the Grothendieck group
  because $\cS^\vartheta$ is highest weight.  Thus, we need only
  check how categorification functors act on these classes, which
  follows immediately from comparing Lemma \ref{lem:FS-filtration} and Corollary
  \ref{cor:ES-filtration} with \eqref{eq:F-E-action}.
\end{proof}
  
The $q$-Fock space $\mathbf{F}_\vartheta$ has a natural symmetric bilinear
form
$(-,-)$ where the $u_\xi$ are an orthonormal basis.  Furthermore, it can be endowed with a
sesquilinear form by \[\langle  u, v\rangle:=(\bar u,v).\]

On the other hand, the Grothendieck group $K_q^0(T^\vartheta)$ also
carries canonical bilinear and sesquilinear forms: we
let \[([M],[N])=\dim_q(\dot{M}\Lotimes N)\qquad \langle M,N\rangle=\dim_q\R\Hom(M,N).\]

\begin{proposition}\label{prop:symmetric-forms}
  Under the isomorphism $\mathbf{F}_\vartheta\cong K_q^0(T^\vartheta)$, the forms $(-,-)$ match.
\end{proposition}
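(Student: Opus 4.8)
The plan is to reduce the identity to a comparison of two distinguished bases. Both forms are $\C[q,q^{-1}]$-bilinear, and the isomorphism $\mathbf{F}_\vartheta\cong K^0_q(T^\vartheta)$ of Theorem \ref{Fock-space} carries $u_\xi$ to $[S_\xi]$; moreover $\{[S_\xi]\}$ really is a basis of $K^0_q(T^\vartheta)$ because $\cS^\vartheta$ is highest weight (Proposition \ref{prop:highest-weight}). Hence it suffices to check
\[
([S_\xi],[S_{\xi'}])=\dim_q\bigl(\dot{S}_\xi\Lotimes S_{\xi'}\bigr)=\delta_{\xi\xi'}=(u_\xi,u_{\xi'}),
\]
the last equality holding since $\{u_\xi\}$ is orthonormal for $(-,-)$ on $\mathbf{F}_\vartheta$.

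So the whole content is the orthogonality $\dot{S}_\xi\Lotimes S_{\xi'}\cong\delta_{\xi\xi'}\K$ with the surviving copy of $\K$ in internal degree $0$. The ungraded version of this is exactly the identity already recorded in Section \ref{sec:derived-equivalences}; I would deduce the graded refinement from the graded highest weight structure (Theorem \ref{th:graded-cellular}, Proposition \ref{prop:highest-weight}). Concretely: $S_\xi$ is a graded standard left module and $\dot{S}_{\xi'}$ a graded standard right module over the finite-dimensional, graded quasi-hereditary algebra $T^\vartheta_\nu$, so $\dot{S}_\xi\Lotimes S_{\xi'}$ is a perfect complex (standards have finite projective dimension, so $\dim_q$ is well defined), and the standard Tor-vanishing between standard modules of a quasi-hereditary algebra concentrates it in homological degree $0$, where it computes the cell pairing $\dot{S}_\xi\otimes_{T^\vartheta}S_{\xi'}$. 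By the cellular structure of Theorem \ref{th:graded-cellular} this pairing is $0$ for $\xi\neq\xi'$, and for $\xi=\xi'$ it is one-dimensional and supported in degree $0$: the tautological $\Bi_\xi$-tableau $\sT$ has $\deg\sT=0$ and $C^\xi_{\sT,\sT}=e_{\Bi_\xi}$ is an idempotent, so the structure constant $a^\xi_{\sT,\sT}$ is a unit of degree $0$ (cf.\ Lemma \ref{lem:cell-hw}). Therefore $\dim_q(\dot{S}_\xi\Lotimes S_{\xi'})=\delta_{\xi\xi'}$, as needed.

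Assembling these steps completes the proof. The only genuinely delicate point is the grading bookkeeping: one must be sure that the one-dimensional space $\dot{S}_\xi\Lotimes S_\xi$ carries no nontrivial internal shift, and this is precisely what the degree-$0$ normalization of the tautological tableau supplies; everything else is a formal consequence of the (graded) highest weight structure established above. If instead one prefers to avoid invoking the Tor-vanishing abstractly, the same conclusion can be obtained directly from the explicit cellular bases of Section \ref{sec:cellular-basis} by reducing to Hecke loadings via Theorem \ref{thm:Hecke-KLR} and Lemma \ref{lem:-1-faithful}, where the pairing becomes the familiar one on the cyclotomic Hecke side.
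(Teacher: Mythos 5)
Your argument is correct and follows the same route as the paper, whose proof simply says that it suffices to check the forms on the standard classes and that these are orthonormal on both sides. You have usefully spelled out why $\dim_q(\dot{S}_\xi\Lotimes S_{\xi'})=\delta_{\xi\xi'}$ (Tor-vanishing between standards of a graded highest weight algebra plus the degree-$0$ idempotent $C^\xi_{\sT,\sT}=e_{\Bi_\xi}$ for the tautological tableau), a fact the paper takes for granted here and records explicitly only later.
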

\begin{proof}
  We need only check that they are correct on standard modules; this
  follows from the orthonormality of the classes $S_\xi$. 
\end{proof}

While the notation suggests that the forms $\langle -,-\rangle$ will
coincide as well, this is not an easy statement to prove.  It is one
of the consequences of Proposition \ref{prop:bar}.

\subsection{Change-of-charge functors: KLR case}
\label{sec:twisting-functors}

The bimodules $\bra^{\vartheta,\vartheta'}$ induce functors between
the categories $\cS^{\vartheta}$ and $\cS^{\vartheta'}$.  We call the
groupoid of functors generated by these {\bf change-of-charge
  functors}. One should see these as 
analogous with the twisting functors on category $\cO$; this connection
can be made precise by realizing $\cS^{\vartheta}$ as a version of
``category $\cO$'' for an affine quiver variety.

These functors are particularly useful since they show that up to
derived equivalence, all the categories $\cS^{\vartheta}$ only depend
on $\la$, up to derived equivalence.  They thus allow us to transport
structure from one category to another.

\begin{lemma}\label{lem:far-right-dual}
  The functor $\bra^{\vartheta,-\vartheta}\Lotimes -$ sends projective
  modules to tilting modules and tilting modules to injective modules.
\end{lemma}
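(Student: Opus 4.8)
The plan is to treat this as a Ringel-duality statement. Passing from $\vartheta$ to $-\vartheta$ negates the $x$-coordinates of all boxes, hence reverses weighted dominance order, so $\cS^{-\vartheta}$ is the highest weight category $\cS^{\vartheta}$ equipped with the opposite order (same labels, same simples). The heart of the argument is to show that $\bra^{\vartheta,-\vartheta}\Lotimes-$ restricts to an exact equivalence from the category $\cF(\Delta^{-\vartheta})$ of $\Delta$-filtered $T^{-\vartheta}$-modules onto the category $\cF(\nabla^{\vartheta})$ of $\nabla$-filtered $T^{\vartheta}$-modules. Granting this, the lemma is formal: an equivalence of exact categories preserves relative-projective and relative-injective objects, and in $\cF(\Delta^{-\vartheta})$ the relative-projectives are exactly the projectives while the relative-injectives are exactly the modules which are also $\nabla$-filtered, i.e.\ the tiltings; dually, in $\cF(\nabla^{\vartheta})$ the relative-projectives are the tiltings and the relative-injectives are the injectives. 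Thus $P\mapsto$ tilting and (tilting)$\mapsto I$.

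To produce the exact functor I would first check that $\bra^{\vartheta,-\vartheta}\Lotimes-$ is honestly exact on $\cF(\Delta^{-\vartheta})$. By Lemma \ref{lem:between} (the analogue of Corollary \ref{cor:B-multiplicities}) the bimodule $\bra^{\vartheta,-\vartheta}$ is standard filtered as a right $T^{-\vartheta}$-module, with subquotients the right cell modules $\dot S^{-\vartheta}_\eta$, and $\dot S^{-\vartheta}_\eta\Lotimes S^{-\vartheta}_\xi$ is concentrated in degree $0$ (the orthogonality used repeatedly in the paper). Running the long exact sequences along this filtration gives $\Tor^{T^{-\vartheta}}_{>0}(\bra^{\vartheta,-\vartheta},S^{-\vartheta}_\xi)=0$, so $\bra^{\vartheta,-\vartheta}\Lotimes-$ agrees with $\bra^{\vartheta,-\vartheta}\otimes_{T^{-\vartheta}}-$ on $\cF(\Delta^{-\vartheta})$ and is exact there.

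Next I would compute the value on standards: $\bra^{\vartheta,-\vartheta}\otimes_{T^{-\vartheta}}S^{-\vartheta}_\xi\cong (S^{\vartheta}_\xi)^{\star}$, up to a grading shift by the degree of the tautological interpolating diagram. By the previous paragraph only the $\xi$-th step of the right standard filtration of $\bra^{\vartheta,-\vartheta}$ survives the tensor product, contributing the multiplicity space spanned by the diagrams $D_\sS$ with $\sS$ a $\vartheta$-tableau of shape $\xi$; its left $T^{\vartheta}$-module structure is read off from the right cellular basis $D_\sS B_\sT^{*}$ of Lemma \ref{lem:hbim-basis}. The reflected factor $B_\sT^{*}$ is precisely what pairs against $S^{-\vartheta}_\xi$ through the cellular bilinear form, and keeping track of this shows the surviving module is the \emph{dual} cell module $(S^{\vartheta}_\xi)^{\star}=\nabla^{\vartheta}_\xi$ (shifted), rather than $S^{\vartheta}_\xi$ itself. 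Combined with exactness, this shows $\bra^{\vartheta,-\vartheta}\Lotimes-$ carries $\cF(\Delta^{-\vartheta})$ into $\cF(\nabla^{\vartheta})$, sending $\Delta^{-\vartheta}_\xi$ to $\nabla^{\vartheta}_\xi$. Full faithfulness and essential surjectivity onto $\cF(\nabla^{\vartheta})$ then follow from a graded-dimension count, again via the cellular basis of the bimodule and the $\Delta$-filtrations of projectives from Proposition \ref{prop:projective-filtration}, using that both exact categories are Krull--Schmidt with the same indecomposables (as determined on $K_0$).

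The main obstacle is this third step: identifying $\bra^{\vartheta,-\vartheta}\otimes S^{-\vartheta}_\xi$ with the dual cell module and pinning down the grading shift, together with the closely related full-faithfulness statement. Both reduce to explicit manipulations with the bimodule cellular basis $D_\sS B_\sT^{*}$, and one has to be careful in the case $e=1$, where there is no Kac--Moody categorical action to lean on. For the projective half of the lemma one can shortcut all of this: $\bra^{\vartheta,-\vartheta}\otimes_{T^{-\vartheta}}P$ is a direct summand of $\bra^{\vartheta,-\vartheta}$, which is $\Delta^{\vartheta}$-filtered as a \emph{left} module by Lemma \ref{lem:between}, so it already lies in $\cF(\Delta^{\vartheta})$; together with the computation of Step~3 it lies in $\cF(\nabla^{\vartheta})$, hence is tilting. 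The full equivalence is really needed only to pass from tiltings to injectives.
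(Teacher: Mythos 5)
Your overall architecture (an exact equivalence $\cF(\Delta^{-\vartheta})\to\cF(\nabla^{\vartheta})$, then the standard identification of relative projectives and injectives in these exact categories) is a legitimate route, and the target of your Step~3 is in fact true — it is essentially Lemma \ref{lem:cat-mutate} specialized to the order-reversing case, which the paper proves \emph{after}, and using, the present lemma. But Step~3 is a genuine gap, not a routine manipulation. The right cellular basis $D_{\sS}B_{\sT}^{*}$ only controls the \emph{right} action (that is what makes the $\vartheta'$-ordered filtration a filtration by right submodules with subquotients $\dot S_{\eta}$); it says nothing about what the multiplicity space $M_{\xi}=\operatorname{span}\{D_{\sS}\}$ is as a \emph{left} $T^{\vartheta}$-module. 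Your sentence ``the reflected factor $B_{\sT}^{*}$ is precisely what pairs against $S^{-\vartheta}_{\xi}$'' only justifies the collapse $\dot S_{\xi}\otimes S_{\xi}\cong\K$, i.e.\ that $\bra^{\vartheta,-\vartheta}\otimes S^{-\vartheta}_{\xi}\cong M_{\xi}$; it does not distinguish whether $M_{\xi}$ is $S^{\vartheta}_{\xi}$, $(S^{\vartheta}_{\xi})^{\star}$, or neither. (For the diagonal bimodule $\vartheta'=\vartheta$ the identical-looking computation gives the \emph{standard} module, so the answer genuinely depends on finer structure.) To pin down $M_{\xi}\cong(S^{\vartheta}_{\xi})^{\star}$ you would need to produce a non-degenerate contravariant pairing $M_{\xi}\times S^{\vartheta}_{\xi}\to\K$, and proving non-degeneracy is exactly the hard analytic content. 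The same issue resurfaces in your full-faithfulness/essential-surjectivity step, which in the paper is Lemma \ref{lem:c-o-c-equiv} and is itself deduced \emph{from} the tilting property.

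The paper's proof avoids all of this by working with the whole module rather than layer by layer: since $\bra^{\vartheta,-\vartheta}e_{\Bi}$ is already known to be $\Delta$-filtered (Lemma \ref{lem:hbim-basis}), it suffices to show it is self-dual. This is done by factoring every basis vector through a Hecke loading $\Bi'$ (all strands far right and far apart), where $e_{\Bi'}T^{-\vartheta}e_{\Bi'}$ is a cyclotomic KLR algebra carrying a symmetric Frobenius trace $\tau$; the form $\langle g_{\Bj}ag_{\Bi}^{*},g_{\Bj}bg_{\Bi}^{*}\rangle=\tau(a^{*}g_{\Bj}^{*}g_{\Bj}bg_{\Bi}^{*}g_{\Bi})$ is checked to be well defined, non-degenerate, and contravariant, giving self-duality directly. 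The passage from tiltings to injectives is then a one-line adjunction ($\mathbb{R}\Hom(\bra^{\vartheta,-\vartheta},-)$ sends duals of projectives to duals of tiltings), rather than requiring the full exact equivalence. So your framework would work, but the missing pairing argument is the actual substance of the lemma, and supplying it would lead you back to something very close to the paper's trace construction.
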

\begin{proof}
  We already know that  $\bra^{\vartheta,-\vartheta}e_{\Bi}$ is standard
  filtered as a left module by Lemma \ref{lem:between}, so if we prove
  it is self-dual, that will show it is tilting.  

For a fixed loading $\Bi$, 
choose a basepoint $b$ which is less that $b_i$ for all $i$, and a
real number $\gamma\gg 0$, sufficiently large so that the loading
$\Bi'$ where we move the points of the loading by the automorphism of $\R$ given by
$x\mapsto \gamma(x-b)+b$ is Hecke (it always will be for $\gamma$
sufficiently large).  There is a natural (generating) element 
$g_{\Bi}\in e_{\Bi}T^{-\vartheta}e_{\Bi'}$ and similarly for
$g_{\Bj}\in e_{\Bj}T^{\vartheta}e_{\Bj'}$.

Each vector in the basis $\CST$ for the bimodule
$e_{\Bj}\bra^{\vartheta,-\vartheta}e_{\Bi}$ factors as
$g_{\Bj}C_{\sS',\sT'}g_{\Bi}^*$ for $\sS',\sT'$ the obvious associated
tableaux of type $\Bj'$ and $\Bi'$.  Thus we have a surjective maps \[\pi \colon
e_{\Bj'}\bra^{\vartheta,-\vartheta}e_{\Bi'}\to
e_{\Bj}\bra^{\vartheta,-\vartheta}e_{\Bi}\qquad \pi(a)= g_{\Bj}a
g_{\Bi}^*. \]  Similarly, as we range over all $\sS,\sT$, the elements
$g_{\Bj}^*C_{\sS,\sT}g_{\Bi}$ are linearly independent, giving an
injective map
\[\iota \colon e_{\Bj}\bra^{\vartheta,-\vartheta}e_{\Bi}\to e_{\Bj'}\bra^{\vartheta,-\vartheta}e_{\Bi'}\qquad \iota(b)= g_{\Bj}^*b g_{\Bi}.\]

For two elements
$g_{\Bj}ag_{\Bi}^*$ and $g_{\Bj}bg_{\Bi}^*$, for $a,b\in
e_{\Bj'}\bra^{\vartheta,-\vartheta}e_{\Bi'}$, we define a pairing
\[\langle
g_{\Bj}ag_{\Bi}^*,g_{\Bj}bg_{\Bi}^*\rangle=\tau(a^*g_{\Bj}^*g_{\Bj}bg_{\Bi}^*g_{\Bi})=\tau(g_{\Bi}^*g_{\Bi}a^*g_{\Bj}^*g_{\Bj}b)\]
where $\tau \colon e_{\Bi'}T^{-\vartheta}e_{\Bi'}\cong e_{\Bi'}T^\la
e_{\Bi'}\to \K$ is the Frobenius trace of \cite[2.26]{Webmerged} if
$e\neq 1$ (we abuse
notation and also use $\Bi$ to denote the unloading of this loading).
 If $e=1$, then we can use an explicit trace on $\K[S_m]\wr
 \K[x]/(x^\ell)$.  
As noted in \cite[2.27]{Webmerged}, we can modify this trace to make it
symmetric; it is a bit more convenient to use this less-canonical
trace, but symmetric, trace.

This pairing is well defined, since if $b$ is in the kernel of $\pi$,
then \[a^*g_{\Bj}^*g_{\Bj}bg_{\Bi}^*g_{\Bi}=a^*g_{\Bj}^*\cdot 0\cdot
g_{\Bi}=0;\] the same statement for $a$ follows by a symmetrical
argument.  Now, assume that $\pi(b)\neq 0$; by injectivity,
$\iota\pi(b)=g_{\Bj}^*g_{\Bj}bg_{\Bi}^*g_{\Bi}\neq 0$ as well.  Thus,
by the non-degeneracy of $\tau$  \cite[2.26]{Webmerged} , there exists an $a$, such that $\langle
g_{\Bj}ag_{\Bi}^*,g_{\Bj}bg_{\Bi}^*\rangle\neq 0$; so this new pairing
is non-degenerate as well.

Note that furthermore, the adjoint under this action of right
multiplication by $c$ is left multiplication by $c^*$ since 
\[\langle
cg_{\Bj}ag_{\Bi}^*,g_{\Bj}bg_{\Bi}^*\rangle= \tau(a^*g_{\Bj}^*c^*g_{\Bj}bg_{\Bi}^*g_{\Bi})=\langle
g_{\Bj}ag_{\Bi}^*,c^*g_{\Bj}bg_{\Bi}^*\rangle\] and similarly for
right multiplication.  Since this is a non-degenerate invariant pairing, we
have proven the self-duality of this module.

The statement on tiltings and injectives is equivalent to the adjoint
$\mathbb{R}\operatorname{Hom}( \bra^{\vartheta,-\vartheta},-)$ sending
injectives to tiltings.  This functor sends the duals of projectives
to the duals of tiltings, so we are done. 
\end{proof}
\begin{corollary}\label{Ringel-dual}
  The Ringel dual of $\mathcal{S}^\vartheta_\nu$ is $\mathcal{S}^{-\vartheta}_\nu$.
\end{corollary}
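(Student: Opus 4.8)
The plan is to deduce this from Lemma \ref{lem:far-right-dual} via the standard characterisation of Ringel duality. Recall that if $A$ is a (graded) quasi-hereditary algebra and $M$ is an $A$-module, all of whose indecomposable summands are tilting and which has every indecomposable tilting as a summand, then $\End_A(M)^{\mathrm{op}}$ is the Ringel dual of $A$ up to Morita equivalence; and if moreover $M$ carries a right action of an algebra $B$ for which the structure map $B\to\End_A(M)^{\mathrm{op}}$ is an isomorphism, then $B\mmod$ is the Ringel dual of $A\mmod$. I will apply this with $A=\dalg^{\vartheta}_\nu$, $B=\dalg^{-\vartheta}_\nu$ and $M=\bra^{\vartheta,-\vartheta}$, the $A$-$B$-bimodule of Section \ref{sec:gener-bimod}, the map $\dalg^{-\vartheta}_\nu\to\End_{\dalg^\vartheta_\nu}(\bra^{\vartheta,-\vartheta})^{\mathrm{op}}$ coming from right multiplication.

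Since each $\dalg^{-\vartheta}_\nu e_{\Bi}$ is projective, $\bra^{\vartheta,-\vartheta}\Lotimes \dalg^{-\vartheta}_\nu e_{\Bi}$ is just $\bra^{\vartheta,-\vartheta}e_{\Bi}$ in homological degree $0$, which is tilting over $\dalg^\vartheta_\nu$ by Lemma \ref{lem:far-right-dual}; hence $M=\bigoplus_{\Bi}\bra^{\vartheta,-\vartheta}e_{\Bi}$ is a sum of tilting modules. Next I would prove that $e_{\Bj}\dalg^{-\vartheta}_\nu e_{\Bi}\to\Hom_{\dalg^\vartheta_\nu}(\bra^{\vartheta,-\vartheta}e_{\Bj},\bra^{\vartheta,-\vartheta}e_{\Bi})$ is a graded isomorphism. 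Injectivity is the far-right straightening argument of Lemmas \ref{lem:hbasis} and \ref{lem:-1-faithful}: composing at top and bottom with the straight-line diagrams pushing all strands to a Hecke loading embeds both spaces into the same idempotent truncation, which for such a loading is literally a cyclotomic Hecke algebra independent of the weighting. Surjectivity follows by a graded dimension count: $\bra^{\vartheta,-\vartheta}e_{\Bj}$ is $S^\vartheta$-filtered and $\bra^{\vartheta,-\vartheta}e_{\Bi}$, being tilting, is also costandard-filtered, so the graded dimension of the $\Hom$ space is $\sum_\xi[\bra^{\vartheta,-\vartheta}e_{\Bj}:S^\vartheta_\xi]\cdot[\bra^{\vartheta,-\vartheta}e_{\Bi}:(S^\vartheta_\xi)^\star]$, and by the left cellular basis of Lemma \ref{lem:between} (together with $\star$-self-duality of tiltings to pass from standards to costandards) these multiplicities are the graded counts of $\Bj$- and $\Bi$-tableaux of shape $\xi$ for the weighting $-\vartheta$; this is exactly $\dim_q e_{\Bj}\dalg^{-\vartheta}_\nu e_{\Bi}$ read off from Theorem \ref{th:graded-cellular}.

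Granting the isomorphism, the functor $\bra^{\vartheta,-\vartheta}\otimes_{\dalg^{-\vartheta}_\nu}-$ is fully faithful on projectives, so it carries the indecomposable projectives $P_\xi$ of $\dalg^{-\vartheta}_\nu$ to pairwise non-isomorphic indecomposable tilting $\dalg^\vartheta_\nu$-modules; since a highest weight category has as many indecomposable tiltings as simples, these are all of them, so $M$ is a full tilting module. The characterisation above then gives $\mathcal{S}^{-\vartheta}_\nu=\dalg^{-\vartheta}_\nu\mmod\cong\End_{\dalg^\vartheta_\nu}(M)^{\mathrm{op}}\mmod$, the Ringel dual of $\mathcal{S}^\vartheta_\nu$; running the same argument with $\vartheta$ and $-\vartheta$ interchanged is consistent with the involutivity of Ringel duality. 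The one step requiring genuine work is the graded dimension bookkeeping in the second paragraph — matching $S^\vartheta$-multiplicities in the left module $\bra^{\vartheta,-\vartheta}e_{\Bj}$ with $(-\vartheta)$-weighted tableau counts and tracking grading shifts; everything else is formal once Lemma \ref{lem:far-right-dual} is in hand.
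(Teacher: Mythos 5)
Your argument is correct and is essentially the paper's: the corollary is stated as an immediate consequence of Lemma \ref{lem:far-right-dual}, and the missing bookkeeping you supply --- that $\bigoplus_{\Bi}\bra^{\vartheta,-\vartheta}e_{\Bi}$ is a full tilting module whose endomorphism ring is $T^{-\vartheta}_\nu$, proved by reducing injectivity to Hecke loadings and getting surjectivity from a BGG-reciprocity dimension count via Corollary \ref{cor:B-multiplicities} --- is precisely the computation the paper carries out in Lemma \ref{lem:c-o-c-equiv}. So you have simply made explicit the standard $\End(T)^{\mathrm{op}}$ characterisation that the paper leaves implicit, using the same two ingredients.
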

Note that in our notation for Uglov weightings, this implies that
$\mathcal{S}^{\vartheta_{\Bs}^+}_\nu$ is Ringel dual to
$\mathcal{S}^{\vartheta_{\Bs}^-}_\nu$.  By Proposition
\ref{prop:symmetry}, this is in turn isomorphic to $\mathcal{S}^{\vartheta_{\Bs^\star}^+}_{\nu^\star}$ where $\nu^\star$
is the image of $\nu$ under the diagram automorphism induced by
$i\mapsto -i$ on $\C/\Z$.
\begin{lemma}\label{lem:c-o-c-equiv}
  The functor $\bra^{\vartheta,\vartheta'}\Lotimes -$ induces an equivalence of categories.
\end{lemma}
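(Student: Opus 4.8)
The plan is to show that for any two weightings $\vartheta,\vartheta'$ (with the same associated $U$ and $\la$), the bimodule $\bra^{\vartheta,\vartheta'}$ is a two-sided tilting complex, i.e. that $\bra^{\vartheta,\vartheta'}\Lotimes-$ is a derived equivalence. The key structural input is the composition formula from the ``between'' lemma just above: if $\vartheta'$ lies between $\vartheta$ and $\vartheta''$ then $\bra^{\vartheta,\vartheta'}\Lotimes\bra^{\vartheta',\vartheta''}\cong\bra^{\vartheta,\vartheta''}$. First I would reduce the general case to a composable chain. Any two weightings can be connected by a sequence $\vartheta=\vartheta_0,\vartheta_1,\dots,\vartheta_N=\vartheta'$ in which each consecutive pair differs by ``crossing a single wall'': the only combinatorial data that matters is the family of total orders $>_{\vartheta}$ on columns of each multipartition $\xi$ (with $|\xi|\le$ the rank we care about, a finite condition), and one can interpolate between any two such families by flipping one adjacent pair at a time, each intermediate weighting being between its neighbours. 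By the composition lemma, $\bra^{\vartheta,\vartheta'}\cong\bra^{\vartheta_0,\vartheta_1}\Lotimes\cdots\Lotimes\bra^{\vartheta_{N-1},\vartheta_N}$, so it suffices to treat a single such elementary step, and moreover it suffices to produce a \emph{quasi-inverse}: since $\bra^{\vartheta',\vartheta}\Lotimes\bra^{\vartheta,\vartheta'}$ and $\bra^{\vartheta,\vartheta'}\Lotimes\bra^{\vartheta',\vartheta}$ both receive the stacking map to the identity bimodule $\bra^{\vartheta,\vartheta}=T^\vartheta$ and $\bra^{\vartheta',\vartheta'}=T^{\vartheta'}$, it is enough to prove that \emph{these} stacking maps are isomorphisms (equivalently quasi-isomorphisms, once we know the left-hand sides compute the derived tensor product).

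The heart of the argument is then exactness and a dimension count, exactly parallel to the proof of the composition lemma and of Lemma \ref{lem:between}. By Lemma \ref{lem:between}, $\bra^{\vartheta,\vartheta'}$ is standard filtered as a left $T^\vartheta$-module and as a right $T^{\vartheta'}$-module; in particular it is a complex of projectives on each side up to the relevant truncation, and using the $\dot S_\xi$-orthonormality ($\dot S_\xi\Lotimes S_{\xi'}=\K$ if $\xi=\xi'$ and $0$ otherwise) together with the standard $\Tor$-vanishing criterion, one gets that $\bra^{\vartheta,\vartheta'}\Lotimes_{T^{\vartheta'}}S_{\xi'}$ is concentrated in degree $0$ and is itself standard filtered: indeed its class in the Grothendieck group is $\sum_\xi(\#\{\Bi\text{-tableaux of shape }\xi\})\,[S_\xi]$ read off from the right cellular basis $D_{\sS}B_{\sT}^*$. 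Hence $\bra^{\vartheta,\vartheta'}\Lotimes\bra^{\vartheta',\vartheta}$ has no higher homology when fed a standard module, and since every projective is standard filtered, it has no higher homology at all; the stacking map $\bra^{\vartheta,\vartheta'}\otimes^L\bra^{\vartheta',\vartheta}\to T^\vartheta$ is therefore a map of genuine bimodules, each standard filtered as a left module, and it is surjective (slice a diagram at $y=1/2$ where the red strands meet the intermediate weighting, as in the proof of the composition lemma — here $\vartheta',\vartheta$ play the role of the outer data and the identity is ``between'' any weighting and itself in the degenerate sense needed). Finally one counts: the dimension of $e_D(\bra^{\vartheta,\vartheta'}\Lotimes\bra^{\vartheta',\vartheta})e_{D'}$ is, by the two cellular bases and the orthonormality of the $\dot S_\xi$, the number of pairs of tableaux of the same shape with the appropriate fillings, which is precisely $\dim e_D T^\vartheta e_{D'}$ by Theorem \ref{th:graded-cellular}. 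A surjection between finite-dimensional spaces of equal dimension is an isomorphism, so the stacking map is an isomorphism of bimodules, and symmetrically in the other order.

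I expect the main obstacle to be the bookkeeping in the reduction step: verifying that one can always interpolate between two weightings through a chain of ``betweenness'' relations, \emph{and} that one only needs finitely many multipartitions (hence finitely many column orders) to control the algebra on a fixed block $T^\vartheta_\nu$ — this is where the steadying relation and the finiteness built into the reduced steadied quotient must be invoked carefully. A secondary subtlety is making sure the derived tensor product really is computed by the underived one in all the places I have used it above; this follows from the one-sided projectivity coming from standard filtered-ness (Lemma \ref{lem:between}), but one should state it cleanly. Once those two points are nailed down, the dimension count and surjectivity are routine given the cellular bases already in hand, and Corollary \ref{Ringel-dual} together with Proposition \ref{prop:symmetry} then identifies the composite $\bra^{\vartheta,-\vartheta}$ with a Ringel duality as a consistency check.
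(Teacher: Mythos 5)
Your reduction strategy has a fatal flaw at its core: you propose to show that $\bra^{\vartheta',\vartheta}\Lotimes-$ is a quasi-inverse of $\bra^{\vartheta,\vartheta'}\Lotimes-$ by proving the stacking maps $\bra^{\vartheta,\vartheta'}\Lotimes\bra^{\vartheta',\vartheta}\to T^{\vartheta}$ are isomorphisms, but this composite bimodule is \emph{not} isomorphic to $T^{\vartheta}$ in general. The composition lemma you invoke requires the middle weighting to be between the outer two, and $\vartheta'$ is between $\vartheta$ and $\vartheta$ only when the two weightings induce the same column orders (by the definition of ``between,'' no pair may have its order reversed by $>'$ relative to $>$). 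There is no ``degenerate sense'' in which this holds. Indeed the composite cannot be the identity bimodule: for an elementary wall-crossing it categorifies $t_i^2$ in the quantum Weyl group (Theorem \ref{thm:braid-action} and Lemma \ref{lem:quantum-weyl}), which already acts nontrivially on the Grothendieck group. Concretely, your surjectivity step fails --- bending the red strands out to $\vartheta'$ at $y=\nicefrac12$ and back is not an isotopy when betweenness fails; it forces genuine red/black crossings whose resolution via (\ref{qHcost}) and (\ref{smart-red-triple}) produces correction terms, so the image of stacking is a proper sub-bimodule. Your dimension count does come out equal (both sides give $\sum_\xi m_\xi n_\xi$ over tableau counts for $\vartheta$), but without surjectivity this proves nothing; equality of dimensions of non-isomorphic bimodules is exactly what one expects here.

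The paper's proof runs in the opposite direction: instead of decomposing into elementary steps, it composes up to the extreme case $\vartheta'=-\vartheta$. Since every weighting is between $\vartheta$ and $-\vartheta$, the functor $\bra^{\vartheta,\vartheta'}\Lotimes-$ admits factorizations through $\bra^{\vartheta,-\vartheta}\Lotimes-$ and $\bra^{-\vartheta',\vartheta'}\Lotimes-$ on both sides, so it suffices to treat $\vartheta'=-\vartheta$. There the key input is Lemma \ref{lem:far-right-dual}: $\bra^{\vartheta,-\vartheta}e_{\Bi}$ is a \emph{tilting} module, so its Ext algebra is concentrated in degree zero, and one only needs the map $e_{\Bi}T^{\vartheta}e_{\Bj}\to\Hom(\bra^{\vartheta,-\vartheta}e_{\Bj},\bra^{\vartheta,-\vartheta}e_{\Bi})$ to be an isomorphism; this follows from a BGG-reciprocity dimension count (using Corollary \ref{cor:B-multiplicities}) together with injectivity checked on Hecke loadings. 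If you want to keep an elementary-step decomposition, the correct quasi-inverse is $\mathbb{R}\Hom_{T^{\vartheta}}(\bra^{\vartheta,\vartheta'},-)$, not $\bra^{\vartheta',\vartheta}\Lotimes-$, and you would still need the tilting/Ext-vanishing input to control it.
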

\begin{proof}
  We can reduce to the case where $\vartheta= -\vartheta'$;
  since any weighting is between this pair, functors of this form
  factor through $\bra^{\vartheta,\vartheta'}\Lotimes -$ on the right
  and the left.  Thus, if all the functors when
  $\vartheta'=-\vartheta$ are equivalences, the desired result will
  follow.

  Since $\bra^{\vartheta,-\vartheta}e_{\Bi}$ is a tilting
  module by Lemma \ref{lem:far-right-dual}, its Ext algebra is
  concentrated in homological degree 0 (i.e. there are no higher
  Ext's).  The functor $\bra^{\vartheta,-\vartheta}\Lotimes -$
  induces a map
  \begin{equation}
e_{\Bi}T^\vartheta e_{\Bj}\to
  \Hom(\bra^{\vartheta,-\vartheta}e_{\Bj},\bra^{\vartheta,-\vartheta}e_{\Bi}).\label{eq:bra-action}
\end{equation}
By the vanishing of higher Exts, it suffices to
  prove that this map is an isomorphism.  

We already know that the dimension of the left hand side is \[\dim (e_{\Bi}T^\vartheta e_{\Bj})=\sum_\xi
[T^\vartheta e_{\Bi} :S_\xi][T^\vartheta e_{\Bj} :S_\xi]\] by BGG
reciprocity.  On the other hand, the dimension of the right hand side
is \[\dim \Hom(\bra^{\vartheta,-\vartheta}e_{\Bj},\bra^{\vartheta,-\vartheta}e_{\Bi})=\sum_\xi
[S_\xi':\bra^{\vartheta,-\vartheta}
e_{\Bi}][S_\xi':\bra^{\vartheta,-\vartheta} e_{\Bj}]\] since the
multiplicities of the standard and costandard filtrations on a tilting coincide.
Thus, the equality of dimensions follows immediately from
  the fact that the (co)standard multiplicities of
  $\bra^{\vartheta,-\vartheta}e_{\Bj}$ coincide with those of
  $T^\vartheta e_{\Bj}$ by Corollary \ref{cor:B-multiplicities}.

  Thus, we need only show that this map is injective.  It's a
  consequence of the $-1$-faithfulness of Lemma
  \ref{lem:Hecke-loadings} that any $b\neq 0\in T^\vartheta$, thought of as an endomorphism by right multiplication, must still
  act non-trivially on $e^0 T^\vartheta$; that is, there must exist
  $a$ such that $e^0ab\neq 0$.  Since $T^\vartheta$ is self-opposite,
  we can apply the same result on the right to $e^0ab$, and see that there is a
  $c$ with $e^0abce^0\neq 0$.  

Thus, if $b$ is an element of the kernel of the map
\eqref{eq:bra-action}, then $e^0abce^0$ will be as well, and we can assume
without loss of generality that $e_{\Bi}e^0\neq 0$, and
$e^0e_{\Bj}\neq 0$, which is the same as to say that $\Bi$ and $\Bj$ are
  Hecke loadings.  But, in this case
  $\bra^{\vartheta,-\vartheta}e_{\Bj}\cong T^{-\vartheta}e_{\Bj}$, so
  we just obtain the induced isomorphism \[e_{\Bi}T^\vartheta e_{\Bj}\cong e_{\Bi}T^\lambda e_{\Bj}\cong e_{\Bi}T^{-\vartheta} e_{\Bj}\cong
  \Hom(\bra^{\vartheta,-\vartheta}e_{\Bj},\bra^{\vartheta,-\vartheta}e_{\Bi}).\]
  This completes the proof.
\end{proof}
This shows that the derived category of $T^\vartheta\mmod$ only
depends on the highest weight $\la$ and not on $\vartheta$ itself
(though these different categories are not {\it canonically equivalent}).
Combining Lemma \ref{lem:c-o-c-equiv} and Theorem
\ref{thm:cherednik-KLR} implies that:
\begin{corollary}
  If the charges $\Bs$ and $\Bs'$ are in the same orbit of
  $\widehat{B}_\ell$, i.e. their KZ functors land in the same block of
  the Hecke algebra, then the categories $D^b(\cO^{\Bs})$ and
  $D^b(\cO^{\Bs'})$ are equivalent.
\end{corollary}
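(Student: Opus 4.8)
The plan is to deduce this formally by splicing together Theorem~\ref{thm:cherednik-KLR} and Lemma~\ref{lem:c-o-c-equiv}; the only content internal to this proof is a combinatorial observation about what the $\widehat{B}_\ell$-orbit of a charge records, together with the remark that what is produced is an equivalence of derived, not abelian, categories.

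First I would isolate the combinatorial point. The parameter $k$ --- and hence $\kappa=\Re(k)$, the integer $e$, and the sign $\sigma$ of $\kappa$ --- is fixed throughout; only the charge $\Bs$ varies. If $\Bs$ and $\Bs'$ lie in one $\widehat{B}_\ell$-orbit, equivalently their $\KZ$ functors land in the same block of the cyclotomic Hecke algebra, equivalently $\Bs$ and $\Bs'$ are permutations of each other modulo $e$, then the multiset $\{Q_i\}=\{\exp(2\pi i k s_i)\}$ is unchanged. Consequently the graph $U$, its Crawley--Boevey enlargement $U_\la$, and the level-$\ell$ dominant weight $\la=\sum_i\omega_{r_i}$ attached to the two charges all agree: passing from $\Bs$ to $\Bs'$ changes only the weighting, replacing $\vartheta^\sigma_{\Bs}$ by $\vartheta^\sigma_{\Bs'}$, while leaving the underlying quiver and highest weight of the ambient weighted KLR algebra fixed.

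Next I would assemble the chain of equivalences. By Theorem~\ref{thm:cherednik-KLR}, for each $d$ there is an equivalence of highest weight categories $\cO^{\Bs}_d\cong T^{\vartheta^\sigma_{\Bs}}_{B^\circ_d}\umod$, and similarly for $\Bs'$; summing over $d$ and invoking the Morita invariance of the bounded derived category together with the Morita equivalence identifying the truncation $T^\vartheta_{B^\circ_d}$ with the full reduced steadied quotient $T^\vartheta$ (the weighted KLR analogue of Lemma~\ref{lem:Morita}), this gives $D^b(\cO^{\Bs})\cong D^b(T^{\vartheta^\sigma_{\Bs}}\umod)$ and $D^b(\cO^{\Bs'})\cong D^b(T^{\vartheta^\sigma_{\Bs'}}\umod)$. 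Since $\vartheta^\sigma_{\Bs}$ and $\vartheta^\sigma_{\Bs'}$ are two weightings for the \emph{same} Crawley--Boevey data with the same weight $\la$, the change-of-charge bimodule $\bra^{\vartheta^\sigma_{\Bs},\vartheta^\sigma_{\Bs'}}$ is defined, and Lemma~\ref{lem:c-o-c-equiv} says that $\bra^{\vartheta^\sigma_{\Bs},\vartheta^\sigma_{\Bs'}}\Lotimes-$ induces an equivalence $D^b(T^{\vartheta^\sigma_{\Bs'}}\mmod)\cong D^b(T^{\vartheta^\sigma_{\Bs}}\mmod)$ (forgetting gradings yields the statement for $\umod$). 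Composing the three equivalences $D^b(\cO^{\Bs'})\cong D^b(T^{\vartheta^\sigma_{\Bs'}})\cong D^b(T^{\vartheta^\sigma_{\Bs}})\cong D^b(\cO^{\Bs})$ then proves the corollary.

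As for the main obstacle, there is essentially none left at this stage: all of the weight sits in the two inputs --- Theorem~\ref{thm:cherednik-KLR}, identifying $\cO^{\Bs}$ with a weighted KLR category, and above all Lemma~\ref{lem:c-o-c-equiv}, whose proof passes through the tilting statement of Lemma~\ref{lem:far-right-dual}, the Ringel self-duality of Corollary~\ref{Ringel-dual}, and a delicate count matching $\dim e_{\Bi}T^\vartheta e_{\Bj}$ with a $\Hom$-space of change-of-charge images via BGG reciprocity and Corollary~\ref{cor:B-multiplicities}. Within the present deduction the only points requiring care are: (i) that the $\widehat{B}_\ell$-orbit hypothesis is exactly strong enough to keep $\la$ fixed --- a congruence on the $s_i$ that permuted $\kappa$ or $e$ would destroy the argument --- and (ii) that the output is genuinely an equivalence of \emph{derived} categories, since the change-of-charge functor becomes exact only after deriving, so one cannot in general upgrade to an equivalence of abelian categories.
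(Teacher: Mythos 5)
Your proposal is correct and is essentially the paper's own argument: the corollary is stated there as an immediate consequence of combining Lemma \ref{lem:c-o-c-equiv} with Theorem \ref{thm:cherednik-KLR}, which is exactly the chain of equivalences you assemble. The extra combinatorial remark that the $\widehat{B}_\ell$-orbit condition fixes $U$ and $\la$ and changes only the weighting is a useful (and accurate) elaboration of what the paper leaves implicit.
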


Recall that if we have an exceptional collection $(\Delta,\leq )$, and we
choose a new order $\leq'$ on the the collection, there is a unique
new exceptional collection $(\Delta',\leq')$ with the same indexing set,
such that $\Delta_i'$ lies in the triangulated category generated by
$\{\Delta_j\}_{j\geq' i}$ and $\Delta_i'\equiv \Delta_i$ modulo the
triangulated category generated by $\{\Delta_j\}_{j >' i}$. We call
this the {\bf mutation} of the exceptional collection by this change
of partial order.  Let $d^{\vartheta,\vartheta'}_\xi$ be the degree of
the basis vector $D_{\sT}$ for the tautological tableau on the
multipartition $\xi$.
\begin{lemma}\label{lem:cat-mutate}
  The image of the standard exceptional collection in $\mathcal{S}^{\vartheta'}$ under
  $\bra^{\vartheta',\vartheta}\Lotimes -$ is the mutation of the
  shifted standard collection $S_\xi(-d^{\vartheta,\vartheta'}_\xi)$ in $\mathcal{S}^{\vartheta}$ for the induced
  change of partial order. 
\end{lemma}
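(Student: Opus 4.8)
The plan is to prove that the functor $\bra^{\vartheta',\vartheta}\Lotimes -$ sends the standard exceptional collection of $\cS^{\vartheta'}$ to the mutation (in the sense defined just above the statement) of the shifted collection $\{S_\xi(-d^{\vartheta,\vartheta'}_\xi)\}$ in $\cS^{\vartheta}$. Recall that $\bra^{\vartheta',\vartheta}\Lotimes-$ is an equivalence of derived categories by Lemma \ref{lem:c-o-c-equiv}, so it carries exceptional collections to exceptional collections; the content is to identify \emph{which} exceptional collection one gets, and in particular to pin down the object $\bra^{\vartheta',\vartheta}\Lotimes S_\xi$ up to the required equivalence.

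First I would establish the two properties that characterize the mutation $(\Delta',\leq_{\vartheta})$ of $\{S_\xi(-d^{\vartheta,\vartheta'}_\xi)\}$ relative to the order $\leq_{\vartheta'}$: namely that each $\Delta'_\xi$ lies in the triangulated subcategory generated by $\{S_\eta(-d^{\vartheta,\vartheta'}_\eta)\}_{\eta\geq_{\vartheta'}\xi}$, and that $\Delta'_\xi \equiv S_\xi(-d^{\vartheta,\vartheta'}_\xi)$ modulo the subcategory generated by $\{S_\eta(-d^{\vartheta,\vartheta'}_\eta)\}_{\eta>_{\vartheta'}\xi}$. So I would check that $M_\xi := \bra^{\vartheta',\vartheta}\Lotimes S^{\vartheta'}_\xi$ satisfies these two conditions. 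For the second (the ``leading term''), I would use Lemma \ref{lem:between}: the bimodule $\bra^{\vartheta',\vartheta}$ is standard filtered as a right module, with the sum of vectors attached to partitions $\leq_{\vartheta'}\xi$ a submodule, and $D_\sS B_\sT^*$ a cellular basis. Applying $\dot{S}^{\vartheta'}_\xi\Lotimes-$ to the left module structure and using that $\dot{S}_\eta\Lotimes S_\xi$ is $\K$ concentrated in degree $d^{\vartheta,\vartheta'}_\xi$ precisely when $\eta=\xi$ (this is the computation recorded — up to the shift $d^{\vartheta,\vartheta'}_\xi$ coming from the degree of $D_{\sT}$ on the tautological tableau — in the proof of the last lemma before Section \ref{sec:basic}), one sees that $M_\xi$ has $S^{\vartheta}_\xi(-d^{\vartheta,\vartheta'}_\xi)$ as a subquotient of its standard filtration with multiplicity one, and all other standard constituents $S^{\vartheta}_\eta$ occur only for $\eta>_{\vartheta'}\xi$ (not $\geq$ in $\vartheta$-order, but the relevant comparison here is the $\vartheta'$-order via the right-module filtration, after passing through the cellular-basis dimension count of Corollary \ref{cor:B-multiplicities}).

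For the first condition — that $M_\xi$ lies in the triangulated hull of $\{S^{\vartheta}_\eta(-d^{\vartheta,\vartheta'}_\eta)\}_{\eta\geq_{\vartheta'}\xi}$ — I would argue that $M_\xi$ is standard filtered (it is, being the derived tensor of a projective with a standard-filtered-on-both-sides bimodule, or directly: $S^{\vartheta'}_\xi$ has a projective resolution and $\bra^{\vartheta',\vartheta}$ is flat, so $M_\xi$ is a module, standard filtered as a $T^{\vartheta}$-module) and that every standard $S^{\vartheta}_\eta$ appearing in it has $\eta\geq_{\vartheta'}\xi$. This again comes from Lemma \ref{lem:between}: the right-submodule structure of $\bra^{\vartheta',\vartheta}$ by $\vartheta'$-order means that tensoring with $S^{\vartheta'}_\xi$ (which is a quotient of $e_{\Bi_\xi}T^{\vartheta'}$ killing everything strictly below $\xi$ in $\vartheta'$-order, with $S^{\vartheta'}_\xi$ itself as top) only sees the part of the bimodule attached to $\eta\geq_{\vartheta'}\xi$. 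I would then match the degree shifts: the tautological-tableau basis vector $D_\sT$ contributing $S^{\vartheta}_\xi$ sits in degree $d^{\vartheta,\vartheta'}_\xi$, giving exactly the shift $(-d^{\vartheta,\vartheta'}_\xi)$ in the Grothendieck group once one reads the multiplicity space graded.

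The main obstacle I expect is the bookkeeping of \emph{which} partial order controls \emph{which} side of the bimodule, and keeping the degree shifts $d^{\vartheta,\vartheta'}_\xi$ consistent throughout. The bimodule $\bra^{\vartheta',\vartheta}$ is standard filtered for the $\vartheta'$-order as a \emph{right} module and for the $\vartheta$-order as a \emph{left} module (Lemma \ref{lem:between}), and one must be careful that tensoring $S^{\vartheta'}_\xi$ \emph{on the left} of this bimodule (i.e. forming $\bra^{\vartheta',\vartheta}\otimes_{T^{\vartheta}} S$? — no: $\bra^{\vartheta',\vartheta}$ is a $T^{\vartheta'}$-$T^{\vartheta}$-bimodule, so $\bra^{\vartheta',\vartheta}\Lotimes_{T^{\vartheta}} S^{\vartheta}_?$ lands in $\cS^{\vartheta'}$; the statement has it the other way, $\bra^{\vartheta',\vartheta}\Lotimes S^{\vartheta'}$, so really the relevant module is $\bra^{\vartheta,\vartheta'}\Lotimes S^{\vartheta'}_\xi$ — I would sort out this indexing against the conventions of Section \ref{sec:gener-bimod} at the start) uses the correct one-sided filtration. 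Once the conventions are fixed, the argument is essentially: (standard filtration on one side) $\times$ (orthogonality of standards under $\dot S\Lotimes S$) $\times$ (Corollary \ref{cor:B-multiplicities} for the dimension count) $\Rightarrow$ the two defining properties of the mutation hold, and then uniqueness of mutation finishes the proof.
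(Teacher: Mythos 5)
Your overall target is right---verify the two defining properties of the mutation for $M_\xi:=\bra^{\vartheta',\vartheta}\Lotimes S'_\xi$---but the way you propose to verify them has a genuine gap, and it is exactly the point the paper's induction is designed to get around. The two cellular bases of the bimodule in Lemma \ref{lem:between} give two \emph{one-sided} filtrations for two \emph{different} orders: the $D_\sS B_\sT^*$ basis filters $\bra^{\vartheta',\vartheta}$ by right submodules with respect to $\vartheta'$-weighted dominance order, and the $B_\sT D_\sS^*$ basis filters it by left submodules with respect to $\vartheta$-weighted order. Neither is a filtration by sub-bimodules, because left multiplication only preserves cells up to terms higher in the $\vartheta$-order while right multiplication only preserves them up to terms higher in the $\vartheta'$-order. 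So the right-module filtration, together with $\dot{S}'_\eta\Lotimes S'_\xi=\delta_{\eta\xi}\K$, does tell you that $M_\xi$ is concentrated in homological degree $0$ and computes its graded dimension (it is the span of the $D_\sS$ of shape $\xi$), but it tells you nothing about the \emph{left} $T^{\vartheta}$-module structure on that space: you cannot conclude from it that $M_\xi$ is standard filtered with constituents $S_\eta$ for $\eta\geq'\xi$. Relatedly, Corollary \ref{cor:B-multiplicities} computes multiplicities in $\bra e_{D}$, i.e.\ in the image of a \emph{projective}, not in $\bra\Lotimes S'_\xi$; your appeal to it conflates the two. (Your stated justification that $M_\xi$ is a module---``$\bra^{\vartheta',\vartheta}$ is flat''---is also false as written; the correct route is Tor-vanishing against the right standard filtration, though the paper never needs this fact.)

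The missing idea is to induct on the $\vartheta'$-order and route everything through projectives. If $\xi$ is maximal then $S'_\xi=P'_{\Bi_\xi}$ and $\bra\Lotimes S'_\xi=\bra e_{\Bi_\xi}=S_\xi(-d^{\vartheta,\vartheta'}_\xi)$ directly. In general $\bra\Lotimes P'_{\Bi_\xi}=\bra e_{\Bi_\xi}$ is underived and honestly standard filtered as a left module by the \emph{left} cellular basis of Lemma \ref{lem:between}, with $S_\xi(-d^{\vartheta,\vartheta'}_\xi)$ occurring once and all other constituents of shape $>'\xi$ (here Corollary \ref{cor:B-multiplicities} applies legitimately). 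The cone of $\bra\Lotimes S'_\xi\to$ nothing---rather, the kernel of $P'_{\Bi_\xi}\to S'_\xi$---is filtered by $S'_\eta$ with $\eta>'\xi$, and the inductive hypothesis places $\bra\Lotimes S'_\eta$ in the triangulated hull of $\{S_\mu\}_{\mu\geq'\eta}\subseteq\{S_\mu\}_{\mu>'\xi}$. Comparing the triangle for $P'_{\Bi_\xi}\to S'_\xi$ with these two facts yields both defining properties of the mutation for $M_\xi$. Without this induction (or some substitute argument pinning down the left-module structure of $\bra\Lotimes S'_\xi$), your verification of the first mutation condition does not go through.
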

\begin{proof}
  We prove this by induction on the partial order for $\vartheta'$,
  which we denote $\leq'$ (matching the role it plays in the
  definition of mutation above).  If $\xi$ is maximal, then $S_\xi'$ is
  projective, and $\bra^{\vartheta',\vartheta}\Lotimes S_\xi'=S_\xi(-d^{\vartheta,\vartheta'}_\xi)$.  

For $\xi$ arbitrary, we have that by induction, the image of the
category generated by $S_\eta'$ with $\eta>' \xi$ is the same that
generated by  $S_\eta$ with $\eta>' \xi$.  Since  $P_\xi'\equiv
S_\xi'$ modulo the subcategory generated by  $S_\eta'$ with $\eta\geq'
\xi$, we have that  $\bra^{\vartheta',\vartheta}\Lotimes P_\xi'\equiv
\bra^{\vartheta',\vartheta}\Lotimes S_\xi'$ modulo  $S_\eta$ with
$\eta\geq' \xi$.  

On the other hand, the standard filtration on
$\bra^{\vartheta',\vartheta}\Lotimes P_\xi'$ makes it clear that it
lies in the subcategory generated by $S_\eta$ with $\eta\geq' \xi$ and
is equivalent to $S_\xi(-d^{\vartheta,\vartheta'}_\xi)$ modulo  $S_\eta$ with $\eta>' \xi$.  Thus,
the same statements hold for $\bra^{\vartheta',\vartheta}\Lotimes S_\xi'$, and we are done.
\end{proof}

Following Bezrukavnikov \cite[Prop. 1]{Bez03}, we can reconstruct the entire $t$-structure
of $D^b(T^\vartheta\mmod)$ just from the exceptional collections $S_*$
and $S^\star_*$; there is a unique $t$-structure containing both of
these sets of modules in its heart.  This gives us a description of
the image of the standard $t$-structure on $D^b(T^{\vartheta'}\mmod)$
under $\bra^{\vartheta',\vartheta}\Lotimes-$.

\begin{proposition}
  The equivalence $\bra^{\vartheta',\vartheta}\Lotimes-$ sends the
  standard $t$-structure on $D^b(T^{\vartheta'}\mmod)$ to the unique
  $t$-structure whose heart contains the mutation of $S_\xi$ and inverse
  mutation of $S_\xi^*$ for the new ordering $>'$.
\end{proposition}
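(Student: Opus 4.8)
The plan is to combine the reconstruction of the standard $t$-structure from its standard and costandard exceptional collections with the elementary fact that an equivalence of triangulated categories transports $t$-structures, transports the property ``this object lies in the heart,'' and hence transports uniqueness statements phrased in these terms. Concretely: the standard $t$-structure on $D^b(T^{\vartheta'}\mmod)$ has heart $T^{\vartheta'}\mmod$, and this heart contains both the exceptional collection of standards $\{S_\xi\}$ and the dual exceptional collection of costandards $\{S_\xi^\star\}$ for $T^{\vartheta'}$; by the result of Bezrukavnikov \cite[Prop. 1]{Bez03} invoked just above, it is the \emph{unique} $t$-structure with this property. Applying the equivalence $\bra^{\vartheta',\vartheta}\Lotimes-$, its image is therefore the unique $t$-structure on $D^b(T^{\vartheta}\mmod)$ whose heart contains the images $\{\bra^{\vartheta',\vartheta}\Lotimes S_\xi\}$ and $\{\bra^{\vartheta',\vartheta}\Lotimes S_\xi^\star\}$ of those two collections. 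Thus the whole statement reduces to identifying these two families.

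The first identification is exactly Lemma \ref{lem:cat-mutate}: the image $\bra^{\vartheta',\vartheta}\Lotimes S_\xi$ of the standard collection is the mutation, for the change of partial order from $\le$ to $\le'$, of the shifted standard collection $S_\xi(-d^{\vartheta,\vartheta'}_\xi)$ in $\cS^{\vartheta}$. For the costandards I would prove the mirror statement, that $\bra^{\vartheta',\vartheta}\Lotimes S_\xi^\star$ is the inverse mutation of $S_\xi^\star$ (with the grading shift dictated by $d^{\vartheta,\vartheta'}_\xi$) for the order $>'$. There are two natural routes. One is to rerun the induction in the proof of Lemma \ref{lem:cat-mutate} on the left-module side, using that $\bra^{\vartheta',\vartheta}$ is standard filtered as a \emph{left} module (Lemma \ref{lem:between}) and that the costandards form an exceptional collection for the reversed order, so the same argument produces the mutation for the reversed order, which is by definition the inverse mutation. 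The other is to verify that $\bra^{\vartheta',\vartheta}\Lotimes-$ intertwines the duality $\star$ on $\cS^{\vartheta'}$ with that on $\cS^{\vartheta}$ up to a grading shift; this uses the self-duality of $\bra^{\vartheta,-\vartheta}e_{\Bi}$ from Lemma \ref{lem:far-right-dual} (equivalently, Ringel duality, Corollary \ref{Ringel-dual}) to reduce to the case $\vartheta'=-\vartheta$, where $\bra^{\vartheta,-\vartheta}e_{\Bi}$ is tilting and the claim becomes the interchange of the standard and costandard filtration multiplicities on a tilting module, which we already control via Corollary \ref{cor:B-multiplicities}.

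Granting both identifications, the transported $t$-structure is the unique one whose heart contains the mutation of $S_\xi$ and the inverse mutation of $S_\xi^\star$ for $>'$, which is precisely the assertion. The main obstacle is the costandard identification: Lemma \ref{lem:cat-mutate} is recorded only for the standard exceptional collection, and passing to costandards requires genuine care about how $\bra^{\vartheta',\vartheta}\Lotimes-$ interacts with $\star$ — one must pin down the grading shift and confirm that, at the level of exceptional collections, ``mutation for the dual order'' coincides with ``inverse mutation.'' Once the compatibility of the change-of-charge functor with duality (through Lemma \ref{lem:far-right-dual}) is established, the remaining steps are formal manipulations of $t$-structures under equivalences.
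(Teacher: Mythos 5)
Your proposal is correct and follows essentially the same route the paper intends: the paper gives no explicit proof, relying on the immediately preceding invocation of Bezrukavnikov's uniqueness of the $t$-structure determined by the standard and costandard exceptional collections, transport under the equivalence $\bra^{\vartheta',\vartheta}\Lotimes-$, and Lemma \ref{lem:cat-mutate} for the standards. You correctly identify the one point the paper leaves implicit — that the costandards are sent to the inverse mutation — and your proposed routes (rerunning the mutation induction using the left standard filtration of the bimodule, or tracking compatibility with duality via Lemma \ref{lem:far-right-dual}) are exactly the kind of dual argument needed to close it.
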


Note that if we replace a weighting by its Uglovation, no boxes with
the same residue switch order,  so weighted partial order does
not change.  Thus, we have that:
\begin{corollary}\label{cor:Uglovation-Morita}
  If $\vartheta_{\Bs}$ is the Uglovation of $\vartheta$, the bimodule
  $\bra^{\vartheta_{\Bs},\vartheta}$ induces a Morita equivalence.
\end{corollary}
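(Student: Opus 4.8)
The plan is to combine the equivalence of derived categories already produced by the change-of-charge machinery with the single combinatorial fact recorded in the Note just above: replacing a weighting by its Uglovation moves no two boxes of the same residue past one another, so the weighted dominance order on $\ell$-multipartitions --- and hence the change of partial order $>'$ that governs the mutation in Lemma~\ref{lem:cat-mutate} --- is trivial. Granting this, the Corollary is essentially formal: Lemma~\ref{lem:c-o-c-equiv} gives that $\bra^{\vartheta_{\Bs},\vartheta}\Lotimes-$ is an equivalence of derived categories, and one need only upgrade it to an exact equivalence of the abelian module categories $T^{\vartheta}\mmod$ and $T^{\vartheta_{\Bs}}\mmod$, which is exactly what a Morita equivalence of these (graded, finite-dimensional) algebras is.

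First I would observe that, since the induced reordering attached to the pair $(\vartheta,\vartheta_{\Bs})$ coincides with the original one, the mutation of the standard exceptional collection appearing in Lemma~\ref{lem:cat-mutate} is the identity up to grading shift, and, by the Proposition following that lemma, the same is true of the inverse mutation of the costandard collection $S^{\star}_{*}$. Consequently, under the change-of-charge equivalence the standard $t$-structure on the source is carried to a $t$-structure on $D^{b}(T^{\vartheta_{\Bs}}\mmod)$ whose heart contains grading shifts of all the standards $S_{\xi}$ and all the costandards $S^{\star}_{\xi}$; since a grading shift of an object of the standard heart again lies in that heart, the target $t$-structure has in its heart the full standard exceptional collection and its dual. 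By the reconstruction principle of Bezrukavnikov \cite[Prop.~1]{Bez03} (used just above to pin down a $t$-structure from $S_{*}$ and $S^{\star}_{*}$) there is a unique such $t$-structure, namely the standard one. Hence $\bra^{\vartheta_{\Bs},\vartheta}\Lotimes-$ is $t$-exact for the standard $t$-structures, so it restricts to an equivalence of hearts $T^{\vartheta}\mmod\cong T^{\vartheta_{\Bs}}\mmod$, i.e.\ a Morita equivalence.

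The substantive step is the combinatorial claim that the Uglovation preserves the order of same-residue boxes; once it is in hand, everything else is a formal deduction from Lemmas~\ref{lem:c-o-c-equiv} and~\ref{lem:cat-mutate}, the Proposition following the latter, and Bezrukavnikov's $t$-structure reconstruction. To prove the claim one computes the $x$-coordinate of a box $(a,b,m)$ under $\vartheta_{\Bs}$ as $\ck(s_{m}+b-a)-me\ck/\ell$, notes that two boxes have the same residue exactly when $s_{m}+b-a\equiv s_{m'}+b'-a'\pmod{e}$, and checks that the fractional corrections $-me\ck/\ell$, of absolute value strictly less than $|e\ck|$, cannot reverse the order of two same-residue boxes whose $\ck(s+b-a)$-values differ by a nonzero multiple of $|e\ck|$; by the construction of the Uglovation (the inequalities $0\le \vartheta_{j}/\ck-\vartheta_{1}/\ck-s_{j}\le e$) the original weighting $\vartheta$ orders these two boxes the same way. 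The only bookkeeping nuisance is the global shift of all residues by $-r_{1}$ and the reindexing of the components, neither of which changes the graph $U$ or any order, so that Lemma~\ref{lem:cat-mutate} and the Proposition after it apply verbatim.
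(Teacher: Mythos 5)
Your proposal is correct and follows essentially the same route as the paper, which derives the corollary directly from the observation that the Uglovation preserves the order of same-residue boxes (so the mutation in Lemma~\ref{lem:cat-mutate} is trivial) together with the $t$-structure description preceding the statement. You have simply written out in full the $t$-exactness deduction and the combinatorial verification that the paper leaves as a one-sentence remark.
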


Let $V$ be a free $\Z[q,q^{-1}]$-module of finite rank, equipped with a
sesquilinear form $\langle -, -\rangle$ and an antilinear bar
involution such that $\langle \bar u, \bar v\rangle=\langle
  v,u\rangle$.  A {\bf semi-orthonormal basis} of $V$ is a partially
ordered $\Z[q,q^{-1}]$-module basis $\{v_i\}_{i\in(I,\leq)}$ such
that $\langle v_i,v_j\rangle=0$ if $j\nleq i$, and $\langle v_i,v_i\rangle=1$.  

If $\leq'$ is another partial order on $I$, then $v_*$ possesses a unique {\bf mutation} 
to another semi-orthogonal basis $\{v_i'\}$ indexed by $I$, this time
endowed with $\leq'$, such that
\begin{equation}
v_i'\in
\operatorname{span}\{v_j\}_{j\geq' i} \qquad\qquad v_i'\equiv v_i
\pmod{\operatorname{span}\{v_j\}_{j>' i}}.\label{eq:vector-mutate}
\end{equation}

It follows from \cite[Prop. 4.11]{Uglov} that the
standard basis $u_\xi$ is semi-orthogonal for the weighted dominance
order for $\vartheta$.

\begin{proposition}\label{prop:classes-mutate}
  The classes $[\bra^{\vartheta',\vartheta}\Lotimes S_\xi']$ are the
  mutation of the semi-orthogonal basis
  $q^{-d^{\vartheta,\vartheta'}_\xi}[S_\xi]$ from $\vartheta$- to
  $\vartheta'$-weighted dominance order. 
\end{proposition}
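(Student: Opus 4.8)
The plan is to deduce Proposition \ref{prop:classes-mutate} from Lemma \ref{lem:cat-mutate} by passing to Grothendieck groups. Recall that under the isomorphism $\mathbf{F}_\vartheta\cong K_q^0(T^\vartheta)$ of Theorem \ref{Fock-space} the standard module $S_\xi$ maps to the basis vector $u_\xi$, and under the analogous isomorphism for $\vartheta'$ we have $S_\xi'\mapsto u_\xi$ (the same multipartition, different module). So the statement to prove is precisely that the classes $[\bra^{\vartheta',\vartheta}\Lotimes S_\xi']$, expressed in $K_q^0(T^\vartheta)\cong \mathbf{F}_{\vartheta}$, form the mutation of the semi-orthogonal basis $\{q^{-d^{\vartheta,\vartheta'}_\xi}u_\xi\}$ from $\vartheta$-weighted dominance order to $\vartheta'$-weighted dominance order.

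First I would record the input facts. By Lemma \ref{lem:cat-mutate}, the image of the standard exceptional collection $\{S_\xi'\}$ in $\mathcal{S}^{\vartheta'}$ under $\bra^{\vartheta',\vartheta}\Lotimes-$ is, on the nose, the mutation of the shifted standard collection $\{S_\xi(-d^{\vartheta,\vartheta'}_\xi)\}$ in $\mathcal{S}^{\vartheta}$ for the induced change of partial order; concretely this means $\bra^{\vartheta',\vartheta}\Lotimes S_\xi'$ lies in the triangulated subcategory generated by $\{S_\eta(-d^{\vartheta,\vartheta'}_\eta)\}_{\eta\geq'\xi}$ and is congruent to $S_\xi(-d^{\vartheta,\vartheta'}_\xi)$ modulo the subcategory generated by $\{S_\eta(-d^{\vartheta,\vartheta'}_\eta)\}_{\eta>'\xi}$. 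Taking classes in $K_q^0(T^\vartheta)$ (where a grading shift $(1)$ acts by $q^{-1}$, or $q$ according to the paper's convention — I would fix this sign once and carry it through consistently), these two categorical containments translate directly into the two defining properties of a mutation of semi-orthogonal bases: $[\bra^{\vartheta',\vartheta}\Lotimes S_\xi']\in\operatorname{span}\{q^{-d^{\vartheta,\vartheta'}_\eta}[S_\eta]\}_{\eta\geq'\xi}$ and $[\bra^{\vartheta',\vartheta}\Lotimes S_\xi']\equiv q^{-d^{\vartheta,\vartheta'}_\xi}[S_\xi]\pmod{\operatorname{span}\{q^{-d^{\vartheta,\vartheta'}_\eta}[S_\eta]\}_{\eta>'\xi}}$. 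Since the mutation is unique, this identifies $[\bra^{\vartheta',\vartheta}\Lotimes S_\xi']$ with the mutated basis vector.

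The remaining point is to check that the relevant bases really are semi-orthonormal for the stated orders, so that "mutation" is defined and the uniqueness clause applies. The sesquilinear form on $\mathbf{F}_\vartheta$ has $\langle u,v\rangle=(\bar u,v)$, and by Proposition \ref{prop:symmetric-forms} the symmetric form $(-,-)$ matches $\dim_q(\dot M\Lotimes N)$ under the isomorphism; combined with Uglov's formula for the bar involution — which the excerpt notes makes $\{u_\xi\}$ semi-orthogonal for $\vartheta$-weighted dominance order — we get that $\{q^{-d^{\vartheta,\vartheta'}_\xi}u_\xi\}$ is a semi-orthonormal basis for the $\vartheta$-order and, by the same reasoning applied to $\vartheta'$, that $\{u_\xi\}$ (equivalently the classes $[S_\xi']$) is semi-orthonormal for the $\vartheta'$-order. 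One should also confirm that $\{[\bra^{\vartheta',\vartheta}\Lotimes S_\xi']\}$ actually is a $\Z[q,q^{-1}]$-basis of $K_q^0$: this follows because $\bra^{\vartheta',\vartheta}\Lotimes-$ is an equivalence of derived categories by Lemma \ref{lem:c-o-c-equiv}, hence an isomorphism on Grothendieck groups, sending the basis $\{[S_\xi']\}$ to a basis.

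I expect the main obstacle to be bookkeeping rather than mathematics: pinning down exactly which grading-shift/$q$ convention is in force (the excerpt uses $K_q^0$ with $\dim_q$ and also writes shifts like $S_{\xi(h)}(\deg(\sT_h))$ in Lemma \ref{lem:FS-filtration}), making sure $d^{\vartheta,\vartheta'}_\xi$ is defined consistently as $\deg(D_{\sT})$ for the tautological $\sT$ in both Lemma \ref{lem:cat-mutate} and here, and verifying that "modulo the triangulated subcategory generated by $\{S_\eta(\cdot)\}_{\eta>'\xi}$" descends correctly to "modulo the $\Z[q,q^{-1}]$-span of $\{q^{-d_\eta}[S_\eta]\}_{\eta>'\xi}$" (this uses that the classes of a highest weight category's standards freely span the Grothendieck group, so the subcategory's image in $K_q^0$ is exactly that span). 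Once conventions are fixed, the proof is a two-line passage from Lemma \ref{lem:cat-mutate} to the statement, invoking uniqueness of mutations of semi-orthonormal bases.
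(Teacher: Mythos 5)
Your proposal matches the paper's (implicit) argument exactly: the proposition is stated without a separate proof precisely because it is the immediate decategorification of Lemma \ref{lem:cat-mutate}, using that $\bra^{\vartheta',\vartheta}\Lotimes-$ is an equivalence (Lemma \ref{lem:c-o-c-equiv}) and that the standards' classes freely span $K_q^0$, so the triangulated containments descend to the span conditions defining a mutation. One caution on your semi-orthonormality step: the form you need is the sesquilinear form on $K_q^0$ itself, and the required properties follow directly from the highest weight structure ($\R\Hom(S_\xi,S_\eta)$ vanishes unless $\xi,\eta$ are suitably comparable in weighted dominance order, $\End(S_\xi)=\K$, and the derived equivalence preserves $\R\Hom$, hence carries a semi-orthonormal basis to one for the transported order); you should not route this through the identification of $\langle-,-\rangle$ on $K_q^0$ with the Fock-space form, since the paper only proves that coincidence later in Proposition \ref{prop:bar}, whose proof cites the present proposition.
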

\begin{proof}
  The properties of a highest weight category guarantee that
  $[\bra^{\vartheta',\vartheta}\Lotimes S_\xi']$ and
  $q^{-d^{\vartheta,\vartheta'}_\xi}[S_\xi]$ form semi-orthogonal
  bases for the appropriate orders.  The definition of mutation of
  exceptional collections directly corresponds to the conditions
  \eqref{eq:vector-mutate}, so the result follows by uniqueness of mutations.
\end{proof}

\subsection{The affine braid action}
\label{sec:mutat-affine-braid}

In this section, for the sake of simplicity, we'll only consider the
case of an 
Uglov weighting, that is, we'll assume that $U$ is
connected.  
The affine Weyl group $\widehat{W}_\ell$ of rank $\ell$ acts on the
set of Uglov weightings.  In terms of the variables $s_*$, we have  
\[\sigma_i\cdot ( s_1,\dots, s_\ell)=( s_1,\dots,
s_{i+1}, s_i,\dots,
s_\ell)\]
\[
\sigma_0\cdot ( s_\ell+ e,s_2,\dots,s_{\ell-1}, s_1-e).\]
The effect on weightings is that it leaves $\ck$ unchanged, and acts
on the weights $(\vartheta_1,\dots, \vartheta_\ell)$ of the new edges
by
\[\sigma_i\cdot ( \vartheta_1,\dots, \vartheta_\ell)=( \vartheta_1,\dots,
\vartheta_{i+1}-\ck e/\ell, \vartheta_i+\ck e/\ell,\dots,
\vartheta_\ell)\]
\[
\sigma_0\cdot (
\vartheta_\ell-\ck e(-1+1/\ell),\vartheta_2,\dots,\vartheta_{\ell-1},
\vartheta_1+\ck e(-1+1/\ell))\]

For each element of the affine Weyl group, we have an induced
bijection between the sets of weighted multi-partitions, permuting
them in the obvious manner.

We can lift this action of the affine Weyl group to the Fock spaces,
at the cost of making it an action of the affine  braid group
$\widehat{B}_\ell$; as discussed above, the
sum $\oplus_\Bs \mathbf{F}_\Bs$ where the sum is over all
Uglov weightings carries an action of the quantum group
$U_q(\mathfrak{\widehat{sl}}_\ell)$ which commutes with the $U_q(\gu)$
action.  There is a natural map of the affine braid group
$\hat{B}_\ell \to \dot U_q(\mathfrak{\widehat{sl}}_\ell)$ called the {\bf
  quantum Weyl group}.  This map sends $\sigma_i$ to the element $t_i$ that acts
on an element $v$ of weight $\mu$ by 
\[t_i\cdot v=\sum_{\substack{a,b\geq 0\\ a-b=\mu^i}} q^{-a}F_i^{(b)}E_i^{(a)}v.\]

\begin{lemma}\label{lem:quantum-weyl}
  Each of the generators $t_i^{-1}$ for $i=0,\dots, \ell-1$ induces an isometry
  $\mathbf{F}_{\vartheta}\to \mathbf{F}_{\sigma_i\cdot \vartheta}$ that
  sends the standard basis of $\mathbf{F}_{\vartheta}$ to the
   mutation of the shifted standard basis $\{q^{-d^{\vartheta,\sigma_i\cdot
       \vartheta}} u_\xi\}$ of $\mathbf{F}_{\sigma_i\cdot \vartheta}$
  for the order change from $\sigma_i\cdot \vartheta$-weighted dominance
  order to $\vartheta$-weighted dominance order.  
\end{lemma}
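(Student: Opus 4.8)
The plan is to compare the two sides of the claim both as maps of $\Z[q,q^{-1}]$-modules equipped with their sesquilinear forms and bar involutions, and to identify the image of the standard basis. The essential observation is that the quantum Weyl group element $t_i\in U_q(\mathfrak{\widehat{sl}}_\ell)$ acts on the direct sum $\oplus_\vartheta \mathbf F_\vartheta$ commuting with the $U_q(\gu)$-action, and that on the categorical side the change-of-charge bimodule $\bra^{\sigma_i\cdot\vartheta,\vartheta}$ induces on Grothendieck groups precisely a mutation of the semi-orthogonal standard basis (this is Proposition \ref{prop:classes-mutate}). So the strategy is: first identify the $U_q(\gu)$-linear isometry induced by $t_i$ with a categorical change-of-charge functor at the level of $K$-theory, and then invoke Proposition \ref{prop:classes-mutate} to read off the mutation statement.

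First I would verify that the two maps have the same source and target, i.e. that $t_i$ really sends $\mathbf F_\vartheta$ isometrically to $\mathbf F_{\sigma_i\cdot\vartheta}$. This follows from the standard theory of the quantum Weyl group: $t_i$ is $U_q(\widehat{\mathfrak{sl}}_\ell)$-semilinear with respect to the simple reflection $s_i$, it commutes with $U_q(\gu)$, and on weight spaces it realizes the braid relation; since the $\widehat{\mathfrak{sl}}_\ell$-weight of $\mathbf F_\vartheta$ is determined by the charge $\Bs$ modulo the affine Weyl group action, $t_i$ carries $\mathbf F_\vartheta$ into $\mathbf F_{\sigma_i\cdot\vartheta}$. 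Isometry with respect to $\langle-,-\rangle$ is the statement that $t_i$ is a unitary operator for the Shapovalov-type form, which is again classical (the Lusztig symmetries preserve the bilinear form up to the bar-compatibility). One also checks $t_i$ commutes with the bar involution up to the expected twist, using Uglov's formula for the bar involution on the higher-level Fock space.

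Next I would pin down the image of the standard basis $u_\xi$. Here the main work is to match $t_i$ with the functor $\bra^{\sigma_i\cdot\vartheta,\vartheta}\Lotimes-$ on $K$-theory. By Theorem \ref{Fock-space} and Proposition \ref{prop:symmetric-forms}, $K_q^0(T^\vartheta)\cong\mathbf F_\vartheta$ compatibly with the symmetric forms, and the categorical $\gu$-action matches the $U_q(\gu)$-action. The functors $\eE_i,\eF_i$ on the categorical side decategorify to $E_i,F_i$, and the change-of-charge bimodule is, by Lemma \ref{lem:c-o-c-equiv}, a derived equivalence commuting with the $\gu$-action; hence $[\bra^{\sigma_i\cdot\vartheta,\vartheta}\Lotimes-]$ is a $U_q(\gu)$-linear isometry $\mathbf F_\vartheta\to\mathbf F_{\sigma_i\cdot\vartheta}$ which, by the uniqueness of such operators intertwining the commuting $\widehat{\mathfrak{sl}}_\ell$-action with a given $\widehat{\mathfrak{sl}}_\ell$-weight shift, agrees with $t_i$ up to scalar; the scalar is fixed to be $1$ by evaluating on the empty multipartition (or any fixed vector where $d^{\vartheta,\sigma_i\cdot\vartheta}$ vanishes). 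Then Proposition \ref{prop:classes-mutate} identifies $[\bra^{\sigma_i\cdot\vartheta,\vartheta}\Lotimes S_\xi]$ with the mutation of the shifted semi-orthogonal basis $\{q^{-d^{\vartheta,\sigma_i\cdot\vartheta}}u_\xi\}$ for the relevant order change, which is exactly the asserted formula for $t_i$.

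The main obstacle I expect is the uniqueness/rigidity step: showing that a $U_q(\gu)$-linear isometry between these two Fock spaces that also respects the commuting $\widehat{\mathfrak{sl}}_\ell$-structure in the appropriate semilinear sense is forced to equal $t_i$. One must be careful that $\oplus_\vartheta\mathbf F_\vartheta$ as a $U_q(\gu)\otimes U_q(\widehat{\mathfrak{sl}}_\ell)$-bimodule is multiplicity-free enough (a consequence of level-rank duality / Uglov's construction) for the intertwiner to be unique up to scalar on each summand, and then normalize correctly. An alternative, which sidesteps the rigidity argument, is to compute directly: expand $t_i\cdot u_\xi$ using the divided-power formula, recognize the result as an alternating sum over ways of removing then re-adding boxes of residue $i$, and match this combinatorially with the definition of mutation of a semi-orthonormal basis together with Uglov's bar-involution formula; this is essentially Uglov's own computation and can be cited, but carrying it out in the weighted generality here requires bookkeeping of the degrees $d^{\vartheta,\sigma_i\cdot\vartheta}_\xi$. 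I would present the rigidity argument as the main line and note the direct computation as the fallback.
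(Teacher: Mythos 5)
Your main line of argument has a genuine circularity problem within the logical structure of the paper. You propose to identify $t_i$ with the decategorified change-of-charge map $[\bra^{\sigma_i\cdot\vartheta,\vartheta}\Lotimes-]$ and then invoke Proposition \ref{prop:classes-mutate}. But the statement that the change-of-charge functors decategorify to the quantum Weyl group action is precisely Theorem \ref{thm:braid-action} and Proposition \ref{prop:bar}(3), and those are proved \emph{by comparing} Lemma \ref{lem:quantum-weyl} with Lemma \ref{lem:cat-mutate}. The lemma you are asked to prove is a purely Fock-space-theoretic statement that serves as the decategorified input; you cannot derive it from the categorical identification it is needed to establish. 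Even read as a standalone argument, your rigidity step is unresolved: the Fock space is far from irreducible over $U_q(\gu)$, so a $U_q(\gu)$-linear isometry with the right weight shift is only pinned down up to a scalar on each joint $U_q(\gu)\otimes U_q(\widehat{\mathfrak{sl}}_\ell)$-isotypic component, and normalizing on the vacuum vector alone does not propagate to the other components without a multiplicity-freeness statement you acknowledge but do not prove.

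The approach that works is the one you relegate to a fallback, and it is what the paper does. Since $t_i$ lives in (a completion of) the root $\mathfrak{sl}_2$ for $i$ inside $U_q(\widehat{\mathfrak{sl}}_\ell)$, restrict to that subalgebra. In the abacus model, $F_i$ pushes beads from the $i$th runner to the $(i{+}1)$st, so $\mathbf F_\vartheta$ decomposes as a sum of tensor products $U^{\otimes m}$ of the standard $U_q(\mathfrak{sl}_2)$-module, with tensor factors indexed by the abacus positions where exactly one of the two runners carries a bead; a degree count on the crossing of the corresponding bundles of strands shows this $m$ equals $-d^{\vartheta,\sigma_i\cdot\vartheta}_\xi$. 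One then computes $t_i$ on $U^{\otimes m}$ via the quasi-R-matrix identity $\Theta^{(m)}(t\otimes\cdots\otimes t)=\Delta^{(m)}(t)$: the factorwise action contributes the leading term $q^{m}v_{\sigma_i\cdot\xi}$, and $\Theta^{(m)}$ contributes only terms indexed by $\xi'>\xi$ agreeing with $\xi$ outside the $i$th and $(i{+}1)$st runners. This upper-triangularity, combined with the fact that $t_i$ is an isometry for $\langle-,-\rangle$, shows the image of the standard basis is a semi-orthonormal basis with the correct leading terms, hence equals the mutation by the uniqueness of mutations. The bookkeeping of $d^{\vartheta,\sigma_i\cdot\vartheta}_\xi$ that you defer is exactly the abacus degree count above, so the fallback is not merely citable — it is the proof.
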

\begin{proof}
% Note first that by \cite[Prop. 5.7]{Uglov} shows that
% \begin{equation}
% \langle
% t_iu,v\rangle=\langle u,\tau(t_i)v\rangle\label{eq:adjunction}
% \end{equation}
% where $\tau$ is the
% anti-automorphism defined in \cite[\S 2.1.2]{KLIII}.  Thus, $\langle
% t_iu,t_iv\rangle=\langle u,\tau(t_i)t_iv\rangle=\langle
% u,v\rangle$ since $\tau(t_i)=t_i^{-1}$.  The latter is the
% decategorification of an observation in proof of
% \cite[Thm. 6.4]{CR04}, and 
  Since $t_i$ lies inside the completion of the quantum universal
  enveloping algebra of the root $\mathfrak{sl}_2$ for $i$, we need
  only study the action of this subalgebra on $\oplus_\Bs
  \mathbf{F}_\Bs$.   Let $t_i^\vee$ be the
adjoint of $t_i$.  The map  $t_i^\vee t_i$ is an endomorphism by
the commutation relations of \cite[Thm. 8.1.2]{CP}.  

To show that $t_i$ is an isometry, it suffices to show that  on 
any simple submodule with highest weight $q$ for this root
$\mathfrak{sl}_2$, the endomorphism $t_i^\vee t_i$ is trivial.  On this submodule, the form $\langle-,-\rangle$ must restrict to a
multiple of $q$-Shapovalov form.  We have 
\begin{equation*}
  \langle v_h,t_i^\vee t_i v_h\rangle=\langle t_iv_h, t_i
  v_h\rangle=\langle F_i^{(q)}v_h, F_i^{(q)} v_h\rangle=\langle v_h,
  E_i^{(q)}F_i^{(q)} v_h\rangle=\langle v_h, v_h\rangle.
\end{equation*}
Since $t_i^\vee t_i v_h$ is also a highest weight vector, and the
space of such vectors is 1-dimensional, this shows that $t_i^\vee t_i
v_h=v_h$, so $t_i^\vee t_i$ must be the identity map.

Now, we turn to showing the required triangularity.  Recall that we think of this sum as a semi-infinite wedge power of
$A^e\otimes A^\ell[t,t^{-1}]$; restricted to $U_q(\mathfrak{sl}_2)$, this representation breaks up as a sum
of infinitely many copies of the trivial representation, and the
standard representation on $A^2$.  The semi-infinite wedge space is
just a sum of terms where we take an infinite number of trivial
factors (either from trivial summands of $A^e\otimes
A^\ell[t,t^{-1}]$, or from $\bigwedge{}^{\!\!2}A^2$) and a finite number
of factors of the form $A^2$.  That is, $\oplus_\Bs \mathbf{F}_\Bs$ is
a sum of infinitely many summands, each of which is a tensor product
of finitely many copies of $A^2$, with the standard basis matching the
usual pure tensor basis of the tensor product.  If we use rank-level duality to
index the basis of $\oplus_\Bs \mathbf{F}_\Bs$ with charged
$e$-multipartitions (so $\sllhat$ acts as in \eqref{eq:F-E-action}),
then these summands correspond to partitions with no removable boxes
of residue $i$, and the factors correspond to the addable boxes with
these residues (this is what Losev refers to as a {\bf family}
structure for this $U_q(\mathfrak{sl}_2)$ action).  

Thus, we need only show the required triangularity in this case.  This
follows from the multiplicative formula for quasi-R-matrix.  In our
notation, we have that the quasi-R-matrix of $U_q(\mathfrak{sl}_2)$
embedded in the root subalgebra is given that
$\theta_i=\Delta(t_i^{-1})(t_i\otimes t_i)$ by \cite{KR90}; note
that the precise formula here depends on the chosen coproduct.
Generalizing to the $m$-fold case, and moving terms, we have that 
\begin{equation*}
  \Delta^{(m)}(t_i)=(t_i\otimes \cdots \otimes t_i)(\theta_i^{(m)})^{-1}.
\end{equation*}
Since $\theta_i= 1+\theta^{(1)}_i+\theta^{(2)}_i+\cdots $ where
$\theta^{(k)}_i$ is lies in the tensor product of the $k\al_i$ and
$-k\al_i$ weight spaces,
\begin{equation}
(\theta_i^{(m)})^{-1}u_\xi=
u_{\xi}+\sum_{\xi<\xi'} a_{\xi'} u_{ \xi'}\label{eq:theta-action}
\end{equation}
for some $a_{\xi'}$.  
The action of $t_i$ on the standard representation is easily
calculated: it sends $v_h$, a highest weight vector $v_\ell:=\dot f_i v_h$, and $t_i\cdot v_\ell=q^{-1} v_h$.  Thus,
\begin{equation}
(t_i\otimes
\cdots \otimes t_i)\cdot u_{\xi'}=q^{\frac{m-\mu^i}{2}} u_{\sigma_i\cdot
  \xi'} \label{eq:t-action}
\end{equation} 
Combining \eqref{eq:theta-action} and \eqref{eq:t-action}, we see that 
\begin{equation}
  \Delta^{(m)}(t_i)u_\xi=
q^{\frac{m-\mu^i}{2}} u_{\xi}+\sum_{\xi<\xi'} q^{\frac{\mu^i-m}{2}} a_{\xi'} u_{ \xi'}\label{eq:delta-action}
\end{equation}
To complete the proof, we must show that $\frac{m-\mu^i}{2}$ is equal
to $d^{\vartheta,\sigma_i\cdot
       \vartheta}$.  
The diagram $\mathsf{D}_{\sT}$ given by the tautological tableau
crosses the strands corresponding to columns with the same $x$-value in the $i$th and
$(i+1)$st components of the multitableau. All the strands that cross
have the same residue, so the contribution of this crossing is  $-2$
times the product of the number of boxes in the two columns, plus this
number in the $i$th tableau, times the number in the column $\ck$
units to the
left in the $(i+1)$st component, plus this number with components reversed.

First, we verify that if the two partitions are empty, then the result
is correct.  In this case, we either have $\mu^i=\pm m$, depending on
whether the red lines are converging or diverging, and our convention
for the degree of $\mathsf{D}_{\sT}$ assures we have the right answer.
When we add a box in a column in $\xi^{(i+1)}$, we add in the corresponding strand, and we get a
contribution that depends on the corresponding column in $\xi^{(i)}$:
we get the sum 
of
\begin{itemize}
\item the number of boxes in adjacent columns to the left and right,
  because of the crossings of strands and ghosts
\item $-2$ times the number in the column itself, because of the
  crosssings with strands of the same label
\item an
  additional one if we are at the central column of the component
  containing $(1,1,i)$,
  from the red strand.
\end{itemize}
This shows that, we get 1
if the corresponding column in $ \xi^{(i)}$ has an addable box, -1 if
it has a removable box, and 0 otherwise.
\begin{itemize}
\item In the case of an addable box in $ \xi^{(i)}$, adding a box in
  $\xi^{(i+1)}$ increases $m$ by 2 and leaves $\mu^i$ invariant.
\item In the case of an removable box in $ \xi^{(i)}$, adding a box in
  $\xi^{(i+1)}$ decreases $m$ by 2 and leaves $\mu^i$ invariant.
\item In the case whether there is neither, adding a box in
  $\xi^{(i+1)}$ leaves both $m$ and $\mu^i$ invariant.
\end{itemize}
Thus, we have the desired change of degree as we add boxes in
$\xi^{(i+1)}$.  Our argument is symmetric in $\xi^{(i+1)}$ and
$\xi^{(i)}$, so we can also add boxes in $\xi^{(i)}$, until we have
arrived at the desired components. 

Since the element $t_i$ acts as an isometry in the pairing $\langle
-,-\rangle$, this is again an semi-orthonormal basis, and thus agrees
with the mutation of $q^{\frac{\mu^i-m}{2}} u_{\sigma_i\cdot \xi} =q^{-d^{\vartheta,\sigma_i\cdot
       \vartheta}} u_{\sigma_i\cdot \xi}$.  
\end{proof}

\begin{theorem}\label{thm:braid-action}
The functors $\mathbb{B}_{\sigma_i}=\bra^{\vartheta,\sigma_i\cdot
  \vartheta}\Lotimes - $
  define a strong action of the affine braid group on the categories
  $D(\mathcal{S}_\vartheta)$ where $\vartheta$ is summed over all
  Uglov weightings, categorifying the action of the quantum Weyl group
  of $\mathfrak{\widehat{sl}}_\ell$.
\end{theorem}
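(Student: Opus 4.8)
The plan is to produce the action from the change-of-charge bimodules $\bra^{\vartheta,\vartheta'}$ together with the associativity of their derived tensor products. First I would record that, by Lemma \ref{lem:c-o-c-equiv}, each $\mathbb{B}_{\sigma_i}=\bra^{\vartheta,\sigma_i\cdot\vartheta}\Lotimes-$ is an equivalence $D^b(\mathcal{S}^{\sigma_i\cdot\vartheta})\to D^b(\mathcal{S}^{\vartheta})$, so summing over Uglov weightings gives an autoequivalence of $\bigoplus_\vartheta D^b(\mathcal{S}^\vartheta)$; what remains is to equip these with coherent $2$-isomorphisms realizing the relations of $\widehat{B}_\ell$. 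The one structural input needed is the canonical isomorphism $\bra^{\vartheta,\vartheta'}\Lotimes\bra^{\vartheta',\vartheta''}\cong\bra^{\vartheta,\vartheta''}$ whenever $\vartheta'$ is between $\vartheta$ and $\vartheta''$ (the analogue for $\bra$ of the betweenness composition lemma for the bimodules $\bK^{\vartheta,\vartheta'}$, proved the same way and used in the proof of Lemma \ref{lem:c-o-c-equiv}); since it is induced by stacking diagrams, it is strictly associative, so any iterated composite of such isomorphisms is well-defined.

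Next I would set $\mathbb{B}_w=\bra^{\vartheta,w\cdot\vartheta}\Lotimes-$ for $w$ in the affine Weyl group $\widehat{W}_\ell$ and argue that, for a reduced expression $w=\sigma_{i_1}\cdots\sigma_{i_k}$, the chain of weightings $\vartheta,\ \sigma_{i_1}\cdot\vartheta,\ \sigma_{i_1}\sigma_{i_2}\cdot\vartheta,\ \ldots,\ w\cdot\vartheta$ has each entry between $\vartheta$ and $w\cdot\vartheta$ and each consecutive triple in ``between'' position. This is the combinatorial core: as recorded in the proof of Lemma \ref{lem:quantum-weyl}, passing from $\vartheta$ to $\sigma_i\cdot\vartheta$ changes only the relative order of the columns of the $i$th and $(i{+}1)$st components (cyclically, for $\sigma_0$), so a reduced word acts on the column orders exactly as a reduced word in the affine symmetric group moves along a geodesic in the permutahedron, never reversing a pair it has already reversed. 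Iterating the composition isomorphism then gives a canonical identification $\mathbb{B}_{\sigma_{i_1}}\circ\cdots\circ\mathbb{B}_{\sigma_{i_k}}\cong\mathbb{B}_w$ independent of the chosen reduced expression. Applying this to the two reduced words for the longest element of each rank-$\leq 2$ standard parabolic yields the commutation isomorphisms $\mathbb{B}_{\sigma_i}\mathbb{B}_{\sigma_j}\cong\mathbb{B}_{\sigma_j}\mathbb{B}_{\sigma_i}$ for non-adjacent $i,j$ and the braid isomorphisms $\mathbb{B}_{\sigma_i}\mathbb{B}_{\sigma_j}\mathbb{B}_{\sigma_i}\cong\mathbb{B}_{\sigma_j}\mathbb{B}_{\sigma_i}\mathbb{B}_{\sigma_j}$ for adjacent $i,j$; the relations involving $\sigma_0$ are the same statement after the cyclic relabelling of components, so no new argument is needed.

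I would then note that these data already constitute a strong action. The assignment $w\mapsto\mathbb{B}_w$ with the length-additive composition isomorphisms of the previous paragraph is exactly the datum of a strong action of the positive affine braid monoid: the only coherences to check are that two ways of bracketing a composite $\mathbb{B}_u\mathbb{B}_v\mathbb{B}_w$ agree, and since every composite collapses through the canonical, strictly associative bimodule composition isomorphisms, this is automatic (the usual mechanism, as in Deligne's treatment of braid-group actions). As each $\mathbb{B}_{\sigma_i}$ is invertible, the action extends uniquely to $\widehat{B}_\ell$ on $\bigoplus_\vartheta D^b(\mathcal{S}^\vartheta)$. Finally, that it categorifies the quantum Weyl group action is read off on Grothendieck groups: under $\mathbf{F}_\vartheta\cong K^0_q(T^\vartheta)$ from Theorem \ref{Fock-space}, Proposition \ref{prop:classes-mutate} identifies $[\mathbb{B}_{\sigma_i}S_\xi]$ with the mutation of the shifted standard basis for the relevant change of order, and Lemma \ref{lem:quantum-weyl} identifies that mutation with $t_i\cdot u_\xi$; since the $[S_\xi]$ form a basis of $K^0_q$, $\mathbb{B}_{\sigma_i}$ descends to the quantum Weyl group generator $t_i$ of $\sllhat$.

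The step I expect to be the main obstacle is the betweenness combinatorics: checking that reduced expressions give chains of weightings in pairwise ``between'' position, so that every composite of the $\mathbb{B}_{\sigma_i}$ along a reduced gallery collapses canonically to a single $\mathbb{B}_w$. Everything else is formal — invertibility from Lemma \ref{lem:c-o-c-equiv}, coherence from associativity of $\Lotimes$, and the decategorified statement from Proposition \ref{prop:classes-mutate} and Lemma \ref{lem:quantum-weyl}.
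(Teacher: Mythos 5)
Your proposal is correct and follows essentially the same route as the paper: the braid relations and strength come from the composition isomorphism $\bra^{\vartheta,\vartheta'}\Lotimes\bra^{\vartheta',\vartheta''}\cong\bra^{\vartheta,\vartheta''}$ for a weighting between the other two, applied to positive (reduced) factorizations $w=w'w''$, with coherence supplied by the strict associativity of stacking, and the decategorified statement is read off by comparing the mutation of standards under $\mathbb{B}_{\sigma_i}$ with Lemma \ref{lem:quantum-weyl}. The betweenness combinatorics you flag as the main obstacle is exactly the one assertion the paper also relies on (inversions of $w''$ are contained in those of $w$ for a reduced factorization), so there is no substantive difference in approach.
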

\begin{proof}
We apply Lemma \ref{lem:between} in order to check the braid
relations. For any positive lift $w$ of an element of the affine symmetric
group, and any factorization $w=w'w''$ into positive elements, we have
that $w''\vartheta$ is between $w\vartheta$
and $\vartheta$.  Thus, by   Lemma \ref{lem:between}, we have that
$\mathbb{B}_{w'}\mathbb{B}_{w''}\cong \bra^{\vartheta,w\cdot
  \vartheta}\Lotimes -.$ This implies the braid relations and the
associativity of these isomorphisms shows that this action is strong.

Thus, we need only check the action on the Grothendieck group is
correct.  This follows immediately from comparing Lemma
\ref{lem:quantum-weyl} and Lemma \ref{lem:cat-mutate}; the action of
$\mathbb{B}_{\sigma_i}$ and of the quantum Weyl group both send the
standard basis to its mutant by the same change of order, so they coincide.
\end{proof}

\begin{remark}
  The same tensor product also induces actions on the categories
  $D^b(T^\vartheta\umod)$ of ungraded modules (by forgetting the
  grading) and on $D^b(T^\vartheta\dgmod)$ by considering all graded
  algebras and modules as complexes with trivial differential.  In
  both these cases, the conclusions of Theorem \ref{thm:braid-action}
  still hold.
\end{remark}

Note that we have a sort of dual braid group action, that arising from
Rickard complexes for $\slehat$, as in the work of Chuang and Rouquier
\cite[6.1]{CR04}. We'll use the inverse of Chuang and Rouquier's
functors, which act on objects of weight $n$ by the complex 
\[\cdots \to \eF^{(r+n-1)}\eE^{(r-1)}(r-1) \to \eF^{(r+n)}\eE^{(r)}(r) \to
    \eF^{(r+n+1)}\eE^{(r+1)}(r+1)\to \cdots\]
with $ \eF^{(r+n)}\eE^{(r)} $ the $r$th term in homological degree.  
This is an action of the affine braid group $\widehat{B}_e$
categorifying the quantum Weyl group action from $\gu$.  We denote
the functor associated to $\sigma\in \widehat{B}_e$ by
$\Theta_\sigma$. 

Recall that in a highest-weight categorification, the set of simple
objects is divided into {\bf families} (as discussed in \cite[\S
3.1]{LoHWCII}).  The simples in each family are in bijection with sign
vectors $\{+,-\}^m$ for some $m$ depending on the family.  In the
Grothendieck group, the corresponding standard modules must span a
copy of $(\mathbb{C}^2)^{\otimes m}$.    We let $m_\xi$ be this
statistic for the family containing $\xi$.  
\begin{lemma} \label{CR-mutate}
Consider any highest weight categorical $\mathfrak{sl}_2$-action.
Then $\Theta_s$ for the unique simple reflection $s$ sends the
exceptional collection of standard modules $S_{\xi}$ of weight $n$ to the mutation of the
exceptional collection $S_{\xi}[\frac{m_\xi-n}{2}]$ where we reverse order on
each family.
\end{lemma}
\begin{proof}
We need only check this for the unique highest weight categorification
of $(\C^2)^{\otimes m}$, since every highest weight categorification
has a filtration (compatible with standards and categorification
functors) with these as subquotients by \cite[Prop. 5.9]{LoHWCII}.   
There are many concrete models for such a categorification, for
example, as representations of the algebra $T^{\boldsymbol{\lambda}}$
for $\mathfrak{sl}_2$ introduced in \cite[\S 4]{Webmerged}, or as a
sum of 
singular blocks of category $\cO$ for $\mathfrak{sl}(m)$ 
corresponding to the subgroups $S_{k}\times S_{m-k}\subset S_m$.  All
of these are equivalent by the main result of \cite{LoWe}.

The standards in this case are
naturally indexed by sign sequences.  We let $\bar \xi$ of a sign
sequence denote the same sequence with $+$ and $-$ switched. 
By \cite[Prop. 7.3]{LoHWCII}, in this case, $\Theta$ sends projective
modules to tilting modules, up to shift.  Our definition of $\Theta$
is the inverse of a shift of
Losev's, so we obtain a shift where he has none.
By standard properties of Ringel duality, this means that $\Theta $
also sends standards to shifted
costandards.  By considering the
effect on the Grothendieck group, we see that $\Theta(S_\xi)=S_{\bar{\xi}}^*[\frac{m_\xi-n}{2}]$. In particular, it has the same
composition factors as $S_{\bar{\xi}}[\frac{m_\xi-n}{2}]$, and thus is in the
subcategory generated by $S_{\bar{\eta}}$ for $\eta\geq \xi$, and
equivalent to $S_{\bar{\xi}}[\frac{m_\xi-n}{2}]$ modulo that generated by
$S_{\bar{\eta}}$ for $\eta> \xi$. Since obviously the image of the
standards is an exceptional collection, these properties show it must
be the mutated one.
\end{proof}

\subsection{Canonical bases}
\label{sec:canonical-bases}
There is a natural duality $\psi$ on projective objects in $S^\vartheta$,
given by the anti-automorphism $*$.  More categorically, we can think
of this as $\Hom(-,T^\vartheta)$, which is naturally a right module,
given a left module structure via $*$.  We can extend this to derived
categories in the obvious way.
\begin{samepage}
  \begin{proposition}\label{prop:bar}\hfill
    \begin{enumerate}
    \item The functor $\psi$ categorifies the bar involution of Fock
      space.
    \item The sesquilinear inner products denoted $\langle -,-\rangle$
      on Fock space and the Grothendieck group coincide.
    \item The affine braid group action of Theorem
      \ref{thm:braid-action} categorifies the quantum Weyl group
      action.
    \end{enumerate}
  \end{proposition}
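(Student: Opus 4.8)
The three assertions are linked, and I would organise the argument so that (1) carries the real content, (2) falls out formally from (1) and Proposition \ref{prop:symmetric-forms}, and (3) is recorded as a restatement of Theorem \ref{thm:braid-action}.

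For (1), the plan is to identify $\psi$, acting on $K^0_q(\bigoplus_\vartheta T^\vartheta)$ with $\vartheta$ running over Uglov weightings, with Uglov's bar involution by checking the properties that pin the latter down. First I would note that $\psi=\R\Hom_{T^\vartheta}(-,T^\vartheta)$, twisted back to a left module by the anti-automorphism $*$, descends to a semilinear involution of the graded Grothendieck group: semilinear because $\Hom$ concentrated in internal degree $d$ is sent to degree $-d$, an involution because $**=\id$. Next, $\psi[P_\emptyset]=[P_\emptyset]$, since $T^\vartheta_0=S$ and the empty diagram is $*$-fixed of degree $0$. Then I would invoke Corollary \ref{biadjoint}: the functors $\eF_i,\eE_i$ are built from $\mathrm{ind},\mathrm{res}$, which commute with duality, so $\psi\eF_i\cong\eF_i\psi$ and $\psi\eE_i\cong\eE_i\psi$ up to the grading shifts in the biadjunctions, and hence on $K^0_q$ the map $\psi$ intertwines the $U_q(\gu)$-action with its bar-conjugate. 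Finally, the bimodules $\bra^{\vartheta,\vartheta'}$ are built from the same diagrammatics equipped with $*$, so that $\psi\circ(\bra^{\vartheta,\vartheta'}\Lotimes-)\cong(\bra^{\vartheta',\vartheta}\Lotimes-)\circ\psi$; combined with Theorem \ref{thm:braid-action} this shows $\psi$ also bar-intertwines the quantum Weyl group, hence the $U_q(\sllhat)$-action. Since $\bigoplus_\vartheta\mathbf{F}_\vartheta$ is generated from the highest weight vectors $u_\emptyset^\vartheta$ under these two commuting actions, Uglov's characterisation of the bar involution as the unique semilinear involution fixing those vectors and commuting, up to bar, with both actions then forces $[\psi M]=\overline{[M]}$ for all $M$. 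As an independent check on the normalisation, $\psi[S_\xi]=[\nabla_\xi]$ is unitriangular in $\vartheta$-weighted dominance order, which matches the shape of Uglov's formula and is exactly what makes the standard basis semi-orthogonal.

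For (2), given (1), I would match the sesquilinear forms directly: for projectives $M,N$ one has $\R\Hom_{T^\vartheta}(M,N)\cong\R\Hom_{T^\vartheta}(M,T^\vartheta)\Lotimes_{T^\vartheta}N\cong\dot{\psi M}\Lotimes_{T^\vartheta}N$, so $\langle[M],[N]\rangle=\dim_q\R\Hom(M,N)=([\psi M],[N])=(\overline{[M]},[N])$ by (1). Under the isomorphism $\mathbf{F}_\vartheta\cong K^0_q(T^\vartheta)$ this is precisely Uglov's form $\langle u,v\rangle=(\bar u,v)$ once the symmetric forms $(-,-)$ are known to agree, which is Proposition \ref{prop:symmetric-forms}; since projectives span the Grothendieck group this is enough. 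For (3) the statement is Theorem \ref{thm:braid-action}; I would only add the observation that the compatibility $\psi\circ(\bra^{\vartheta,\vartheta'}\Lotimes-)\cong(\bra^{\vartheta',\vartheta}\Lotimes-)\circ\psi$ established in proving (1) additionally shows this braid action preserves the canonical basis.

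The hard part is all inside (1): stating and applying Uglov's uniqueness characterisation of the bar involution in exactly the form available here --- cyclicity of $\{u_\emptyset^\vartheta\}$ under the two commuting quantum-group actions, and bar-equivariance of $\psi$ measured against the quantum Weyl group generators rather than directly against Chevalley generators of $\sllhat$ --- and, more delicately, verifying that the grading shifts in the biadjunctions of $\eF_i,\eE_i$ and in the composition $\bra^{\vartheta,\vartheta'}\Lotimes\bra^{\vartheta',\vartheta''}$ conspire so that the conjugated action is the genuine bar-conjugate, and not a further power of $q$ times it. The $\nabla$-triangularity computation is the fallback that pins down the remaining normalisation if the shift bookkeeping becomes cumbersome.
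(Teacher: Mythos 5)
Your reduction of (2) to (1) via $\langle[M],[N]\rangle=([\psi M],[N])$ and Proposition \ref{prop:symmetric-forms} is exactly the observation the paper starts from, but the core of your argument for (1) — a uniqueness characterisation of Uglov's bar involution as the only semilinear involution fixing the vacua $u_\emptyset^\vartheta$ and bar-commuting with the $U_q(\gu)$- and $U_q(\sllhat)$-actions — rests on a cyclicity claim that is false. Under the commuting pair $U_q(\slehat)\times U_q(\sllhat)$ the sum $\bigoplus_\vartheta\mathbf{F}_\vartheta$ decomposes with nontrivial multiplicity spaces carrying a Heisenberg algebra action (this is the level-rank/boson-fermion decomposition; already for $\ell=e=1$ both quantum group actions are essentially trivial and the Fock space is a full polynomial ring), so the submodule generated by the highest weight vectors is proper and your characterisation does not pin down $\psi$ on all of $K^0_q$. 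The Heisenberg action is precisely what is \emph{not} categorified in this paper (see the remark that $\eE_i,\eF_i$ for $e=1$ are only ``reminiscent of'' a categorical Heisenberg action), so this gap cannot be patched by adding one more equivariance check. Your fallback — unitriangularity of $\psi[S_\xi]$ against $\vartheta$-weighted dominance order — also does not suffice: unitriangular semilinear involutions are far from unique without the full compatibility data.

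You have also underestimated (3). The proof of Theorem \ref{thm:braid-action} identifies $\mathbb{B}_{\sigma_i}$ with $t_i$ on the Grothendieck group by matching two \emph{mutations} (Lemma \ref{lem:cat-mutate} versus Lemma \ref{lem:quantum-weyl}), but a mutation of a semi-orthogonal basis is only determined once one knows which form it is semi-orthogonal for; the categorical side produces the mutation for the Euler form $\langle-,-\rangle$ on $K^0$, the quantum Weyl group side the mutation for Uglov's form. Identifying these requires (2) in the \emph{target} category, which is why (3) genuinely belongs in this proposition. The paper breaks this circularity by a double induction on $\ell(\vartheta,\nu)$, the distance from $\vartheta$ to a well-separated weighting: the base case is the quiver Schur algebra, where (1)--(2) are imported from \cite[7.19]{SWschur}; the inductive step uses that $t_i$ is an isometry and that $\mathbb{B}_{\sigma_i}$ induces an isometry on Grothendieck groups to transport the coincidence of forms from $\sigma_i\vartheta$ back to $\vartheta$, which in turn validates the identification $\mathbb{B}_{\sigma_i}=t_i$ at that stage. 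Your proposal contains neither the external anchor nor the interleaving of (2) and (3), and as written does not close.
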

\end{samepage}

\begin{proof}
First, note that if $U$ is disconnected, then all of these structures
are induced by the tensor product decomposition of
\eqref{eq:Fock-tensor}, so we can immediately reduced to the connected
case, and assume that our weighting is Uglov.

Since \[\langle [M],[N]\rangle=([\psi M],[N])\qquad \langle
u,v\rangle=(\bar u,v)\]
and we already know that the forms $(-,-)$ coincide by Proposition
\ref{prop:symmetric-forms}, the statements $(1)$ and $(2)$ are equivalent.

For each $\vartheta$ and $\nu$, there exists some element of the
affine braid group $\pi$, such that $\pi\cdot \vartheta$ is
well-separated (in the sense of \cite[\S 3.3]{WebwKLR}) for $\nu$;
that is, the weights $\vartheta_i$ are sufficiently far apart that the
weighted dominance order on multipartitions of weight $\nu$, and thus the
category $\mathcal{S}^{\vartheta}_\nu$, will not change as we separate them further.  As proven in
\cite[3.6]{WebwKLR}, this algebra is Morita equivalent to the {\bf
  quiver Schur algebra} of \cite{SWschur}. We let $\ell(\vartheta,\nu)$ be the minimal
length of such an element.  We will prove the statements above by
induction on $\ell(\vartheta, \nu)$.  More precisely, our inductive
hypothesis will be
\begin{itemize}
\item [$(h_n)$] the inner products $\langle -,-\rangle$ agree for all
  $\vartheta$ and $\nu$ such that $\ell(\vartheta,\nu)\leq n$, and for
  any
  generator $\mathbb{B}_i$, the action when both $
  \ell(\vartheta,\nu)\leq n$
  and $\ell(\sigma_i\vartheta,\nu)\leq n$ agrees with the quantum Weyl group action.
\end{itemize}

When $n=0$, the category $\cS^\vartheta_\nu$ agrees with the
representations of a quiver Schur algebra as in \cite{SWschur};  thus,
statement $(1)$ and thus $(2)$ hold by \cite[7.19]{SWschur}.  Since we
have checked that the sesquilinear forms coincide, Proposition
\ref{prop:classes-mutate} and Lemma \ref{lem:quantum-weyl} describe
the effect of the change-of-charge functor and the quantum Weyl group action
in terms of the same mutations, so they coincide.  This principle is
the key of the proof: once we know that the forms $\langle -,-\rangle$
coincide on the image category, we know that the action of
$\mathbb{B}_i$ agrees with the quantum Weyl group.

Thus, we move to the inductive step $(h_{n-1}) \Rightarrow (h_n)$.  We
consider $\vartheta,\nu$ with $\ell(\vartheta,\nu)=n$.  Then, for some
generator $\sigma_i$, we have $\ell(\sigma_i\vartheta,\nu)=n-1$.  
We know that $t_iu_\xi=[\mathbb{B}_i
S_\xi]$ by Theorem \ref{thm:braid-action}. 
Thus, we have that 
\begin{equation}
\langle u_\xi,u_{\eta}\rangle \overset{\text{(i)}}=\langle
t_iu_\xi,t_iu_{\eta}\rangle\overset{\text{(ii)}}=\langle \mathbb{B}_i
S_\xi,\mathbb{B}_iS_{\eta}\rangle\overset{\text{(iii)}}=\langle
S_\xi,S_{\eta}\rangle,\label{eq:forms-match}
\end{equation}
where we use in turn (i) that $t_i$ is an isometry, (ii) the inductive
hypothesis establishes the coincidence of forms for
$\cS^{\sigma_i\vartheta}_\nu$, and (iii) that $\mathbb{B}_i$ induces an isometry
on Grothendieck groups.

This establishes claims $(1\operatorname{-}2)$, and claim $(3)$ for reflections that
decrease $\ell(\vartheta,\nu)$;  however, the cases where $\sigma_i$
increases or keeps $\ell(\vartheta,\nu)$ unchanged follow immediately
by the same argument.  We already know that the forms $\langle
-,-\rangle$ coincide in the target, so we may use the same argument as
\eqref{eq:forms-match}.  This establishes the theorem.
\end{proof}
The structure of $q$-Fock spaces together with their bar involution
leads to the definition of a canonical basis, as defined in \cite[\S
4.4]{Uglov}; this also fits in the framework for canonical bases
discussed in \cite{WebCB}.
\begin{definition}
Let $\{b_\xi\}$ be the unique bar invariant
  basis such that
  $b_\xi\in u_\xi+\sum_{\xi'<\xi}q^{-1}\Z[q^{-1}]u_\xi$; in the
  notation of \cite{Uglov}, this is $\mathcal{G}^-$.
\end{definition}

\begin{theorem}\label{th:canonical}
  The basis in $K^0(\mathcal{S}^\vartheta)$ given by the
  indecomposable projectives $P_\xi$ is identified under the
  isomorphism to twisted Fock space with Uglov's canonical basis $\{b_\xi\}$, and
  thus the basis of simples with the dual canonical basis.
\end{theorem}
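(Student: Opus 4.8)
The plan is to use the structural results about the category $\mathcal{S}^\vartheta$ assembled in the preceding subsections — principally Proposition \ref{prop:bar} (which says that $\psi$ categorifies the bar involution and that the sesquilinear forms agree) together with the graded cellular structure (Theorem \ref{th:graded-cellular}) and the projective filtration of Proposition \ref{prop:projective-filtration}. The characterization of the canonical basis $b_\xi$ is: it is the unique bar-invariant element with $b_\xi \in u_\xi + \sum_{\xi' < \xi} q^{-1}\Z[q^{-1}] u_{\xi'}$. So it suffices to verify that the classes $[P_\xi]$ satisfy these two defining properties under the isomorphism $K^0_q(T^\vartheta) \cong \mathbf{F}_\vartheta$ of Theorem \ref{Fock-space}.

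First I would check bar-invariance: since $P_\xi$ is projective and $\psi = \Hom(-,T^\vartheta)$ (with its left-module structure twisted by $*$) sends $T^\vartheta e_{\Bi}$ to $e_{\Bi}T^\vartheta$, which as a left module is $T^\vartheta e_{\Bi}$ again via $*$, the functor $\psi$ fixes the indecomposable projectives up to isomorphism. Hence $[P_\xi]$ is fixed by the bar involution, using Proposition \ref{prop:bar}(1) that $\psi$ categorifies $u \mapsto \bar u$. Second, I would analyze the expansion of $[P_\xi]$ in the standard basis $\{u_\eta\}$: by BGG reciprocity in the highest weight category $\mathcal{S}^\vartheta_\nu$, the multiplicity of $S_\eta$ in the standard filtration of $P_\xi$ equals the graded decomposition number $[S_\eta : L_\xi]$, so $[P_\xi] = \sum_\eta d_{\eta\xi}(q) u_\eta$ with $d_{\xi\xi}(q) = 1$ and $d_{\eta\xi}(q) \neq 0$ only when $L_\xi$ appears in $S_\eta$, which by the highest weight order forces $\eta \leq \xi$ in weighted dominance order (simple $L_\xi$ is the head of $S_\xi$, and composition factors of $S_\eta$ other than $L_\eta$ are indexed by $\xi > \eta$ — wait, the direction: $S_\eta$ has $L_\eta$ on top and $L_\xi$ for $\xi < \eta$ below, so $d_{\eta\xi} \neq 0 \Rightarrow \xi \leq \eta$, i.e. $[P_\xi] \in u_\xi + \sum_{\eta > \xi} (\cdots) u_\eta$ — the triangularity is with respect to $>$, consistent with $u_\xi$ being semi-orthonormal for weighted dominance order).

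The main obstacle is the unitriangularity \emph{with the correct negativity of powers of $q$} — that is, showing $d_{\eta\xi}(q) \in q^{-1}\Z[q^{-1}]$ for $\eta \neq \xi$ (equivalently $\eta > \xi$), so that $[P_\xi]$ lands in $u_\xi + \sum q^{-1}\Z[q^{-1}]u_\eta$ rather than merely in the span. This positivity/negativity statement is where the grading must be controlled: it amounts to saying that $L_\xi$ occurs in strictly positive internal degree in $S_\eta$ for $\eta \neq \xi$, i.e. the graded radical filtration is strictly below degree $0$ away from the head. The cleanest route is to invoke that $\mathcal{S}^\vartheta$ is \emph{Koszul} (or at least that its standards have radical-filtration degrees matching their grading degrees) — but since that is only proven later via geometric input, within the present framework I would instead deduce the negativity by reduction to the well-separated (quiver Schur) case via the change-of-charge equivalences: by the corollary that $\bra^{\vartheta_{\Bs},\vartheta}$ induces a Morita equivalence when $\vartheta_{\Bs}$ is the Uglovation, and by tracking degrees through $\bra^{\vartheta,\vartheta'}$ using Lemma \ref{lem:between} and Proposition \ref{prop:classes-mutate} (mutation of semi-orthonormal bases preserves the $q^{-1}\Z[q^{-1}]$-triangularity), it is enough to establish the statement for a single well-chosen weighting. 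For that weighting one cites \cite[7.19]{SWschur}, exactly as in the proof of Proposition \ref{prop:bar}, where the identification of $[P_\xi]$ with the canonical basis for quiver Schur algebras is already known. Once the projectives are identified with $\{b_\xi\}$, the statement about simples follows because $\{[L_\xi]\}$ is the basis dual to $\{[P_\xi]\}$ under the Euler form $\langle -, - \rangle$, which by Proposition \ref{prop:bar}(2) matches the sesquilinear form on Fock space, whose basis dual to the canonical basis is by definition the dual canonical basis.
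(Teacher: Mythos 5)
Your overall strategy is the right one -- verify the two defining properties of $b_\xi$ for $[P_\xi]$ -- and your treatment of bar-invariance (via $\psi(T^\vartheta e_{\Bi})\cong T^\vartheta e_{\Bi}$ and the multiplicity-one occurrence of $P_\xi$ in $T^\vartheta e_{\Bi_\xi}$) and of the unitriangularity over $\Z[q,q^{-1}]$ matches the paper. The problem is your handling of the one step you correctly identify as the crux: showing the off-diagonal coefficients lie in $q^{-1}\Z[q^{-1}]$.

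Your proposed reduction to the well-separated (quiver Schur) case does not go through. The Uglovation Morita equivalence only relates a weighting to its Uglovation -- these have the \emph{same} weighted dominance order -- and in particular it does not move you between different Uglov weightings, so it cannot reach a well-separated one. To travel between genuinely different Uglov weightings you must use the change-of-charge bimodules $\bra^{\vartheta',\vartheta}$, and these are only \emph{derived} equivalences: by Lemma \ref{lem:cat-mutate} they send standards to mutations of (shifted) standards and change the $t$-structure, so they do not send indecomposable projectives to indecomposable projectives. Consequently the statement ``$[P_\xi]=b_\xi$'' cannot be transported across them; if it could, the canonical bases of $\mathbf{F}_\vartheta$ and $\mathbf{F}_{\vartheta'}$ would have to agree under $g_{\vartheta,\vartheta'}$, which they do not. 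Your appeal to ``mutation preserves $q^{-1}\Z[q^{-1}]$-triangularity'' is a statement about the abstract canonical basis attached to the mutated semi-orthonormal basis, not about the classes of the projectives of the new heart, which is what you need. The paper avoids this entirely with a direct positivity input: $\Hom(P_{\xi'},P_\xi)$ is positively graded for $\xi'\neq\xi$, because by \cite[4.4]{WebwKLR} the sum $\oplus_\xi\tilde P_\xi$ is a summand of a graded projective generator with positively graded endomorphism ring, and $\Hom(\tilde P_{\xi'},\tilde P_\xi)\to\Hom(P_{\xi'},P_\xi)$ is surjective. That single graded-positivity fact about the weighted KLR algebra is the missing ingredient in your argument; without it, or some substitute of equal strength, the coefficient bound is not established.
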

\begin{proof}
  The projectives $P_\xi$ are invariant under $\psi$: the
  modules 
  $T^\vartheta e_\Bi$ are,  and when $\Bi=\Bi_\xi$, the indecomposable
  $P_\xi$ appears as a summand exactly once.  The highest weight
  structure shows that $b_\xi\in
  u_\xi+\sum_{\xi'<\xi}\Z[q,q^{-1}]u_\xi$.  Thus, we need only
  establish these coefficients are all polynomial in $q^{-1}$ with no
  constant term.  That is, that only
  positive shifts of standard modules appear in the standard
  filtration.  

  For this, it suffices to check that $\Hom(P_{\xi'},P_{\xi})$ is
  positively graded for $\xi'\neq \xi$.  We first note that the
  corresponding result holds for $\tilde{T}^{\vartheta}$, the algebra
  without the violating relation.  Let $P_{\xi'} $ be the projective
  cover of $P_{\xi'} $ as a module over this algebra.  The algebra
  $\tilde{T}^{\vartheta}$ is isomorphic to the Ext-algebra of a sum of
  shifts of semi-simple perverse sheaves on a version of the Lusztig
  quiver variety.  Thus, by
  \cite[Cor. 4.4]{WebwKLR}, the sum $\oplus_\xi \tilde{P}_\xi$ is a
  summand of a graded projective generator whose endomorphisms are
  positively graded, and so $\Hom(\tilde P_{\xi'},\tilde P_{\xi})$ is
  positively graded.  Since
  $\Hom(\tilde P_{\xi'},\tilde P_{\xi}) \to \Hom(P_{\xi'},P_{\xi})$ is
  a surjection by the lifting property of projectives, the latter is positively graded as well.
\end{proof}

This shows a diagrammatic analogue of Rouquier's conjecture.  By BGG
reciprocity, we have that the multiplicities
$[S_\xi:P_\eta]=[L_{\eta}:S_\xi]$ agree;  thus, it follows that 
have that:
\begin{corollary}\label{cor:Fock-decom}
  The graded decomposition numbers for $T^\vartheta$ agree with the
  coefficients of Uglov's canonical basis of Fock space $\mathbf{F}_\vartheta$ in terms of
  standard modules.  That is, for all $\eta $, we have that  \[b_\eta=\sum_{\xi}[S_{\xi}:P_\eta]u_{\xi}=\sum_{\xi}[L_\eta:S_{\xi}]u_{\xi}.\]
\end{corollary}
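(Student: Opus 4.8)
The plan is to deduce Corollary \ref{cor:Fock-decom} directly from the theorem immediately preceding it (identifying indecomposable projectives $P_\xi$ with Uglov's canonical basis element $b_\xi$) together with the cellular/highest-weight structure already established. First I would recall that $\mathcal{S}^\vartheta_\nu$ is a highest weight category (Proposition \ref{prop:highest-weight}) with standard modules $S_\xi$ indexed by $\ell$-multipartitions, so that in the graded Grothendieck group $K^0_q(T^\vartheta)$ the classes $[S_\xi]$ form a $\Z[q,q^{-1}]$-basis and each indecomposable projective has a graded standard filtration: $[P_\eta]=\sum_\xi [S_\xi:P_\eta]\,[S_\xi]$, where $[S_\xi:P_\eta]\in \Z_{\ge 0}[q,q^{-1}]$ is the graded multiplicity. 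Under the isomorphism $K^0_q(T^\vartheta)\cong \mathbf{F}_\vartheta$ of Theorem \ref{Fock-space} sending $[S_\xi]\mapsto u_\xi$, the preceding theorem says $[P_\eta]\mapsto b_\eta$. Combining these gives $b_\eta = \sum_\xi [S_\xi:P_\eta]\, u_\xi$ immediately.

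Next I would supply the BGG-reciprocity identity $[S_\xi:P_\eta]=[L_\eta:S_\xi]$. This is a formal consequence of the highest weight structure on $\mathcal{S}^\vartheta_\nu$ together with the duality $\star$ appearing in Proposition \ref{prop:bar} and Lemma \ref{lem:far-right-dual}: the category is a graded highest weight category equipped with a duality fixing the simples, hence costandard modules $\nabla_\xi = S_\xi^\star$ satisfy $[\nabla_\xi : L_\eta] = [S_\xi:L_\eta]$, and the standard graded BGG reciprocity argument (projective covers have $\nabla$-filtrations with $(P_\eta:\nabla_\xi)_q = [\nabla_\xi:L_\eta]_q$, dualized) yields $[S_\xi:P_\eta]_q = [L_\eta:S_\xi]_q$. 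I would cite the graded version of this statement, e.g. via the cellular structure of Theorem \ref{th:graded-cellular} and the general theory of graded cellular algebras, rather than reprove it. Plugging this into the formula from the previous paragraph gives the second equality $b_\eta = \sum_\xi [L_\eta:S_\xi]\, u_\xi$.

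The main subtlety — but it is already handled by the theorem we are quoting, not by this corollary — is the positivity of the coefficients, i.e.\ that only nonnegative powers of $q^{-1}$ (indeed that $b_\eta \in u_\eta + \sum_{\xi<\eta} q^{-1}\Z[q^{-1}] u_\xi$) occur; this was established there using \cite[4.4]{WebwKLR} to see that $\Hom(P_{\xi'},P_\xi)$ is positively graded. So for this corollary there is essentially no obstacle: it is a bookkeeping translation. I would therefore write the proof in a few lines: invoke the highest weight structure for the standard filtration of $P_\eta$, invoke the preceding theorem for $[P_\eta]\leftrightarrow b_\eta$, and invoke graded BGG reciprocity for the equality of the two multiplicity expressions. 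The one thing to state carefully is that everything takes place in the block $T^\vartheta_\nu$ fixed by the residue data $\nu = \operatorname{wt}$ of the multipartitions in question, so that all sums are finite and the Grothendieck group identification of Theorem \ref{Fock-space} applies verbatim.
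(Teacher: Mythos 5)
Your proposal is correct and follows essentially the same route as the paper: the first equality is the image under the isomorphism $K^0_q(T^\vartheta)\cong \mathbf{F}_\vartheta$ (with $[S_\xi]\mapsto u_\xi$, $[P_\eta]\mapsto b_\eta$ by the preceding theorem) of the graded standard filtration of $P_\eta$, and the second is graded BGG reciprocity, which the paper likewise simply invokes. Your extra remarks on where BGG reciprocity comes from (the duality fixing simples plus the graded cellular/highest-weight structure) are sound elaborations of what the paper leaves implicit.
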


Transferring structure via the equivalence of Theorem
\ref{thm:cherednik-KLR}, we find that Corollary \ref{cor:Fock-decom} implies that:
\begin{corollary}[Rouquier's conjecture]
  The multiplicities of standard modules in projectives in
  $\cO^{\Bs}$, and thus by BGG reciprocity, the multiplicities of
  simples in standards, are the same as the coefficients of Uglov's
  canonical basis of a Fock space, specialized at $q=1$.
\end{corollary}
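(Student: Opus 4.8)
The plan is to obtain this as an immediate consequence of Corollary~\ref{cor:Fock-decom} and the equivalence of Theorem~\ref{thm:cherednik-KLR}. First I would invoke Theorem~\ref{thm:cherednik-KLR}, which gives an equivalence of highest weight categories $T^{\vartheta^\sigma_{\Bs}}_{B^\circ_d}\umod\cong \cO^{\Bs}_d$, where $\sigma$ is the sign of $\kappa$. Any equivalence of highest weight categories matches standard objects with standard objects and indecomposable projectives with indecomposable projectives, and hence preserves the multiplicities $[P_\eta:S_\xi]$; by BGG reciprocity, which holds for both $\cO^{\Bs}_d$ and for the representations of the cellular algebra $T^{\vartheta^\sigma_{\Bs}}_{B^\circ_d}$ (Corollary~\ref{cor:PC-hw}, Proposition~\ref{prop:highest-weight}), these also agree with the multiplicities $[L_\eta:S_\xi]$ on both sides. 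So it suffices to identify the multiplicities $[S_\xi:P_\eta]$ for $T^{\vartheta^\sigma_{\Bs}}_{B^\circ_d}$ with Uglov's canonical basis coefficients specialized at $q=1$.

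Next I would pass from the graded to the ungraded statement. By Theorem~\ref{thm:cherednik-KLR} and the remark following it, the category $T^{\vartheta^\sigma_{\Bs}}_{B^\circ_d}\mmod$ of graded modules is a graded lift of $\cO^{\Bs}_d$; forgetting the grading identifies the ungraded composition multiplicities with the graded ones, evaluated at $q=1$. Corollary~\ref{cor:Fock-decom} computes the graded multiplicities: $b_\eta=\sum_{\xi}[S_{\xi}:P_\eta]u_{\xi}=\sum_{\xi}[L_\eta:S_{\xi}]u_{\xi}$, where $b_\eta$ is the canonical basis element of $\mathbf{F}_{\vartheta^\sigma_{\Bs}}$ and $u_\xi$ is the standard basis vector attached to the multipartition $\xi$. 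Setting $q=1$ therefore expresses the ungraded multiplicities as the coefficients of $b_\eta$ at $q=1$.

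Finally I would reconcile the combinatorics, so that ``Uglov's canonical basis of a Fock space'' in the statement really is the object produced above. The Fock space $\mathbf{F}_{\vartheta^\sigma_{\Bs}}$ attached to the Uglov weighting for the charge $\Bs$ coincides with Uglov's $q$-Fock space for $\Bs$: this is the Proposition immediately preceding Theorem~\ref{Fock-space}, together with the discussion of Uglov weightings and residues in Section~\ref{sec:comb-prel-}; Proposition~\ref{prop:symmetry} (and the accompanying statement $\vartheta_\Bs^\pm\mapsto\vartheta_{\Bs^\star}^\mp$) shows that the choice of sign $\sigma=\pm$, i.e.\ of $k=\pm1/e$, only reindexes the charge and so does not affect which canonical basis arises. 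Combining this identification with the two preceding steps gives the equality of the standard-in-projective multiplicities in $\cO^{\Bs}$ with the $q=1$ specialization of Uglov's canonical basis, and BGG reciprocity in $\cO^{\Bs}$ then yields the corresponding statement for simples in standards.

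The main obstacle is not a deep one at this stage: all the representation-theoretic and geometric weight is already carried by Theorem~\ref{thm:cherednik-KLR} and by Corollary~\ref{cor:Fock-decom} (whose proof uses the positivity input from the perverse sheaves of \cite{WebwKLR}). What genuinely requires care is the bookkeeping --- verifying that the equivalence of Theorem~\ref{thm:cherednik-KLR} intertwines the labelling of standards by $\ell$-multipartitions on the Cherednik side with the labelling used in defining $\mathbf{F}_\vartheta$, tracking the sign conventions ($\sigma$, the normalisation $k=m/e$, the shift $\vartheta_i=\Re(ks_i)-i/\ell$ versus the Uglov weighting), and confirming that the ungrading functor really does correspond to $q\mapsto1$ on Grothendieck groups. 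These are routine consistency checks given the framework built up in Sections~\ref{sec:pict-hecke-cher}--\ref{sec:structure-categories}.
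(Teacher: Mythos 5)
Your proposal is correct and follows essentially the same route as the paper: the paper's own proof is precisely to transfer Corollary \ref{cor:Fock-decom} through the equivalence of Theorem \ref{thm:cherednik-KLR}, with the $q=1$ specialization coming from passing to ungraded modules. Your additional bookkeeping (matching of standards/projectives under a highest weight equivalence, the Uglov weighting identification, and the sign convention via Proposition \ref{prop:symmetry}) just makes explicit the checks the paper leaves implicit.
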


\section{Koszul duality}
\label{sec:koszul-duality}

Unlike the earlier sections, the results in this section depend on the ``categorical dimension conjecture'' of Vasserot and
Varagnolo  \cite[8.8]{MR2673739} that $\cO^{\vartheta}$ is equivalent to a truncated
parabolic category $\cO$ for an affine Lie algebra; as mentioned in
the introduction, this is proven in \cite{RSVV} and \cite{LoVV}. 
This conjecture shows, amongst other things, 
that $\cO^{\vartheta}$ and thus $T^\vartheta$ possess a Koszul grading.

\begin{remark}
  Papers of the author \cite{Webqui,WebSD} which have appeared since this article first
  became available as a preprint give an alternate approach to proving
  the Koszulity of $ T^\vartheta$, and the Koszul duality result of
  Theorem \ref{Koszul-dual}.  These use the realizations of these algebras
  in terms of category $\cO$ on affine quiver varieties, instead of
  the truncated affine category $\cO$ of \cite{MR2673739}.
\end{remark}

{\it A priori}, it is not clear that this Koszul grading is Morita
equivalent to the one that we've already defined; in fact, a
general uniqueness property of Koszul gradings shows this.  To clarify:
\begin{definition}
  We call a finitely dimensional graded algebra $A$ {\bf Koszul} if it
  is graded Morita equivalent to a positively graded algebra $A'$
  which is Koszul in the usual sense; we call a graded abelian
  category {\bf Koszul} if it is equivalent to the category of graded
  modules over a Koszul algebra.
\end{definition}

\begin{theorem}\label{Koszul-grading}
The usual grading on $T^\vartheta$ is Koszul, and the equivalence of
Theorem~\ref{thm:cherednik-KLR} induces an
equivalent graded lift of $\cO^{\vartheta}$ to the grading on
category $\cO$.  In particular, $\cS^\vartheta$ is standard Koszul and balanced.
\end{theorem}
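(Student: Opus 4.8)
The plan is to deduce Theorem \ref{Koszul-grading} from the ``categorical dimension conjecture'' (proven in \cite{RSVV} and \cite{LoVV}), which identifies $\cO^{\vartheta}$ with a truncation of parabolic category $\cO$ for an affine Lie algebra. That latter category is known to be Koszul by \cite[2.16]{SVV}, so $\cO^{\vartheta}$ carries \emph{some} Koszul grading. The task is then twofold: first, to show that this abstractly-produced grading agrees (up to graded Morita equivalence) with the grading on $T^\vartheta$ that we have constructed via the weighted KLR presentation; and second, to upgrade from ``Koszul'' to ``standard Koszul and balanced'' in the sense appropriate to a highest weight category.

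The first step I would carry out as follows. Both gradings are gradings on the \emph{same} underlying ungraded category $\cO^{\vartheta}\cong T^\vartheta\umod$ (Theorem \ref{thm:cherednik-KLR}), and both are \emph{mixed} in a way compatible with the highest weight structure: on the $T^\vartheta$ side this is exactly the content of Proposition \ref{prop:highest-weight} together with the positivity of $\Hom(P_{\xi'},P_\xi)$ established in the proof of the canonical basis theorem (via \cite[4.4]{WebwKLR}), and on the parabolic category $\cO$ side this is built into \cite{SVV}. I would then invoke the uniqueness of Koszul gradings: a finite-dimensional algebra admits at most one Koszul grading up to graded Morita equivalence (this is a standard rigidity statement, e.g.\ as in \cite{BGS} 2.5.2, for the Koszul grading on a given Koszul algebra), provided one knows the $T^\vartheta$-grading is itself Koszul. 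To see the latter, note that the $T^\vartheta$-grading and the grading transported from parabolic category $\cO$ induce the \emph{same} filtration-degree data on graded decomposition numbers --- on the $T^\vartheta$ side these are Uglov's canonical basis coefficients (Corollary \ref{cor:Fock-decom}), and on the affine parabolic side these are the same Kazhdan--Lusztig-type polynomials by the level-rank/combinatorial identifications in \cite{SVV} --- so the two graded lifts have matching graded Cartan data, and a graded lift of a ring with the Cartan data of a Koszul ring, compatible with a mixed highest-weight structure, is Koszul. Hence the $T^\vartheta$-grading \emph{is} Koszul and, by rigidity, is graded Morita equivalent to the one coming from category $\cO$; transporting along Theorem \ref{thm:cherednik-KLR} gives the claimed equivalent graded lift.

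For the second step, ``standard Koszul'' means that the standard modules $S_\xi$ admit linear projective resolutions, and ``balanced'' means in addition that both $S_\xi$ and the costandards $S_\xi^\star$ do, so that the Koszul dual is again (standard) Koszul with its standard modules matching the costandards of the original. Here I would use that the change-of-charge functor $\bra^{\vartheta,-\vartheta}\Lotimes-$ sends projectives to tiltings and tiltings to injectives (Lemma \ref{lem:far-right-dual}), so $\cS^{-\vartheta}_\nu$ is the Ringel dual of $\cS^{\vartheta}_\nu$ (Corollary \ref{Ringel-dual}); combined with the Koszulity just established and the self-duality of $\cO^{\vartheta}$ under Koszul duality coming from the level-rank statement \cite[B.5]{SVV} (the Koszul dual of a Cherednik $\cO$ is again a Cherednik $\cO$ with rank-level-dual parameters), one gets that the linear-resolution property holds simultaneously for standards and costandards. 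Concretely: $T^\vartheta$ Koszul $+$ highest weight with $\Hom(P_{\xi'},P_\xi)$ positively graded $+$ Ringel duality realized internally (Corollary \ref{Ringel-dual}) forces the grading to be balanced in the sense of Mazorchuk, and standard Koszulity then follows from the Koszulity of the associated graded of the standard filtration --- which is exactly the statement that $S_\xi$ has a linear resolution, visible from the cellular basis since $\gr$ of $T^\vartheta$ with respect to the cell filtration is a polynomial ring in the relevant dot variables over the endomorphisms of the standards.

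The main obstacle, and the step I expect to require the most care, is matching the two gradings precisely rather than just matching Cartan data: a priori there could be a graded lift with the same graded decomposition numbers that is \emph{not} the Koszul one (graded lifts are not unique in general). The clean way around this is to push everything through the uniqueness of Koszul gradings, which means one genuinely needs to verify that the $T^\vartheta$-grading is Koszul \emph{before} appealing to rigidity --- and for that the honest input is again the ``categorical dimension conjecture,'' used to transport the linear-resolution property of standards across Theorem \ref{thm:cherednik-KLR}. I would emphasize in the writeup that this is why the section's results depend essentially on \cite{RSVV}/\cite{LoVV}, and flag that the forthcoming geometric argument of \cite{Webqui} will give an independent proof of Koszulity not relying on that conjecture.
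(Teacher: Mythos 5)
Your overall route is the paper's: both arguments lean on the categorical dimension conjecture proved in \cite{RSVV,LoVV} to produce \emph{some} Koszul grading on $\cO^{\vartheta}$ from truncated affine parabolic category $\cO$, and both reduce the identification of that grading with the diagrammatic one to matching graded Cartan data, supplied by Corollary \ref{cor:Fock-decom} on the diagrammatic side and by \cite[8.2]{MR2673739} on the affine side. The ``standard Koszul and balanced'' clause is likewise obtained in the paper by direct citation (to \cite[4.3]{SVV}) rather than by your Ringel-duality argument; your sketch there is not unreasonable, but the closing claim that the associated graded of the cell filtration is a polynomial ring in dot variables is established nowhere in the paper and would need genuine work.

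The real gap sits in your last paragraph, at exactly the step you flag as delicate. You are right that matching graded decomposition numbers does not, for a general algebra, determine the graded Morita class, so one must prove the diagrammatic grading is itself Koszul. But your proposed fix --- transporting the linear-resolution property of standards across Theorem \ref{thm:cherednik-KLR} --- is circular: that theorem is an equivalence of \emph{ungraded} categories, so it cannot carry any grading-dependent property such as linearity of resolutions; which graded lift it induces is precisely the question. The paper closes this gap with the numerical criterion of Koszulity \cite[2.11.1]{BGS96}: if an algebra admits one Koszul grading, then any grading with the same graded Cartan matrix is automatically Koszul and graded Morita equivalent to it. Hence no a priori Koszulity of the diagrammatic grading is required --- the Cartan-matrix comparison alone suffices --- and the auxiliary hypothesis you insert (``compatible with a mixed highest-weight structure'') is unnecessary. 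If you keep your structure, replace the circular step by this appeal to the numerical criterion.
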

\begin{proof}
  By the numerical criterion of Koszulity \cite[2.11.1]{BGS96}, if an algebra has one
  Koszul grading, then any other grading with the same graded Cartan
  matrix is again Koszul, and in fact graded Morita equivalent to the
  first Koszul grading.  Thus, any grading on $T^\vartheta$ whose
  Cartan matrix is the matrix expressing Uglov's canonical basis in
  terms of its dual is a Koszul grading, since the grading induced
  from the truncated parabolic category $\cO$ has this property by \cite[8.2]{MR2673739}.  By Corollary
  \ref{cor:Fock-decom}, this is the case for the diagrammatic grading
  on $T^\vartheta$ as well.  Thus, the grading on $T^\vartheta$ is Koszul.  Similarly,
  $T^\vartheta$ is balanced and standard Koszul (the latter being part
  of the definition of the former) by \cite[4.3]{SVV}.
\end{proof}

In the case where $T^{\vartheta}$ is Morita equivalent to a quiver
Schur algebra (as shown in \cite[Th. A]{WebwKLR}), this Koszulity has
been established independently by Maksimau in forthcoming work
\cite{Maks}.  As mentioned before, we give an independent and
different proof in \cite{Webqui}.

Now we turn to describing the Koszul dual of
$T^\vartheta$; for simplicity, we only do this in the case where
$U$ is a $e$-cycle, so $\gu=\slehat$.
Consider an $\ell\times e$ matrix of integers $U=\{u_{ij}\}$, and
let $s_i=\sum_{j=1}^e u_{ij}$ and $t_j=\sum_{i=1}^\ell u_{ij}$ and an
integer $w$.  We wish to consider the former as an Uglov weighting for
$\slehat$, and the latter for $\sllhat$.

Associated
to each row of $U$, we have a charged $e$-core partition; we fill an
abacus with beads at the positions $(u_{ij}-a)e +j$ for $j=1,\dots,
e$ and all $a\in \Z_{\geq 0}$, and take the partition described by this abacus.
Let $v_i$ be
the unique integer such that $v_i-w$ is the
total
number of boxes of residue $i$ in all these partitions.  We
wish to consider the algebra $T^{\vartheta_{\Bs}^+}_{\Bt,w}:=T^{\vartheta_{\Bs}^+}_{\mu}$  
and
$\cS^{\vartheta}_{\Bt, w}:=\cS^\vartheta_\mu$ with weight
$\mu:=\la-\sum v_i\al_i$.  We note that by Proposition
\ref{prop:symmetry}, we have an equivalence
$\cS^{\vartheta_\Bs^\pm}_{\Bt, w}\cong \cS^{\vartheta_{\Bs^\star}^\mp}_{\Bt^\star, w}$.

\begin{theorem}\label{Koszul-dual}
  The Koszul dual of $\cS^{\vartheta_{\Bs}^\pm}_{\Bt,w}$ is
  $\cS^{\vartheta_{\Bt}^\mp}_{\Bs,w}\cong \cS^{\vartheta_{\Bt^\star}^\pm}_{\Bs^\star, w}$.
\end{theorem}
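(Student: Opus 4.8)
The plan is to combine the Koszulity already established in Theorem~\ref{Koszul-grading} with the ``categorical dimension conjecture'' (proven in \cite{RSVV,LoVV}) and the level-rank duality on the parabolic category $\cO$ side proven in \cite[B.5]{SVV}. First I would invoke Theorem~\ref{Koszul-grading}: the diagrammatic grading on $T^{\vartheta_{\Bs}^\pm}_{\Bt,w}$ agrees, via the numerical criterion of \cite[2.11.1]{BGS96}, with the Koszul grading coming from the truncated affine parabolic category $\cO$ to which $\cO^{\vartheta}$ is equivalent. Thus $\cS^{\vartheta_{\Bs}^\pm}_{\Bt,w}$, being standard Koszul, has a well-defined Koszul dual category, and it suffices to identify that dual. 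Since Koszul duality on the level of categories is determined by the graded decomposition/Cartan data (again by \cite[2.11.1]{BGS96} together with \cite[2.12.6]{BGS96} on duality of standard Koszul categories), the entire task reduces to a purely combinatorial identity: the matrix expressing Uglov's canonical basis of $\mathbf{F}_{\vartheta_{\Bs}^\pm}$ in weight $\mu$ in terms of its dual basis is the transpose of the corresponding matrix for $\mathbf{F}_{\vartheta_{\Bt}^\mp}$ in the transposed weight. By Corollary~\ref{cor:Fock-decom}, these matrices are exactly the graded decomposition matrices of $T^{\vartheta_{\Bs}^\pm}_{\Bt,w}$ and $T^{\vartheta_{\Bt}^\mp}_{\Bs,w}$ respectively.

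The second main step is to extract this transposition statement from the known Koszul duality of the affine side. The truncation of parabolic affine category $\cO$ at level $e$ with singularity/parabolic data recorded by $\Bs$ and $\Bt$ is Koszul by \cite[2.16]{SVV}, and by \cite[B.5]{SVV} (the Chuang--Miyachi conjecture \cite{CM}) its Koszul dual is the analogous truncated parabolic category for the $\sllhat$-picture with $\Bs$ and $\Bt$ interchanged, matching the rank-level duality in the statement. Transporting this across the equivalence $\cO^{\vartheta}\cong$ (truncated parabolic $\cO$) of \cite{RSVV} gives that the Koszul dual of $\cS^{\vartheta_\Bs^\pm}_{\Bt,w}$ is the Cherednik-side category with data governed by $(\Bt,\Bs)$; it remains only to match the sign conventions $\pm \leftrightarrow \mp$, which comes from the Ringel duality statement of Corollary~\ref{Ringel-dual} (a Koszul dual of a standard Koszul category is the Ringel dual of the naive Koszul dual, and Ringel duality flips $\vartheta_\Bs^+$ to $\vartheta_\Bs^-$). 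The final isomorphism $\cS^{\vartheta_{\Bt}^\mp}_{\Bs,w}\cong \cS^{\vartheta_{\Bt^\star}^\pm}_{\Bs^\star,w}$ is then just Proposition~\ref{prop:symmetry}, which sends $\vartheta_{\Bt}^\mp$ to $\vartheta_{\Bt^\star}^\pm$ and $\Bs$-data to $\Bs^\star$-data, exactly as already recorded in the paragraph preceding the theorem.

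The main obstacle, and the part requiring genuine care rather than citation, is verifying that the combinatorial data on the two sides truly match under the translation between the abacus description of $e$-cores (used here to define $v_i$ and the weight $\mu$) and the singular-block/parabolic-subalgebra data on the affine category $\cO$ side in \cite{SVV,RSVV}. Concretely I must check that the matrix $U=\{u_{ij}\}$, its row sums $\Bs$, column sums $\Bt$, the integer $w$, and the derived quantities $v_i$ are precisely the bijective image of the data on the other side of level-rank duality; the definitions in \cite{SVV} are phrased in somewhat different language (Young-diagram truncations, dominant weights for $\glehat$ versus $\glnhat$), so a dictionary must be set up and the core-counting formula for $v_i$ cross-checked against it. Once that dictionary is in place, the argument is essentially formal: Koszul duality is determined by the decomposition matrices, those are canonical-basis transition matrices by Corollary~\ref{cor:Fock-decom}, and the transpose symmetry of those matrices under $\Bs\leftrightarrow\Bt$ is exactly the content of \cite[B.5]{SVV}. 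I would also remark that, since Theorem~\ref{Koszul-grading} already gives that \emph{some} Koszul grading exists and the numerical criterion pins it down up to graded Morita equivalence, no independent construction of the duality functor is needed here — it is enough to identify the abstract Koszul dual category, which is what the above steps accomplish.
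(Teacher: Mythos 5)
Your proposal is correct and follows essentially the same route as the paper: the paper's proof simply cites the Koszul duality of $\cO^{\Bs}_{\Bt,w}$ and $\cO^{\Bt^\star}_{\Bs^\star,w}$ from \cite[7.4]{RSVV} and transports it across the equivalence with the diagrammatic categories, using Theorem~\ref{Koszul-grading} to identify the grading and Proposition~\ref{prop:symmetry} for the $\pm/\star$ bookkeeping. Your extra detour through Ringel duality for the sign and the explicit appeal to the numerical criterion are elaborations of steps the paper leaves implicit (the combinatorial dictionary you flag as the delicate point is handled separately in Proposition~\ref{koszul-bijection}), but the underlying argument is the same.
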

\begin{proof}
The result \cite[7.4]{RSVV}
 implies that $\cO^{\Bs}_{\Bt,w}$ and $\cO^{\Bt^\star}_{\Bs^\star,w}$
  are Koszul dual.  Translating to diagrammatic algebras, this implies that
  $\cS^{\vartheta_\Bs^+}_{\Bt,w}$ and $\cS^{\vartheta_{\Bt^\star}^+}_{\Bs^\star,w}$
  are Koszul dual.  %That $\cS^{\vartheta_{\Bt}^-}_{\Bs,w}\cong
 % \cS^{\vartheta_{\Bt^\star}^+}_{\Bs^\star,w}$ follows from
 % Proposition \ref{prop:symmetry}. 
\end{proof}

We can visualize the combinatorial bijection between simple modules in
these two Koszul dual categories.  To a simple in
$\cS^{\vartheta_{\Bs}^+}$, we can associate a charged $\ell$-multipartition,
and thus an $\ell$-runner abacus.  We place the runner for the new
edge $e^1$ at the bottom, and the list them in ascending order.  The
duality map works by cutting this abacus into rectangles along the
vertical lines between $ae$ and $ae+1$ for $a\in \Z$, and then
flipping along the SW/NE diagonal. That is, the runner corresponding
to $e^j$ becomes the beads in the positions $ae+j$.  This reverses the
roles of $\ell$ and $e$.

\begin{figure}
  \centering
  \begin{tikzpicture}
\node (a) at (-3,0){
    \begin{tikzpicture}[very thick,scale=.5]
      \draw[thick, densely dashed] (-.5,1.6) -- (-.5,-.6);
      \draw[thick, densely dashed] (-3.5,1.6) -- (-3.5,-.6);
      \draw[thick, densely dashed] (2.5,1.6) -- (2.5,-.6);
      \node[circle, draw, inner sep=3pt] at (0,0){}; \node[circle,
      draw, inner sep=3pt,fill=black] at (0,1) {}; \node[circle, draw, inner
      sep=3pt,fill=black] at (1,0) {}; \node[circle, draw, inner sep=3pt] at
      (1,1) {}; \node[circle, draw, inner sep=3pt] at (2,0) {};
      \node[circle, draw, inner sep=3pt,fill=black] at (2,1) {}; \node[circle,
      draw, inner sep=3pt] at (3,0) {}; \node[circle, draw, inner
      sep=3pt] at (3,1) {}; \node[circle, draw, inner sep=3pt,fill=black] at
      (-1,0) {}; \node[circle, draw, inner sep=3pt,fill=black] at (-1,1) {};
      \node[circle, draw, inner sep=3pt] at (-2,0) {}; \node[circle,
      draw, inner sep=3pt,fill=black] at (-2,1) {}; \node[circle, draw, inner
      sep=3pt,fill=black] at (-3,0) {}; \node[circle, draw, inner sep=3pt] at
      (-3,1) {}; \node[circle, draw, inner sep=3pt,fill=black] at (-4,0) {};
      \node[circle, draw, inner sep=3pt,fill=black] at (-4,1) {}; \node at
      (-5.2,.5) {$\cdots$}; \node at (4.2,.5) {$\cdots$};
    \end{tikzpicture}};
\node (b) at (3,0){    \begin{tikzpicture}[very thick,scale=.5]
      \draw[thick, densely dashed] (-.5,1.6) -- (-.5,-1.6);
      \draw[thick, densely dashed] (-2.5,1.6) -- (-2.5,-1.6);
      \draw[thick, densely dashed] (1.5,1.6) -- (1.5,-1.6);
      \node[circle, draw, inner sep=3pt,fill=black] at (0,0){}; 
      \node[circle, draw, inner sep=3pt] at (0,1) {}; 
      \node[circle, draw, inner sep=3pt] at (1,0) {}; 
      \node[circle, draw, inner sep=3pt,fill=black] at (1,1) {}; 
      \node[circle, draw, inner sep=3pt] at (2,0) {};
      \node[circle, draw, inner sep=3pt] at (2,1) {}; 
      \node[circle, draw, inner sep=3pt] at (-1,-1) {}; 
      \node[circle, draw, inner sep=3pt,fill=black] at (-2,-1) {}; 
      \node[circle, draw, inner sep=3pt,fill=black] at (-1,0) {}; 
      \node[circle, draw, inner sep=3pt,fill=black] at (-1,1) {};
      \node[circle, draw, inner sep=3pt] at (-2,0) {}; 
      \node[circle, draw, inner sep=3pt,fill=black] at (-2,1) {}; 
      \node[circle, draw, inner sep=3pt,fill=black] at (-3,0) {}; 
      \node[circle, draw, inner sep=3pt,fill=black] at (-3,1) {}; 
      \node[circle, draw, inner sep=3pt] at (0,-1) {};
      \node[circle, draw, inner sep=3pt,fill=black] at (1,-1) {}; 
      \node[circle, draw, inner sep=3pt,fill=black] at (-3,-1) {}; 
      \node[circle, draw, inner sep=3pt] at (2,-1) {}; 
      \node at (-4.2,0) {$\cdots$}; 
      \node at (3.2,0) {$\cdots$};
    \end{tikzpicture}};
\draw[<->,very thick] (a)--(b);
  \end{tikzpicture}

  \caption{The Koszul duality bijection}
  \label{fig:koszul}
\end{figure}
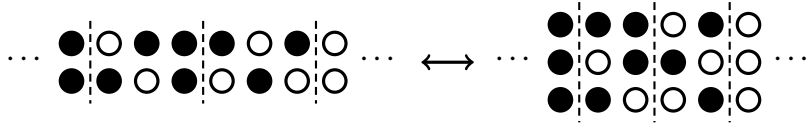
\begin{proposition}\label{koszul-bijection}
  This bijection between multipartitions matches that on simples
  induced by Koszul duality. 
\end{proposition}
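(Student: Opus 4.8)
\emph{Proof proposal.} The plan is to reduce the statement to the combinatorial description of the bijection on simples for the singular--parabolic Koszul duality of affine category $\cO$, and then to match conventions. First I would translate everything through the equivalences already in place: by Theorem~\ref{thm:cherednik-KLR} the categories $\cS^{\vartheta_{\Bs}^\pm}_{\Bt,w}$ are (ungraded) blocks of Cherednik category $\cO$, and by Theorem~\ref{Koszul-grading} the diagrammatic grading on $T^{\vartheta_{\Bs}^\pm}_{\Bt,w}$ agrees (up to graded Morita equivalence) with the Koszul grading transported from a truncated parabolic affine category $\cO$ via \cite{SVV}. Under this identification the bijection on simples induced by the Koszul duality of Theorem~\ref{Koszul-dual} becomes the bijection on simples induced by singular--parabolic Koszul duality of affine category $\cO$, composed with the level--rank duality identification of \cite[B.5]{SVV} (equivalently, Chuang--Miyachi's conjecture \cite{CM}). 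That bijection has a standard combinatorial description in terms of cores/abaci, and Figure~\ref{fig:koszul} is precisely that description applied to the multipartitions indexing simples on the two sides; so once conventions are matched the proposition is immediate.

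To keep things self-contained relative to the diagrammatic formalism, I would also give a direct argument at the level of Grothendieck groups. Since $\cS^\vartheta$ is standard Koszul and balanced (Theorem~\ref{Koszul-grading}), the Koszul duality functor carries the exceptional collection of standard modules $\{S_\xi\}$ to a shift of the costandard exceptional collection $\{S^\star_{\xi'}\}$ of the dual category, in the reversed order; this is what defines the order-reversing bijection $\xi\mapsto\xi'$ on multipartitions, and, passing to graded Grothendieck groups and using BGG reciprocity, it transposes the graded decomposition matrices of the two categories. By Corollary~\ref{cor:Fock-decom} those matrices are the matrices of Uglov's canonical bases of the relevant weight spaces of $\mathbf{F}_{\vartheta_{\Bs}^+}$ and $\mathbf{F}_{\vartheta_{\Bt^\star}^+}$. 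It then suffices to show that the abacus-flip map of Figure~\ref{fig:koszul} is an order-reversing bijection transposing these two canonical basis matrices: this is exactly Uglov's level--rank duality for canonical bases of higher-level Fock spaces \cite{Uglov}, which states that the abacus flip intertwines the $U_q(\slehat)$-action on one Fock space with the $U_q(\sllhat)$-action on the other and carries the bar involution, hence the canonical basis and the standard basis, to the corresponding data on the other side (with standard vectors transposed). Uniqueness of a bijection with this last property — which forces it to coincide with the Koszul-dual bijection — follows from unitriangularity of the canonical basis matrices for weighted dominance order together with the compatibility with both the $\slehat$-crystal structure on simples (Corollary after the definition of the $\vartheta$-weighted crystal) and the $\sllhat$-structure coming from the affine braid action of Theorem~\ref{thm:braid-action}, since any order-preserving symmetry respecting both categorical actions is trivial.

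The main obstacle is not any one hard theorem but the careful bookkeeping of conventions: one must verify that the charge normalisations (the shifts $h_j,s_j$ appearing in the Uglovation and the $-je\ck/\ell$ correction in $\vartheta_{\Bs}^\pm$), the role of the Crawley--Boevey runner $e^1$ placed at the bottom of the abacus, the operations $\Bs\mapsto\Bs^\star$ and $\pm\mapsto\mp$ of Proposition~\ref{prop:symmetry}, and the orientation of the SW/NE flip are all mutually consistent across three pictures — the diagrammatic one, Uglov's Fock-space combinatorics, and the parabolic category $\cO$ dictionary of \cite{SVV,CM}. Pinning all of these down, rather than proving a new structural fact, is where the real work of the argument lies.
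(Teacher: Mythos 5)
Your first paragraph is the paper's strategy in outline --- reduce to the known combinatorics of the bijection on simples for the Koszul duality of truncated parabolic affine category $\cO$ --- but the entire content of the paper's proof is exactly the ``bookkeeping'' you defer. Concretely, the paper translates an abacus into the tuple $(a_1,\dots,a_N)$ of bead positions read down the runners, identifies the minimal-length element $y$ of $\widehat{S}_N$ (for the level-$e$ action) carrying this tuple into the fundamental alcove, invokes \cite[2.16]{SVV} --- which says the Koszul-dual simple is labelled by $y$ applied, in the level-$\ell$ action, to the fundamental-alcove point determined by the charge --- and then checks that the SW/NE flip interchanges ``reading down the runners'' with ``reading across the runners'' while preserving each bead's displacement from the fundamental alcove, so that the flip realizes precisely this $y$. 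Asserting that the dual bijection ``has a standard combinatorial description'' and that Figure \ref{fig:koszul} is that description is the statement to be proved, not a proof of it; you need to name the specific result (\cite[2.16]{SVV}) and actually perform the identification.

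Your alternative Grothendieck-group argument has a genuine gap at the uniqueness step. Knowing that some bijection transposes the graded decomposition matrices does not determine it (any permutation commuting with the relevant matrix would serve equally well), and the rigidity you invoke --- that the bijection is pinned down by compatibility with the $\slehat$-crystal on one side and the $\sllhat$-structure of Theorem \ref{thm:braid-action} on the other --- presupposes that Koszul duality intertwines the Chuang--Rouquier functors with the change-of-charge functors. That intertwining is not established in this paper; the author explicitly defers it to \cite{Webqui}. So you would be assuming a strictly stronger unproven compatibility in order to derive the weaker combinatorial statement. Even granting it, the claim that ``any order-preserving symmetry respecting both categorical actions is trivial'' is asserted rather than proved: the $(\slehat,\sllhat)$-bicrystal on charged multipartitions is not obviously automorphism-free without further structure. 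The direct route through \cite[2.16]{SVV} avoids all of this, which is presumably why the paper takes it.
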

\begin{proof}
In order to understand this duality, we must give the correspondence
between our combinatorics and that for affine Lie algebras as in the
work of Vasserot-Varagnolo \cite{MR2673739}.

  We associate a weight of an affine Lie algebra to an abacus diagram
  as follows: we cut off the diagram at some point to the far left of all boxes of
  the partition (i.e. left of which the abacus is solid).  We can 
  simultaneously shift all the $s_i$, so we can assume that we
  cut off all the beads at negative positions, so we have exactly $s_i$
  dots remaining on the $i$th runner, and $N=\sum s_i$ total dots. We read the $x$-coordinates of
  the dots on each runner in turn (all on the first, then all on the
  second, etc.), which
  gives us an $N$-tuple which we denote $(a_1,\dots,
  a_N)$ (this matches the notation in \cite[\S 2.3]{LoVV}).  The
  affine Weyl group $\widehat{S}_N$ acts on this set with the level
  $e$-action (i.e. the ``translation'' adding $e$ to one coordinate and subtracting $e$
  from another is an element of the Weyl group).  We let $y$ be the
  unique minimal length element of this group that sends the sequence  $(a_1,\dots,
  a_N)$ to an element of the fundamental alcove (all entries are
  increasing and 
  between $1$ and $e$).   Visually, we can think of $y$ the element
  that switches
  \begin{itemize}
  \item from the order induced on dots by reading leftward on each
    runner in order
\item to
    that induced by reading across the runners from the first to the
    $\ell$th, first reading all dots in position $\dots,
    2e+1,e+1,1,-e+1,-2e+1,\cdots$ starting at the greatest $x$
    position that appears, then at $x$-coordinates congruent to
    $2\pmod e$, etc.
  \end{itemize}

  By \cite[2.16]{SVV}, the weight of the Koszul dual simple is obtained by applying the
  element $y$ in the level $\ell$ action to the element of the
  fundamental alcove given by $s_1$ instances of $1$, then $s_{2}$
  instances of $2$, etc.  This is given by the flip map we have
  described, since this switches the reading down runners used to
  obtain $(a_1,\dots,
  a_N)$ with the reading across runners that gives $y$, and preserves
  how shifted from the fundamental alcove a dot is (this matches with
  taking the inverse since we have gone from level $e$ to level
  $\ell$).  
\end{proof}

%Note that in this geometric picture, the transpose of multipartitions
%is obtained by rotating this picture by a half-turn and switching beads
%and empty spots; the
%effect on charges matches the correspondence with Chuang and Miyachi's
%description of the Ringel dual.

Alternatively, we can describe this map by decomposing this abacus
further into one with runners corresponding to each entry of an
$\ell\times e$ matrix; the runners of our previous description
correspond to the rows, and the runner for the $j$th column is gotten
by taking the beads (or lack of beads) at positions $ae+j$.  In this
case, the duality map is gotten by transposing the matrix of runners.

\excise{ Recall that the categories $D^b(\cS^{\vartheta_{\Bs}}_{\Bt,w})$ carry
commuting actions of $\widehat{B}_e$ by the Chuang-Rouquier braid
functors and of $\widehat{B}_\ell$ by change-of-charge functors.  The
former acts preserving $\Bs$ and on $\Bt$ by the usual action, and the
latter preserves $\Bt$ and acts on $\Bs$ by the usual action.

\begin{conjecture}
  The Koszul duality between $\cS^{\vartheta_{\Bs}}_{\Bt,w}\mmod$ is
  $\cS^{-\vartheta_{\Bt}}_{\Bs,w}\mmod$ interchanges the
  Chuang-Rouquier braid functors and change-of-charge functors.
\end{conjecture}
\begin{proof}
 Lemmas \ref{lem:cat-mutate} and \ref{CR-mutate} show that these
  induce the same $t$-structure, since Koszul duality switches
  standards, and intertwines grading shift with shift in the
  triangulated category.  So, the Koszul twist of the Chuang-Rouquier
  functors send each projective to the corresponding projective in the
  heart of the $t$-structure;  thus, the composition of this functor
  with its corresponding change-of-charge functor sends projectives to
  projectives.  That is, it is thus induced by a graded algebra
  automorphism of $T^{\vartheta_{\Bs}}_{\Bt}$.  The proof of this
will appear in \cite{Webqui}.
\end{proof}}

 \bibliography{../gen}
\bibliographystyle{amsalpha}

\end{document}